\renewcommand{\P}{\mathbb{P}}
\newcommand{\Z}{\mathbb{Z}}
\newcommand{\Q}{\mathbb{Q}}
\newcommand{\R}{\mathbb{R}}
\newcommand{\C}{\mathbb{C}}
\newcommand{\Cb}{\mathbb{C}}
\newcommand{\F}{\mathcal{F}}
\renewcommand{\H}{\mathcal{H}}
\newcommand{\Hc}{\mathcal{H}}
\newcommand{\Ac}{\mathcal{A}}
\newcommand{\Ec}{\mathcal{E}}
\newcommand{\G}{\mathcal{G}}
\newcommand{\M}{\mathcal{M}}
\newcommand{\Pc}{\mathcal{P}}
\newcommand{\Qc}{\mathcal{Q}}
\newcommand{\Cc}{\mathcal{C}}
\newcommand{\Sc}{\mathcal{S}}
\newcommand{\K}{\mathcal{K}}
\newcommand{\Lc}{\mathcal{L}}
\newcommand{\Abc}{\mathcal{A}b}
\newcommand{\A}{\mathbf{A}}
\newcommand{\Ab}{\mathbb{A}}
\newcommand{\OO}{\mathcal{O}}
\newcommand{\Hom}{\mathrm{Hom}}
\newcommand{\ord}{\mathrm{ord}}
\newcommand{\Tame}{\mathrm{Tame}}
\newcommand{\colim}{\mathrm{colim}}
\newcommand{\Picard}{\mathrm{Picard}}
\newcommand{\Funt}{\mathrm{Fun}^+}
\newcommand{\Distr}{\mathrm{Distr}}
\newcommand{\codim}{\mathrm{codim}}
\newcommand{\Pic}{\mathrm{Pic}}
\newcommand{\Ext}{\mathrm{Ext}}
\newcommand{\RHom}{\mathrm{RHom}}
\newcommand{\Supp}{\mathrm{Supp}}
\newcommand{\Ker}{\mathrm{Ker}}
\newcommand{\rk}{\mathrm{rk}}
\newcommand{\Imm}{\mathrm{Im}}
\newcommand{\Spec}{\mathrm{Spec}}
\renewcommand{\dim}{\mathrm{dim}}
\renewcommand{\div}{\mathrm{div}}
\newcommand{\Div}{\mathrm{Div}}
\newcommand{\hhom}{\mathrm{hom}}
\newtheorem{theor}{Theorem}[section]
\newtheorem{lemma}[theor]{Lemma}
\newtheorem{prop}[theor]{Proposition}
\newtheorem{claim}[theor]{Claim}
\newtheorem{corol}[theor]{Corollary}
\theoremstyle{definition}
\newtheorem{defin}[theor]{Definition}
\newtheorem{examp}[theor]{Example}
\newtheorem{quest}[theor]{Question}
\theoremstyle{remark}
\newtheorem{rmk}[theor]{Remark}
\numberwithin{equation}{section}
\begin{document}

\title{Notes on the biextension of Chow groups}
\author{Sergey Gorchinskiy\footnote{The author was partially supported
by the grants RFBR 08-01-00095, Nsh-1987.2008.1, and INTAS
05-1000008-8118.}\\ \small{Steklov Mathematical Institute}\\
\small{\it e-mail: gorchins@mi.ras.ru}}
\date{}

\maketitle

\begin{abstract}
The paper discusses four approaches to the biextension of Chow groups
and their equivalences. These are the following: an
explicit construction given by S.\,Bloch, a construction in terms of the Poincar\'e
biextension of dual intermediate Jacobians, a construction in terms of
$K$-cohomology, and a construction in terms of determinant of cohomology
of coherent sheaves. A new approach to J.\,Franke's Chow categories is given.
An explicit formula for the Weil pairing of algebraic cycles is obtained.
\end{abstract}

\section{Introduction}

One of the questions about algebraic cycles is the following: what
can be associated in a bilinear way to a pair of algebraic cycles
$(Z,W)$ of codimensions $p$ and $q$, respectively, on a smooth
projective variety $X$ of dimension $d$ over a field $k$ with
$p+q=d+1$, i.e., what is an analogue of the linking number for
algebraic cycles? This question arose naturally from some of the
approaches to the intersection index of arithmetic cycles on
arithmetic schemes, i.e., to the height pairing for algebraic
cycles.

For homologically trivial cycles, a ``linking invariant'' was
constructed by S.\,Bloch in \cite{Blo84} and \cite{Blo89}, and independently by A.\,Beilinson
in~\cite{Bei87}. It turns
out that for an {\it arbitrary} ground field $k$, this invariant is no longer a number, but it is a
$k^*$-torsor, i.e., a set with a free transitive action of the
group~$k^*$. More precisely, in \cite{Blo89} a biextension $P$ of
$(CH^p(X)_{\hom},CH^q(X)_{\hom})$ by~$k^*$ is constructed, where
$CH^{p}(X)_{\hom}$ is the group of codimension $p$ homologically
trivial algebraic cycles on $X$ up to rational equivalence. This means that there is a map of sets
$$
\pi:P\to CH^p(X)_{\hom}\times CH^q(X)_{\hom},
$$
a free action of $k^*$ on the set $P$ such that $\pi$ induces a bijection
$$
P/k^*\cong CH^p(X)_{\hom}\times CH^q(X)_{\hom},
$$
and for all elements $\alpha,\beta\in CH^p(X)_{\hom}$,
$\gamma,\delta\in CH^q(X)_{\hom}$, there are fixed isomorphisms
$$
P_{(\alpha,\gamma)}\otimes P_{(\beta,\gamma)}\cong
P_{(\alpha+\beta,\gamma)}
$$
$$
P_{(\alpha,\gamma)}\otimes P_{(\alpha,\delta)}\cong
P_{(\alpha,\gamma+\delta)}
$$
such that certain compatibility axioms are satisfied (see \cite{Bre}). Here the
tensor product is taken in the category of $k^*$-torsors and
$P_{(*,*)}$ denotes a fiber of $P$ at $(*,*)\in
CH^p(X)_{\hom}\times CH^q(X)_{\hom}$. In other words, the
biextension $P$ defines a bilinear pairing between Chow groups of
homologically trivial cycles with value in the category of
$k^*$-torsors.

The biextension $P$ generalizes the Poincar\'e line bundle on the
product of the Picard and Albanese varieties. Note that if the ground field $k$ is a {\it
number field}, then each embedding of $k$ into its completion $k_v$
induces a trivialization of the biextension $\log|P|_v$ and the
collection of all these trivializations defines in a certain way the
height pairing for algebraic cycles (see \cite{Blo84}).

On the other hand, the biextension $P$ of
$(CH^p(X)_{\hom},CH^q(X)_{\hom})$ by $k^*$ for $p+q=d+1$ is an
analogue of the intersection index $CH^p\times CH^q(X)\to \Z$ for
$p+q=d$. There are several approaches to algebraic cycles, each of
them giving its own definition of the intersection index. A natural
question is to find analogous definitions for the biextension $P$.
The main goal of the paper is to give a detailed answer to this
question. Namely, we discuss four different constructions of
biextensions of Chow groups and prove their equivalences.

Let us remind several approaches to the intersection index and
mention the corresponding constructions of the biextension of Chow
groups that will be given in the article.

The most explicit way to define the intersection index is to use the moving lemma and the definition
of local multiplicities for proper intersections. Analogous to this
is the explicit definition of the biextension $P$ given
in~\cite{Blo89}.

If $k=\C$, one can consider classes of algebraic cycles in Betti
cohomology groups $H^{2p}_B(X(\C),\Z)$ and then use the product
between them and the push-forward map. Corresponding to this in
\cite{Blo89} it was suggested that for $k=\C$, the biextension $P$
should be equal to the pull-back via the Abel--Jacobi map of the
Poincar\'e line bundle on the product of the corresponding dual
intermediate Jacobians. This was partially proved in \cite{MS95} by
using the functorial properties of higher Chow groups and the
regulator map to Deligne cohomology.

A different approach uses the Bloch--Quillen formula
$CH^p(X)=H^p(X,\K_p)$, the product between cohomology groups of
sheaves, the product between $K$-groups, and the push-forward for $K$-cohomology.
Here $\K_p$ is the
Zariski sheaf associated to the presheaf given by the formula
$U\mapsto K_p(U)$ for an open subset $U\subset X$. A corresponding
approach to the biextension uses the pairing between complexes
$$
R\Gamma(X,\K_p)\times R\Gamma(X,\K_q)\to k^*[-d]
$$
for $p+q=d+1$.

Finally, one can associate with each cycle $Z=\sum_i n_iZ_i$ an
element $[\OO_Z]=\sum n_i[\OO_{Z_i}]\in K_0(X)$ and to use the
natural pairing on $K_0(X)$, i.e., to define the intersection index
by the formula ${\rm rk} R\Gamma(X,\OO_Z\otimes^L_{\OO_X}\OO_W)$.
Analogous to this one considers the determinant of cohomology
${\rm det} R\Gamma(X,\OO_Z\otimes^L_{\OO_X}\OO_W)$
to get a biextension of Chow groups. This is a generalization of what was done
for divisors on curves by P.\,Deligne in~\cite{Del}. With this aim a new approach to
J.\,Franke's Chow categories (see \cite{Fra}) is developed.
This approach uses a certain filtration ``by codimension of support'' on the Picard category
of virtual coherent sheaves on a variety (see \cite{Del}).

One interprets the compatibility of the first definition of the
intersection index with the second and the third one as the fact
that the cycle maps
$$
CH^p(X)\to H^{2p}_B(X(\C),\Z),
$$
$$
CH^p(X)\to H^{2p}(X,\K_p[-p])
$$
commute with products and push-forwards. Note that the cycle
maps are particular cases of canonical morphisms (regulators) from
motivic cohomology to various cohomology theories. This approach
explains quickly the comparison isomorphism between the
corresponding biextensions. Besides this we give a more explicit and
elementary proof of the comparison isomorphism in each case.

\begin{quest}
What should be associated to a pair of cycles of codimensions $p$, $q$ with $p+q=d+i$,
$i\ge 2$? Presumably, when $i=2$, one associates
a \mbox{$K_2(k)$-gerbe}.
\end{quest}

The paper has the following structure.
Sections~\ref{subsection-higherChow}-\ref{subsection-Abel-Jacobi}
contain the description of various geometric constructions that are necessary
for definitions of biextensions of Chow groups. In particular, in
Section~\ref{subsection-adelic} we recall several facts from
\cite{Gor07} about adelic resolution for sheaves of $K$-groups.

In Sections~\ref{subsection-quotientbiext} and
\ref{subsection-complexes} general algebraic constructions of
biextensions are discussed. In particular, we introduce the notion
of a bisubgroup and give an explicit construction of a biextension induced by
a pairing between complexes. Though these constructions are
elementary and general, the author could not find any reference for
them. As an example to the above notions, in
Section~\ref{subsection-Poincare} we consider the definition of the
Poincar\'e biextension of dual complex compact tori by $\Cb^*$.

Sections~\ref{subsection-acycles}-\ref{subsection-determbiext}
contain the constructions of biextensions of Chow groups according
to different approaches to algebraic cycles. In
Section~\ref{subsection-acycles} we recall from~\cite{Blo89} an
explicit construction of the biextension of Chow groups. This
biextension is interpreted in terms of the pairing between higher
Chow complexes (Proposition \ref{prop-Chowbiext}), as was suggested
to the author by S.\,Bloch. In Section~\ref{subsection-AJbiext} for
the complex base field, we consider the pull-back of the Poincar\'e
biextension of dual intermediate Jacobians
(Proposition~\ref{prop-AbelJacobi}).
Section~\ref{subsection-Kcohombiext} is devoted to the construction
of the biextension in terms of $K$-cohomology groups and a pairing
between sheaves of \mbox{$K$-groups} (Proposition
\ref{prop-biext-Kcohom}). We give an explicit description of this
biextension in terms of the adelic resolution for sheaves of
$K$-groups introduced in~\cite{Gor07}. In
Section~\ref{subsection-determbiext} we construct a filtration on the Picard
category of virtual coherent sheaves on a variety (Definition~\ref{defin-filtr})
and we define the biextension of
Chow groups in terms of the determinant of cohomology
(Proposition~\ref{prop-detcohombiext}). In each section we establish a canonical
isomorphism of the constructed biextension with the explicit
biextension from \cite{Blo89} described firstly. In
Sections~\ref{subsection-AJbiext} and \ref{subsection-Kcohombiext}
we give both explicit proofs and the proofs that use properties of
the corresponding regulator maps.
Finally, Section~\ref{subsection-weilpairing} gives an explicit formula for
the Weil pairing between torsion elements in $CH^*(X)_{\hom}$. This can be
considered as a generalization of the classical Weil's formula for
divisors on a curve. Also, the equivalence of different
constructions of biextensions of Chow groups implies the
interpretation of the Weil pairing in terms of a certain Massey
triple product.

The author is deeply grateful to A.\,A.\,Beilinson, S.\,Bloch,
A.\,M.\,Levin, A.\,N.\,Parshin, C.\,Soul\'e, and V.\,Vologodsky for
very stimulating discussions, and to the referee whose numerous
suggestions helped improving the paper very much. In particular, the
first proof of Proposition~\ref{prop-biext-Kcohom} was proposed by
the referee.

\section{Preliminary results}\label{section-prelim}

\subsection{Facts on higher Chow groups}\label{subsection-higherChow}

All varieties below are defined over a fixed ground field $k$.
For an equidimensional variety $S$, by $Z^p(S)$ denote the free abelian
group generated by all codimension $p$ irreducible subvarieties in $S$.
For an element $Z=\sum n_i Z_i\in Z^p(S)$,
by $|Z|$ denote the union of codimension $p$ irreducible subvarieties
$Z_i$ in $S$ such that $n_i\ne 0$.
Let $S^{(p)}$ be the set of all codimension $p$ schematic points on $S$.

Let us recall the definition of higher Chow groups (see \cite{Blo86}).
We put \mbox{$\Delta^n=\{\sum_{i=0}^n t_i=1\}\subset\Ab^{n+1}$}; note that
$\Delta^{\bullet}$ is a cosimplicial variety.
In particular, for each $m\le n$, there are
several face maps $\Delta^m\to\Delta^n$. For an equidimensional variety $X$,
by $Z^p(X,n)$ denote the free abelian
group generated by codimension $p$
irreducible subvarieties in $X\times \Delta^n$ that meet the subvariety
$X\times\Delta^m\subset X\times \Delta^n$ properly for any face $\Delta^m\subset
\Delta^n$. The simplicial group $Z^p(X,\bullet)$ defines a homological type complex;
by definition, $CH^p(X,n)=H_n(Z^p(X,\bullet))$ is the {\it higher Chow group} of $X$.
Note that $CH^p(X,0)=CH^p(X)$ and $CH^1(X,1)=k[X]^*$ for a regular variety $X$ (see op.cit.).
For a projective morphism of varieties $f:X\to Y$,
there is a push-forward morphism of complexes
$f_*:Z^p(X,\bullet)\to Z^{p+\dim(Y)-\dim(X)}(Y,\bullet)$.

For an equidimensional subvariety $S\subset X$, consider the subcomplex
\mbox{$Z_S^p(X,\bullet)\subset Z^p(X,\bullet)$} generated by elements from
$Z^p(X,n)$ whose support meets the subvariety
$S\times \Delta^m\subset X\times\Delta^n$ properly for any face
$\Delta^m\subset\Delta^n$. For a collection $\Sc=\{S_1\ldots,S_r\}$ of
equidimensional subvarieties in $X$, we put $Z^p_{\Sc}(X,\bullet)=
\cap^r_{i=1} Z^p_{S_i}(X,\bullet)$.
The following moving lemma is proven in Proposition 2.3.1 from \cite{Blo}
and in \cite{Lev}:

\begin{lemma}\label{lemma-moving}
Provided that $X$ is smooth over $k$ and either projective or
affine, the inclusion $Z_{\Sc}^p(X,\bullet)\subset Z^p(X,\bullet)$
is a quasiisomorphism for any $\Sc$ as above.
\end{lemma}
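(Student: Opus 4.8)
The statement to prove is Lemma~\ref{lemma-moving}: for $X$ smooth over $k$ and either projective or affine, and for any finite collection $\Sc = \{S_1,\dots,S_r\}$ of equidimensional subvarieties of $X$, the inclusion $Z^p_{\Sc}(X,\bullet) \subset Z^p(X,\bullet)$ is a quasiisomorphism. Since the excerpt explicitly attributes this to Proposition~2.3.1 of \cite{Blo} and to \cite{Lev}, the plan is to reproduce the structure of Bloch's moving-lemma argument rather than to invent something new. The core of that argument is the ``projecting cone'' construction (Bloch's original moving lemma for higher Chow groups): one produces, for a cycle $Z \in Z^p(X,n)$ in general position with respect to the faces, a homotopy moving it into the subcomplex $Z^p_{\Sc}(X,\bullet)$, by choosing a sufficiently generic linear projection (using the embedding $X \hookrightarrow \P^N$ in the projective case, or a generic translation/linear action in the affine case) and forming the cycle swept out over $\Delta^1$.

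First I would reduce to the case $r=1$, i.e. a single equidimensional subvariety $S \subset X$: the general case follows by iterating, since $Z^p_{\{S_1,\dots,S_r\}} = \bigcap_i Z^p_{S_i}$ and the intersection of the relevant ``good position'' conditions is again an open dense condition on each Chow variety of cycles of bounded degree. Then I would set up the homotopy: given $Z \in Z^p(X,n)$ whose support meets all faces $X \times \Delta^m$ properly (so that $Z$ defines a class in $CH^p(X,n)$), I want to construct $H \in Z^p(X,n+1)$ with $\partial H = Z - Z'$ where $Z' \in Z^p_S(X,n)$, and such that the construction is compatible with faces so that it assembles into a chain homotopy on the level of complexes. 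In the projective case this uses the action of a large linear group (e.g. $PGL_{N+1}$ acting on the ambient $\P^N$, or the ``join with a generic linear space'' operation) parametrised by $\Delta^1$; a Bertini–Kleiman type generic-transversality argument over the (possibly infinite, but one passes to a suitable subgroup/large enough family) base field guarantees that for a generic choice the swept-out cycle meets $S \times \Delta^\bullet$ and all faces properly. One must also check that the moved cycle $Z'$ itself lands in $Z^p_S(X,\bullet)$ and that the homotopy respects the simplicial boundary maps, which is where keeping track of all faces simultaneously is essential.

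The main obstacle is precisely this bookkeeping of proper intersection with \emph{all} faces $\Delta^m \subset \Delta^n$ simultaneously, together with $S$, while the homotopy parameter varies over $\Delta^1$: one needs a single generic choice that works for the finitely many faces at once, and one needs the genericity argument to be valid over an arbitrary (not necessarily infinite or algebraically closed) ground field $k$ — this is handled, as in Bloch's and Levine's treatments, by working with families of automorphisms/projections defined over $k$ that are ``large enough'' (dense orbits) and invoking a Kleiman-transversality argument, or by a base-change-and-descent trick. Since this is a known result with two cited sources, I would not grind through the transversality estimates; I would instead state that the construction of \cite{Blo}, Proposition~2.3.1, applies verbatim (the additional constraints coming from the extra subvarieties $S_i$ being imposed as finitely many more ``good position'' conditions, each open dense on the relevant parameter space), and refer to \cite{Lev} for the version valid over a general field, noting only the reduction to $r = 1$ explicitly.
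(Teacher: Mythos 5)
Your proposal matches the paper's approach exactly: the paper gives no proof of this lemma, stating only that it is proven in Proposition~2.3.1 of \cite{Blo} and in \cite{Lev}, and your proposal likewise defers to those references after a sketch of the projecting-cone and transversality argument. Since the paper treats the moving lemma as a cited black box, your background sketch is a bonus but nothing more is required; the only small caution is that one does not literally iterate over the $S_i$ (moving for $S_2$ could disturb position relative to $S_1$) but, as you also note, imposes the finitely many open dense conditions simultaneously.
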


In particular, Lemma \ref{lemma-moving} allows to define
the multiplication morphism
$$
m\in \Hom_{D^-(\Abc)}(Z^p(X,\bullet)\otimes_{\Z}^L Z^q(X,\bullet),Z^{p+q}(X,\bullet)).
$$

Recall that a cycle $Z\in Z^p(X)$ is called {\it homologically trivial} if its class
in the \'etale cohomology group $H_{\acute e t}^{2p}(X_{\overline{k}},\Z_l(p))$
is zero for any prime $l\ne{\rm char}(k)$, where $\overline{k}$ is the algebraic
closure of the field $k$. Note that when ${\rm char}(k)=0$ the cycle $Z$ is
homologically trivial if and only if its class in the Betti cohomology group
$H^{2p}_B(X_{\C},\Z)$ is zero after we choose any model of $X$ defined over $\C$.
Denote by $CH^p(X)_{\hhom}$ the subgroup in $CH^p(X)$ generated by classes of
homologically trivial cycles.

The following result is proved in \cite{Blo89}, Lemma 1.

\begin{lemma}\label{lemma-FirstBloch}
Let $X$ be a smooth projective variety, $\pi:X\to \Spec(k)$ be the
structure morphism. Suppose that a cycle $Z\in Z^p(X)$ is
homologically trivial; then the natural homomorphism
$CH^{d+1-p}(X,1)\stackrel{m(-\otimes Z)}\longrightarrow
CH^{d+1}(X,1)\stackrel{\pi_*}\longrightarrow CH^1(k,1)=k^*$ is
trivial.
\end{lemma}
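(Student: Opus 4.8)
The plan is to realize the composite $CH^{d+1-p}(X,1)\xrightarrow{m(-\otimes Z)}CH^{d+1}(X,1)\xrightarrow{\pi_*}k^*$ as a value of the $l$-adic regulator and to deduce its triviality from the fact that a homologically trivial cycle has vanishing geometric cycle class, via a weight argument. Put $q=d+1-p$ and $\langle w,Z\rangle:=\pi_*(m(w\otimes Z))\in k^*$.

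\emph{Reduction to a finitely generated ground field.} The functors $CH^p(-)$, $CH^q(-,1)$, the product $m$, the push-forward $\pi_*$, and \'etale cohomology over the algebraic closure all commute with filtered colimits in $k$, and homological triviality is unchanged by enlarging an algebraically closed field; hence $X$, $Z$ and a representative of $w$ descend to a subfield $k_0\subset k$ finitely generated over the prime field, $Z$ remains homologically trivial there, and it suffices to show $\langle w,Z\rangle=1$ in $k_0^*$. The gain is that for such $k_0$ the product over all primes $l\ne\charf(k)$ of the Kummer maps $k_0^*\to H^1_{\mathrm{cont}}(k_0,\Z_l(1))$ is injective, i.e.\ $\bigcap_n(k_0^*)^n=1$: a finitely generated field has only finitely many roots of unity, and an element with trivial valuation at every place of a model is a root of unity.

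\emph{Passage to cohomology.} Fix $l\ne\charf(k)$ and use the $l$-adic regulators $r_l\colon CH^a(Y,n)\to H^{2a-n}_{\mathrm{cont}}(Y,\Z_l(a))$ into Jannsen's continuous \'etale cohomology: these are functorial, multiplicative, compatible with proper Gysin push-forward, and on $CH^1(\Spec k,1)=k^*$ equal to the Kummer map. By these compatibilities $r_l(\langle w,Z\rangle)=\pi_*\!\bigl(r_l(w)\cup r_l(Z)\bigr)$, with $r_l(w)\in H^{2q-1}_{\mathrm{cont}}(X,\Z_l(q))$ and $r_l(Z)\in H^{2p}_{\mathrm{cont}}(X,\Z_l(p))$. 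Since $Z$ is homologically trivial, its image in $H^{2p}_{\acute e t}(X_{\overline{k}_0},\Z_l(p))$ vanishes, so $r_l(Z)$ lies in the first step $F^1$ of the Hochschild--Serre filtration; as $F^a\cup F^b\subset F^{a+b}$ and the trace lands in $H^1_{\mathrm{cont}}(k_0,\Z_l(1))=H^1(G_{k_0},\Z_l(1))$, which occupies the top filtration degree, the only surviving term of $\pi_*(r_l(w)\cup r_l(Z))$ is $\mathrm{gr}^0_F r_l(w)\cdot\mathrm{gr}^1_F r_l(Z)$, and one gets
$$
r_l(\langle w,Z\rangle)\;=\;\overline{r_l}(w)\smile\mathrm{AJ}_l(Z)
$$
under the cup-product pairing $H^{2q-1}_{\acute e t}(X_{\overline{k}_0},\Z_l(q))^{G_{k_0}}\times H^1\!\bigl(G_{k_0},H^{2p-1}_{\acute e t}(X_{\overline{k}_0},\Z_l(p))\bigr)\to H^1(G_{k_0},\Z_l(1))$ coming from $(2q-1)+(2p-1)=2d$ and $H^{2d}_{\acute e t}(X_{\overline{k}_0},\Z_l(d+1))\cong\Z_l(1)$; here $\overline{r_l}(w)$ is the geometric \'etale cycle class of $w$ and $\mathrm{AJ}_l(Z)$ the $l$-adic Abel--Jacobi class of $Z$.

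\emph{Vanishing of $\overline{r_l}(w)$, and the main obstacle.} Since $X$ is smooth projective over the finitely generated field $k_0$, $H^{2q-1}_{\acute e t}(X_{\overline{k}_0},\Q_l(q))$ is pure of weight $-1$ by Deligne's theory of weights, hence has no nonzero $G_{k_0}$-invariants; so $\overline{r_l}(w)$ is torsion, and is in fact $0$ whenever $H^{2q-1}_{\acute e t}(X_{\overline{k}_0},\Z_l)$ is torsion free, which happens for all but finitely many $l$. Running the previous step at those ``good'' $l$ together with the injectivity above already forces $\langle w,Z\rangle$ to be a root of unity supported on finitely many primes; for $Z$ \emph{algebraically} trivial this is even elementary (writing $Z=f_*D$ with $f\colon C\to X$ from a curve and $D\in CH^1(C)_{\hom}$, the projection formula puts $f^*w\cdot D$ in $CH^{q+1}(C,1)$, which vanishes for $q>1$ by dimension and gives $(\mathrm{const})^{\deg D}=1$ for $q=1$), so the genuine content lies in the Griffiths-group part. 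The step I expect to be the real obstacle is the remaining finitely many primes: one must show $\overline{r_l}(w)=0$ integrally, i.e.\ that the geometric \'etale cycle class of an element of $CH^q(X_{\overline{k}_0},1)$ in $H^{2q-1}_{\acute e t}(X_{\overline{k}_0},\Z_l(q))$ vanishes with $\Z_l$-coefficients; I would attack this via the Beilinson--Lichtenbaum comparison of motivic and \'etale cohomology with finite coefficients together with the Bloch--Ogus coniveau filtration, which pin the class into a graded piece that must vanish on a geometrically projective variety. Once $\overline{r_l}(w)=0$ for every $l$, the displayed identity gives $r_l(\langle w,Z\rangle)=0$ for every $l$, and the injectivity of the total Kummer map yields $\langle w,Z\rangle=1$, as required.
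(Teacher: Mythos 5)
Your route through $l$-adic regulators, the Hochschild--Serre filtration, and Deligne's weights is essentially the strategy of the cited proof of Bloch, as the paper's remark after the lemma describes (regulators to Deligne cohomology in characteristic zero, to \'etale cohomology in positive characteristic); your version uniformizes it in the \'etale setting. The reduction to a finitely generated base, the compatibility identities reducing the problem to the cup product $\overline{r_l}(w)\cup \mathrm{AJ}_l(Z)$ on the graded pieces of the Hochschild--Serre filtration, and the vanishing of $H^{2q-1}_{\acute e t}(X_{\overline{k}_0},\Q_l(q))^{G_{k_0}}$ by weight purity are all sound.

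However, as you yourself flag, the argument as written only shows that $\langle w,Z\rangle$ is a root of unity, not that it is $1$. The weight argument kills $\overline{r_l}(w)$ only with $\Q_l$-coefficients; at the finitely many primes $l$ where $H^{2q-1}_{\acute e t}(X_{\overline{k}_0},\Z_l)$ has torsion, $\overline{r_l}(w)$ may a priori be a nonzero torsion class in $H^0(G_{k_0},H^{2q-1}_{\acute e t}(X_{\overline{k}_0},\Z_l(q)))$, and its cup product with $\mathrm{AJ}_l(Z)$ may land in the torsion subgroup $\mu_{l^\infty}(k_0)\subset H^1_{\mathrm{cont}}(G_{k_0},\Z_l(1))$, which need not vanish. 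You propose to close this via the Beilinson--Lichtenbaum comparison and the coniveau filtration, but you do not carry it out; the proof is therefore incomplete at exactly the step separating ``$\langle w,Z\rangle$ is a root of unity'' from the assertion of the lemma. The aside on algebraically trivial $Z$ does not touch the Griffiths-group part, which is where the content lies. (The analytic argument of Lemma~\ref{lemma-BlochAnal} that the paper gives for $k=\Cb$ is a genuinely different route which sidesteps this integral torsion issue by a direct period computation.)
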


\begin{rmk}
The proof of Lemma \ref{lemma-FirstBloch} uses the regulator map from higher Chow groups
to Deligne cohomology if the characteristic is zero and to \'etale cohomology
if the characteristic is positive.
\end{rmk}

\begin{quest}
According to Grothendieck's standard conjectures, the statement of
Lemma \ref{lemma-FirstBloch} (at least up to torsion in $k^*$)
should be true if one replaces the homological triviality of the
cycle $W$ by the numerical one. Does there exist a purely algebraic
proof of this fact that does not use Deligne cohomology or \'etale
cohomology?
\end{quest}

\begin{rmk}
In Section
\ref{subsection-AJbiext} we give an analytic proof of Lemma \ref{lemma-FirstBloch} for
complex varieties, which uses only general facts from the Hodge theory
(see Lemma \ref{lemma-BlochAnal}).
\end{rmk}

\subsection{Facts on $K_1$-chains}\label{subsection-K1}

Let $X$ be an equidimensional variety over the ground field $k$.
We put $G^p(X,n)=\bigoplus\limits_{\eta\in X^{(p)}}K_n(k(\eta))$
(in this section we use these groups only for $n=0,1,2$). Elements of the group $G^{p-1}(X,1)$ are
called \mbox{\it $K_1$-chains}.
There are natural homomorphisms
$\Tame:G^{p-2}(X,2)\to G^{p-1}(X,1)$ and $\div:G^{p-1}(X,1)\to G^p(X,0)=Z^p(X)$.
Note that $\div\circ\Tame=0$. The subgroup $\Imm(\Tame)\subset G^{p-1}(X,1)$
defines an equivalence
on $K_1$-chains; we call this a \mbox{\it $K_2$-equivalence} on \mbox{$K_1$-chains}.
For a $K_1$-chain $\{f_{\eta}\}\in G^{p-1}(X,1)$, by $\Supp(\{f_{\eta}\})$
denote the union of codimension $p-1$ irreducible subvarieties $\overline{\eta}$ in
$X$ such that $f_{\eta}\ne 1$.

Let $S$ be an equidimensional subvariety; by $G^{p-1}_S(X,1)$ denote the group of
\mbox{$K_1$-chains} $\{f_{\eta}\}$ such that for any $\eta\in X^{(p-1)}$,
either $f_{\eta}=1$, or the closure $\overline{\eta}$ and the support
$|\div(f_{\eta})|$ meet $S$ properly. For a collection $\Sc=\{S_1\ldots,S_r\}$ of
equidimensional subvarieties in $X$, we put $G^{p-1}_{\Sc}(X,1)=
\cap^r_{i=1} G^{p-1}_{S_i}(X,1)$.

Define the homomorphism $N:Z^p(X,1)\to G^{p-1}(X,1)$ as follows.
Let $Y$ be an irreducible subvariety in $X\times\Delta^1$ that meets properly
both faces $X\times\{(0,1)\}$ and $X\times\{(1,0)\}$
(recall that $\Delta^1=\{t_0+t_1=1\}\subset\Ab^2$). By
$p_{X}$ and $p_{\Delta^1}$ denote the projections from $X\times\Delta^1$ to
$X$ and $\Delta^1$, respectively.
If the morphism $p_X:Y\to X$ is not generically finite onto its image,
then we put $N(Y)=0$. Otherwise, let
$\eta\in X^{(p-1)}$ be the generic point of $p_X(Y)$; we put
$f_{\eta}=(p_X)_*(p^*_{\Delta^1}(t_1/t_0))\in k(\eta)^*$ and $N(Y)=f_{\eta}\in G^{p-1}(X,1)$.
We extend the homomorphism $N$ to $Z^p(X,1)$ by linearity.

Conversely, given a point $\eta\in X^{(p-1)}$ and a rational
function $f_{\eta}\in k(\eta)^*$ such that $f_{\eta}\ne 1$, let
$\Gamma(f_{\eta})$ be the closure of the graph of the rational map
$(\frac{1}{1+f_{\eta}},\frac{f_{\eta}}{1+f_{\eta}}):
\overline{\eta}\dasharrow \Delta^1\subset\Ab^2$. This defines the
map of sets $\Gamma:G^{p-1}(X,1)\to Z^p(X,1)$ such that $N\circ
\Gamma$ is the identity. For an element $Y\in Z^p(X,1)$, we have
$\div(N(Y))= d(Y)\in Z^p(X)$, where $d$ denotes the differential in
the complex $Z^p(X,\bullet)$.

Given an equidimensional subvariety $S\subset X$, it is easy to check that
$\Gamma(G^{p-1}_S(X,1))\subset Z_S^p(X,1)$ and
$N(Z_S^p(X,1))\subset G^{p-1}_S(X,1)$.

\begin{lemma}\label{lemma-movingK1}
Suppose that $X$ is smooth over $k$ and either projective or affine.
Let $\Sc=\{S_1,\ldots,S_r\}$ be a collection of equidimensional
closed subvarieties in $X$, $\{f_{\eta}\}\in G^{p-1}(X,1)$ be a
$K_1$-chain such that the support $|\div(\{f_{\eta}\})|$ meets $S_i$
properly for all $i$, $1\le i\le r$; then there exists a $K_1$-chain
$\{g_{\eta}\}\in G^{p-1}_{\Sc}(X,1)$ such that $\{g_{\eta}\}$ is
$K_2$-equivalent to $\{f_{\eta}\}$.
\end{lemma}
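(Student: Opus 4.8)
The plan is to reduce the statement about $K_1$-chains to the moving lemma for higher Chow cycles (Lemma~\ref{lemma-moving}), exploiting the explicit correspondences $N$ and $\Gamma$ between $Z^p(X,1)$ and $G^{p-1}(X,1)$ together with the compatibility $\div\circ N = d$ and $\Tame\circ(\text{differential}) = (\text{differential})\circ\,?$ at the level of $Z^p(X,2)$. First I would observe that a $K_1$-chain $\{f_\eta\}$ whose divisor-support meets each $S_i$ properly need not itself lie in $G^{p-1}_{\Sc}(X,1)$, because the condition there also demands that the closures $\overline{\eta}$ meet $S_i$ properly; so the content of the lemma is exactly that one can correct $\{f_\eta\}$ by an element of $\Imm(\Tame)$ to fix this. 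The strategy is: lift $\{f_\eta\}$ to a $1$-cycle $Y := \Gamma(\{f_\eta\}) \in Z^p(X,1)$; note $d(Y) = \div(N(\Gamma(\{f_\eta\}))) = \div(\{f_\eta\})$ is a $p$-cycle whose support meets all the $S_i$ properly. Then I want to move $Y$ inside $Z^p(X,1)$, rel boundary, into the subcomplex $Z^p_{\Sc}(X,1)$, and translate the move back through $N$.

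The key steps, in order, are as follows. (1) Consider the collection $\Sc'$ obtained from $\Sc$; apply Lemma~\ref{lemma-moving} not to $X$ directly but in a relative form: one wants the inclusion $Z^p_{\Sc}(X,\bullet)\hookrightarrow Z^p(X,\bullet)$ to be a quasiisomorphism, which it is. Hence the class of $Y$ in the quotient complex $Z^p(X,\bullet)/Z^p_{\Sc}(X,\bullet)$ is a cycle that is nullhomotopic; since $d(Y)$ already lies in $Z^p_{\Sc}(X,0) = Z^p_{\Sc}(X)$ (its support meets the $S_i$ properly), the pair $(Y,0)$ represents a class in degree $1$ of that acyclic quotient, and therefore there exists $W \in Z^p(X,2)$ and $Y' \in Z^p_{\Sc}(X,1)$ with $Y - Y' = d(W)$ modulo nothing — i.e. $Y = Y' + d(W)$ with $Y'\in Z^p_{\Sc}(X,1)$, after possibly also adjusting by a cycle in $Z^p_{\Sc}(X,1)$ coming from the homotopy at the boundary. (2) Apply $N$: set $\{g_\eta\} := N(Y')$. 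By the last displayed compatibilities in Section~\ref{subsection-K1}, $N(Y') \in G^{p-1}_{\Sc}(X,1)$ since $Y'\in Z^p_{\Sc}(X,1)$. (3) It remains to check $\{g_\eta\}$ is $K_2$-equivalent to $\{f_\eta\}$, i.e. $N(Y') - N(Y) \in \Imm(\Tame)$. Here one uses that $N(d(W))$, for $W\in Z^p(X,2)$, lands in $\Imm(\Tame)$: this should follow from unwinding the definition of $\Tame: G^{p-2}(X,2)\to G^{p-1}(X,1)$ via the Steinberg symbol together with the explicit formula $f_\eta = (p_X)_*(p_{\Delta^1}^*(t_1/t_0))$ and the two nontrivial faces of a $2$-simplex, so that the difference of face-restrictions of $W$ produces precisely a tame-symbol contribution. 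Since $N\circ\Gamma = \mathrm{id}$, $N(Y) = \{f_\eta\}$, giving $\{g_\eta\} - \{f_\eta\} = N(d(W)) \in \Imm(\Tame)$, as required.

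The main obstacle I expect is step (3): matching $N\circ d$ on $Z^p(X,2)$ with the image of $\Tame$, i.e. verifying that the nonlinearity in the map $\Gamma$ (and hence in $N$) does not obstruct the identification, and handling the components of $W$ on which $p_X$ fails to be generically finite (where $N$ is declared to be zero) — one must check these genuinely contribute nothing to the divisor/tame bookkeeping. A secondary technical point is making the ``rel boundary'' application of Lemma~\ref{lemma-moving} precise: one should phrase it as acyclicity of the complex $Z^p(X,\bullet)/Z^p_{\Sc}(X,\bullet)$ and chase the class of $Y$, using that $d(Y)=\div(\{f_\eta\})$ is already in the subcomplex by the properness hypothesis, so no correction is forced in degree $0$. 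Once these are in place, the composite $\{f_\eta\} \leadsto Y \leadsto Y' \leadsto \{g_\eta\}$ delivers the desired $K_2$-equivalent representative lying in $G^{p-1}_{\Sc}(X,1)$.
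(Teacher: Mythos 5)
Your proposal is correct and follows essentially the same route as the paper: the paper applies Lemma~\ref{lemma-moving} twice (once to produce $Y'\in Z^p_{\Sc}(X,1)$ with $dY'=\div(\{f_\eta\})$, once more to produce $Y''$ making $Y''+Y'-\Gamma(\{f_\eta\})$ a boundary $d(\widetilde Y)$), which is exactly your single ``acyclicity of the quotient $Z^p/Z^p_{\Sc}$'' argument, and then takes $\{g_\eta\}=N(Y''+Y')$. The containment $(N\circ d)(Z^p(X,2))\subset\Imm(\Tame)$ that you flag as the main remaining obstacle is precisely what the paper invokes as a cited fact (the remark on p.~13 of \cite{MS}), so your argument is complete modulo the same reference.
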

\begin{proof}
Denote by $d$ the differential in the complex $Z^p(X,\bullet)$.
Since $\div(\{f_{\eta}\})\in Z_{\Sc}^p(X,0)$, by Lemma
\ref{lemma-moving}, there exists an element $Y'\in Z_{\Sc}^p(X,1)$
such that $d(Y')=\div(\{f_{\eta}\})= d(\Gamma(\{f_{\eta}\}))$. Again
by Lemma \ref{lemma-moving}, there exists an element $Y''\in
Z^p_{\Sc}(X,1)$ such that $d(Y'')=0$ and
$Y''+Y'-\Gamma(\{f_{\eta}\})=d(\widetilde{Y})$ for some
$\widetilde{Y}\in Z^p(X,2)$.

Recall that $(N\circ d)(Z^p(X,2))\subset\Imm(\Tame)\subset G^{p-1}(X,1)$,
see \cite{MS}, Remark on p. 13 for more details. Therefore, the $K_1$-chain
$\{g_{\eta}\}=N(Y''+Y')\in G^{p-1}_{\Sc}(X,1)$
is $K_2$-equivalent to $\{f_{\eta}\}$.
\end{proof}

\begin{corol}\label{corol-movingK1codim}
In notations from Lemma \ref{lemma-movingK1} let
$Z=\div(\{f_{\eta}\})$ and suppose that $\codim_{Z}(Z\cap S_i)\ge
n_i$ for all $i$, $1\le i\le r$ (in particular, $n_i\le
\codim_X(S_i)$); then there exists a $K_1$-chain $\{g_{\eta}\}\in
G^{p-1}(X,1)$ such that $\{g_{\eta}\}$ is $K_2$-equivalent to
$\{f_{\eta}\}$, $\codim_Y(Y\cap S_i)\ge n_i$ for all $i$, $1\le i\le
r$, where $Y=\Supp(\{g_{\eta}\})$, and for each $\eta\in
Z^{p-1}(X)$, we have $\codim_{\div(g_{\eta})} (\div(g_{\eta})\cap
S_i)\ge n_i$ for all $i$, $1\le i\le r$.
\end{corol}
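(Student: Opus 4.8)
The plan is to reduce the statement to Lemma~\ref{lemma-movingK1} itself, applied not to $\Sc$ but to a collection $\Sc'$ obtained by \emph{enlarging} each $S_i$. The reason Lemma~\ref{lemma-movingK1} does not apply directly is that $Z=\div(\{f_\eta\})$ need not meet $S_i$ properly: it meets $S_i$ only in codimension $\ge n_i$. However, if we replace $S_i$ by a larger equidimensional subvariety $T_i$ of codimension exactly $n_i$, then $Z$ will meet $T_i$ \emph{properly}, and — crucially — any $K_1$-chain in good position with respect to $T_i$ will automatically meet $S_i$ in codimension $\ge n_i$, simply because $S_i\subseteq T_i$ and $\codim_X(T_i)=n_i$.

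First I would establish the following geometric claim: for every $i$ there is an equidimensional closed subvariety $T_i\subseteq X$ with $S_i\subseteq T_i$, $\codim_X(T_i)=n_i$, and such that $|\div(\{f_\eta\})|$ meets $T_i$ properly. This is a Bertini-type construction. Embedding $X$ (or, in the affine case, a projective closure of $X$) in a projective space, take for $T_i$ the intersection of $X$ with $n_i$ general hypersurfaces of large degree containing $S_i$; since $S_i$ is equidimensional of codimension $\ge n_i$, this intersection is equidimensional of codimension exactly $n_i$ and contains $S_i$. By Bertini on $X\setminus S_i$ it meets the fixed cycle $|\div(\{f_\eta\})|$ properly away from $S_i$, while over $S_i$ the intersection with $|\div(\{f_\eta\})|$ equals $|\div(\{f_\eta\})|\cap S_i$, which by hypothesis already has codimension $\ge n_i$ in $|\div(\{f_\eta\})|$; hence $|\div(\{f_\eta\})|$ meets $T_i$ properly. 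Set $\Sc'=\{T_1,\ldots,T_r\}$.

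By construction $|\div(\{f_\eta\})|$ meets every member of $\Sc'$ properly, so Lemma~\ref{lemma-movingK1} produces a $K_1$-chain $\{g_\eta\}\in G^{p-1}_{\Sc'}(X,1)$ that is $K_2$-equivalent to $\{f_\eta\}$; this already gives the required $K_2$-equivalence. It remains to translate membership in $G^{p-1}_{\Sc'}(X,1)$ into codimension bounds with respect to $\Sc$. For each $\eta$ with $g_\eta\ne 1$, both $\overline\eta$ (of codimension $p-1$) and $|\div(g_\eta)|$ (of codimension $p$) meet every member of $\Sc'$ properly; since $S_i\subseteq T_i$ and $\codim_X(T_i)=n_i$, this forces $\codim_{\overline\eta}(\overline\eta\cap S_i)\ge n_i$ and $\codim_{\div(g_\eta)}(\div(g_\eta)\cap S_i)\ge n_i$. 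Taking the union over all $\eta$ with $g_\eta\ne 1$ and using that all the $\overline\eta$ have the same codimension $p-1$ yields $\codim_Y(Y\cap S_i)\ge n_i$ for $Y=\Supp(\{g_\eta\})$, which completes the argument.

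The main obstacle is the geometric claim of the second paragraph: enlarging a given subvariety to one of prescribed larger dimension while keeping a fixed cycle in proper position with it. Making this precise requires the usual care with Bertini theorems for the linear system of high-degree hypersurfaces through $S_i$ — base-point freeness and separation of points and tangents on $X\setminus S_i$, equidimensionality of the successive general hyperplane sections, and the reduction of the affine case to the projective one by passing to a closure — but the hypothesis $\codim_Z(Z\cap S_i)\ge n_i$ is exactly tailored so that the part of the intersection lying over $S_i$ does not violate properness.
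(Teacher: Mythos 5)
Your proposal is correct and follows essentially the same route as the paper: reduce to Lemma~\ref{lemma-movingK1} by enlarging each $S_i$ to an equidimensional subvariety of codimension exactly $n_i$ containing $S_i$ and meeting $Z$ properly, and then note that proper intersection with the enlargement forces the stated codimension bounds with respect to $S_i$. The only difference is in how the enlargement is produced: the paper builds it by induction on $n_i$, adding one reduced divisor at a time chosen to avoid a finite set of closed points (one on each relevant irreducible component outside $S_i$), whereas you produce it in one step by intersecting with $n_i$ general high-degree hypersurfaces through $S_i$ and invoking Bertini away from $S_i$ together with the hypothesis $\codim_Z(Z\cap S_i)\ge n_i$ to control what happens over $S_i$ — both are valid, and the paper's inductive avoidance argument is essentially an unpacked, more elementary version of your Bertini step.
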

\begin{proof}
We claim that for each $i$, $1\le i\le r$, there exists an equidimensional
subvariety $S'_i\subset X$ of codimension $n_i$ such that $S'_i\supset S_i$
and $Z$ meets $S'_i$ properly. By Lemma \ref{lemma-movingK1}, this immediately implies
the needed statement.

For each $i$, $1\le i\le r$, we prove the existence of $S'_i$ by induction on $n_i$.
Suppose that $n_i=1$. Then there exists an
effective reduced divisor $H\subset X$ such that $H\supset S_i$ and $H$ meets
$Z$ properly: to construct such divisor we have to choose a closed point
on each irreducible
component of $Z$ outside of $S_i$ and take an arbitrary $H$ that does not contain
any of these points and such that $H\supset S_i$.

Now let us do the induction step from $n_i-1$ to $n_i$. Let $\widetilde{S}_i\subset X$
be an equidimensional subvariety that satisfies the needed condition for $n_i-1$.
For each irreducible component of $\widetilde{S}_i$ choose a closed point on it outside
of $S_i$. Also, for each irreducible component of $\widetilde{S}_i\cap Z$
choose a closed
point on it outside of $S_i$. Thus we get a finite set $T$ of closed points in $X$
outside of $S_i$. Let $H$ be an effective reduced divisor on $X$ such that
$H\supset S_i$
and $H\cap T=\emptyset$; then we put $S'_i=\widetilde{S}_i\cap H$.
\end{proof}

Let $W$ be a codimension $q$ cycle on $X$,
$Y$ be an irreducible subvariety of codimension $d-q$ in $X$ that meets
$|W|$ properly,
and let $f$ be a rational function on $Y$ such that $\div(f)$ does not intersect
with $|W|$. We put $f(Y\cap W)=\prod\limits_{x\in Y\cap |W|}{\rm
Nm}_{k(x)/k}(f^{(Y,W;x)}(x))$,
where $(Y,W;x)$ is the intersection index of $Y$ with the cycle $W$
at a point $x\in Y\cap |W|$.

\begin{lemma}\label{lemma-recipr}
Let $X$ be a smooth projective variety over $k$ and let $p+q=d+1$;
consider cycles $Z\in Z^p(X)$, $W\in Z^q(X)$, and $K_1$-chains
$\{f_{\eta}\}\in G^{p-1}_{|W|}(X,1)$, $\{g_{\xi}\}\in
G^{q-1}_{|Z|}(X,1)$ such that $\div(\{f_{\eta}\})=Z$,
$\div(\{g_{\xi}\})=W$. Then we have
$\prod_{\eta}f_{\eta}(\overline{\eta}\cap
W)=\prod_{\xi}g_{\xi}(Z\cap\overline{\xi})$.
\end{lemma}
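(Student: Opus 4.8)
The engine of the proof is a dimension count: $\overline{\eta}$ has codimension $p-1$ and $\overline{\xi}$ has codimension $q-1$, so whenever $\overline{\eta}$ and $\overline{\xi}$ meet properly their intersection is a curve, and on such a curve the identity to be proved is exactly the classical Weil reciprocity law. The plan is therefore to (i) move the two $K_1$-chains into general position so that all the intersections between the varieties $\overline{\eta}$, $\overline{\xi}$, $|\div(f_\eta)|$, $|\div(g_\xi)|$, $|Z|$, $|W|$ that occur are proper; (ii) apply Weil reciprocity on the normalizations of the components of the curves $\overline{\eta}\cap\overline{\xi}$; (iii) reassemble the local factors and match them with the intersection numbers $(\overline{\eta},W;x)$ and $(\overline{\xi},Z;x)$.

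For (i), observe that the hypotheses $\{f_\eta\}\in G^{p-1}_{|W|}(X,1)$, $\div(\{f_\eta\})=Z$ and $\{g_\xi\}\in G^{q-1}_{|Z|}(X,1)$, $\div(\{g_\xi\})=W$ force $Z$ to be disjoint from $|W|$ and from each $|\div(g_\xi)|$ and to meet each $\overline{\xi}$ properly, so Corollary \ref{corol-movingK1codim} applies and lets me replace $\{f_\eta\}$ by a $K_2$-equivalent $K_1$-chain -- still with divisor $Z$ -- whose support and whose individual divisors $\div(f_\eta)$ meet every $\overline{\xi}$, every $|\div(g_\xi)|$ and $|W|$ properly. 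Symmetrically, I then move $\{g_\xi\}$ with respect to the new data attached to $\{f_\eta\}$, using that $K_2$-equivalence leaves $W=\div(\{g_\xi\})$ unchanged. After these two moves the configuration is in general position: for all $\eta,\xi$ with $f_\eta\ne 1\ne g_\xi$ the set $\overline{\eta}\cap\overline{\xi}$ is empty or a curve, $\overline{\eta}\cap|W|$ and $\overline{\xi}\cap|Z|$ are still the same finite sets, and $|\div(f_\eta)|\cap|\div(g_\xi)|=\emptyset$. It remains to see that neither side of the asserted equality is affected by such a move; by additivity this reduces to showing that a $K_1$-chain of the form $\Tame(\{h_\zeta\})$, $\{h_\zeta\}\in G^{p-2}(X,2)$, pairs trivially with $W$, i.e. that $\prod_{\zeta}\prod_{\eta}(\Tame(\{h_\zeta\}))_\eta(\overline{\eta}\cap W)=1$. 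After a general-position move of $\{h_\zeta\}$ (which does not change $\Tame(\{h_\zeta\})$, since $\Tame\circ\Tame=0$), the factor attached to a fixed $\zeta$ equals the product over the points of the regular proper curve obtained by normalizing the components of $W\cdot\overline{\zeta}$ of the norms to $k$ of the tame symbols of $h_\zeta$, and this is $1$ by Weil reciprocity for $K_2$ on a curve -- the analogue of the present lemma one dimension lower. The same argument with $Z$ in place of $W$ takes care of a move of $\{g_\xi\}$.

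Now assume the configuration is in general position and fix a pair $(\eta,\xi)$ with $f_\eta\ne 1\ne g_\xi$. Let $C$ run over the irreducible components of the curve $\overline{\eta}\cap\overline{\xi}$, let $m_C$ be the intersection multiplicity of $\overline{\eta}$ and $\overline{\xi}$ along $C$, and let $\nu\colon\widetilde{C}\to C\subset X$ be the normalization, a regular proper curve over $k$. Since $|\div(f_\eta)|\cap|\div(g_\xi)|=\emptyset$, the functions $f_\eta|_{\widetilde C}$ and $g_\xi|_{\widetilde C}$ have disjoint divisors, so Weil reciprocity on $\widetilde C$ gives
$$
\prod_{\widetilde x}\Nm_{k(\widetilde x)/k}\bigl((f_\eta|_{\widetilde C})(\widetilde x)^{\ord_{\widetilde x}(g_\xi)}\bigr)=\prod_{\widetilde x}\Nm_{k(\widetilde x)/k}\bigl((g_\xi|_{\widetilde C})(\widetilde x)^{\ord_{\widetilde x}(f_\eta)}\bigr),
$$
the products being over the finitely many points $\widetilde x$ of $\widetilde C$. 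I raise this to the power $m_C$, multiply over the components $C$, and then over all pairs $(\eta,\xi)$. On the left, $\widetilde x$ maps to some $z\in\overline{\eta}\cap|\div(g_\xi)|\subset\overline{\eta}\cap|W|$, and writing $\Nm_{k(\widetilde x)/k}=\Nm_{k(z)/k}\circ\Nm_{k(\widetilde x)/k(z)}$ and noting that $(f_\eta|_{\widetilde C})(\widetilde x)$ is the image of $f_\eta(z)\in k(z)^*$, the local factor at $\widetilde x$ becomes $\Nm_{k(z)/k}(f_\eta(z)^{[k(\widetilde x):k(z)]\,\ord_{\widetilde x}(g_\xi)})$. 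Summing the exponents over the $\widetilde x$ above $z$ and over the components $C\ni z$, and using $\nu_*\div_{\widetilde C}(g_\xi)=\div_C(g_\xi)$ together with the projection formula $(\overline{\eta},\div(g_\xi);z)=\sum_{C\ni z}m_C\cdot\mathrm{mult}_z\div_C(g_\xi)$, the product over $\xi$ and over the curves collapses to $\prod_{z\in\overline{\eta}\cap|W|}\Nm_{k(z)/k}(f_\eta(z)^{(\overline{\eta},W;z)})=f_\eta(\overline{\eta}\cap W)$. Taking the product over $\eta$ yields the left-hand side of the lemma; the right-hand sides of the Weil identities assemble in the identical manner into $\prod_{\xi}g_\xi(Z\cap\overline{\xi})$, the right-hand side of the lemma.

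The conceptual reason behind all of this is that both sides are, up to a sign that is trivial in the present situation (the divisors of $f_\eta$ and $g_\xi$ being disjoint, all tame-symbol signs vanish), equal to $\Nm$ applied to a boundary: once the supports are in general position the product $\{f_\eta\}\cdot\{g_\xi\}$ is defined in $G^{d-1}(X,2)$, the differential of the Gersten complex sends it to $Z\cdot\{g_\xi\}-\{f_\eta\}\cdot W$ in $G^{d}(X,1)$ by the Leibniz rule, and the push-forward along $X\to\Spec(k)$ is a morphism of Gersten complexes lowering codimension by $d=\dim(X)$, so it annihilates $\{f_\eta\}\cdot\{g_\xi\}$ for degree reasons (it would land in $G^{-1}(\Spec(k),2)=0$); hence $\Nm(Z\cdot\{g_\xi\})=\Nm(\{f_\eta\}\cdot W)$ in $k^*$. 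The main obstacle is exactly what this account suppresses: making the product of the relevant complexes available at the chain level in general position (this is what the moving lemmas are for), and carrying out the intersection-theoretic bookkeeping of the previous paragraph with care, since the curves $\overline{\eta}\cap\overline{\xi}$ need be neither reduced nor irreducible and the $\overline{\xi}$ may be singular.
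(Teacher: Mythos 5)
You take essentially the same route as the paper: move $\{f_\eta\}$ into general position with respect to $\Supp(\{g_\xi\})$ and $\cup_\xi|\div(g_\xi)|$ via the moving lemma for $K_1$-chains, and then reduce both sides to Weil reciprocity on the curve components $C^\alpha_{\eta\xi}$ of $\overline\eta\cap\overline\xi$. Your second move of $\{g_\xi\}$ is unnecessary (the one-sided move the paper performs already makes the whole configuration proper, since $|W|\subset\cup_\xi|\div(g_\xi)|$) but harmless, and your passage to normalizations of the $C^\alpha_{\eta\xi}$ before invoking Weil reciprocity is a sensible technical refinement that the paper leaves implicit. The one place you add genuine content is the justification of the invariance of the pairing under $K_2$-equivalence; the paper simply asserts this and moves on, while your sketch invokes a moving lemma for $K_2$-chains and the $K_2$-analogue of Weil reciprocity (Suslin reciprocity) on curves -- both correct and standard, but neither is established in the paper, so that step is still a sketch, at about the same level of rigor as the paper's unproved assertion. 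Your concluding Gersten-complex heuristic matches the conceptual interpretation the paper develops later in Proposition~\ref{prop-Chowbiext}.
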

\begin{proof}
Note that the left hand side depends only in the $K_2$-equivalence class of
the \mbox{$K_1$-chain} $\{f_{\eta}\}$. Therefore by Lemma \ref{lemma-movingK1},
we may assume that $\{f_{\eta}\}\in
G^{p-1}_{\Sc}(X,1)$, where $\Sc=\{\Supp(\{g_{\xi}\}),\cup_{\xi}|\div(g_{\xi})|\}$.
For each pair $\eta\in \Supp(\{f_{\eta}\})$,
$\xi\in\Supp(\{g_{\xi}\})$, let $C^{\alpha}_{\eta\xi}$ be an irreducible component of
the intersection $\overline{\eta}\cap\overline{\xi}$ and
let $n^{\alpha}_{\eta\xi}$ be the intersection index of the subvarieties
$\overline{\eta}$ and $\overline{\xi}$
at the irreducible curve $C^{\alpha}_{\eta\xi}$. By condition, for all
$\eta,\xi,\alpha$ as above,
the restrictions $f^{\alpha}_{\eta\xi}=f_{\eta}|_{C^{\alpha}_{\eta\xi}}$ and
$g^{\alpha}_{\eta\xi}=g_{\xi}|_{C^{\alpha}_{\eta\xi}}$ are well defined
as rational functions on the irreducible curve $C^{\alpha}_{\eta\xi}$. It follows that
$\prod_{\eta}f_{\eta}(\overline{\eta}\cap W)=\prod_{\eta,\xi,\alpha}
f^{\alpha}_{\eta\xi}(\div(g^{\alpha}_{\eta\xi}))^{n^{\alpha}_{\eta\xi}}$;
thus we conclude by the classical
Weil reciprocity law for curves.
\end{proof}

\begin{rmk}
The same reasoning as in the proof of Lemma \ref{lemma-recipr}
is explained in a slightly different language in the
proof of Proposition 3 from \cite{Blo89}.
\end{rmk}

\begin{lemma}\label{lemma-Bloch}
Let $X$ be a smooth projective variety over $k$. Suppose that a
cycle $W\in Z^q(X)$ is homologically trivial; then for any
$K_1$-chain $\{f_{\eta}\}\in G^{d-q}_{|W|}(X,1)$ with
$\div(\{f_{\eta}\})=0$, we have
$\prod_{\eta}f_{\eta}(\overline{\eta}\cap W)=1$.
\end{lemma}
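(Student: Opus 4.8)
The plan is to identify $\prod_{\eta}f_{\eta}(\overline{\eta}\cap W)$ with the image of a natural class $\xi\in CH^p(X,1)$, where $p:=d-q+1$ (so that $p+q=d+1$), under the homomorphism of Lemma~\ref{lemma-FirstBloch} taken with the homologically trivial cycle $W$ in the role of the cycle $Z$; since that homomorphism is trivial, the product equals $1$. To construct $\xi$: because $N\circ\Gamma$ is the identity and $\div(N(Y))=d(Y)$ for every $Y\in Z^p(X,1)$, the cycle $\Gamma(\{f_{\eta}\})\in Z^p(X,1)$ satisfies $d(\Gamma(\{f_{\eta}\}))=\div(\{f_{\eta}\})=0$, and hence represents a class $\xi:=[\Gamma(\{f_{\eta}\})]\in CH^p(X,1)$. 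Since $\Gamma(G^{p-1}_{|W|}(X,1))\subset Z^p_{|W|}(X,1)$, the support of $\Gamma(\{f_{\eta}\})$ meets $|W|\times\Delta^m$ properly for each face $\Delta^m\subset\Delta^1$; thus $\Gamma(\{f_{\eta}\})$ and $W$, regarded as an element of $Z^q(X,0)$, are in sufficiently general position that the product $m(\xi\otimes[W])\in CH^{d+1}(X,1)$ is represented by the naive cycle obtained by external product in $X\times X$ followed by restriction to the diagonal, with no further use of Lemma~\ref{lemma-moving}.

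The key point is then the identity
$$
\pi_*\bigl(m(\xi\otimes[W])\bigr)=\prod_{\eta}f_{\eta}(\overline{\eta}\cap W)\quad\text{in }CH^1(k,1)=k^*,
$$
where $\pi\colon X\to\Spec(k)$ is the structure morphism. Indeed, $\Gamma(f_{\eta})\subset\overline{\eta}\times\Delta^1$ is the closure of the graph of the rational map from $\overline{\eta}$ to $\Delta^1$ that in the coordinate $t_1/t_0$ is given by $f_{\eta}$; since $|\div(f_{\eta})|$ meets $|W|$ properly we have $|\div(f_{\eta})|\cap|W|=\emptyset$, so $f_{\eta}$ is a unit at each of the finitely many points $x\in\overline{\eta}\cap|W|$, and over such an $x$ the fibre of $\Gamma(f_{\eta})$ is the single $k(x)$-point of $\Delta^1$ with $t_1/t_0=f_{\eta}(x)$, taken with multiplicity $(\overline{\eta},W;x)$ because $\Gamma(f_{\eta})\to\overline{\eta}$ is birational. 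Hence the restriction of $\Gamma(\{f_{\eta}\})\times W$ to the diagonal is the $1$-cycle $\sum_{\eta}\sum_{x\in\overline{\eta}\cap|W|}(\overline{\eta},W;x)\,[(x,f_{\eta}(x))]$, and its push-forward to $\Spec(k)$ corresponds, under the identification $CH^1(k,1)=k^*$, to $\prod_{\eta}\prod_{x}\Nm_{k(x)/k}(f_{\eta}(x))^{(\overline{\eta},W;x)}=\prod_{\eta}f_{\eta}(\overline{\eta}\cap W)$. Now Lemma~\ref{lemma-FirstBloch}, applied to the homologically trivial cycle $W\in Z^q(X)$ and using $d+1-q=p$, asserts precisely that the composite $CH^p(X,1)\stackrel{m(-\otimes W)}{\longrightarrow}CH^{d+1}(X,1)\stackrel{\pi_*}{\longrightarrow}k^*$ is trivial; evaluating on $\xi$ gives $\prod_{\eta}f_{\eta}(\overline{\eta}\cap W)=1$.

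The step I expect to be the main obstacle is the explicit identification in the previous paragraph: one must verify that the product $m$, defined a priori only in $D^-(\Abc)$ through the moving lemma, is computed by naive intersection in this general-position situation, keep careful track of intersection multiplicities, and handle the exceptional points of $\overline{\eta}\cap|W|$ at which $1+f_{\eta}$ vanishes, where $\Gamma(f_{\eta})$ has no point over $x$ and the naive count drops a factor. This last issue is dealt with by first replacing $\{f_{\eta}\}$ with a $K_2$-equivalent $K_1$-chain that is in general position with respect to $|W|$ and the relevant auxiliary subvarieties, using Lemma~\ref{lemma-movingK1} and Corollary~\ref{corol-movingK1codim}: this changes neither the conclusion (which holds for any representative) nor the quantity $\prod_{\eta}f_{\eta}(\overline{\eta}\cap W)$, which depends only on the $K_2$-equivalence class of $\{f_{\eta}\}$, as noted in the proof of Lemma~\ref{lemma-recipr}. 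Alternatively, one can carry out the same computation on the Gersten complexes for the sheaves $\K_j$, where the relevant product is expressed via Tame symbols of units and these exceptional points never arise.
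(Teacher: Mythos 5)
Your proposal follows exactly the route the paper takes: form the class $\xi=[\Gamma(\{f_{\eta}\})]\in CH^p(X,1)$ with $p=d-q+1$, use the explicit description of the intersection product for cycles in general position to identify $\pi_*\bigl(m(\xi\otimes[W])\bigr)$ with $\prod_{\eta}f_{\eta}(\overline{\eta}\cap W)$, and invoke Lemma~\ref{lemma-FirstBloch} applied to the homologically trivial cycle $W$. The paper's proof is three sentences; yours is essentially the same argument written out in full. (Incidentally, you have the codimension right: $\Gamma(\{f_{\eta}\})\in Z^{d-q+1}_{|W|}(X,1)$; the displayed ``$Z^{d-q}_{|W|}(X,1)$'' in the paper is a typo.)

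The one place where you go beyond the paper is your remark about points $x\in\overline{\eta}\cap|W|$ at which $1+f_{\eta}$ vanishes: over such a point the graph closure $\Gamma(f_{\eta})$ has empty fibre in $\overline{\eta}\times\Delta^1$ (the limit $t_1/t_0=-1$ lies outside $\Delta^1$), so the naive 0-cycle in $\Delta^1$ drops the factor $\Nm_{k(x)/k}(-1)^{(\overline{\eta},W;x)}$. This is a legitimate subtlety that the paper's one-line appeal to the ``explicit formula'' in Bloch's 1986 paper does not explicitly address. However, the workaround you propose---replacing $\{f_{\eta}\}$ by a $K_2$-equivalent chain via Lemma~\ref{lemma-movingK1} and Corollary~\ref{corol-movingK1codim}---does not by itself close the gap: those results control which subvarieties meet which properly, i.e.\ the \emph{supports} of $\{g_{\eta}\}$ and $\div(g_{\eta})$, not the \emph{values} $g_{\eta}(x')$ at the new intersection points $x'\in\Supp(\{g_{\eta}\})\cap|W|$; nothing in their statement rules out $g_{\eta}(x')=-1$. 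A correct remedy would require either a sharper moving statement (moving both the support and the level set $\{g_{\eta}=-1\}$ away from $|W|$, which should be achievable but needs its own argument), or, as you suggest as an alternative, passing through the Gersten/tame-symbol picture where the identification with $\prod_{\eta}f_{\eta}(\overline{\eta}\cap W)$ is manifest and no such special locus appears---this is in effect the route the paper itself takes later in Section~\ref{subsection-Kcohombiext} via Lemma~\ref{lemma-adelicproduct}. So your instinct to flag the exceptional points is right, but the first fix as stated is incomplete; the second is sound.
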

\begin{proof}
By condition, $\Gamma(\{f_{\eta}\})\in Z_{|W|}^{d-q}(X,1)$.
Keeping in mind the explicit formula for product in higher Chow groups
for cycles in general position (see \cite{Blo86}), we see that
\mbox{$((\pi_*)\circ m)(\Gamma(\{f_{\eta}\})\otimes W)\in k^*$}
is well defined and coincides with $\prod_{\eta}f_{\eta}(\overline{\eta}\cap W)$.
Hence we conclude by Lemma \ref{lemma-FirstBloch}.
\end{proof}

\begin{rmk}
If $q=d$, then Lemma \ref{lemma-Bloch} is trivial. An elementary
proof of Lemma \ref{lemma-Bloch} for the case $q=1$ can be found
in~\cite{Gor07}.
\end{rmk}

\subsection{Facts on the Abel--Jacobi map}\label{subsection-Abel-Jacobi}

Notions and results of this section are used in
Section~\ref{subsection-AJbiext}.

Let $X$ be a complex smooth variety of dimension $d$. By $A^n_X$
denote the group of complex valued smooth differential forms on $X$
of degree $n$. Let $F^pA^n_X$ be the subgroup in $A^n_X$ that
consists of all differential forms with at least $p$ ``$dz_i$''. If
$X$ is projective, then the classical Hodge theory implies
$H^n(F^pA^{\bullet}_X)=F^pH^n(X,\C)$, where we consider the Hodge
filtration in the right hand side.

Let $S$ be a closed subvariety in a smooth projective variety $X$;
then the notation $\eta\in F_{\log}^pA^n_{X\backslash S}$ means that
there exists a smooth projective variety $X'$ together with a
birational morphism $f:X'\to X$ such that $D=f^{-1}(S)$ is a normal
crossing divisor on $X'$, $f$ induces an isomorphism $X'\backslash
f^{-1}(S)\to X\backslash S$, and $f^*\eta\in F^pA^n_{X'}\langle
D\rangle$, where $A^n_{X'}\langle D\rangle$ is the group of complex
valued smooth differential forms on $X'\backslash D$ of degree $n$
with logarithmic singularities along $D$. Recall that any class in
$F^pH^n(X\backslash S,\C)$ can be represented by a closed form
$\eta\in F^p_{\log}A^n_{X\backslash S}$ (see \cite{Del71}).

In what follows the variety $X$ is supposed to be projective. By
$H^{*}(X,\Z)$ we often mean the image of this group in
$H^{*}(X,\C)$. Recall that the {\it $p$-th intermediate Jacobian}
$J^{2p-1}(X)$ of $X$ is the compact complex torus given by the
formula
$$
J^{2p-1}(X)=H^{2p-1}(X,\C)/(H^{2p-1}(X,\Z)+F^pH^{2p-1}(X,\C))=
$$
$$
=F^{d-p+1}H^{2d-2p+1}(X,\C)^*/H_{2d-2p+1}(X,\Z).
$$
Let $Z$ be a homologically trivial algebraic cycle on $X$ of
codimension $p$. Then there exists a differentiable singular chain
$\Gamma$ of dimension $2d-2p+1$ with $\partial\Gamma=Z$, where
$\partial$ denotes the differential in the complex of singular
chains on $X$. Consider a closed differential form $\omega\in
F^{d-p+1}A^{2d-2p+1}_X$. It can be easily checked that the integral
$\int_{\Gamma}\omega$ depends only on the cohomology class
$[\omega]\in
F^{d-p+1}H^{2d-2p+1}(X,\C)=H^{2d-2p+1}(F^{d-p+1}A^{\bullet}_X)$ of
$\omega$. Thus the assignment
$$
Z\mapsto\mbox{$\{[\omega]\mapsto \int_{\Gamma}\omega\}$}
$$
defines a homomorphism $AJ:Z^p(X)_{\hom}\to J^{2p-1}(X)$, which is
called the {\it Abel--Jacobi map}.

We give a slightly different description of the Abel--Jacobi map.
Recall that there is an exact sequence of integral mixed Hodge
structures:
$$
0\to H^{2p-1}(X)(p)\to H^{2p-1}(X\backslash
|Z|)(p)\stackrel{\partial_Z}\to H_{2d-2p}(|Z|)\to H^{2p}(X)(p).
$$
Recall that $H_{2d-2p}(|Z|)=\oplus_i\Z(0)$, where the sum is taken
over all irreducible components in $|Z|=\cup_i Z_i$. Thus the cycle
$Z$ defines an element $[Z]\in F^0H_{2d-2p}(|Z|,\C)$ with a trivial
image in the group $H^{2p}(X,\Z(p))$. Hence there exists a closed
differential form $\eta\in F_{\log}^pA^{2p-1}_{X\backslash|Z|}$ such
that $\partial_Z([(2\pi i)^p\eta])=[Z]$. The difference
$PD[\Gamma]-[\eta]$ defines a unique element in the group
$H^{2p-1}(X,\C)$, where $PD:H_{*}(X,|Z|;\Z)\to H^{2d-*}(X\backslash
|Z|,\Z)$ is the canonical isomorphism induced by Poincar\'e duality.

\begin{lemma}\label{lemma-AJexplic}
The image of $PD[\Gamma]-[\eta]\in H^{2p-1}(X,\C)$ in the
intermediate Jacobian $J^{2p-1}(X)$ is equal to the image of $Z$
under the Abel--Jacobi map.
\end{lemma}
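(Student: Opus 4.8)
The plan is to compare two cocycle-level representatives of the same cohomology class and track them through the comparison of integration against forms. Recall that the Abel--Jacobi map was originally defined using a topological chain $\Gamma$ with $\partial\Gamma = Z$, via $[\omega]\mapsto \int_\Gamma \omega$ for $\omega\in F^{d-p+1}A^{2d-2p+1}_X$; on the other hand, the alternative description produces a form $\eta\in F^p_{\log}A^{2p-1}_{X\backslash|Z|}$ with $\partial_Z([(2\pi i)^p\eta]) = [Z]$, and the element $PD[\Gamma]-[\eta]\in H^{2p-1}(X,\C)$. The key point is that under the identification $J^{2p-1}(X) = F^{d-p+1}H^{2d-2p+1}(X,\C)^*/H_{2d-2p+1}(X,\Z)$, an element of $H^{2p-1}(X,\C)$ acts on a class $[\omega]\in F^{d-p+1}H^{2d-2p+1}(X,\C)$ by cup product followed by evaluation on the fundamental class, and I must show this pairing reproduces $\int_\Gamma\omega$ modulo periods.

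First I would fix a closed form $\omega\in F^{d-p+1}A^{2d-2p+1}_X$ and compute the pairing of $PD[\Gamma]$ with $[\omega]$. By definition of Poincaré duality on the pair $(X,|Z|)$, the class $PD[\Gamma]\in H^{2d-*}(X\backslash|Z|)$ pairs with $\omega$ to give $\int_\Gamma\omega$ — this is the standard fact that cap/cup with a Poincaré-dual class of a chain recovers integration over the chain, valid here because $\omega$ is globally defined on $X$ (in particular on $X\backslash |Z|$) and $\Gamma$ has boundary on $|Z|$, so the relative pairing $H^{2d-2p+1}(X,|Z|)\times H_{2d-2p+1}(X\backslash |Z|)$ is the relevant one. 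Next I would argue that $[\eta]$ contributes nothing to this pairing: the form $\eta$ lies in $F^pA^{2p-1}$ (with log poles), and $\omega$ lies in $F^{d-p+1}A^{2d-2p+1}_X$; their wedge product $\eta\wedge\omega$ lies in $F^{d+1}A^{2d}$, which vanishes since the Hodge filtration on a $2d$-form on a $d$-dimensional variety has $F^{d+1}=0$. Hence $\int_X \eta\wedge\omega = 0$, so subtracting $[\eta]$ does not change the value of the functional on $F^{d-p+1}H^{2d-2p+1}(X,\C)$. Combining, the functional attached to $PD[\Gamma]-[\eta]$ sends $[\omega]$ to $\int_\Gamma\omega$, which is exactly $AJ(Z)$ evaluated at $[\omega]$.

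It remains to check that the class $PD[\Gamma]-[\eta]$ is genuinely well-defined in $H^{2p-1}(X,\C)$ — i.e.\ that $PD[\Gamma]$ and $[\eta]$, a priori living in $H^{2d-2p+1}(X\backslash |Z|)$ (equivalently $H^{2p-1}(X\backslash|Z|)(p)$ after a twist), have equal images under $\partial_Z$ so that their difference lifts to $H^{2p-1}(X)$; this is where the condition $\partial_Z([(2\pi i)^p\eta])=[Z]$ and the fact that $\partial\Gamma = Z$ enter, via the compatibility of the topological boundary map with $\partial_Z$ in the exact sequence of mixed Hodge structures. I expect the main obstacle to be precisely this bookkeeping: matching the normalization factor $(2\pi i)^p$, making the Poincaré duality isomorphism $PD$ on the pair $(X,|Z|)$ compatible with the boundary map $\partial_Z$ coming from the Gysin/residue sequence, and verifying that the topological fundamental-class pairing I use to evaluate $PD[\Gamma]$ against $\omega$ is the one induced by the identification of $J^{2p-1}(X)$ with $F^{d-p+1}H^{2d-2p+1}(X,\C)^*/H_{2d-2p+1}(X,\Z)$ rather than the de Rham one. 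Once the signs and twists are pinned down, the vanishing $F^{d+1}A^{2d}=0$ does all the real work, and the two descriptions of $AJ$ agree.
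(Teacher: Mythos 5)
Your proposal is correct and takes essentially the same approach as the paper: pair $PD[\Gamma]-[\eta]$ against a closed form $\omega\in F^{d-p+1}A^{2d-2p+1}_X$ via the pairing $H^{*}(X\backslash |Z|,\C)\times H^{2d-*}(X,|Z|;\C)\to\C$, observe $(PD[\Gamma],[\omega])=\int_\Gamma\omega$, and show $([\eta],[\omega])=0$. The only difference is presentational: you spell out the Hodge-type reason $\eta\wedge\omega\in F^{d+1}A^{2d}=0$ for the vanishing of the $\eta$-contribution, which the paper leaves implicit in the one-line computation $(PD[\Gamma]-[\eta],[\omega])=(PD[\Gamma],[\omega])$.
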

\begin{proof}
Consider a closed differential form $\omega\in
F^{d-p+1}A^{2d-2p+1}_X$. Since $\dim(Z)=d-p$, the form $\omega$ also
defines the class $[\omega]\in F^{d-p+1}H^{2d-2p+1}(X,|Z|;\C)$.
Denote by
$$
(\cdot,\cdot): H^{*}(X\backslash Z,\C)\times H^{2d-*}(X,|Z|;\C)\to
\C
$$
the natural pairing. Then we have $(PD[\Gamma]-[\eta],[\omega])=
(PD[\Gamma],[\omega])=\int_{\Gamma}\omega$; this proves the needed
result.
\end{proof}

\begin{rmk}\label{rmk-trivial-AJ}
It follows from Lemma \ref{lemma-AJexplic} that $AJ(Z)=0$ if and
only if there exists an element $\alpha\in
F^pH^{2p-1}(X\backslash|Z|,\C)\cap H^{2p-1}(X\backslash |Z|,\Z(p))$
such that $\partial_Z(\alpha)=[Z]$.
\end{rmk}

\begin{examp}\label{examp-P1}
Suppose that $X=\P^1$, $Z=\{0\}-\{\infty\}$. Let $z$ be a coordinate
on $\P^1$ and $\Gamma$ be a smooth generic path on $\P^1$ such that
$\partial\Gamma=\{0\}-\{\infty\}$; then we have
$\alpha=[\frac{dz}{z}]= 2\pi iPD[\Gamma]\in F^1H^1(X\backslash
|Z|,\C)\cap H^1(X\backslash |Z|,\Z(1))$ and $\partial_Z(\alpha)=Z$.
\end{examp}

\begin{lemma}\label{lemma-AJrational}
Suppose that the cycle $Z$ is rationally trivial; then $AJ(Z)=0$.
\end{lemma}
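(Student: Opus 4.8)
The plan is to reduce to the defining property of the Abel--Jacobi map via the explicit description in Lemma~\ref{lemma-AJexplic} and Remark~\ref{rmk-trivial-AJ}. A rationally trivial cycle $Z$ of codimension $p$ can be written as $Z=\div(f_\eta)$ for a single point $\eta\in X^{(p-1)}$ and a rational function $f_\eta\in k(\eta)^*$; by linearity and by the compatibility of $AJ$ with sums it suffices to treat this case, and moreover, using a resolution of singularities of the closure $\overline{\eta}$, one may further reduce to the situation where $Z$ is supported on a smooth subvariety and $f_\eta$ is a morphism to $\P^1$. First I would therefore consider the map $\varphi:\overline{\eta}'\to\P^1$ obtained from $f_\eta$ after normalizing/resolving, where $\overline{\eta}'$ is smooth projective of dimension $d-p+1$, and write $Z$ as the pushforward under the inclusion $\iota:\overline{\eta}'\to X$ (composed through the birational morphism) of $\varphi^*(\{0\}-\{\infty\})$.

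Next I would invoke functoriality of the Abel--Jacobi map with respect to pushforward and pullback: $AJ_X\big(\iota_*\varphi^*(\{0\}-\{\infty\})\big)=\iota_*\,AJ_{\overline{\eta}'}\big(\varphi^*(\{0\}-\{\infty\})\big)=\iota_*\varphi^*\,AJ_{\P^1}(\{0\}-\{\infty\})$, the last equality again by functoriality under the morphism $\varphi$. So the whole computation collapses to showing $AJ_{\P^1}(\{0\}-\{\infty\})=0$ in $J^1(\P^1)$. But $J^1(\P^1)=H^1(\P^1,\C)/(\cdots)=0$ since $H^1(\P^1,\C)=0$, so this vanishing is automatic; alternatively one sees it concretely as in Example~\ref{examp-P1}, where the class $\alpha=[dz/z]$ lies in $F^1H^1(\P^1\setminus\{0,\infty\},\C)\cap H^1(\P^1\setminus\{0,\infty\},\Z(1))$ and satisfies $\partial_Z(\alpha)=Z$, which by Remark~\ref{rmk-trivial-AJ} gives $AJ(Z)=0$ directly.

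To make this argument self-contained without quoting functoriality as a black box, the cleaner route — and the one I would actually write — is to produce the class $\alpha$ of Remark~\ref{rmk-trivial-AJ} explicitly in the general case. Namely, take the logarithmic form $\omega=\dfrac{df_\eta}{f_\eta}$ on $\overline{\eta}$ away from $\div(f_\eta)$, push it forward appropriately, and set $\alpha=(2\pi i)^p\iota_*[\omega/(2\pi i)]$ (with the correct Tate twist and pushforward of cohomology classes with logarithmic poles, using the birational model as in the definition of $F^p_{\log}$). The class $\dfrac{1}{2\pi i}\dfrac{df_\eta}{f_\eta}$ is manifestly integral — it is the pullback of $\dfrac{1}{2\pi i}\dfrac{dz}{z}$ which generates $H^1(\G_m,\Z(1))$ — and it manifestly lies in $F^p$ after pushing forward, because it is a holomorphic logarithmic $1$-form, hence of Hodge type contributing to $F^p$ in the relevant $H^{2p-1}$. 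One then checks $\partial_Z(\alpha)=[Z]$: this is the residue computation, namely that the residue of $\dfrac{df_\eta}{f_\eta}$ along a component $Z_i$ equals the order of vanishing $\ord_{Z_i}(f_\eta)$, which is exactly the coefficient of $Z_i$ in $\div(f_\eta)=Z$. By Remark~\ref{rmk-trivial-AJ}, the existence of such $\alpha$ forces $AJ(Z)=0$.

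The main obstacle is bookkeeping the mixed Hodge structure and the Tate twists correctly — verifying that the pushforward of the logarithmic form $df_\eta/f_\eta$ really defines a class in $F^pH^{2p-1}(X\setminus|Z|,\C)\cap H^{2p-1}(X\setminus|Z|,\Z(p))$, and that the connecting map $\partial_Z$ in the exact sequence of mixed Hodge structures sends it to $[Z]\in H_{2d-2p}(|Z|)$. Once the resolution of the indeterminacy of $f_\eta$ is fixed so that $df_\eta/f_\eta$ has logarithmic singularities along a normal crossing divisor, the Hodge-theoretic part is the statement that $\partial_Z$ is computed by residues (a standard property of the Gysin/residue sequence for normal crossing divisors), and the integrality is the statement that $\dfrac{1}{2\pi i}\dfrac{dz}{z}$ is an integral generator of $H^1$ of a punctured disc; both are routine once set up, but the set-up requires care. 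I expect no serious difficulty beyond this, since the geometric heart — reduction to $\{0\}-\{\infty\}$ on $\P^1$ — is forced by the very definition of rational equivalence.
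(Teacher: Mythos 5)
Your proposal is correct, and the ``cleaner route'' you actually commit to is essentially the paper's second proof: both reduce by linearity to a single $Z=\div(f)$ with $f\in\C(Y)^*$, both recast the vanishing $AJ(Z)=0$ as the existence of a class $\alpha\in F^pH^{2p-1}(X\backslash|Z|,\C)\cap H^{2p-1}(X\backslash|Z|,\Z(p))$ with $\partial_Z(\alpha)=[Z]$ (Remark~\ref{rmk-trivial-AJ}), and both produce $\alpha$ by transporting the generator $[dz/z]$ from $\P^1\backslash\{0,\infty\}$ to $X\backslash|Z|$ through the graph $\widetilde{Y}$ of $f$. The difference is one of packaging: where you propose to verify by hand the integrality, the $F^p$ condition, and the residue computation for $\partial_Z$ on the pushed-forward logarithmic form $df/f$, the paper has already isolated exactly that verification into the correspondence Lemma~\ref{lemma-correspAJ} and simply invokes it with $X_1=\P^1$, $X_2=X$, $C=\widetilde{Y}$, $Z_1=\{0,\infty\}$, $W_2=\emptyset$. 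Your initial ``functoriality'' phrasing is the same argument again at a higher level, and you are right to be wary of it as a black box --- the correspondence lemma is precisely what makes it precise. The paper additionally offers a first, genuinely different proof via Griffiths' theorem that $z\mapsto AJ(p_*f^{-1}(\{z\}-\{\infty\}))$ is a holomorphic map $\P^1\to J^{2p-1}(X)$, hence constant; you do not use this. One small inaccuracy in your write-up: you do not need to resolve the singularities of $\overline{\eta}$ itself; taking the closure of the graph of $f:Y\dasharrow\P^1$ (as in the paper) already suffices to make $f$ a morphism and to run the argument.
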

\begin{proof}[\it Proof 1]

By linearity, it is enough to consider the case when $Z=\div(f)$,
$f\in \C(Y)^*$, $Y\subset X$ is an irreducible subvariety of
codimension $p-1$. Let $\widetilde{Y}$ be the closure of the graph
of the rational function $f:Y\dasharrow\P^1$ and let
$p:\widetilde{Y}\to X$ be the natural map. In \cite{Gri} it was
shown that the following map is holomorphic:
$$
\varphi:\P^1\to J^{2p-1}(X),z\mapsto
AJ(p_*f^{-1}(\{z\}-\{\infty\}));
$$
therefore $\varphi$ is constant and $AJ(Z)=\varphi(0)=\{0\}$.

{\it Proof 2.} Let $\alpha$ be as in Example \ref{examp-P1}; then by
Lemma \ref{lemma-correspAJ} with $X_1=\P^1$, $X_2=X$,
$C=\widetilde{Y}$, $Z_1=\{0,\infty\}$, $W_2=\emptyset$, we have
$[\widetilde{Y}]^*\alpha\in F^pH^{2p-1}(X\backslash |Z|,\C)\cap
H^{2p-1}(X\backslash|Z|,\Z(p))$ and
$\partial_Z([\widetilde{Y}]^*\alpha)=Z$; thus we conclude by Remark
\ref{rmk-trivial-AJ}.
\end{proof}

\begin{rmk}\label{rmk-explicit-chain}
If there is a differentiable triangulation of the closed subset
$p(f^{-1}(\gamma))\subset X$, then we have a well defined class
$[p(f^{-1}(\gamma))]\in H_{2d-2p+1}(X,|Z|;\Z)$ and $\alpha=(2\pi
i)^pPD[p(f^{-1}(\gamma))]$.
\end{rmk}

In particular, we see that the Abel--Jacobi map factors through Chow
groups. In what follows we consider the induced map
$AJ:CH^p(X)_{\hom}\to J^{2p-1}(X)$.

In the second proof of Lemma \ref{lemma-AJrational} we have used the
following simple fact. Let $X_1$ and $X_2$ be two complex smooth
projective varieties of dimensions $d_1$ and $d_2$, respectively.
Suppose that $C\subset X_1\times X_2$, $Z_1\subset X_1$, and
$W_2\subset Z_2$ are closed subvarieties; we put
$Z_2=\pi_2(\pi_1^{-1}(Z_1)\cap C)$, $W_1=\pi_1(\pi_2^{-1}(W_2)\cap
C)$, where $\pi_i:X_1\times X_2\to X_i$, $i=1,2$ denote the natural
projections.

\begin{lemma}\label{lemma-correspAJ}
\begin{itemize}
\item[(i)]
Let $c$ be the codimension of $C$ in $X_1\times X_2$; then there is
a natural morphism of integral mixed Hodge structures
$$
[C]^*:H^{*}(X_1\backslash Z_1,W_1)\to H^{*+2c-2d_1}(X_2\backslash
Z_2,W_2)(c-d_1).
$$
\item[(ii)]
For $i=1,2$, let $p_i$ be the codimension of $Z_i$ in $X_i$. Suppose
that $C$ meets $\pi_1^{-1}(Z_1)$ properly, the intersection
$\pi_1^{-1}(Z_1)\cap\pi_2^{-1}(W_2)\cap C$ is empty, and
$p_1+c-d_1=p_2$; then $Z_1\cap W_1=Z_2\cap W_2=\emptyset$ and the
following diagram commutes:
$$
\begin{array}{ccc}
H^{2p_1-1}(X_1\backslash Z_1,W_1)(p_1)&\stackrel{\partial_{Z_1}}\to&
H_{2d_1-2p_1}(Z_1)
=Z^0(Z_1)\otimes\Z(0)\\
\downarrow\lefteqn{[C]^*}&&\downarrow\lefteqn{\pi_2(C\cap\pi_1^{-1}(\cdot))}\\
H^{2p_2-1}(X_2\backslash Z_2,W_2)(p_2)&\stackrel{\partial_{Z_2}}\to&
H_{2d_2-2p_2}(Z_2)
=Z^0(Z_2)\otimes\Z(0),\\
\end{array}
$$
where the right vertical arrow is defined via the corresponding
natural homomorphisms of groups of algebraic cycles.
\end{itemize}
\end{lemma}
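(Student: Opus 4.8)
\textbf{Proof proposal for Lemma \ref{lemma-correspAJ}.}

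The plan is to construct the cohomological correspondence $[C]^*$ and to check the compatibility with the boundary maps $\partial_{Z_i}$ by unwinding both through Poincar\'e--Lefschetz duality, so that the geometric statement ``intersect with $C$ and project'' becomes visible on the level of cycles. First I would address (i). The class of $C$ lives in $H^{2c}_{|C|}(X_1\times X_2,\Z(c))$, and by the K\"unneth-type decomposition for local cohomology together with Poincar\'e duality on the smooth projective factors it induces, for any closed $A_i\subset B_i\subset X_i$ with $C$ in suitably general position with respect to the $\pi_i^{-1}(A_i)$, $\pi_i^{-1}(B_i)$, a morphism of mixed Hodge structures
$$
[C]^*\colon H^{*}(X_1\backslash A_1,B_1)\longrightarrow H^{*+2c-2d_1}(X_2\backslash A_2,B_2)(c-d_1),
$$
defined as the composite
$$
H^{*}(X_1\backslash A_1,B_1)\xrightarrow{\pi_1^*} H^{*}\bigl((X_1\times X_2)\backslash\pi_1^{-1}A_1,\pi_1^{-1}B_1\bigr)\xrightarrow{\cup [C]} H^{*+2c}_{?}\xrightarrow{\pi_{2*}} H^{*+2c-2d_1}(X_2\backslash A_2,B_2)(c-d_1),
$$
where the middle term is relative cohomology supported on the correct locus and $\pi_{2*}$ is the Gysin pushforward for the proper smooth map $\pi_2$ on the complement of the bad loci; properness of $C\to X_2$ after restriction is what one needs to keep track of, and it holds because $C$ is closed in a product of projective varieties. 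That $[C]^*$ is a morphism of integral mixed Hodge structures follows because each of $\pi_1^*$, $\cup[C]$, $\pi_{2*}$ is, the cycle class $[C]$ being a Hodge class of type $(c,c)$; this is standard functoriality of MHS on (relative, open) varieties and I would simply cite \cite{Del71}.

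For part (ii), the first observation is purely set-theoretic: since $C$ meets $\pi_1^{-1}(Z_1)$ properly and $\pi_1^{-1}(Z_1)\cap\pi_2^{-1}(W_2)\cap C=\emptyset$, the image $W_1=\pi_1(\pi_2^{-1}(W_2)\cap C)$ is disjoint from $Z_1$, and symmetrically $Z_2\cap W_2=\emptyset$; hence both relative pairs $(X_i\backslash Z_i,W_i)$ make sense and the diagram is well-posed. The numerical hypothesis $p_1+c-d_1=p_2$ is exactly what makes $[C]^*$ land in degree $2p_2-1$ with the Tate twist $(p_2)$, so the diagram typechecks. Now the content is the commutativity. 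I would prove it by a compatibility of $[C]^*$ with the long exact sequence of the pair $(X_i,|Z_i|)$: the map $\partial_{Z_i}$ is the connecting homomorphism $H^{2p_i-1}(X_i\backslash Z_i,W_i)\to H^{2p_i}_{|Z_i|}(X_i,W_i)\cong H_{2d_i-2p_i}(Z_i)$, the last isomorphism being Poincar\'e--Lefschetz duality on $X_i$ (which identifies $H^{2p_i}_{|Z_i|}(X_i)(p_i)$ with $H_{2d_i-2p_i}(|Z_i|)=Z^0(Z_i)\otimes\Z(0)$ because $\dim Z_i=d_i-p_i$). One then has a corresponding correspondence on local cohomology, $[C]^*\colon H^{2p_1}_{|Z_1|}(X_1,W_1)\to H^{2p_2}_{|Z_2|}(X_2,W_2)$, compatible with the $\pi_i^*$--$\cup[C]$--$\pi_{2*}$ factorization, and the square built from the two connecting maps commutes because connecting homomorphisms are natural with respect to maps of pairs and $[C]^*$ is assembled from such maps. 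The only genuinely non-formal point is to identify, under the two Poincar\'e dualities, the induced map $H_{2d_1-2p_1}(Z_1)\to H_{2d_2-2p_2}(Z_2)$ with the geometric operation $Z_1'\mapsto \pi_2(C\cap\pi_1^{-1}(Z_1'))$ on $0$-cycles (i.e.\ on the free groups on irreducible components); this is a local intersection-multiplicity computation, which I would do by reducing to the case where $Z_1$ is a single component and $C$ is generically reduced over it, using the proper-intersection hypothesis so that all multiplicities are the expected ones, and then invoking the projection formula for $\pi_{2*}$ together with the fact that cup product with $[C]$ in local cohomology computes the refined intersection product of cycles (compatibility of the topological cycle class map with intersection, as in the comparison between Betti cycle classes and the cycle-theoretic product already used in Section~\ref{subsection-higherChow}).

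The main obstacle I anticipate is bookkeeping the supports and the general-position hypotheses precisely enough that every arrow in the factorization of $[C]^*$ is defined on the nose (not just after passing to a limit over neighborhoods), and simultaneously that the Tate twists and degrees match; the hypothesis that $C$ meets $\pi_1^{-1}(Z_1)$ properly and avoids $\pi_1^{-1}(Z_1)\cap\pi_2^{-1}(W_2)\cap C$ is tailored exactly for this, so the proof is really an exercise in organizing these conditions rather than in any deep input. A secondary subtlety is that $\pi_2$ is not proper on all of $(X_1\times X_2)\backslash(\text{bad locus})$, only after restricting the relevant cohomology to classes supported over $C$; I would handle this by defining $\pi_{2*}$ as the Gysin map associated to the proper map $C\to X_2$ (or its relevant open subvariety) rather than to $\pi_2$ itself, which is legitimate since the cup product with $[C]$ already forces support on $|C|$.
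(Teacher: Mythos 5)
Your proposal takes essentially the same route as the paper: $[C]^*$ is constructed as the composite of pullback along $\pi_1$, cup product with the fundamental class $[C]\in H^{2c}_{C}(X_1\times X_2)(c)$, and a pushforward that is really the Gysin map for the proper map from $C$ to $X_2$, the support on $C$ being enforced by the cup product (the paper packages this via excision into Borel--Moore homology of an open piece of $C$, but it is the same mechanism). Part~(ii) is likewise handled in both by naturality of the boundary/connecting maps together with the standard identification, under Poincar\'e--Lefschetz duality, of cup with $[C]$ followed by pushforward with the proper intersection-and-project operation on cycles.
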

\begin{proof}
The needed morphism $[C]^*$ is the composition of the following
natural morphisms
$$
H^{*}(X_1\backslash Z_1,W_1)\to H^{*}((X_1\times X_2) \backslash
\pi_1^{-1}(Z_1),\pi_2^{-1}(W_2)\cap
C)\stackrel{\cap[C]}\longrightarrow
$$
$$
\stackrel{\cap[C]}\longrightarrow H^{*+2c}_{C\backslash
\pi_1^{-1}(Z_1)}((X_1\times X_2)\backslash
\pi_1^{-1}(Z_1),\pi_2^{-1}(W_2)\cap C)(c)=
$$
$$
=H_{2d_1+2d_2-*-2c} (C\backslash \pi_2^{-1}(W_2),\pi_1^{-1}(Z_1)\cap
C)=
$$
$$
=H^{*+2c}_{C\backslash \pi_1^{-1}(Z_1)}((X_1\times X_2)\backslash
(\pi_1^{-1}(Z_1)\cap C),\pi_2^{-1}(W_2))(c)\to
$$
$$
\to H^{*+2c}((X_1\times X_2)\backslash (\pi_1^{-1}(Z_1)\cap
Y),\pi_2^{-1}(W_2))(c)\to H^{*+2c-2d_1}(X_2\backslash
Z_2,W_2)(c-d_1),
$$
where the first morphism is the natural pull-back map, the second
one is multiplication by the fundamental class $[C]\in
H^{2c}_C(X_1\times X_2)(c)$, the equalities in the middle follow
from the excision property, and the last morphism is the
push-forward map. The second assertion follows from the
commutativity of the following diagram:
$$
\begin{array}{ccc}
H^{*}((X_1\times X_2)\backslash (\pi_1^{-1}(Z_1)\cap
C),\pi_2^{-1}(W_2))&\to&
H_{2d_1+2d_2-1-*}(\pi_1^{-1}(Z_1)\cap C)(-c-p_1)\\
\downarrow&&\downarrow\\
H^{*-2d_1}(X_2\backslash Z_2,W_2)(-d_1)&\to& H_{2d_1+2d_2-1-*}(Z_2)(-c-p_1).\\
\end{array}
$$
\end{proof}

\subsection{Facts on $K$-adeles}\label{subsection-adelic}

Notions and results of this section are used in
Section~\ref{subsection-Kcohombiext}.

Let $X$ be an equidimensional variety over the ground field $k$.
Let $\K_n$ be the sheaf on $X$ associated to the presheaf given by the formula
$U\mapsto K_n(U)$, where $K_n(-)$ is the Quillen $K$-group and $U$
is an open subset in $X$. Zariski cohomology groups of the sheaves $\K_n$ are called
{\it $K$-cohomology groups}. When it will be necessary for us to point
out the underline variety, we will use notation $\K_n^X$ for
the defined above sheaf $\K_n$ on $X$.

Recall that in notations from Section~\ref{subsection-K1},
for all integers $n\ge 1,p\ge 0$, there are natural
homomorphisms \mbox{$d:G^{p}(X,n)\to G^{p+1}(X,n-1)$} such that $d^2=0$. Thus
for each $n\ge 0$, there is a complex $Gers(X,n)^{\bullet}$,
where $Gers(X,n)^p=G^{p}(X,n-p)$; this
complex is called the {\it Gersten complex}. Note that
the homomorphisms $d:G^{p-1}(X,1)\to G^{p}(X,0)$ and \mbox{$d:G^{p-2}(X,2)\to
G^{p-1}(X,1)$} coincide with the homomorphisms $\div$ and $\Tame$, respectively.
For a projective morphism of varieties $f:X\to Y$,
there is a push-forward morphism of complexes
$f_*:Gers(X,n)^{\bullet}\to Gers(Y,n+\dim(Y)-\dim(X))^{\bullet}[\dim(Y)-\dim(X)]$.

In what follows we suppose that $X$ is {\it smooth} over the field
$k$. By results of Quillen (see \cite{Q}), for each $n\ge 0$, there
is a canonical isomorphism between the classes of the complexes
$Gers(X,n)^{\bullet}$ and $R\Gamma(X,\K_n)$ in the derived category
$D^{b}(\Abc)$. In particular, there is a canonical isomorphism
$H^p(X,\K_p)\cong CH^p(X)$ for all $p\ge 0$.
The last statement is often called the Bloch--Quillen formula.

There is a canonical product between the sheaves of $K$-groups, induced by the
product in $K$-groups themselves. However,
the Gersten complex {\it is not multiplicative}, i.e., there is no a
product between Gersten complexes that would correspond to the product between
sheaves $\K_n$: otherwise there would
exist an intersection theory for algebraic cycles without taking
them modulo rational equivalence. Explicitly, there is no a morphism of complexes
$$
Gers(X,m)^{\bullet}\otimes_{\Z} Gers(X,n)^{\bullet}\to
Gers(X,m+n)^{\bullet}
$$
that would correspond to the natural product between cohomology groups
$$
H^{\bullet}(X,\K_m)\otimes H^{\bullet}(X,\K_n)\to
H^{\bullet}(X,\K_{m+n}).
$$
Therefore if one would like to work explicitly with the pairing of
objects in the derived category
$$
R\Gamma(X,\K_m)\otimes^L_{\Z}R\Gamma(X,\K_n)\to R\Gamma(X,\K_{m+n}),
$$
then a natural way would be to use a different resolution rather
than the Gersten complex. There are general multiplicative
resolutions of sheaves, for example a Godement resolution, but it does not see
the Bloch--Quillen isomorphism. In particular, there is no explicit
quasiisomorphism between the Godement and the Gersten complexes.

In \cite{Gor07} the author proposed another way to construct
resolutions for a certain class of abelian sheaves on smooth algebraic
varieties, namely, the {\it adelic resolution}. This class of
sheaves includes the sheaves $\K_n$. It is {\it multiplicative} and there
is an {\it explicit quasiisomorphism} from the adelic resolution to the
Gersten complex.

\begin{rmk}
Analogous adelic resolutions for coherent sheaves on algebraic varieties
have been first introduced by A.\,N.\,Parshin (see \cite{Par76}) in the two-dimensional case,
and then developed by
A.\,A.\,Beilinson (see \cite{Bei}) and A.\,Huber (see \cite{Hub}) in
the higher-dimensional case.
\end{rmk}

\begin{rmk}
When the paper was finished, the author discovered that a similar
but more general construction of a resolution for sheaves on algebraic varieties
was independently done in \cite[Section 4.2.2]{BV} by A.\,A.\,Beilinson and V.\,Vologodsky.
\end{rmk}

Let us briefly recall several notions and facts from \cite{Gor07}. A
{\it non-degenerate flag of length $p$} on $X$ is a sequence of
schematic points $\eta_0\ldots\eta_p$ such that $\eta_{i+1}\in
\overline{\eta}_i$ and $\eta_{i+1}\ne\eta_i$ for all $i$, $0\le i\le
p-1$. For $n,p\ge 0$, there are {\it adelic groups}
$$
\A(X,\K_n)^p\subset\prod_{\eta_0\ldots\eta_p}K_n(\OO_{X,\eta_0}),
$$
where the product is taken over all non-degenerate flags of length
$p$, $\OO_{X,\eta_0}$ is the local ring of the scheme $X$ at a point
$\eta_0$, and the subgroup $\A(X,\K_n)^p$ is defined by certain explicit
conditions concerning ``singularities'' of elements in $K$-groups.
Elements of
the adelic groups are called \mbox{{\it $K$-adeles}} or just adeles. Explicitly, an
adele $f\in\A(X,\K_n)^p$ is a collection
$f=\{f_{\eta_0\ldots\eta_p}\}$ of elements
$f_{\eta_0\ldots\eta_p}\in K_n(\OO_{X,\eta_0})$ that satisfies
certain conditions.

\begin{examp}\label{examp-adelescurve}
If $X$ is a smooth curve over the ground field $k$, then
$$
\A(X,\K_n)^0=K_n(k(X))\times\prod_{x\in X}K_n(\OO_{X,x}),
$$
where the product is taken over all closed points $x\in X$,
and an adele $f\in\A(X,\K_n)^1$ is a collection $f=\{f_{Xx}\}$, $f_{Xx}\in K_n(k(X))$,
such that $f_{Xx}\in K_n(\OO_{X,x})$
for almost all $x\in X$ (this is the restricted product condition in
this case). The apparent similarity with classical adele and idele groups
explains the name of the notion.
\end{examp}

There is a differential
$$
d:\A(X,\K_n)^p\to\A(X,\K_n)^{p+1}
$$
defined by the formula
$$
(df)_{\eta_0\ldots\eta_p}=\sum_{i=0}^p(-1)^i
f_{\eta_0\ldots\hat\eta_i\ldots\eta_p},
$$
where the hat over $\eta_i$ means that we omit a point $\eta_i$. It
can be easily seen that $d^2=0$, so one gets an {\it adelic complex}
$\A(X,\K_n)^{\bullet}$. There is a canonical morphism of complexes
$\nu_X:\A(X,\K_n)^{\bullet}\to Gers(X,n)^{\bullet}$. There is also
an adelic complex of flabby sheaves
$\underline{\A}(X,\K_n)^{\bullet}$ given by the formula
$\underline{\A}(X,\K_n)^{p}(U)=\A(U,\K_n)^p$ and a natural morphism
of complexes of sheaves $\K_n[0]\to
\underline{\A}(X,\K_n)^{\bullet}$.

In what follows we suppose that the
ground field $k$ is {\it infinite and perfect}.

\begin{lemma}\label{lemma-adelicquasiis}(\cite[Theorem 3.34]{Gor07})
The complex of sheaves $\underline{\A}(X,\K_n)^{\bullet}$ is a
flabby resolution for the sheaf $\K_n$. The morphism $\nu_X$ is a
quasiisomorphism; in particular, this induces a canonical
isomorphism between the classes of the complexes
$\A(X,\K_n)^{\bullet}$ and $R\Gamma(X,\K_n)$ in the derived category
$D^{b}(\Abc)$.
\end{lemma}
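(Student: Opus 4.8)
The plan is to reduce everything to the two separate assertions: first, that $\underline{\A}(X,\K_n)^{\bullet}$ is a flabby resolution of $\K_n$, and second, that the comparison map $\nu_X$ to the Gersten complex is a quasiisomorphism; the statement about the derived category then follows formally, since the Gersten complex computes $R\Gamma(X,\K_n)$ by Quillen's theorem and a flabby resolution also computes Zariski hypercohomology. Flabbiness of the sheaves $\underline{\A}(X,\K_n)^p$ should be immediate from the definition, since for $U'\subset U$ the restriction map on adelic groups simply forgets the components indexed by flags not contained in $U'$, and any collection of ``singularity''-constrained components over $U'$ extends by zero (or by the constrained values already present) to a collection over $U$; one must check that the defining conditions are preserved, which is a local matter on flags. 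The key exactness input is that the complex of sheaves $\K_n[0]\to\underline{\A}(X,\K_n)^{\bullet}$ is exact, i.e., that on stalks — or better, on small enough opens — the augmented adelic complex is acyclic.

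The heart of the argument, and the step I expect to be the main obstacle, is proving this local acyclicity together with the quasiisomorphism $\nu_X$. The natural strategy is an induction on $\dim X$, or more precisely a dévissage over the ``support filtration'' of the adelic complex by codimension of the leading point $\eta_0$ of a flag: the associated graded pieces of the adelic complex should decompose, flag-point by flag-point, into adelic complexes of the residue schemes $\overline{\eta}$, so that the Cousin/Gersten description emerges by comparing graded pieces. Concretely, one filters $\A(X,\K_n)^{\bullet}$ by the first index $\eta_0$: the $m$-th graded quotient involves flags with $\eta_0$ of codimension $m$, and one identifies it with a product over codimension-$m$ points $x$ of a ``truncated'' adelic complex on $\Spec\OO_{X,x}$ (or on the local scheme), whose cohomology is concentrated in one degree and equals $K_{n-m}(k(x))$ by the analogous lower-dimensional statement applied to the residue field, i.e.\ by an input of Quillen-type localization / Gersten resolution for the local rings $\OO_{X,x}$. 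This is exactly where smoothness of $X$ (hence the Gersten conjecture for the local rings $\OO_{X,x}$, known by Quillen for smooth varieties over a field) and the hypothesis that $k$ is infinite and perfect get used — the latter to guarantee enough good hyperplane sections / generic position arguments needed to set up the dévissage and the restricted-product conditions. Then a spectral sequence comparison of these filtered complexes with the Gersten complex (which is by construction the $E_1$-page of its codimension filtration) shows that $\nu_X$ induces an isomorphism on all graded pieces and hence is a quasiisomorphism, and simultaneously that the augmented adelic complex of sheaves is exact.

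Once local acyclicity and the quasiisomorphism $\nu_X$ are in hand, the final clause follows: a flabby resolution computes Zariski cohomology, so $\A(X,\K_n)^{\bullet}=\Gamma(X,\underline{\A}(X,\K_n)^{\bullet})$ represents $R\Gamma(X,\K_n)$, and $\nu_X$ identifies this class with that of $Gers(X,n)^{\bullet}$, compatibly with the already-known Quillen isomorphism between the Gersten complex and $R\Gamma(X,\K_n)$. I would remark that all of the genuinely hard content — the local acyclicity, the identification of graded pieces, and the rôle of the Gersten conjecture for $\OO_{X,x}$ — is precisely \cite[Theorem 3.34]{Gor07}, so in the present paper the proof is a citation; the proposal above is a sketch of how that cited theorem is established.
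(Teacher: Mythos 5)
The paper itself contains no proof of this lemma: it is quoted verbatim, with the attribution \cite[Theorem 3.34]{Gor07}, and the argument lives entirely in that reference. You correctly identify this in your closing remark, so there is no ``paper's own proof'' to compare you against; what you offer is a sketch of how the cited theorem is presumably established.

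With that caveat, your sketch looks broadly sensible and is consistent with what this paper reveals about the adelic formalism. Two points deserve a closer look. First, flabbiness of $\underline{\A}(X,\K_n)^p$ is not quite ``immediate'': the adelic group is a proper subgroup of the full product $\prod_{\eta_0\ldots\eta_p} K_n(\OO_{X,\eta_0})$ cut out by restricted-product/boundedness conditions (cf.\ Example~\ref{examp-adelescurve}), and one has to verify that extending a section on $U'\subset U$ by zero (or by units) on the flags of $U$ meeting $X\setminus U'$ still satisfies those conditions; this is precisely the kind of check that the definition of the adelic subgroup is engineered to make possible, but it does need an argument, not just the observation that the indexing set of flags grows. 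Second, the dévissage you propose --- filtering by the codimension of the leading flag point $\eta_0$ and identifying graded pieces with adelic-type complexes over $\Spec\OO_{X,\eta_0}$, then invoking Quillen's Gersten resolution for the regular local rings $\OO_{X,\eta_0}$ --- is the standard strategy for Parshin--Beilinson--Huber-type adelic resolutions, and it does explain where smoothness of $X$ enters. Whether [Gor07] organizes the argument exactly this way, or instead runs an induction that threads the patching-system machinery (which this paper recalls in Lemmas~\ref{lemma-patchingsyst}--\ref{lemma-adelicproduct} and which is where the ``infinite perfect field'' hypothesis visibly matters, for generic hyperplane sections), cannot be determined from this paper alone. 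Your sketch captures the likely skeleton; the genuine content is, as you say, offloaded to the cited theorem.
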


In particular, there is a canonical isomorphism
$$
H^p(X,\K_n)=H^{p}(\A(X,\K_n)^{\bullet}).
$$

The main advantages of adelic complexes are
the contravariancy and the multiplicativity properties.

\begin{lemma}(\cite[Remark 2.12]{Gor07})
\hspace{0cm}
\begin{itemize}
\item[(i)]
Given a morphism $f:X\to Y$ of smooth varieties over $k$, for each $n\ge 0$,
there is a morphism of complexes
$$
f^*:\A(Y,\K_n^Y)^{\bullet}\to
\A(X,\K_n^X)^{\bullet};
$$
this morphism agrees with the natural morphism
$\Hom_{D^b(\Abc)}(R\Gamma(Y,f_*\K_n^X),R\Gamma(X,\K_n^X))$ and the morphism
$\K_n^Y\to f_*\K_n^X$ of sheaves on $Y$.
\item[(ii)]
For all $p,q\ge 0$, there is a morphism of complexes
$$
m:\A(X,\K_p)^{\bullet}\otimes\A(X,\K_q)^{\bullet}\to
\A(X,\K_{p+q})^{\bullet};
$$
this morphism agrees with the multiplication morphism
$$
m\in \Hom_{D^{b}(\Abc)}(R\Gamma(X,\K_p)\otimes_{\Z}^L R\Gamma(X,\K_q),
R\Gamma(X,\K_{p+q})).
$$
\end{itemize}
\end{lemma}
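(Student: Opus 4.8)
The plan is to construct both $f^{*}$ and $m$ by explicit formulas on adeles and then to verify, in each case, three things: that the formula lands again in the adelic subgroup rather than in the ambient product, that it is a morphism of complexes, and that it induces the asserted map after passing to cohomology. For part~(i), note that if $\xi_0\ldots\xi_p$ is a non-degenerate flag on $X$, then $f(\xi_0)\ldots f(\xi_p)$ is a flag on $Y$ (possibly degenerate, since consecutive images may coincide), and the morphism of schemes $f$ induces a local homomorphism $\OO_{Y,f(\xi_0)}\to\OO_{X,\xi_0}$, hence a map $K_n(\OO_{Y,f(\xi_0)})\to K_n(\OO_{X,\xi_0})$. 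I would define $(f^{*}g)_{\xi_0\ldots\xi_p}$ to be the image of $g_{f(\xi_0)\ldots f(\xi_p)}$ under this map, where the value of an adele on a degenerate flag is the one prescribed by the simplicial degeneracy operators (so it is determined by the values on non-degenerate flags). For part~(ii), for a flag $\eta_0\ldots\eta_{a+b}$ I would use the Alexander--Whitney decomposition into the front face $\eta_0\ldots\eta_a$ and the back face $\eta_a\ldots\eta_{a+b}$: set $m(f\otimes g)_{\eta_0\ldots\eta_{a+b}}=f_{\eta_0\ldots\eta_a}\cdot\overline{g}_{\eta_a\ldots\eta_{a+b}}$, where $\overline{g}_{\eta_a\ldots\eta_{a+b}}\in K_q(\OO_{X,\eta_0})$ is the image of $g_{\eta_a\ldots\eta_{a+b}}\in K_q(\OO_{X,\eta_a})$ under the localization map $\OO_{X,\eta_a}\to\OO_{X,\eta_0}$ (legitimate because $\eta_a\in\overline{\eta}_0$), and the product is the one coming from the pairing in $K$-theory.

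That these formulas give morphisms of complexes is the combinatorial part and I expect it to be routine. For $f^{*}$ it is immediate: the adelic differential is the alternating sum over omitting a point, and omitting a point commutes with pushing the flag along $f$. For $m$ it is the usual check that the front-face/back-face construction combined with an associative product satisfies the Leibniz rule $d\,m(f\otimes g)=m(df\otimes g)+(-1)^{a}\,m(f\otimes dg)$, with the signs produced exactly as in the standard cochain cup-product identity.

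The hard part, as always with adeles, is to show that these operations preserve the adelic subgroups $\A(X,\K_n)^{\bullet}\subset\prod_{\eta_0\ldots\eta_p}K_n(\OO_{X,\eta_0})$, i.e.\ that the ``singularity''/restricted-product conditions recalled in Section~\ref{subsection-adelic} are stable. Concretely, for~(i) one must show these conditions are preserved under the local homomorphisms $\OO_{Y,f(\xi_0)}\to\OO_{X,\xi_0}$ attached to a morphism of smooth varieties — using that $f$ sends a chain of specializations to a chain of specializations and that $K$-theoretic regularity along a point is functorial for local homomorphisms; for~(ii) one must show they are preserved under the $K$-theory product together with the localizations $\OO_{X,\eta_a}\to\OO_{X,\eta_0}$ — using that the product of two $K$-classes that are regular along a subvariety is again regular there. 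I expect this to be the technical heart: one unwinds the inductive definition of the adelic conditions and proves the two stability statements by induction on the codimension, keeping track of compatibility with the maps $\nu_X$ to the Gersten complex.

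For the derived-category compatibility I would sheafify. Both $f^{*}$ and $m$ are manifestly defined at the presheaf level, so they yield morphisms of complexes of flabby sheaves $\underline{\A}(Y,\K_n^Y)^{\bullet}\to f_{*}\underline{\A}(X,\K_n^X)^{\bullet}$ and $\underline{\A}(X,\K_p)^{\bullet}\otimes\underline{\A}(X,\K_q)^{\bullet}\to\underline{\A}(X,\K_{p+q})^{\bullet}$ which on degree-zero sheaf cohomology restrict to the canonical sheaf maps $\K_n^Y\to f_{*}\K_n^X$ and $\K_p\otimes\K_q\to\K_{p+q}$. By Lemma~\ref{lemma-adelicquasiis} the complex $\underline{\A}(X,\K_n)^{\bullet}$ is a flabby resolution of $\K_n$, and flabby sheaves are acyclic both for $\Gamma$ and for $f_{*}$; hence taking global sections computes $R\Gamma$ (and $Rf_{*}$), and since an extension of a given sheaf map to a map of flabby resolutions is unique up to homotopy, the maps on $R\Gamma$ induced by $f^{*}$ and by $m$ agree with the canonical pullback and product. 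One should be careful not to route part~(ii) through the Gersten complex, which — as emphasized in Section~\ref{subsection-adelic} — admits no such product; the flabby-resolution argument sidesteps this issue entirely.
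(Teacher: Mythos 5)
The paper itself offers no proof of this lemma — it is imported verbatim from \cite[Remark 2.12]{Gor07} — so there is no in-text argument to compare against. Taking your plan on its own terms: the Alexander--Whitney decomposition into front and back faces, with the back-face value transported along the localization $\OO_{X,\eta_a}\to\OO_{X,\eta_0}$, is indeed how $m$ is constructed, and the pointwise pushforward of flags is how $f^*$ is built. Two caveats. First, the adeles in this paper are indexed only by non-degenerate flags, and your recipe for evaluating $g$ on a possibly degenerate image flag ``via simplicial degeneracy operators'' is the wrong convention: one should set $(f^*g)_{\xi_0\ldots\xi_p}=0$ whenever $f(\xi_0)\ldots f(\xi_p)$ is degenerate, and then the chain-map identity holds by a cancellation --- if $f(\xi_i)=f(\xi_{i+1})$, omitting $\xi_i$ or $\xi_{i+1}$ yields the same image flag with opposite signs, while every other omission still produces a degenerate image and contributes zero. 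Second, and more substantively, you flag the preservation of the adelic subgroup conditions as the ``technical heart'' and then only assert it. That verification, done by induction through the recursive definition of $\A(X,\K_n)^p$ (controlling singularities along the chain of specializations $\overline{\eta}_0\supset\overline{\eta}_1\supset\cdots$), is precisely where the content of \cite[Remark 2.12]{Gor07} lives; as written your proposal is a roadmap rather than a proof of it. The closing flabby-resolution argument for the derived-category compatibility is correct, and is the right way to sidestep the fact, emphasized in Section~\ref{subsection-adelic}, that the Gersten complex carries no product.
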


In what follows we recollect some technical notions and facts that
are used in calculations with elements of the adelic
complex. The idea is to associate with each cocycle in the Gersten complex,
an explicit cocycle in the adelic resolution with the same class in $K$-cohomology.
The adelic cocycle should be good enough so that it would be easy to calculate
its product with other (good) adelic cocycles. This allows to analyze explicitly
the interrelation between the product on complexes
$R\Gamma(X,\K_n)$ and the Bloch--Quillen isomorphism.

For any equidimensional subvariety $Z\subset X$
of codimension $p$ in $X$, there is a notion of a {\it
patching system} $\{Z^{1,2}_r\}$, $1\le r\le p-1$ for $Z$ on $X$, where
$Z^1_r$ and $Z^2_r$ are equidimensional subvarieties in $X$ of codimension $r$
such that the system $\{Z^{1,2}_r\}$ satisfies certain properties;
in particular, we have:
\begin{itemize}
\item[(i)]
the varieties $Z_{r}^1$ and $Z_r^2$ have no common irreducible
components for all $r,1\le r\le p-1$;
\item[(ii)]
the variety $Z$ is contained in both varieties $Z^1_{p-1}$ and $Z_{p-1}^2$,
and the variety $Z_{r}^1\cup Z_r^2$ is contained in both varieties $Z^1_{r-1}$ and
$Z^2_{r-1}$ for all $r,2\le r\le p-1$.
\end{itemize}

\begin{rmk}
What we call here a patching system is what is called in
\cite{Gor07} {\it a patching system with the freedom degree at least
zero}.
\end{rmk}

\begin{lemma}\label{lemma-patchingsyst}(\cite[Remark 3.32]{Gor07})
\hspace{0cm}
\begin{itemize}
\item[(i)]
Suppose that $Z\subset X$ is an equidimensional subvariety of codimension $p$ in $X$;
then there exists a patching system $\{Z^{1,2}_r\}$, $1\le r\le p-1$
for $Z$ on $X$ such that each irreducible component of $Z^1_{p-1}$ and $Z^2_{p-1}$
contains some irreducible component of $Z$ and
for any $r$, $1\le r\le p-2$, each irreducible component of $Z^1_r$ and $Z^2_r$
contains some irreducible component of $Z^1_{r+1}\cup Z^2_{r+1}$;
\item[(ii)]
given an equidimensional subvariety $W\subset X$ that meets $Z$ properly,
one can require in addition that
no irreducible component of $W\cap Z^1_r$ is contained in $Z^2_r$ for all $r$,
$1\le r\le p-1$.
\end{itemize}
\end{lemma}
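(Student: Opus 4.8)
The plan is to build the patching system by downward induction on the codimension index $r$, starting from $r=p-1$ and descending to $r=1$, enlarging the subvarieties by one codimension at each step. If $p=1$ the patching system is empty and there is nothing to prove, so assume $p\ge 2$. Write $Z=\bigcup_\alpha Z_\alpha$ as the union of its irreducible components, all of codimension $p$, and put $\mathcal C_p=\{Z_\alpha\}$. At stage $r$ we will have a finite set $\mathcal C_{r+1}$ of irreducible subvarieties of $X$ of codimension $r+1$ --- for $r+1=p$ these are the components of $Z$, and in general they will be the irreducible components of $Z^1_{r+1}\cup Z^2_{r+1}$ --- and for each $C\in\mathcal C_{r+1}$ we choose, first all of one kind and then all of the other, irreducible subvarieties $C^1\supseteq C$ and $C^2\supseteq C$ of codimension $r$, with all the $C^1$'s and $C^2$'s (as $C$ ranges over $\mathcal C_{r+1}$ and the superscript over $\{1,2\}$) pairwise distinct. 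Then we put $Z^i_r=\bigcup_{C\in\mathcal C_{r+1}}C^i$ and let $\mathcal C_r=\{C^i\}$, which is exactly the set of irreducible components of $Z^1_r\cup Z^2_r$. Since at every step the subvarieties being enlarged have codimension $r+1\ge 2$, each step lowers a codimension $\ge 2$ to a codimension $\ge 1$, and it is this margin that provides the freedom needed for the choices.

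The enlargements exist by the following elementary fact, which one proves in the spirit of the construction of the divisors $H$ in the proof of Corollary~\ref{corol-movingK1codim} (using that $k$ is infinite, together with Bertini irreducibility): for an irreducible subvariety $V\subseteq X$ of codimension $c\ge 2$ and a finite set of closed points $T\subseteq X\setminus V$, there exist infinitely many irreducible subvarieties $V'\subseteq X$ of codimension $c-1$ with $V\subseteq V'$ and $V'\cap T=\emptyset$, and one may require in addition that $V'$ meet any prescribed subvariety properly. We run the induction maintaining the invariant that every element of $\mathcal C_s$ meets $W$ properly; this holds for $\mathcal C_p$ because $Z$ meets $W$ properly, and at stage $r$ we simply require each chosen $C^1$ and $C^2$ to meet $W$ properly. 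After the $C^1$'s have been chosen and $Z^1_r=\bigcup_C C^1$ formed, let $E_1,\dots,E_m$ be the irreducible components of $W\cap Z^1_r$; then $\dim E_j=\dim W-r$ for every $j$, since each $C^1$ meets $W$ properly, whereas for $C\in\mathcal C_{r+1}$, since $C$ meets $W$ properly and $\codim_X W\ge 1$ (if $\codim_X W=0$ then (ii) is immediate, as $Z^1_r$ and $Z^2_r$ have no common component), one has $\dim(W\cap C)\le\dim W-r-1<\dim E_j$, so $E_j\not\subseteq C$. Hence when choosing the $C^2$'s we may, besides making them distinct from all the $C^1$'s, include in $T$ a closed point of each $E_j$ lying off $C$, which forces $E_j\not\subseteq C^2$.

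It remains to verify the axioms. By construction $Z^1_r$ and $Z^2_r$ are equidimensional of codimension $r$, and since all the $C^i$ were taken distinct they share no irreducible component. One has $Z\subseteq Z^1_{p-1}\cap Z^2_{p-1}$ because $Z_\alpha\subseteq (Z_\alpha)^i$, and $Z^1_{r}\cup Z^2_{r}\subseteq Z^1_{r-1}\cap Z^2_{r-1}$ (for $2\le r\le p-1$) because each $C'\in\mathcal C_r$ lies in its enlargements $(C')^i\subseteq Z^i_{r-1}$; the same relations give the two additional properties in the statement, namely that every component of $Z^1_{p-1}$ or $Z^2_{p-1}$ contains a component of $Z$, and that every component of $Z^1_r$ or $Z^2_r$ (for $r\le p-2$) contains a component of $Z^1_{r+1}\cup Z^2_{r+1}$. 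Finally, for part (ii): an irreducible component of $W\cap Z^1_r$ is one of the $E_j$, and if it were contained in $Z^2_r=\bigcup_C C^2$ then, being irreducible, it would lie in some $C^2$, contradicting $E_j\not\subseteq C^2$.

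The hard part is the elementary fact about enlargements, together with maintaining the properness invariant: one must confirm that there really is an infinite supply of irreducible codimension-$(c-1)$ subvarieties through a given $V$ --- so that the finitely many exclusions at each stage (the previously chosen enlargements and the points of $T$) can always be met --- and that a general one meets $W$ properly. This is where the hypotheses that $k$ is infinite and perfect and that $X$ is smooth enter; smoothness is also what underlies the bound $\dim(V\cap W)\ge\dim V+\dim W-\dim X$ used above, which makes ``meets properly'' behave as required in the argument for part (ii).
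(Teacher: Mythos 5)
The paper itself offers no proof of this lemma; it is quoted verbatim from \cite[Remark~3.32]{Gor07}, and the full definition of a patching system is not even reproduced here (the text explicitly says a patching system ``satisfies certain properties; in particular, we have\ldots'' and lists only two of them). So there is nothing in this paper to compare your argument against, and any proof must in any case work with the complete definition in op.\ cit.

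Taking your argument on its own terms: the downward induction, iteratively enlarging each irreducible piece by one codimension while keeping all choices distinct, and the dimension count in part (ii) that rules out $E_j\subseteq C$ (and hence allows you to dodge $E_j$ when choosing $C^2$) are all sensible, and the bookkeeping in your last paragraph correctly delivers the two extra ``contains a component'' conditions in the statement. But the proof has a genuine gap, and you have correctly located it yourself: the entire argument rests on the ``elementary fact'' that through a given irreducible $V$ of codimension $c\ge 2$ one can find infinitely many \emph{irreducible} subvarieties of codimension $c-1$ avoiding a prescribed finite set of closed points and meeting a prescribed subvariety properly. This is where all the content is, and it is not justified. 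The divisor construction in the proof of Corollary~\ref{corol-movingK1codim} only produces a reduced divisor $H\supseteq S_i$ avoiding finitely many points --- it makes no irreducibility claim, and indeed Bertini irreducibility is needed. Even granting Bertini for hypersurfaces through $V$ (which already requires $\codim V\ge 2$, no fixed component, a rational map with $\ge 2$-dimensional image, and care in positive characteristic), you need codimension $c-1$, not $1$: one must iterate inside successive hypersurfaces which are no longer smooth, or argue via general complete intersections, and the control of proper intersection with $W$ along components of $V'\cap W$ that lie inside $V$ versus those that do not needs to be made precise. Without a careful proof of this lemma your argument is a reduction, not a proof. Finally, because the paper leaves the definition of ``patching system'' incomplete, you cannot, working from this paper alone, verify that your construction actually satisfies all the defining conditions --- only the two that are explicitly listed.
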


\begin{rmk}\label{rmk-adelicproper}
If an equidimensional subvariety $W\subset X$ meets $Z$ properly
and the patching system
$\{Z^{1,2}_r\}$ satisfies the condition $(i)$ from Lemma \ref{lemma-patchingsyst},
then $W$ meets $Z^i_r$ properly for $i=1,2$ and all $r$, $1\le r\le p-1$.
\end{rmk}

Suppose that $\{f_{\eta}\}\in Gers(X,n)^p$ is a cocycle in the
Gersten complex and let $Z$ be the support of $\{f_{\eta}\}$. Given
a patching system $Z^{1,2}_r$, $1\le r\le p-1$ for $Z$ on $X$, there
is a notion of a {\it good cocycle}
$[\{f_{\eta}\}]\in\A(X,\K_n)^{p}$ with respect to the patching
system $\{Z^{1,2}_r\}$. In particular, we have $d[\{f_{\eta}\}]=0$,
$\nu_X[\{f_{\eta}\}]=\{f_{\eta}\}$, and
$i_U^*[\{f_{\eta}\}]=0\in\A(U,\K_n)^p$, where $i_U:U=X\backslash
Z\hookrightarrow X$ is the open embedding. Thus the good cocycle
$[\{f_{\eta}\}]$ is a cocycle in the adelic complex
$\A(X,\K_n)^{\bullet}$, which represents the cohomology class in
$H^p(X,\K_n)$ of the Gersten cocycle $\{f_{\eta}\}$. In addition,
$[\{f_{\eta}\}]$ satisfies certain properties that allow to consider
explicitly its products with other cocycles.

\begin{lemma}\label{lemma-adelicgoodcocycles}(\cite[Claim 3.47]{Gor07})
Let $\{f_{\eta}\}$, $Z$, and $\{Z^{1,2}_r\}$, $1\le r\le p-1$ be as
above; then there exists a good cocycle for $\{f_{\eta}\}$ with
respect to the patching system $Z^{1,2}_r$.
\end{lemma}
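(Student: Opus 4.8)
The plan is to construct the adele $[\{f_\eta\}]\in\A(X,\K_n)^p$ by an explicit flag-by-flag recipe and then to verify the three defining properties of a good cocycle, the nontrivial input being the combinatorics of the patching system. Recall that an element of $\A(X,\K_n)^q$ is a family $\{h_{\eta_0\ldots\eta_q}\}$ with $h_{\eta_0\ldots\eta_q}\in K_n(\OO_{X,\eta_0})$, indexed by non-degenerate flags, subject to the adelic ``singularity'' conditions. I would build the components of $[\{f_\eta\}]$ by upward induction on the flag length $q=0,1,\ldots,p$: the length-$q$ component is required to vanish on any flag whose $r$-th term is not contained in $Z^1_r\cup Z^2_r$ for some $r\le q$, and, roughly, the two superscripts in the patching system serve as a Mayer--Vietoris pair at codimension $r$ that lets one solve the local lifting problem arising at that stage. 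At the top length $q=p$ the component is the genuine lift of the Gersten datum: for a complete flag $\eta_0>\cdots>\eta_p$ (each successive codimension one larger) with $\eta_0$ generic, $\overline{\eta}_r\subset Z^1_r\cup Z^2_r$ for $1\le r\le p-1$, and $\eta_p\in|Z|$, one obtains $f_{\eta_0\ldots\eta_p}\in K_n(\OO_{X,\eta_0})$ by lifting $f_{\eta_p}\in K_{n-p}(k(\eta_p))$ up the specialization chain, multiplying at each step by the symbol of a regular parameter of $\overline{\eta}_r$ inside $\overline{\eta}_{r-1}$; on all other flags the component is $1$.

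The first and \emph{main} point is to check that the resulting family actually lies in $\A(X,\K_n)^p$. This is where the two patching-system axioms do their work: the condition that $Z^1_r$ and $Z^2_r$ share no irreducible component guarantees that the regular parameters used in the lifting steps genuinely exist and vanish exactly along the intended loci, so that the order-of-singularity conditions on the adelic components hold at every non-degenerate flag; and the nesting $Z^1_r\cup Z^2_r\subset Z^1_{r-1}\cap Z^2_{r-1}$ guarantees that the supports produced at successive flag lengths are compatible, so that the restricted-product/boundedness conditions hold. Smoothness of $X$ over the infinite perfect field $k$ enters precisely here, since it makes the local rings $\OO_{X,\eta_0}$ regular and supplies enough rational functions for the parameter choices, and the properness consequences of Remark~\ref{rmk-adelicproper} keep all the auxiliary intersections of the expected dimension.

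Granting membership in $\A(X,\K_n)^p$, the remaining verifications are comparatively formal. The identity $\nu_X[\{f_\eta\}]=\{f_\eta\}$ holds because $\nu_X$ extracts the top-length diagonal components, which by construction reduce to $f_{\eta_p}$ over $k(\eta_p)$. The identity $i_U^*[\{f_\eta\}]=0$ for $U=X\setminus|Z|$ is built in, since every flag with a nontrivial component has $\eta_p\in|Z|$. The cocycle condition $d[\{f_\eta\}]=0$ is the one genuine computation: in $(d[\{f_\eta\}])_{\eta_0\ldots\eta_{p+1}}=\sum_i(-1)^i f_{\eta_0\ldots\hat{\eta}_i\ldots\eta_{p+1}}$ the extreme terms $i=0,p+1$ drop for support reasons, while the interior terms cancel in pairs once one observes that deleting a middle vertex composes two consecutive ``multiply by a parameter'' steps; the cancellation is exactly the combination of $\div\circ\Tame=0$ in the Gersten complex with the original hypothesis $d\{f_\eta\}=0$, the Steinberg relation in $K_{n-p+1}$ of the relevant two-step semilocal rings taking care of the boundary contributions. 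The further ``goodness'' properties needed later for product computations follow from the same support bookkeeping.

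Accordingly, I expect the real obstacle to be the first step: fixing the formulas for the intermediate components ($0<q<p$) so that the whole family \emph{simultaneously} lies in $\A(X,\K_n)^p$, is annihilated by $d$, and is rigid enough to multiply cleanly against other good cocycles. These three demands pull the construction in slightly different directions, and reconciling them is exactly what forces the elaborate notion of a patching system with its two nested families; smoothness of $X$ and the infinitude and perfectness of $k$ are what make the reconciliation possible.
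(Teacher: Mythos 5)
The paper does not actually prove this lemma: it is cited verbatim from \cite[Claim 3.47]{Gor07}, and even the definition of ``good cocycle'' is left implicit (the text only lists three headline consequences plus the phrase ``in addition, $[\{f_{\eta}\}]$ satisfies certain properties that allow to consider explicitly its products with other cocycles''). So there is no in-paper proof to compare against; the right reference point is Gorchinskiy's original construction.

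Judged on its own terms, what you have written is a plan, not a proof, and the gap you yourself flag at the end is real and is not just a matter of polish. The construction you describe is concrete only at flag length $p$: ``lift $f_{\eta_p}\in K_{n-p}(k(\eta_p))$ up the specialization chain, multiplying at each step by the symbol of a regular parameter of $\overline{\eta}_r$ inside $\overline{\eta}_{r-1}$, on all other flags the component is $1$.'' But a local equation of $\overline{\eta}_r$ in $\overline{\eta}_{r-1}$ lives on $\overline{\eta}_{r-1}$, not in $\OO_{X,\eta_0}$, and even after fixing that, the resulting lifts along two different flags through the same pair $\eta_{r-1}>\eta_r$ need not agree --- choosing splittings of the Gersten boundary maps flag by flag is precisely the operation that fails to be consistent, which is why the Gersten complex is not multiplicative in the first place. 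The patching system is supposed to resolve this, but your sketch invokes its two axioms only at the level of slogans (``the parameters genuinely exist,'' ``supports are compatible'') without showing how the Mayer--Vietoris pair $Z^1_r,Z^2_r$ actually reconciles the choices. Concretely missing are: (i) formulas for the components at flag lengths $0<q<p$ (you explicitly defer these); (ii) a verification that the family satisfies the adelic singularity/restricted-product conditions defining $\A(X,\K_n)^p$, which is the whole content of membership; and (iii) any engagement with the extra ``goodness'' conditions that the paper needs later in Lemmas~\ref{lemma-adelicboundary} and~\ref{lemma-adelicproduct} --- these cannot be recovered from $d[\{f_\eta\}]=0$, $\nu_X[\{f_\eta\}]=\{f_\eta\}$, and $i_U^*[\{f_\eta\}]=0$ alone. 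Your closing paragraph correctly diagnoses all of this, but diagnosing the hard part is not the same as doing it.

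Also be careful with your claimed cancellation in $d[\{f_\eta\}]$: the terms in $(d[\{f_\eta\}])_{\eta_0\ldots\eta_{p+1}}$ that delete $\eta_0$ land a priori in $K_n(\OO_{X,\eta_1})$ and are mapped to $K_n(\OO_{X,\eta_0})$ along the localization $\OO_{X,\eta_1}\to\OO_{X,\eta_0}$, so the ``extreme terms drop for support reasons'' step needs an actual argument, and the interior cancellation requires the splittings at adjacent steps to be chosen coherently --- which brings you straight back to gap (i). If you want to make this rigorous, you should first write out Gorchinskiy's definition of a good cocycle in full, then do the construction there; the proof really lives in \cite{Gor07} and cannot be reconstructed from what this paper states.
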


Consider a cycle $Z\in Z^p(X)$; suppose that a $K_1$-chain
$\{f_{\eta}\}\in G^{p-1}(X,1)$ is such that $\div(\{f_{\eta}\})=Z$ and
a $K$-adele $[Z]\in \A(X,\K_p)^p$ is such that $d[Z]=0$, $\nu_X([Z])=Z$,
and $i_U^*[Z]=0\in\A(U,\K_p)^p$,
where $i_U:U=X\backslash|Z|\hookrightarrow X$ is the open embedding.

\begin{lemma}\label{lemma-adelicboundary}(\cite[Lemma 3.48]{Gor07})
In the above notations, let $Y=\Supp(\{f_{\eta}\})$ and let $\{Y^{1,2}_r\}$
be a patching system for $Y$ on $X$; then there exists a $K$-adele
$[\{f_{\eta}\}]\in\A(X,\K_{p})^{p-1}$ such that
$d[\{f_{\eta}\}]=[Z]$, $\nu_X[\{f_{\eta}\}]=\{f_{\eta}\}$, and
$i_U^*[\{f_{\eta}\}]\in\A(U,\K_p)^{p-1}$ is a good cocycle with
respect to the restriction of the patching system $\{Y^{1,2}_r\}$ to
$U$.
\end{lemma}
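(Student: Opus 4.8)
The plan is to transport the problem to the open complement $U=X\setminus|Z|$, where $\div(\{f_\eta\})$ vanishes so that $\{f_\eta\}|_U$ is an honest Gersten cocycle to which Lemma~\ref{lemma-adelicgoodcocycles} applies, and then to lift the resulting good cocycle back to $X$ and correct it by a cocycle supported on $|Z|$, using the flabbiness of the adelic sheaves and the fact that $\nu$ is a quasiisomorphism.

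First I would pass to $U$. As $\div(\{f_\eta\})|_U=Z|_U=0$, the chain $\{f_\eta\}|_U$ is a cocycle in $Gers(U,p)^{p-1}$ with support $Y\cap U$, and one checks that the system $\{Y^1_r\cap U,\ Y^2_r\cap U\}$, $1\le r\le p-1$, is a patching system for $Y\cap U$ on $U$: since $\codim_X|Z|=p>p-1\ge r$, no $Y^i_r$ is contained in $|Z|$, so each $Y^i_r\cap U$ is again equidimensional of codimension $r$ in $U$, and the incidence relations among the $Y^i_r$ are preserved under intersection with the open set $U$. By Lemma~\ref{lemma-adelicgoodcocycles} applied to $U$ there is a good cocycle $g_U\in\A(U,\K_p)^{p-1}$ for $\{f_\eta\}|_U$ with respect to this system; in particular $dg_U=0$ and $\nu_U(g_U)=\{f_\eta\}|_U$.

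Next I would lift and correct. Since $\underline{\A}(X,\K_p)^{\bullet}$ is a complex of flabby sheaves (Lemma~\ref{lemma-adelicquasiis}), the restriction $i_U^*\colon\A(X,\K_p)^j\to\A(U,\K_p)^j$ is surjective in every degree; the corresponding restriction of Gersten complexes is a split surjection, and $\nu$, being defined locally, is compatible with both. Denoting by $\A_{|Z|}(X,\K_p)^{\bullet}$ and $Gers_{|Z|}(X,p)^{\bullet}$ the kernel subcomplexes, we get a morphism of short exact sequences of complexes whose outer vertical arrows are the quasiisomorphisms $\nu_X$ and $\nu_U$, so by the five lemma $\A_{|Z|}(X,\K_p)^{\bullet}\to Gers_{|Z|}(X,p)^{\bullet}$ is a quasiisomorphism as well. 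Now choose any $\tilde g\in\A(X,\K_p)^{p-1}$ with $i_U^*\tilde g=g_U$. Then $d\tilde g-[Z]$ lies in $\A_{|Z|}(X,\K_p)^{p}$ (both terms restrict to $0$ on $U$, as $dg_U=0$ and $i_U^*[Z]=0$) and is a cocycle there (since $d[Z]=0$). Its image under $\nu_X$ is $d(\nu_X\tilde g)-Z$; but $\nu_X\tilde g\in Gers(X,p)^{p-1}=G^{p-1}(X,1)$ has restriction $\nu_U(g_U)=\{f_\eta\}|_U$ on $U$, and because $\codim_X|Z|=p$ there are no codimension $p-1$ points of $X$ in $|Z|$, so $Gers(X,p)^{p-1}\to Gers(U,p)^{p-1}$ is an isomorphism; hence $\nu_X\tilde g=\{f_\eta\}$ and the image of $d\tilde g-[Z]$ in $Gers_{|Z|}(X,p)^{p}$ is $\div(\{f_\eta\})-Z=0$. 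By the quasiisomorphism above, $d\tilde g-[Z]=dh$ for some $h\in\A_{|Z|}(X,\K_p)^{p-1}$, and I would set $[\{f_\eta\}]:=\tilde g-h$.

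The three required properties are then immediate: $d[\{f_\eta\}]=d\tilde g-dh=[Z]$; $i_U^*[\{f_\eta\}]=i_U^*\tilde g=g_U$, the good cocycle for the restricted patching system; and $\nu_X[\{f_\eta\}]=\{f_\eta\}-\nu_X(h)$ with $\nu_X(h)\in Gers_{|Z|}(X,p)^{p-1}=0$ (again because no codimension $p-1$ subvariety lies in $|Z|$), so $\nu_X[\{f_\eta\}]=\{f_\eta\}$. Apart from the input of Lemma~\ref{lemma-adelicgoodcocycles} the argument is entirely formal; the one point requiring care is the correction step, where one must know that $h$ can be chosen supported on $|Z|$ --- this rests on the five-lemma comparison of the support subcomplexes together with the codimension count $\codim_X|Z|=p>p-1$, the latter being precisely what also makes the normalization $\nu_X[\{f_\eta\}]=\{f_\eta\}$ automatic and rules out any residual obstruction class.
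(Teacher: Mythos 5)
Your argument is correct, and it is worth noting that the paper itself gives no proof of this lemma --- it is cited directly from \cite[Lemma 3.48]{Gor07} --- so there is no in-text argument to compare against. What you supply is a clean formal deduction from Lemma~\ref{lemma-adelicgoodcocycles} and the quasiisomorphism property of $\nu$ (Lemma~\ref{lemma-adelicquasiis}), rather than the explicit hands-on construction one would expect the source to carry out in parallel with its proof of the good-cocycle lemma. The two short exact sequences of complexes (adelic and Gersten, via flabbiness and the evident splitting of the Gersten restriction) and the resulting five-lemma quasiisomorphism on the support subcomplexes are exactly right, and the observation that $\codim_X|Z| = p$ kills both $Gers_{|Z|}(X,p)^{p-1}$ (making $\nu_X\tilde g = \{f_\eta\}$ and $\nu_X h = 0$ automatic) and forces $Gers_{|Z|}(X,p)^{\bullet}$ to be concentrated in degree $p$ is the crucial point that makes the correction term $h$ vanish under both $\nu_X$ and $i_U^*$. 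All three required properties then drop out simultaneously, as you check.

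Two small cosmetic points, neither affecting the argument. First, since $Y = \Supp(\{f_\eta\})$ has codimension $p-1$, the patching system $\{Y^{1,2}_r\}$ ranges over $1 \le r \le p-2$, not $p-1$; your codimension estimate $\codim_X|Z| = p > r$ is of course still valid. Second, the assertion that the restriction of a patching system to $U$ is again a patching system rests on the full definition in \cite{Gor07}, only part of which is reproduced in this paper; your reasoning (components of $Y^i_r$ have codimension $< p$ so cannot sink into $|Z|$, and $U$ is dense so component and containment relations persist) is the right kind of check, and the lemma's own phrasing ``the restriction of the patching system ... to $U$'' presupposes that this operation is well defined. The gain of your route is that it avoids reopening the explicit adelic singularity conditions: everything is pushed into the already-established Lemma~\ref{lemma-adelicgoodcocycles} over $U$, and the passage back to $X$ is purely homological.
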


For a cycle $Z\in Z^p(X)$, let $\{Z^{1,2}_r\}$, $1\le r\le p-1$ be a
patching system for $|Z|$ on $X$ and $[Z]\in\A(X,\K_p)^p$ be a good
cocycle for $Z$ with respect to the patching system $\{Z^{1,2}_r\}$.
Given a $K_1$-chain $\{f_z\}\in G^{p}(X,1)$ with support on $|Z|$
and $\div(\{f_z\})=0$, let $[\{f_z\}]\in\A(X,p+1)^{p}$ be a good
cocycle for $\{f_z\}$ with respect to the patching system
$\{Z^{1,2}_r\}$. Let $\{W^{1,2}_s\}$, $1\le s\le q-1$, $[W]$, and
$[\{g_w\}]\in\A(X,q+1)^{q}$ be the analogous objects for a cycle
$W\in Z^q(X)$ and a $K_1$-chain $\{g_w\}\in G^{q}(X,1)$ with support
on $|W|$ and $\div(\{g_w\})=0$.

\begin{lemma}\label{lemma-adelicproduct}(\cite[Theorem 4.22]{Gor07})
In the above notations, suppose that $p+q=d$, $|Z|$ meets $|W|$
properly, the patching systems $\{Z^{1,2}_r\}$ and $\{W^{1,2}_s\}$
satisfy the condition $(i)$ from Lemma \ref{lemma-patchingsyst}, and
that the patching system $\{W^{1,2}_s\}$ satisfies the condition
$(ii)$ from Lemma \ref{lemma-patchingsyst} with respect to the
subvariety $|Z|$. Then we have
$$
\nu_X(m([\{f_z\}]\otimes[W]))=(-1)^{(p+1)q}\{(\prod_{z\in Z^{(0)}}f_z^{(\overline{z},W;x)}(x))_x\}\in G^d(X,1),
$$
$$
\nu_X(m([Z]\otimes[\{g_w\}]))=(-1)^{pq}\{(\prod_{w\in W^{(0)}}g_w^{(Z,\overline{w};x)}(x))_x\}\in G^d(X,1),
$$
where $(\overline{z},W;x)$ is the intersection index of the
subvariety $\overline{z}$ with the cycle $W$ at a point
$x\in X^{(d)}$ (the same for $(Z,\overline{w};x)$).
\end{lemma}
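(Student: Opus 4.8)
The plan is to trace through the two pieces of adelic machinery that enter --- the Alexander--Whitney formula for the product $m$ on adelic complexes and the explicit shape of the quasiisomorphism $\nu_X$ in top degree --- and thereby reduce the statement to a local residue computation near each of the finitely many points of $|Z|\cap|W|$. First I would record the product formula: for a non-degenerate flag $\eta_0\ldots\eta_d$ on $X$ one has
\[
m([\{f_z\}]\otimes[W])_{\eta_0\ldots\eta_d}=(-1)^{\varepsilon}\,[\{f_z\}]_{\eta_0\ldots\eta_p}\cdot\iota([W]_{\eta_p\ldots\eta_d})\in K_{d+1}(\OO_{X,\eta_0}),
\]
where $\iota$ is induced by the localization $\OO_{X,\eta_p}\to\OO_{X,\eta_0}$ and $\varepsilon$ is the Koszul sign of the bigraded product. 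In top degree, $\nu_X\colon\A(X,\K_{d+1})^d\to G^d(X,1)=\bigoplus_x k(x)^*$ evaluates an adele at $x\in X^{(d)}$ as the alternating sum, over strict maximal flags $\eta_0\supsetneq\cdots\supsetneq\eta_{d-1}\supsetneq x$ with $\codim_X\eta_i=i$, of the iterated residue (tame symbol) $\partial_{\eta_d}\circ\cdots\circ\partial_{\eta_1}$ applied to the component. So the whole computation is an evaluation of iterated tame symbols on products of the displayed type.

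Next I would cut down the contributing flags using the ``support along the patching system'' property of good cocycles: $[\{f_z\}]_{\eta_0\ldots\eta_p}$ vanishes unless the front flag follows $\{Z^{1,2}_r\}$ and ends at a generic point $z$ of a component of $\Supp(\{f_z\})\subset|Z|$, and $[W]_{\eta_p\ldots\eta_d}$ vanishes unless the back flag follows $\{W^{1,2}_s\}$ and ends at $x\in|W|$. Here the hypotheses do the work: since $|Z|$ meets $|W|$ properly, condition (i) of Lemma~\ref{lemma-patchingsyst} together with Remark~\ref{rmk-adelicproper} forces each $W^{1,2}_s$ to meet $\overline z$ properly, and condition (ii) for $\{W^{1,2}_s\}$ relative to $|Z|$ makes the restriction of the whole system $\{W^{1,2}_s\}$ to $\overline z$ a patching system for the zero-cycle $W\cdot\overline z$ on $\overline z$. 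Consequently, for fixed $x$ the contributions organize into a sum over the components $\overline z$ of $|Z|$ through $x$, and for each such $z$ the remaining sum is precisely $\nu_{\overline z}$ evaluated at $x$ on the restriction to $\overline z$ of $[W]$, which by compatibility of the adelic formalism with the closed embedding $\overline z\hookrightarrow X$ is a good cocycle for $W\cdot\overline z$ on $\overline z$.

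The local computation then finishes the first formula. Applying the residue maps along the front flag collapses $[\{f_z\}]_{\eta_0\ldots\eta_{p-1}z}$ to $f_z$ and turns the product into $f_z\cup\{w_1,\ldots,w_q\}$ on $\overline z$, where the $w_s$ are local equations of $W^1_s\cap\overline z$ (or $W^2_s\cap\overline z$) read off the flag and $w_q$ is a local equation of $|W|\cap\overline z$ occurring with the correct multiplicity; applying the remaining iterated tame symbol on $\overline z$ and using multiplicativity together with the Steinberg relation, the auxiliary units $w_1,\ldots,w_{q-1}$ cancel in pairs --- this is exactly where conditions (i), (ii) make the relevant divisors coprime along the flag --- while the order of $w_q$ along $\overline z$ at $x$ is the intersection index $(\overline z,W;x)$, leaving $f_z(x)^{(\overline z,W;x)}$. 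Multiplying over $z\in Z^{(0)}$ gives $\prod_z f_z^{(\overline z,W;x)}(x)$, and collecting the sign $(-1)^{\varepsilon}$ (here $\varepsilon=(p+1)q$, the $K$-weight of the front factor times the flag-degree of the back one) together with the signs of the iterated tame symbol and of $\nu_X$ produces the overall $(-1)^{(p+1)q}$. The second identity is the symmetric computation with $[Z]$ and $[\{g_w\}]$ in place of $[\{f_z\}]$ and $[W]$; the only change is that the front factor now has $K$-weight $p$ rather than $p+1$, which turns the Koszul sign into $(-1)^{pq}$. Alternatively, the second identity follows from the first by the graded commutativity of $m$ combined with the Weil reciprocity of Lemma~\ref{lemma-recipr}.

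The hardest part will be the reduction described above --- proving that $\nu_X$ of a product of two good cocycles retains only the ``diagonal'' contributions fibered over the components of $|Z|$ through $x$, and that the second patching system, once restricted to $\overline z$, interacts cleanly with the first. This is precisely where the proper-intersection hypothesis and conditions (i), (ii) of Lemma~\ref{lemma-patchingsyst} are indispensable; granted that geometric input, the tame-symbol evaluation and the sign bookkeeping are routine.
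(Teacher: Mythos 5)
The paper does not prove this lemma: it is imported verbatim from \cite[Theorem~4.22]{Gor07}, so there is no in-text argument against which to compare your proposal step by step. Evaluating your sketch on its own terms, the high-level architecture is plausible --- an Alexander--Whitney-type formula for the adelic product followed by evaluation of $\nu_X$ in top degree, with the patching-system conditions governing which flags can contribute --- and it is the kind of reduction to local tame-symbol computations one would expect. However, what you yourself flag as ``the hardest part'' is genuinely the content of the theorem, and it is left as a gap rather than carried out. Concretely: you assert without argument that $\nu_X$ of the product retains only ``diagonal'' contributions fibered over the components of $|Z|$ through $x$, and that the auxiliary units $w_1,\ldots,w_{q-1}$ cancel in pairs via multiplicativity and the Steinberg relation. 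These cancellations are precisely where the off-diagonal flags and the two branches $W^1_s$, $W^2_s$ of each level of the patching system must be shown to contribute opposite terms; conditions (i)--(ii) of Lemma~\ref{lemma-patchingsyst} are designed to make this possible, but invoking them does not by itself produce the cancellation, and the combinatorics of which flag segments pair off is nontrivial. Similarly, the sign identification $\varepsilon=(p+1)q$ is stated as ``the $K$-weight of the front factor times the flag-degree of the back one'' but not derived from the actual Koszul convention of the adelic product together with the sign of $\nu_X$ and of the iterated residue. Finally, your alternative derivation of the second identity from the first via Lemma~\ref{lemma-recipr} (Weil reciprocity) is not available here: that lemma requires $X$ projective, whereas the present statement is in the setting of Section~\ref{subsection-adelic} where $X$ is only assumed smooth over an infinite perfect field. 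In short, the outline is reasonable but the two load-bearing claims --- the flag cancellation and the sign bookkeeping --- are asserted rather than proved, so this does not yet constitute a proof.
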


\subsection{Facts on determinant of cohomology and Picard categories}\label{subsection-determ}

Notions and results of this section are used in
Section~\ref{subsection-determbiext}.

Let $X$ be a smooth projective variety over a field $k$. Given two
coherent sheaves $\F$ and $\G$ on $X$, one has a well-defined
$k^*$-torsor $\det
R\Gamma(X,\F\otimes^{L}_{\OO_X}\G)\backslash\{0\}$, where
$\det(V^{\bullet})=\otimes_{i\in\Z}\det^{(-1)^{i}}H^i(V^{\bullet})$
for a bounded complex of vector spaces $V^{\bullet}$. We will need
to study a behavior of this $k^*$-torsor with respect to exact
sequences of coherent sheaves. With this aim it is more convenient
to consider a $\Z$-graded $k^*$-torsor
$$
\langle\F,\G\rangle=(\rk R\Gamma(X,\F\otimes^{L}_{\OO_X}\G),\det R\Gamma(X,\F\otimes^{L}_{\OO_X}\G)\backslash\{0\})
$$
and to use the construction of virtual coherent sheaves and virtual vector spaces.

Recall that for any exact category $\Cc$, P.\,Deligne has defined in
\cite{Del} the category of virtual objects $V\Cc$ together with a
functor $\gamma:\Cc_{iso}\to V\Cc$ that has a certain universal
property, where $\Cc_{iso}$ is the category with the same objects as
$\Cc$ and with morphisms being all isomorphisms in $\Cc$. The
category $V\Cc$ is a {\it Picard category}: it is non-empty, every morphism in
$V\Cc$ is invertible, there is a functor $+:V\Cc\times V\Cc\to V\Cc$
such that it satisfies some compatible associativity and
commutativity constraints and such that for any object $L$ in
$V\Cc$, the functor $(\cdot+L):V\Cc\to V\Cc$ is an autoequivalence
of $V\Cc$ (see op.cit.)\footnote{In op.cit. this notion is called
a {\it commutative} Picard category; since we do not consider
non-commutative Picard categories,
we use a shorter terminology.}. In particular, this implies the existence of a unit object
$0$ in any Picard category.

Explicitly, the objects of $V\Cc$ are based loops on the
$H$-space $BQ\Cc$ and for any two loops $\gamma_1$, $\gamma_2$ on
$BQ\Cc$, the morphisms in $\Hom_{V\Cc}(\gamma_1,\gamma_2)$ are the
homotopy classes of homotopies from $\gamma_1$ to $\gamma_2$. For an
object $E$ in $\Cc$, the object $\gamma(E)$ in $V\Cc$ is the
canonical based loop on $BQ\Cc$ associated with $E$. By $[E]$
denote the class of $E$ in the group $K_0(\Cc)$. Note that
$[E]=[E']$ if and only if there is a morphism between $\gamma(E)$
and $\gamma(E')$ in $V\Cc$. Moreover, an exact sequence in $\Cc$
$$
0\to E'\to E\to E''\to 0
$$
defines in a canonical way an isomorphism in $V\Cc$
$$
\gamma(E')+\gamma(E'')\cong \gamma(E).
$$
Furthermore, there are canonical isomorphisms
$\pi_{i}(V\Cc)\cong K_i(\Cc)$ for $i=0,1$ and $\pi_{i}(V\Cc)=0$ for
$i>1$.

Let us explain in which sense the functor $\gamma:\Cc_{iso}\to V\Cc$ is universal.
Given a Picard category $\Pc$, consider the category $\mathrm{Det}(\Cc,\Pc)$
of {\it determinant functors}, i.e.,
a category of pairs $(\delta,D)$, where $\delta:\Cc_{iso}\to\Pc$
is a functor and a $D$ is a functorial isomorphism
$$
D(\Sigma):\delta(E')+\delta(E'')\to \delta(E)
$$
for each exact sequence
$$
\Sigma:0\to E'\to E\to E''\to 0
$$
such that the pair $(\delta,D)$ is compatible with zero objects,
associativity and commutativity (see op.cit., 4.3). Note that $\mathrm{Det}(\Cc,\Pc)$
has a natural structure of a Picard category.

For Picard categories $\Pc$ and $\Qc$, denote by $\Funt(\Pc,\Qc)$ the category
of symmetric monoidal functors $F:\Pc\to\Qc$. Morphisms in $\Funt(\Pc,\Qc)$
are monoidal morphisms between monoidal functors. Denote by $0:\Pc\to \Qc$ a
functor that sends every object of $\Pc$ to the unit object $0$ in $\Qc$
and sends every morphism to the identity. Note that $\Funt(\Pc,\Qc)$ has a natural
structure of a Picard category.
The universality of $V\Cc$ is expressed by the following statement (see op.cit., 4.4).

\begin{lemma}\label{lemma-univ}
For any Picard category $\Pc$ and an exact category $\Cc$,
the composition with the functor $\gamma:\Cc\to V\Cc$ defines an equivalence
of Picard categories
$$
\Funt(V\Cc,\Pc)\to\mathrm{Det}(\Cc,\Pc).
$$
\end{lemma}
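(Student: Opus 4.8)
The plan is to exhibit the required equivalence by constructing a quasi-inverse to the composition functor $\gamma^*\colon\Funt(V\Cc,\Pc)\to\mathrm{Det}(\Cc,\Pc)$, $F\mapsto (F\circ\gamma, F(\text{structure isos}))$, and this in turn reduces to the universal property of $V\Cc$ as formulated by Deligne in \cite{Del}, 4.3--4.4. First I would recall that, by construction, $V\Cc$ is the fundamental groupoid of the loop space of $BQ\Cc$, equivalently the Picard groupoid with $\pi_0(V\Cc)=K_0(\Cc)$ and $\pi_1(V\Cc)=K_1(\Cc)$, and that $\gamma\colon\Cc_{iso}\to V\Cc$ together with the canonical isomorphisms attached to short exact sequences is itself an object of $\mathrm{Det}(\Cc,V\Cc)$. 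The content to verify is: (a) $\gamma^*$ is essentially surjective — every determinant functor $(\delta,D)\colon\Cc_{iso}\to\Pc$ factors, up to monoidal isomorphism, through $\gamma$; (b) $\gamma^*$ is fully faithful — monoidal natural transformations $F\Rightarrow G$ between symmetric monoidal functors $V\Cc\to\Pc$ correspond bijectively to monoidal natural transformations $F\circ\gamma\Rightarrow G\circ\gamma$ compatible with the exact-sequence data.

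For essential surjectivity, given $(\delta,D)$ I would produce $F\colon V\Cc\to\Pc$ on the level of the classifying space: a symmetric monoidal functor out of $V\Cc$ is the same datum as a map of connective spectra (or of the relevant $H$-spaces) $\tau_{\le 1}K(\Cc)\to\tau_{\le 1}K(\Pc)$, where $K(\Pc)$ denotes the spectrum with $\pi_0=\pi_0(\Pc)$, $\pi_1=\pi_1(\Pc)$, and $K(\Cc)$ the Quillen $K$-theory spectrum truncated in degrees $\le 1$. The pair $(\delta, D)$ gives, by the additivity theorem of Quillen (which packages exactly the compatibility of $\delta$ with short exact sequences, associativity and commutativity), a map on $BQ\Cc$ well-defined up to the homotopies recorded by $D$; looping once and truncating yields $F$. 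Then one checks $F\circ\gamma\cong\delta$ as determinant functors, which is essentially the statement that $\gamma$ is initial among determinant functors valued in Picard categories — this is the heart of Deligne's 4.4 and I would cite it rather than reprove the additivity theorem. Full faithfulness is then a diagram chase: a monoidal transformation between the two spectrum-level maps is detected on $\pi_0$ and $\pi_1$ of the source, i.e. on $K_0(\Cc)$ and $K_1(\Cc)$, and the generators of these groups in the Picard-category presentation are precisely the objects $\gamma(E)$ and the automorphisms of $\gamma(E)$ coming from $K_1$, so compatibility with $\gamma$ and with exact sequences pins the transformation down uniquely.

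The main obstacle is the bookkeeping in (a): one must check that the system of isomorphisms $D(\Sigma)$ glues to genuinely well-defined data on the \emph{2-truncated} space $BQ\Cc$, i.e. that all higher coherences (the pentagon/hexagon-type conditions among iterated exact sequences, the $S_\bullet$-style simplicial identities) are forced by the finitely many axioms in the definition of a determinant functor. This is exactly where Deligne's argument invokes that $BQ\Cc$ has the homotopy type built from the Waldhausen/Quillen $S_\bullet$- or $Q$-construction and that $\pi_i$ vanishes above degree $1$ after looping, so no obstruction lives in degrees $\ge 2$; since $\Funt$ and $\mathrm{Det}$ are by construction supported in those low degrees, the comparison is an equivalence. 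I would therefore organize the proof as: (1) identify both sides with categories of spectrum maps into $\tau_{\le1}K(\Pc)$; (2) quote Deligne \cite{Del}, 4.3--4.4 (ultimately the additivity theorem) for the universality of $\gamma$; (3) deduce essential surjectivity and full faithfulness of $\gamma^*$ formally. The only genuinely new work, if any, is spelling out step (1) in the notation of this paper; everything else is a reference to \cite{Del}.
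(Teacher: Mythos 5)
The paper does not actually prove this lemma; it is recalled from Deligne, \cite{Del} 4.4, as background, so there is no in-paper argument for you to have matched or missed. Your proposal correctly treats the content as Deligne's and wraps it in the language of 1-truncated connective spectra (stable 1-types). That reformulation is faithful to the spirit of Deligne's proof --- which does assemble a determinant functor into a map $BQ\Cc\to B^2\Pc$ via the $Q$-construction, then loops down and truncates --- and you correctly identify why no higher coherence obstructions arise: both $\Funt(V\Cc,\Pc)$ and $\mathrm{Det}(\Cc,\Pc)$ are 2-truncated because $\Pc$ is a Picard groupoid. One inaccuracy worth flagging: invoking \emph{Quillen's additivity theorem} as the tool that ``packages'' the determinant-functor axioms is not quite the right attribution. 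Additivity asserts that certain $K$-theoretic maps are equivalences; what is actually used to produce a map out of $BQ\Cc$ from the pair $(\delta,D)$ is that the determinant-functor axioms encode precisely the simplicial coherences of the $Q$- (or $S_\bullet$-) construction, which is the core of Deligne's \S4 and is closer in spirit to what is now called the theory of determinant functors and $K$-theory. Since you explicitly defer to \cite{Del} for the substance and the paper provides no proof of its own, your treatment is at least as complete as the paper's, and the reference to Deligne 4.3--4.4 is the right move.
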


By universality of $V\Cc$, the smallest
Picard subcategory in $V\Cc$ containing $\gamma(\Cc_{iso})$ is
equivalent to the whole category $V\Cc$.

\begin{examp}\label{examp-virtualfield}
Let us describe explicitly the category $V\M_k$, where $\M_k$ is the exact
category of finite-dimensional vector spaces over a field $k$
(see op.cit., 4.1 and 4.13). Objects of $V\M_k$ are $\Z$-graded $k^*$-torsors, i.e,
pairs $(l,L)$, where $l\in\Z$ and $L$ is a $k^*$-torsor. For any objects
$(l,L)$ and $(m,M)$ in $V\M_k$, we have $\Hom_{V\M_k}((l,L),(m,M))=0$ if $l\ne
m$, and $\Hom_{V\M_k}((l,L),(m,M))=\Hom_{k^*}(L,M)$ if $l=m$. The
functor $(\M_k)_{iso}\to V\M_k$ sends a vector space $V$ over $k$
to the pair $(\rk_k(V),\det_k(V)\backslash\{0\})$. We have
$$
(l,L)+(m,M)=(l+m,L\otimes M)
$$
and the commutativity constraint
$$
(l,L)+(m,M)\cong (m,M)+(l,L)
$$
is given by the formula $u\otimes v\mapsto (-1)^{lm}v\otimes u$, where $u\in L$, $v\in M$.
\end{examp}

Given exact categories $\Cc$, $\Cc'$, $\Ec$, and a biexact
functor $\Cc\times\Cc'\to\Ec$, one has a corresponding functor
$G:V\Cc\times V\Cc'\to V\Ec$, which is {\it distributive} with respect
to addition in categories of virtual objects: for a fixed object $L$ in $V\Cc$ or $M$ in $V\Cc'$,
the functor $G(L,M)$ is a symmetric monoidal functor and the choices of a
fixed argument are compatible with each other (see op.cit., 4.11). A
distributive functor is an analog for Picard
categories of what is a biextension for abelian groups (see Section~\ref{subsection-quotientbiext}).

Now let $\M_X$, $\Pc_X$, and $\Pc'_X$ denote the categories of
coherent sheaves on $X$, vector bundles on $X$, and vector bundles
$E$ on $X$ such that $H^i(X,E)=0$ for $i>0$, respectively. The
natural symmetric monoidal functors $V\Pc_X\to V\M_X$ and $V\Pc'_X\to V\Pc_X$ are
equivalences of categories, see op.cit., 4.12. A choice of
corresponding finite resolutions for all objects gives inverse
functors to these functors. Thus, there is a distributive functor
$$
\langle\cdot,\cdot\rangle:
V\M_X\times V\M_X\cong V\Pc_X\times V\Pc_X\to V\Pc_X\cong V\Pc'_X\to V\M_k.
$$
There is a canonical isomorphism of
the composition
$\langle\cdot,\cdot\rangle\circ\gamma:(\M_X)_{iso}\times(\M_X)_{iso}\to V\M_k$
and the previously defined functor
$$
\langle\F,\G\rangle=(\rk R\Gamma(X,\F\otimes^{L}_{\OO_X}\G),\det R\Gamma(X,\F\otimes^{L}_{\OO_X}\G)\backslash\{0\})
$$
By Lemma~\ref{lemma-univ}, the last condition defines the distributive functor
$\langle\cdot,\cdot\rangle$ up to a unique isomorphism.


In Section~\ref{subsection-determbiext} we will need some more generalities about Picard categories that we describe below.
For Picard categories $\Pc_1$ and $\Pc_2$, denote by $\Distr(\Pc_1,\Pc_2;\Qc)$ the category
of distributive functors $G:\Pc_1\times\Pc_2\to\Qc$. Morphisms between $G$ and $G'$
in $\Distr(\Pc_1,\Pc_2;\Qc)$ are morphisms between functors
such that for a fixed object $L$ in $\Pc_1$ or
$M$ in $\Pc_2$, the corresponding morphism of monoidal functors $G(L,M)\to G'(L,M)$
is monoidal. Denote by $0:\Pc_1\times\Pc_2\to \Qc$ a
functor that sends every object of $\Pc_1\times \Pc_2$ to the unit object $0$ in $\Qc$
and sends every morphism to the identity. As above, the category $\Distr(\Pc_1,\Pc_2;\Qc)$
has a natural structure of a Picard category.

We will use quotients of Picard categories.

\begin{defin}\label{defin-localiz}
Given a functor $F$ in $\Funt(\Pc',\Pc)$, a {\it quotient} $\Pc/\Pc'$
is the following category: objects of $\Pc/\Pc'$
are the same as in $\Pc$, and morphisms are defined by the formula
$$
\Hom_{\Pc/\Pc'}(L,M)=\colim_{K\in Ob(\Pc')}\Hom_{\Pc}(L,M+F(K)),
$$
i.e., we take the colimit of the functor $\Pc\to Sets$, $K\mapsto \Hom_{\Pc}(L,M+F(K))$.
The composition of morphisms $f\in\Hom_{\Pc/\Pc'}(L,M)$ and $g\in \Hom_{\Pc/\Pc'}(M,N)$
is defined as follows: represent $f$ and $g$ by elements
$\widetilde{f}\in \Hom_{\Pc}(L,M+F(K_1))$ and
$\widetilde{g}\in \Hom_{\Pc}(M,N+F(K_2))$, respectively, and take the composition
$$
(\widetilde{g}+id_{F(K_1)})\circ\widetilde{f}\in\Hom_{\Pc}(L,N+F(K_1)+F(K_2))=\Hom_{\Pc}(L,N+F(K_1+K_2)).
$$
\end{defin}

\begin{rmk}\label{rmk-localiz}
\hspace{0cm}
\begin{itemize}

\item[(i)] Taking a representative $K\in Ob(\Pc')$ for each class in $\pi_0(\Pc')$, we get
a decomposition
$$
\Hom_{\Pc/\Pc'}(L,M)=
\coprod_{[K]\in\pi_0(\Pc')}\Hom_{\Pc}(L,M+F(K))/\pi_1(\Pc'),
$$
where $\pi_1(\Pc')$ acts on $\Hom_{\Pc}(L,M+F(K))$ via the second summand
and the identification $\pi_1(\Pc')=\Hom_{\Pc'}(K,K)$.

\item[(ii)] A more natural way to define a quotient of Picard categories
would be to consider 2-Picard categories instead of taking quotients of sets of morphisms
(or, more generally, to consider homotopy quotients of $\infty$-groupoids);
the above construction is a truncation to the first two layers
in the Postnikov tower of the ``right'' quotient.
\end{itemize}
\end{rmk}

Let us consider a descent property for quotients of Picard categories.

For a symmetric monoidal functor $F:\Pc'\to \Pc$ and a Picard category $\Qc$,
consider a category $\Funt_{F}(\Pc,\Qc)$,
whose objects are pairs $(G,\Psi)$, where $G$ is an object
in $\Funt(\Pc,\Qc)$ and $\Psi:G\circ F\to 0$ is an isomorphism in $\Funt(\Pc',\Qc)$.
Morphisms in $\Funt_{F}(\Pc,\Qc)$ are defined in a natural way.
Note that $\Funt_{F}(\Pc,\Qc)$ is a Picard category.

Analogously, for two symmetric monoidal functors of Picard categories $F_1:\Pc'_1\to \Pc_1$,
$F_2:\Pc'_2\to \Pc_2$, and a Picard category $\Qc$, consider a category $\Distr_{(F_1,F_2)}(\Pc_1,\Pc_2;\Qc)$,
whose objects are triples $(G,\Psi_1,\Psi_2)$, where $G$ is an object
in $\Distr(\Pc_1,\Pc_2;\Qc)$, $\Psi_i:G\circ (F_i\times\mathrm{Id})\to 0$, $i=1,2$,
are isomorphisms in $\Distr(\Pc'_i,\Pc_{3-i};\Qc)$ such that
$$
\Psi_1\circ (\mathrm{Id}\times F_{2})=\Psi_2\circ (F_1\times\mathrm{Id})\in
\Hom(G\circ(F_1\times F_2),0),
$$
where morphisms are taken in the category $\Distr(\Pc'_1,\Pc'_2;\Qc)$.
Morphisms in $\Distr_{(F_1,F_2)}(\Pc_1,\Pc_2;\Qc)$ are defined in a natural way.
Note that $\Distr_{(F_1,F_2)}(\Pc_1,\Pc_2;\Qc)$ is a Picard category.

\begin{lemma}\label{lemma-localiz}
\hspace{0cm}
\begin{itemize}
\item[(i)] The category $\Pc/\Pc'$ inherits a canonical Picard category
structure such that the natural functor
$P:\Pc\to \Pc/\Pc'$ is a symmetric monoidal functor.
\item[(ii)]  There is an exact sequence of abelian groups
$$
\pi_1(\Pc')\to \pi_1(\Pc)\to \pi_1(\Pc/\Pc')\to\pi_0(\Pc)\to
\pi_0(\Pc')\to\pi_0(\Pc/\Pc')\to 0.
$$
\item[(iii)] The composition of functors with $P$ defines an isomorphism of Picard categories
$$
\Funt(\Pc/\Pc',\Qc)\to \Funt_{F}(\Pc,\Qc).
$$
\item[(iv)] The composition of functors with $P_1\times P_2$
defines an isomorphism of Picard categories
$$
\Distr(\Pc_1/\Pc'_1,\Pc_2/\Pc_2';\Qc)\to \Funt_{(F_1\times F_2)}(\Pc_1,\Pc_2;\Qc).
$$
\end{itemize}
\end{lemma}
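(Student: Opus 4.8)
The plan is to derive all four statements by unwinding the colimit in Definition~\ref{defin-localiz} and the description of morphisms in Remark~\ref{rmk-localiz}; assertions (iii) and (iv) then follow as instances of the universal property of a localization, proved by exhibiting explicit inverse functors. For (i), I would let the addition on $\Pc/\Pc'$ agree on objects with that of $\Pc$ and, on a pair of morphism classes represented by $\widetilde f\colon L_1\to M_1+F(K_1)$ and $\widetilde g\colon L_2\to M_2+F(K_2)$, take the class of $\widetilde f+\widetilde g$ followed by the canonical isomorphism $(M_1+F(K_1))+(M_2+F(K_2))\cong (M_1+M_2)+F(K_1+K_2)$ assembled from the commutativity constraint of $\Pc$ and the monoidal structure of $F$. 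Independence of the chosen representatives (the transition maps in the colimit being postcomposition with $\mathrm{id}_M+F(\varphi)$ for $\varphi$ a morphism of $\Pc'$) makes this bifunctorial. The associativity, commutativity and unit constraints, and the inverse $-L$ of an object, are defined as the $P$-images of the corresponding data of $\Pc$; the coherence diagrams in $\Pc/\Pc'$ are then $P$-images of commuting diagrams and hence commute, every morphism of $\Pc/\Pc'$ is invertible since each of its representatives is, and $(\cdot+L)$ is an autoequivalence because it is one term by term on the defining colimits. Finally $P$ is symmetric monoidal with identity structure isomorphisms.

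For (ii), I would apply Remark~\ref{rmk-localiz}(i) with $L=M=0$. Since $\Pc/\Pc'$ has the same objects as $\Pc$ and every object of the form $F(K)$ becomes isomorphic to $0$ in $\Pc/\Pc'$, one reads off $\pi_0(\Pc/\Pc')=\mathrm{coker}\bigl(\pi_0(\Pc')\xrightarrow{\pi_0(F)}\pi_0(\Pc)\bigr)$, and
\[
\pi_1(\Pc/\Pc')=\Hom_{\Pc/\Pc'}(0,0)=\coprod_{[K]\in\pi_0(\Pc')}\Hom_{\Pc}(0,F(K))/\pi_1(\Pc'),
\]
where the term indexed by $[K]$ is empty unless $[F(K)]=0$ in $\pi_0(\Pc)$, in which case it is the quotient of a $\pi_1(\Pc)$-torsor by the action of $\pi_1(\Pc')$. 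This exhibits $\pi_1(\Pc/\Pc')$ as an extension of $\ker\bigl(\pi_0(\Pc')\to\pi_0(\Pc)\bigr)$ by $\mathrm{coker}\bigl(\pi_1(\Pc')\to\pi_1(\Pc)\bigr)$, and splicing this with the computation of $\pi_0$ yields the six-term exact sequence.

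For (iii), the functor sends $H\in\Funt(\Pc/\Pc',\Qc)$ to the pair $(H\circ P,\Psi_H)$, where $\Psi_H$ is induced by the canonical monoidal isomorphism $P\circ F\xrightarrow{\ \sim\ }0$ in $\Funt(\Pc',\Pc/\Pc')$ whose value at $K$ is the class of $\mathrm{id}_{F(K)}$ in $\Hom_{\Pc/\Pc'}(F(K),0)$ (an isomorphism by (i)). For the inverse I would set $\bar G(L)=G(L)$ on objects and, for a morphism of $\Pc/\Pc'$ represented by $\widetilde f\colon L\to M+F(K)$,
\[
\bar G(f)=\bigl(\mathrm{id}_{G(M)}+\Psi_K\bigr)\circ G(\widetilde f)\colon G(L)\to G(M)+G(F(K))\to G(M)+0=G(M).
\]
The crucial point, and the main obstacle, is that $\bar G(f)$ must not depend on the representative $(\widetilde f,K)$: replacing $K$ by $K'$ along $\varphi\colon K\to K'$ replaces $\widetilde f$ by $(\mathrm{id}_M+F(\varphi))\circ\widetilde f$, and the equality of the two resulting values for $\bar G(f)$ is precisely the naturality of $\Psi\colon G\circ F\to 0$ at $\varphi$ (monoidality of $\Psi$ being used, in turn, to check that $\bar G$ is symmetric monoidal). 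Once well-definedness is established, one verifies that $\bar G$ is symmetric monoidal, that $\bar G\circ P\cong G$ compatibly with the trivializations, and that the two assignments are mutually quasi-inverse, giving the stated equivalence of Picard categories.

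For (iv), I would run the argument of (iii) in each of the two variables of a distributive functor. Distributivity of $G$ lets one reconstruct $\bar G$ from $(G,\Psi_1,\Psi_2)$ by applying the one-variable construction of (iii) in $\Pc_1$ for each fixed object of $\Pc_2$ and, symmetrically, in $\Pc_2$ for each fixed object of $\Pc_1$; the compatibility condition $\Psi_1\circ(\mathrm{Id}\times F_2)=\Psi_2\circ(F_1\times\mathrm{Id})$ is exactly what is needed for these two partial reconstructions to agree on morphisms that move both coordinates into the respective subcategories. The remaining verifications — that $\bar G$ is distributive, that it restricts to $G$ along $P_1\times P_2$ compatibly with $\Psi_1,\Psi_2$, and that this is inverse to the stated functor — are formal, repeating those of (iii). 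Thus the only step genuinely requiring care is the well-definedness in (iii), from which (iv) and the descent statements follow without difficulty.
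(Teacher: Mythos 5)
Your proposal follows the same route as the paper: the Picard structure on $\Pc/\Pc'$ is inherited from $\Pc$, (ii) is read off from the explicit description of morphism sets in Remark~\ref{rmk-localiz}(i) applied at $0$, and (iii) is proved by exhibiting the inverse functor $(G,\Psi)\mapsto\bar G$ with $\bar G(f)=(\mathrm{id}_{G(M)}+\Psi(K))\circ G(\widetilde f)$, with (iv) analogous. You spell out the well-definedness check for $\bar G$ (independence of the representative via naturality of $\Psi$ over $\Pc'$) which the paper leaves implicit, but the underlying constructions coincide.
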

\begin{proof}
(i) Let $L,M$ be objects in $\Pc/\Pc'$; by definition, the sum $L+M$ in $\Pc/\Pc'$
is equal the sum of $L$ and $M$ as objects of the Picard category $\Pc$.
The commutativity and associativity constraints are also induced by that
in the category $\Pc$. The check of the needed properties is straightforward.

(ii) Direct checking that uses an explicit description of morphisms given in
Remark~\ref{rmk-localiz}(i).

(iii) Let $H:\Pc/\Pc'\to \Qc$ be a symmetric monoidal functor and
let $K$ be an object in $\Pc'$. There is a canonical morphism
$F(K)\to 0$ in $\Pc/\Pc'$ given by the morphism $F(K)\to F(K)$ in $\Pc$;
this induces a canonical morphism $(H\circ P\circ F)(K)\to 0$ in $\Qc$
and an isomorphism of symmetric monoidal functors $\Psi:H\circ P\circ F\to 0$.
Thus the assignment $H\mapsto (H\circ P,\Psi)$ defines defines a functor
$\Funt(\Pc/\Pc',\Qc)\to \Funt_{F}(\Pc,\Qc)$.

Let $(G,\Psi)$ be an object in
$\Funt_{F}(\Pc,\Qc)$. We put $H(L)=G(L)$ for any object $L$ in $\Pc/\Pc'$.
Let $f:L\to M$ be a morphism in $\Pc/\Pc'$ represented by a morphism
$\widetilde{f}:L\to M+F(K)$ in $\Pc$. We put $H(f)$ to be the composition
$$
H(L)=G(L)\stackrel{G(\widetilde{f})}\longrightarrow G(M)+(G\circ F)(K)
\stackrel{id_{G(M)}+\Psi(K)}\longrightarrow G(M)=H(M).
$$
This defines a symmetric monoidal functor $H:\Pc/\Pc'\to \Qc$. The assignment
$(G,\Psi)\mapsto H$ defines an inverse functor $\Funt_{F}(\Pc,\Qc)\to \Funt(\Pc/\Pc',\Qc)$
to the functor constructed above.

(iv) The proof is completely analogous to the proof of (iii).
\end{proof}

For an abelian group $A$, denote by $\Picard(A)$ a Picard category whose
objects are elements of $A$ and whose only morphisms are identities.

\begin{examp}\label{examp-localiz}
\hspace{0cm}
\begin{itemize}
\item[(i)] Let $\Pc$ be a Picard category and let $0_{\Pc}$ be a full subcategory in $\Pc$,
whose only object is $0$; then the natural functor $\Pc/0_{\Pc}\to \Picard(\pi_0(\Pc))$
is an equivalence of Picard categories.
\item[(ii)] Let $\Ac'$ be an abelian Serre subcategory in an abelian category $\Ac$;
then the natural functor $V\Ac/V\Ac'\to V(\Ac/\Ac')$ is an equivalence of Picard categories.
This follows immediately from the 5-lemma and Lemma~\ref{lemma-localiz} (ii).
\end{itemize}
\end{examp}


For an abelian group $N$, denote by $N_{tors}$ the Picard category of $N$-torsors.
Note that for abelian groups $A$, $B$, a biextension of $(A,B)$ by $N$ is
the same as a distributive functor $\Picard(A)\times\Picard(B)\to N_{tors}$.
Let $\Pc_1$, $\Pc_2$ be Picard categories,
$P$ be a biextension of $(\pi_0(\Pc_1),\pi_0(\Pc_2))$ by $N$. Then $P$ naturally defines a
distributive functor $G_P:\Pc_1\times\Pc_2\to N_{tors}$.


Combining Lemma~\ref{lemma-localiz}(iv) and Example~\ref{examp-localiz}(i),
we get the next statement.

\begin{lemma}\label{lemma-Deligne}
A distributive functor $G:\Pc_1\times\Pc_2\to N_{tors}$ is isomorphic to the
functor $G_P$ for some biextension $P$ of
$(\pi_0(\Pc_1),\pi_0(\Pc_2))$ by $N$ if and only if
$G_*(\pi_0(\Pc_1)\times\pi_1(\Pc_2))=G_*(\pi_1(\Pc_1)\times\pi_0(\Pc_2))=0\in
N=\pi_1(N_{tors})$.
\end{lemma}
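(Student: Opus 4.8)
The plan is to use the localization machinery of Lemma~\ref{lemma-localiz} together with the explicit description of quotients of Picard categories from Example~\ref{examp-localiz}(i). The key observation is that, for an abelian group $A$, the natural functor $\Pc_i \to \Picard(\pi_0(\Pc_i))$ (factoring through $\Pc_i/0_{\Pc_i}$) lets us compare distributive functors out of $\Pc_1\times\Pc_2$ with biextensions of $(\pi_0(\Pc_1),\pi_0(\Pc_2))$, which by the remark preceding the statement are precisely the distributive functors $\Picard(\pi_0(\Pc_1))\times\Picard(\pi_0(\Pc_2))\to N_{tors}$.

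First I would establish the ``only if'' direction, which is the easy one: if $G \cong G_P$ for a biextension $P$, then $G$ factors through $\Picard(\pi_0(\Pc_1))\times\Picard(\pi_0(\Pc_2))$, a Picard category whose objects have no nonidentity automorphisms; hence the induced maps on $\pi_1$ from either factor are zero, and the two displayed compositions $G_*(\pi_0(\Pc_1)\times\pi_1(\Pc_2))$ and $G_*(\pi_1(\Pc_1)\times\pi_0(\Pc_2))$ vanish in $\pi_1(N_{tors})=N$.

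For the ``if'' direction, suppose the two vanishing conditions hold. Applying Lemma~\ref{lemma-localiz}(iv) with $\Pc'_i = 0_{\Pc_i}$ and $F_i$ the inclusion, composition with $P_1\times P_2$ gives an equivalence $\Distr(\Pc_1/0_{\Pc_1},\Pc_2/0_{\Pc_2};\Qc)\xrightarrow{\ \sim\ }\Distr_{(F_1\times F_2)}(\Pc_1,\Pc_2;\Qc)$ — here I am using the distributive analog of part (iv), where $\Funt_{(F_1\times F_2)}$ should read $\Distr_{(F_1\times F_2)}$; I would spell out this variant explicitly since only part (iv) in the $\Funt$-form is stated. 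The vanishing of $G_*$ on $\pi_0(\Pc_1)\times\pi_1(\Pc_2)$ and $\pi_1(\Pc_1)\times\pi_0(\Pc_2)$ is exactly what is needed to equip $G$ with isomorphisms $\Psi_i : G\circ(F_i\times \mathrm{Id})\to 0$ in $\Distr(0_{\Pc_i},\Pc_{3-i};\Qc)$ satisfying the required compatibility: restricting $G$ to $0_{\Pc_i}\times\Pc_{3-i}$, the functor is pointwise trivial up to the $\pi_1$-contributions that we have assumed to vanish, so a canonical such $\Psi_i$ exists, and the cocycle condition $\Psi_1\circ(\mathrm{Id}\times F_2)=\Psi_2\circ(F_1\times\mathrm{Id})$ holds because both sides are the unique trivialization of $G\circ(F_1\times F_2)$ (which lands in $0_{\Pc_1}\times 0_{\Pc_2}$, where everything is a point). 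Thus $(G,\Psi_1,\Psi_2)$ is an object of $\Distr_{(F_1\times F_2)}(\Pc_1,\Pc_2;\Qc)$, hence by the equivalence comes from a distributive functor $\bar G : \Pc_1/0_{\Pc_1}\times\Pc_2/0_{\Pc_2}\to N_{tors}$; composing with the equivalences $\Pc_i/0_{\Pc_i}\simeq\Picard(\pi_0(\Pc_i))$ of Example~\ref{examp-localiz}(i) yields a distributive functor $\Picard(\pi_0(\Pc_1))\times\Picard(\pi_0(\Pc_2))\to N_{tors}$, i.e. a biextension $P$ of $(\pi_0(\Pc_1),\pi_0(\Pc_2))$ by $N$, and by construction $G_P\cong G$.

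The main obstacle I anticipate is bookkeeping rather than conceptual: making the distributive version of Lemma~\ref{lemma-localiz}(iv) precise (the excerpt states (iv) with $\Funt_{(F_1\times F_2)}$, but the target should be the distributive category $\Distr_{(F_1\times F_2)}$), and checking that the vanishing of the two $\pi_1$-pairings is genuinely equivalent to the \emph{existence} of the trivializations $\Psi_1,\Psi_2$ together with their compatibility — this amounts to unwinding the explicit description of morphisms in a quotient from Remark~\ref{rmk-localiz}(i) and verifying that no further obstruction survives in $\pi_1(N_{tors})$. Once that is in place, the rest is a formal chain of the equivalences already recorded in Lemma~\ref{lemma-localiz} and Example~\ref{examp-localiz}.
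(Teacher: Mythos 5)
Your proposal is correct and follows exactly the route the paper intends (the paper proves this lemma only by the phrase ``Combining Lemma~\ref{lemma-localiz}(iv) and Example~\ref{examp-localiz}(i)''): take $\Pc'_i=0_{\Pc_i}$, use the distributive variant of Lemma~\ref{lemma-localiz}(iv) to descend $G$ along $\Pc_i\to\Pc_i/0_{\Pc_i}\simeq\Picard(\pi_0(\Pc_i))$, and observe that the existence of the trivializations $\Psi_i$ is equivalent to the vanishing of the two $\pi_1$-pairings while the cocycle condition holds automatically because $\mathrm{Aut}(0)$ in $\Distr(0_{\Pc_1},0_{\Pc_2};N_{tors})$ is trivial. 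You also correctly flagged the notational slip in the paper's statement of Lemma~\ref{lemma-localiz}(iv), which should read $\Distr_{(F_1,F_2)}$ rather than $\Funt_{(F_1\times F_2)}$.
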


\section{General facts on biextensions}\label{section-quotientbiext}

\subsection{Quotient biextension}\label{subsection-quotientbiext}

For all groups below, we write the groups law in the additive
manner. For an abelian group $A$ and a natural number $l\in \Z$, let
$A_l$ denote the $l$-torsion in $A$.

The notion of a biextension was first introduced in \cite{Mum}; see
more details on biextensions in \cite{Bre} and \cite{SGA}. Recall
that the set of isomorphism classes of biextensions of $(A,B)$ by $N$ is canonically
bijective with the group $\Ext^1_{\Abc}(A\otimes^L_{\Z}B,N)$ for any
abelian groups $A$, $B$, and $N$, where $\Abc$ is the category of
all abelian groups.

Let us describe one explicit construction of biextensions. As
before, let $A$ and $B$ be abelian groups.

\begin{defin}\label{defin-bigroup}
A subset $T\subset A\times B$ is a {\it bisubgroup} if for all
elements $(a,b)$, $(a,b')$, and $(a',b)$ in $T$, the elements
$(a+a',b)$ and $(a,b+b')$ belong to $T$. For a bisubgroup $T\subset
A\times B$ and an abelian group $N$, a {\it bilinear map} $\psi:T\to
N$ is a map of sets such that for all elements $(a,b)$, $(a,b')$,
and $(a',b)$ in $T$, we have $\psi(a,b)+\psi(a',b)=\psi(a+a',b)$ and
$\psi(a,b)+\psi(a,b')=\psi(a,b+b')$.
\end{defin}

Suppose that $T\subset A\times B$ is a bisubgroup and
$\varphi_A:A\to A'$, $\varphi_B:B\to B'$ are surjective group
homomorphisms such that $(\varphi_A\times\varphi_B)(T)=A'\times B'$.
Consider the set
$$
S=T\cap(\Ker(\varphi_A)\times B\cup A\times\Ker(\varphi_B))\subset
A\times B;
$$
clearly, $S$ is a bisubgroup in $A\times B$. Given a bilinear map
$\psi:S\to N$, let us define an equivalence relation on $N\times T$
as the transitive closure of the isomorphisms
$$
N\times\{(a,b)\}\stackrel{\psi(a',b)}\longrightarrow
N\times\{(a+a',b)\},
$$
$$
N\times\{(a,b)\}\stackrel{\psi(a,b')}\longrightarrow
N\times\{(a,b+b')\}
$$
for all $(a,b)\in T$ and $(a',b),(a,b')\in S$. It is easy to check
that the quotient $P_{\psi}$ of $N\times T$ by this equivalence
relation has a natural structure of a biextension of $(A',B')$ by
$N$.

\begin{rmk}\label{rmk-biextcoinc}
Suppose that we are given two bisubgroups $T_2\subset T_1$ in
$A\times B$ satisfying the above conditions; if the restriction of
$\psi_1$ to $S_2$ is equal to $\psi_2$, then the biextensions
$P_{\psi_1}$ and $P_{\psi_2}$ are canonically isomorphic.
\end{rmk}

\begin{rmk}\label{rmk-isombiext}
Suppose that we are given two bilinear maps $\psi_1,\psi_2:S\to N$.
Let $\phi:T\to N$ be a bilinear map that satisfies
$\psi_1=\phi|_S+\psi_2$; then the multiplication by $\phi$ defines
an isomorphism of the biextensions $P_{\psi_2}$ and $P_{\psi_1}$ of
$(A',B')$ by $N$.
\end{rmk}

\begin{rmk}\label{rmk-quotient-biext}
There is a natural generalization of the construction mentioned
above. Suppose that we are given a biextension $Q$ of $(A,B)$ by $N$
and surjective homomorphisms $\varphi_A:A\to A'$, $\varphi_B:B\to
B'$. Consider the bisubgroup $S=\Ker(\varphi_A)\times B\cup
A\times\Ker(\varphi_B)$ in $A\times B$; a bilinear map (in the
natural sense) $\psi:S\to Q$ is called a {\it trivialization of the
biextension $Q$ over the bisubgroup $S$}. The choice of the
trivialization $\psi:S\to Q$ canonically defines a biextension $P$
of $(A',B')$ by $N$ such that the biextension
$(\varphi_A\times\varphi_B)^*P$ is canonically isomorphic to the
biextension $Q$ (one should use the analogous construction to the
one described above).
\end{rmk}

Now let us recall the definition of a Weil pairing associated to a
biextension.

\begin{defin}
Consider a biextension $P$ of $(A,B)$ by $N$ and a natural number
$l\in{\mathbb N}$, $l\ge 1$; for elements $a\in A_l$, $b\in B_l$,
their {\it Weil pairing} $\phi_l(a,b)\in N_l$ is the obstruction to
a commutativity of the diagram
$$
\begin{array}{ccc}
P_{(a,lb)}&\stackrel{\alpha}\longrightarrow&P^{\wedge l}_{(a,b)}\\
\uparrow\lefteqn{\beta}&&\uparrow\lefteqn{\gamma}\\
P_{(0,0)}&\stackrel{\delta}\longrightarrow& P_{(la,b)},\\
\end{array}
$$
where arrows are canonical isomorphisms of $N$-torsors given by the
biextension structure on $P$: $\phi_l(a,b)=\gamma\circ\delta-\alpha\circ\beta$.
\end{defin}

It is easily checked that the Weil pairing defines a bilinear map
$\phi_l:A_l\times B_l\to N_l$.

\begin{rmk}
There is an equivalent definition of the Weil pairing:
an isomorphism class of a biextension is defined by a morphism
$A\otimes^L_{\Z}B[-1]\to N$ in $D^b(\Abc)$, and the corresponding Weil pairing is obtained
by taking the composition
$$
\oplus_l (A_l\otimes B_l)\to\mathrm{Tor}_1^{\Z}(A,B)\to A\otimes^L_{\Z}B[-1]\to N.
$$
\end{rmk}

\begin{lemma}\label{lemma-Weil-pair-commut}
Let $\varphi_A:A\to A'$, $\varphi_B:B\to B'$, $T\subset A\times B$,
and $\psi:S\to N$ be as above. Consider a pair $(a,b)\in T$ such
that $\varphi_A(la)=0$ and \mbox{$\varphi_B(lb)=0$}; then we have
$$
\phi_l(\varphi_A(a),\varphi_B(b))=\psi(la,b)-\psi(a,lb),
$$
where $\phi_l$ denotes the Weil pairing associated to the
biextension $P$ of $(A',B')$ by $N$ induced by the bilinear map
$\psi:S\to N$.
\end{lemma}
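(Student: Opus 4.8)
The plan is to prove the identity by a direct computation in the diagram defining the Weil pairing, using the explicit description of $P_{\psi}$ as the quotient $(N\times T)/\!\sim$. Throughout write $\bar a=\varphi_A(a)$ and $\bar b=\varphi_B(b)$, so $l\bar a=0$ and $l\bar b=0$, and let $[n,(x,y)]$ denote the class in $P_{\psi}$ of $(n,(x,y))\in N\times T$. First I would note that $\psi(la,b)$, $\psi(a,lb)$ and $\psi(la,lb)$ are defined: the bisubgroup axioms applied to $(a,b)\in T$ give $(ma,b),(a,mb)\in T$ for all $m\ge 1$, so $(la,b),(a,lb),(la,lb)\in T$, and these pairs lie in $S$ since $\varphi_A(la)=0$ and $\varphi_B(lb)=0$. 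The crucial point of the setup is to choose base points for the four fibers occurring in the diagram: take $[0,(a,b)]$ in $P_{(\bar a,\bar b)}$, and $[0,(a,lb)]$, $[0,(la,b)]$, $[0,(la,lb)]$ in $P_{(\bar a,0)}$, $P_{(0,\bar b)}$, $P_{(0,0)}$ respectively. These lifts are chosen so that the $l$-fold partial additions in the second, resp. first, variable — which on $P_{\psi}$ are given by the naive formulas $[n_1,(x,y_1)]+[n_2,(x,y_2)]=[n_1+n_2,(x,y_1+y_2)]$ and its analogue in the first variable — carry $[0,(a,b)]^{\otimes l}$ exactly onto $[0,(a,lb)]$ and onto $[0,(la,b)]$.

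Next I would compute the canonical trivializations of the three fibers lying over points with a zero coordinate. For $P_{(\bar a,0)}$, whose canonical trivialization comes from partial addition in the second variable, the defining relation of $\sim$ applied with the $S$-element $(a,lb)$ gives $[0,(a,2lb)]=[-\psi(a,lb),(a,lb)]$; that is, the image of $e\otimes e$ (with $e=[0,(a,lb)]$) under the relevant isomorphism is $-\psi(a,lb)\cdot e$. Since the canonical trivialization of an $N$-torsor $L$ equipped with an isomorphism $L\otimes L\xrightarrow{\sim}L$ sending $e\otimes e$ to $c\cdot e$ is $(-c)\cdot e$, this shows the canonical trivialization of $P_{(\bar a,0)}$ equals $\psi(a,lb)\cdot[0,(a,lb)]$. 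The same computation with the $S$-element $(la,b)$ (partial addition in the first variable) gives $\psi(la,b)\cdot[0,(la,b)]$ for $P_{(0,\bar b)}$, and with the $S$-element $(la,lb)$ gives $\psi(la,lb)\cdot[0,(la,lb)]$ for $P_{(0,0)}$ — the latter being the same whether computed from the first or the second variable, which also serves as a consistency check.

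Finally I would evaluate the four arrows of the diagram on these trivializations. The vertical arrows $\beta$ and $\delta$ identify canonical trivializations, so $\beta$ sends the canonical trivialization of $P_{(0,0)}$ to that of $P_{(\bar a,0)}$ and $\delta$ sends it to that of $P_{(0,\bar b)}$; the arrows $\alpha$ and $\gamma$ are the inverses of the $l$-fold partial additions, hence by the choice of base points they send $[0,(a,lb)]$ and $[0,(la,b)]$ to $[0,(a,b)]^{\otimes l}$. Composing, $\alpha\circ\beta$ carries the canonical trivialization of $P_{(0,0)}$ to $\psi(a,lb)\cdot[0,(a,b)]^{\otimes l}$, while $\gamma\circ\delta$ carries it to $\psi(la,b)\cdot[0,(a,b)]^{\otimes l}$. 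Therefore $\phi_l(\bar a,\bar b)=\gamma\circ\delta-\alpha\circ\beta=\psi(la,b)-\psi(a,lb)$, as asserted.

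The one place that needs care is the bookkeeping of signs and group actions, which must be kept mutually consistent: the $N$-action on a tensor power of $N$-torsors, the minus sign in the canonical trivialization ($(-c)\cdot e$ above), and the sign convention $\phi_l=\gamma\circ\delta-\alpha\circ\beta$ in the definition of the Weil pairing. It is also worth verifying, against the construction of $P_{\psi}$, that the partial additions really are given by the naive formulas above and that they are well defined on the quotient $(N\times T)/\!\sim$; once that is granted, the remainder is a bounded mechanical chase.
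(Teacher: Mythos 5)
Your proof is correct and is essentially the same argument the paper gives: pull back the Weil-pairing diagram along the tautological section $T\to N\times T\to P_\psi$ (equivalently, choose lifts $[0,(\cdot,\cdot)]$ in each fiber) and read off the discrepancy between $\gamma\circ\delta$ and $\alpha\circ\beta$. You are somewhat more explicit about the sign bookkeeping (the $(-c)\cdot e$ formula for the canonical trivialization) and you lift $(0,0)\in A'\times B'$ to $(la,lb)\in T$ rather than to $(0,0)$ as the paper does, which is in fact slightly more careful since $(la,lb)\in T$ is forced by the bisubgroup axioms whereas $(0,0)\in T$ is not.
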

\begin{proof}
By construction the pull-back of the biextension $P$ with respect to
the map $T\to A'\times B'$ is canonically trivial. Furthermore, the
pull-back of the diagram that defines the Weil pairing
$\phi_l(\varphi(a),\varphi(b))$ is equal to the following diagram:
$$
\begin{array}{ccc}
N\times\{(a,lb)\}&\stackrel{id_N}\longrightarrow& N^{\wedge l}\times\{(a,b)\}\\
\uparrow\lefteqn{+\psi(a,lb)}&&\uparrow\lefteqn{id_N}\\
N\times\{(0,0)\}&\stackrel{+\psi(la,b)}\longrightarrow&N\times\{(la,b)\} .\\
\end{array}
$$
This concludes the proof.
\end{proof}

\subsection{Biextensions and pairings between complexes}\label{subsection-complexes}

We describe a way to construct a biextension starting from a pairing
between complexes. Suppose that $A^{\bullet}$ and $B^{\bullet}$ are
two bounded from above complexes of abelian groups, $N$ is an
abelian group, and that we are given a pairing
\mbox{$\phi\in\Hom_{D^-(\Abc)}
(A^{\bullet}\otimes^L_{\Z}B^{\bullet},N)$}. We fix an integer $p$.
Let $H^p(A^{\bullet})'\subset H^p(A^{\bullet})$ be the annulator of
$H^{-p}(B^{\bullet})$ with respect to the induced pairing
$\phi:H^p(A^{\bullet})\times H^{-p}(B^{\bullet})\to N$. Analogously,
we define the subgroup $H^{1-p}(B^{\bullet})'\subset
H^{1-p}(B^{\bullet})$. Let $\tau'_{\le p}A^{\bullet}\subset
A^{\bullet}$ be a subcomplex such that $(\tau'_{\le
p}A^{\bullet})^n=0$ if $n>p$, $(\tau'_{\le p}A^{\bullet})^n=A^n$ if
$n<p$, and $(\tau'_{\le p}A^{\bullet})^p=\Ker(d_A^p)'$, where
$d_A^p:A^p\to A^{p+1}$ is the differential in the complex
$A^{\bullet}$ and $\Ker(d_A^p)'$ is the group of cocycles that map
to $H^p(A^{\bullet})'$. Note that the operation $\tau'_{\le p}$ is
well defined for complexes up to quasiisomorphisms, i.e., the class
of the complex $\tau'_{\le p}A^{\bullet}$ in the derived category
$D^-(\Abc)$ depends only on the classes of the complexes
$A^{\bullet}$, $B^{\bullet}$ and the morphism $\phi$. Analogously,
we define the subcomplex $\tau'_{\le(d+1-p)}B^{\bullet}\subset
B^{\bullet}$. The restriction of $\phi$ defines an element $\phi'\in
\Ext^1_{D^-(\Abc)}(\tau'_{\le p}A^{\bullet}\otimes^L_{\Z}
\tau'_{\le(1-p)}B^{\bullet},N[-1])$. It can be easily checked that
$\phi'$ passes in a unique way through the morphism $\tau'_{\le
p}A^{\bullet}\otimes^L_{\Z} \tau'_{\le(1-p)}B^{\bullet}\to
H^p(A^{\bullet})'[-p]\otimes^L_{\Z}H^{1-p}(B^{\bullet})'[p-1]$. Thus
we get a canonical element
$$
\phi'\in\Hom_{D^-(\Abc)}(H^p(A^{\bullet})'[-p]
\otimes^L_{\Z}H^{1-p}(B^{\bullet})'[1-p],N)=
$$
$$
=\Ext^1_{\Abc}(H^p(A^{\bullet})'\otimes^L_{\Z}H^{1-p}(B^{\bullet})',N),
$$
i.e., a biextension $P_{\phi}$ of
$(H^p(A^{\bullet})',H^{1-p}(B^{\bullet})')$ by $N$.

We construct this biextension explicitly for the case when $\phi$ is
induced by a true morphism of complexes $\phi:A^{\bullet}\otimes
B^{\bullet}\to N$ (this can be always obtained by taking projective
resolutions). Let $A=\Ker(d_A^p)'$, $B=\Ker(d_B^{1-p})'$, $T=A\times
B$, $\phi_A:\Ker(d_A^p)'\to H^p(A^{\bullet})'$,
$\phi_B:\Ker(d_B^{1-p})'\to H^{1-p}(B^{\bullet})'$,
$S=\Imm(d^{p-1}_A)\times
\Ker(d^{1-p}_{B})'\cup\Ker(d_A^p)'\times\Imm(d_B^{-p})$. We define a
bilinear map $\psi:S\to N$ as follows:
$\psi(d_A^{p-1}(a^{p-1}),b^{1-p})=\phi(a^{p-1}\otimes b^{1-p})$ and
$\psi(a^{p},d_B^{-p}(b^{-p}))=(-1)^p\phi(a^{p}\otimes b^{-p})$. It
is readily seen that this does not depend on the choices of
$a^{p-1}$ and $b^{-p}$, and that
$\psi(d^{p-1}_A(a^{p-1}),d_B^{-p}(b^{-p}))$ is well defined (the
reason is that $\phi$ is a morphism of complexes). The application
of the construction from Section~\ref{subsection-quotientbiext}
gives a biextension $P_{\psi}$ of
$(H^p(A^{\bullet})',H^{1-p}(B^{\bullet})')$ by $N$; we have
$P_{\phi}=P_{\psi}$.

\begin{rmk}\label{rmk-Picardbiext}
There is an equivalent construction of the biextension $P_{\phi}$ in terms of
Picard categories (see Section~\ref{subsection-determ}). For a two-term complex
$C^{\bullet}=\{C^{-1}\stackrel{d}\to C^0\}$, let $\Picard(C^{\bullet})$ be the following Picard category:
objects in $\Picard(C^{\bullet})$ are elements in the group $C^0$ and morphisms
from $c\in C^0$ to $c'\in C^0$ are elements $\widetilde{c}\in C^{-1}$ such that
$d\widetilde{c}=c'-c$. A morphism of complexes $\phi$ defines
a morphism of complexes
$$
\tau_{\ge(p-1)}\tau_{\le p}A^{\bullet}\times \tau_{\ge-p}\tau_{\le(1-p)}A^{\bullet}\to N
$$
that defines a distributive functor
$$
F:\Picard(\tau_{\ge(p-1)}\tau_{\le p}A^{\bullet})\times
\Picard(\tau_{\ge-p}\tau_{\le(1-p)}A^{\bullet})\to BN,
$$
where $BN$ is a Picard category with one object $0$ and with $\pi_1(BN)=N$.
The functor $F$ is defined as follows: for morphisms $\widetilde{a}:a\to a'$ and
$\widetilde{b}:b\to b'$, we have $F(\widetilde{a},\widetilde{b})=
\phi(\widetilde{a}\otimes b')+\phi(a\otimes\widetilde{b})$. Consider the restriction
of $F$ to the full Picard subcategories that consist of objects whose cohomology
classes belong to $H^p(A^{\bullet})'$ and $H^{1-p}(B^{\bullet})'$, respectively.
By Lemma~\ref{lemma-Deligne}, we get a biextension $P_{\phi}$ of
$(H^p(A^{\bullet})',H^{1-p}(B^{\bullet})')$ by $N$.
\end{rmk}

\begin{rmk}
In what follows there will be given shifted pairings
\mbox{$p\in\Hom_{D^-(\Abc)}
(A^{\bullet}\otimes^L_{\Z}B^{\bullet},N[m])$}, $m\in \Z$; this gives
a biextension of $(H^p(A^{\bullet}),H^{-m+1-p}(B^{\bullet}))$ by
$N$.
\end{rmk}

\begin{examp}\label{examp-dg-Weilpairing}
Consider a unital DG-algebra $A^{\bullet}$ over $\Z$, an abelian
group $N$, and an integer $p$. Suppose that $A^i=0$ for $i>d$. Given
a homomorphism $H^d(A^{\bullet})\to N$, by $H^p(A^{\bullet})'\subset
H^p(A^{\bullet})$ denote the annulator of the group
$H^{d-p}(A^{\bullet})$ with respect to the induced pairing
$H^p(A^{\bullet})\times H^{d-p}(A^{\bullet})\to N$. Analogously,
define the subgroup $H^{d+1-p}(A^{\bullet})'\subset
H^{d+1-p}(A^{\bullet})$. By the above construction, this defines a
biextension $P$ of $(H^p(A^{\bullet})',H^{d+1-p}(A^{\bullet})')$ by
$N$. The associated Weil pairing $\phi_l:H^p(A^{\bullet})'_l\times
H^{d+1-p}(A^{\bullet})'_l\to N_l$ has the following interpretation
as a Massey triple product. For the triple $a\in
H^p(A^{\bullet})'_l$, $l\in H^0(A^{\bullet})$, and $b\in
H^{d+1-p}(A^{\bullet})'_l$, there is a well defined Massey triple
product
$$
m_3(a,l,b)\in H^d(A^{\bullet})/(a\cdot
H^{d-p}(A^{\bullet})+H^{p-1}(A^{\bullet})\cdot b).
$$
By condition, the image $\overline{m}_3(a,l,b)\in N_l$ of
$m_3(a,l,b)$ with respect to the map $H^d(A^{\bullet})\to N$ is well
defined. By Lemma \ref{lemma-Weil-pair-commut}, we have
\mbox{$\phi_l(a,b)=\overline{m}_3(a,l,b)$}.
\end{examp}

\begin{examp}\label{examp-sheavesbiext}
Let $Y$ be a Noetherian scheme of finite dimension $d$; then any
sheaf of abelian groups $\F$ on $Y$ has no non-trivial cohomology
groups with numbers greater than $d$ and there is a natural morphism
$R\Gamma(Y,\F)\to H^d(Y,\F)[-d]$ in $D^b(\Abc)$. Let $\F$, $\G$ be
sheaves of abelian groups on $Y$, $N$ be an abelian group, and $p\ge
0$ be an integer. Given a homomorphism $H^d(Y,\F\otimes\G)\to N$,
let $H^p(Y,\F)'$ be the annulator of $H^{d-p}(Y,\G)$ with respect to
the natural pairing $H^p(Y,\F)\times H^{d-p}(Y,\G)\to N$.
Analogously, we defined the subgroup $H^{d+1-p}(Y,\G)'\subset
H^{d+1-p}(Y,\G)$. Then the multiplication morphism \mbox{$m\in
\Hom_{D^b(\Abc)}(R\Gamma(Y,\F)\otimes_{\Z}^L
R\Gamma(Y,\G),R\Gamma(Y,\F\otimes\G))$} and the morphism
$R\Gamma(Y,\F\otimes\G)\to N[-d]$ lead to a biextension of
$(H^p(Y,\F)',H^{d+1-p}(Y,\G)')$ by $N$.
\end{examp}

\subsection{Poincar\'e biextension}\label{subsection-Poincare}

Consider an example to the construction described above. Let $\H$ be
the abelian category of integral mixed Hodge structures. For an
object $H$ in $\H$, by $H_{\Z}$ denote the underlying finitely
generated abelian group, by $W_{*}H_{\Q}$ denote the increasing
weight filtration, and by $F^{*}H_{\C}$ denote the decreasing Hodge
filtration, where $H_{\C}=H_{\Z}\otimes_{\Z}\C$. Recall that if
$H_{\Z}$ is torsion-free, then the complex $R\Hom_{\H}(\Z(0),H)$ is
canonically quasiisomorphic to the complex
$$
B(H)^{\bullet}:0\to W_0 H_{\Z}\oplus (F^0\cap W_0) H_{\C}\to
W_0H_{\C}\to 0,
$$
where the differential is given by $d(\gamma,\eta)=\gamma-\eta$ for
$\gamma\in W_0H_{\Z}$, $\eta\in (F^0\cap W_0)H_{\C}$, and
$W_0H_{\Z}=H_{\Z}\cap W_0 H_{\Q}$ (see \cite{Bei86}). Moreover, for
any two integral mixed Hodge structures $H$ and $H'$ with $H_{\Z}$
and $H'_{\Z}$ torsion-free, the natural pairing
$R\Hom_{\H}(\Z(0),H)\otimes^L_{\Z}\RHom_{\H}(\Z(0),H')\to
R\Hom_{\H}(\Z(0),H\otimes H')$ corresponds to the morphism
$$
m:B(H)^{\bullet}\otimes_{\Z} B(H')^{\bullet}\to B(H\otimes
H')^{\bullet}
$$
given by the formula $m(\varphi\otimes\eta')=\varphi\otimes\eta'\in
(H\otimes H')_{\C}$,
$m(\gamma\otimes\varphi')=\gamma\otimes\varphi'\in (H\otimes
H')_{\C}$, $m(\gamma\otimes\gamma')=\gamma\otimes\gamma'\in
W_0(H\otimes H')_{\Z}$, $m(\eta\otimes\eta')=\eta\otimes\eta'\in
F^0(H\otimes H')_{\C}$, and $m=0$ otherwise, where $\gamma\in W_0
H_{\Z}$, $\gamma'\in W_0 H_{\Z}'$, $\eta\in F^0H_{\C}$, $\eta'\in
F^0H'_{\C}$, $\varphi\in H_{\C}$, $\varphi'\in H'_{\C}$.

Let $H$ be a pure Hodge structure of weight $-1$, and let
$H^{\vee}=\Hom(H,\Z(1))$ be the corresponding internal $\Hom$ in
$\H$. By
$\langle\cdot,\cdot\rangle:H_{\C}\otimes_{\C}H^{\vee}_{\C}\to \C^*$
denote the composition of the natural map
$H_{\C}\otimes_{\C}H^{\vee}_{\C}\to \C$ and the exponential map
$\C\to \C/\Z(1)=\C^*$. We put
$J(H)=\Ext^1_{\H}(\Z(0),H)=H_{\C}/(H_{\Z}+F^0H_{\C})$. This group
has a natural structure of a compact complex torus; we call it the
{\it Jacobian of $H$}. Note that the Jacobian $J(H^{\vee})$ is
naturally isomorphic to the dual compact complex torus
$J(H)^{\vee}$.

By Section \ref{subsection-complexes}, the natural pairing
$$
R\Hom(\Z(0),H)\otimes^L_{\Z}R\Hom(\Z(0),H^{\vee})\to
R\Hom(\Z(0),H\otimes H^{\vee})\to
$$
$$
\to R\Hom(\Z(0),\Z(1))\to \C^*[-1]
$$
defines a biextension $P$ of $(J(H),J(H^{\vee}))$ by $\C^*$. The
given above explicit description of the pairing $m$ shows that the
biextension $P$ is induced by the bilinear map $\psi:S\to \C^*$ (see
Section \ref{subsection-quotientbiext}), where
$$
S=((H_{\Z}+F^0H_{\C})\times H^{\vee}_{\C})\cup (H_{\C}\times
(H_{\Z}^{\vee}+ F^0H^{\vee}_{\C}))
$$
is a bisubgroup in $H_{\C}\times H^{\vee}_{\C}$,
$\psi(\gamma+\eta,\varphi^{\vee})=\langle\gamma,\varphi^{\vee}\rangle$,
and
$\psi(\varphi,\gamma^{\vee}+\eta^{\vee})=\langle\varphi,\eta^{\vee}\rangle$
for all $\gamma\in H_{\Z}$, $\gamma^{\vee}\in H_{\Z}^{\vee}$,
$\eta\in F^0H_{\C}$, $\eta^{\vee}\in F^0H^{\vee}_{\C}$, $\varphi\in
H_{\C}$, $\varphi^{\vee}\in H^{\vee}_{\C}$.

\begin{rmk}\label{rmk-Hain}
The given above explicit construction of the biextension $P$ shows
that it coincides with the biextension constructed in \cite{Hai},
Section 3.2. In particular, by Lemma 3.2.5 from op.cit., $P$ is
canonically isomorphic to the Poincar\'e line bundle over
$J(H)\times J(H)^{\vee}$.
\end{rmk}

By Remark \ref{rmk-Hain}, it makes sense to call $P$ the {\it
Poincar\'e biextension}.

\begin{rmk}\label{rmk-Hain-canonical}
It follows from \cite{Hai}, Section 3.2 that the fiber of the
biextension $P$ over a pair $(e,f)\in J(H)\times J(H^{\vee})=
\Ext^1_{\H}(\Z(0),H)\times \Ext^1_{\H}(H,\Z(1))$ is canonically
bijective with the set of isomorphism classes of integral mixed
Hodge structures $V$ whose weight graded quotients are identified
with $\Z(0)$, $H$, $\Z(1)$ and such that $[V/W_{-2}V]=e\in
\Ext^1_{\H}(\Z(0),H)$, $[W_{-1}V]=f\in \Ext^1_{\H}(H,\Z(1))$.
\end{rmk}

\begin{rmk}\label{rmk-height}
There is a canonical trivialization of the biextension $\log|P|$ of
$(J(H),J(H^{\vee}))$ by $\R$; this trivialization is given by the
bilinear map $H_{\C}\times H^{\vee}_{\C}\to \R$,
$(\varphi,\varphi^{\vee})\mapsto \log|\langle
r,\varphi^{\vee}\rangle|$, where $\varphi=r+\eta$, $r\in H_{\R}$,
$\eta\in F^0H$.
\end{rmk}

\section{Biextensions over Chow groups}\label{section-biextChow}

\subsection{Explicit construction}\label{subsection-acycles}

The construction described below was first given in \cite{Blo89}. We
use notions and notations from Section~\ref{subsection-K1}. Let $X$
be a smooth projective variety of dimension $d$ over $k$. For an
integer $p\ge 0$, by $Z^p(X)'$ denote the subgroup in $Z^p(X)$ that
consists of cycles $Z$ such that for any $K_1$-chain
$\{f_{\eta}\}\in G^{d-p}_{|Z|}(X,1)$ with $\div(\{f_{\eta}\})=0$, we
have $\prod_{\eta}f_{\eta} (\overline{\eta}\cap Z)=1$. Let
$CH^p(X)'$ be the image of $Z^p(X)'$ in $CH^p(X)$; by Lemma
\ref{lemma-Bloch}, we have $CH^p(X)_{\hhom}\subset CH^p(X)'$.

\begin{rmk}\label{rmk-numtriv}
If the group $k^*$ has an element of infinite order, then it is easy to see that
any cycle $Z\in Z^p(X)'$ is numerically trivial.
\end{rmk}

Consider integers $p,q\ge 0$ such that $p+q=d+1$. Let $T\subset
Z^p(X)'\times Z^q(X)'$ be the bisubgroup that consists of pairs
$(Z,W)$ such that $|Z|\cap |W|=\emptyset$. By the classical moving
lemma, we see that the map $T\to CH^p(X)'\times CH^q(X)'$ is
surjective. Let $S\subset T$ be the corresponding bisubgroup as
defined in Section~\ref{subsection-quotientbiext}. Define a bilinear
map $\psi:S\to k^*$ as follows. Suppose that $Z$ is rationally
trivial; then by Lemma \ref{lemma-movingK1}, there exists an element
$\{f_{\eta}\}\in G_{|W|}^{p-1}(X,1)$ such that
$\div(\{f_{\eta}\})=Z$. We put
$\psi(Z,W)=\prod_{\eta}f_{\eta}(\overline{\eta}\cap W)$. By
condition, this element from $k^*$ does not depend on the choice of
$\{f_{\eta}\}$. Similarly, we put $\psi(Z,W)=
\prod_{\xi}g_{\xi}(Z\cap\overline{\xi})$ if $W=\div(\{g_{\xi}\})$,
$\{g_{\xi}\}\in G^{q-1}_{|Z|}(X,1)$. By Lemma~\ref{lemma-recipr},
$\psi(\div(\{f_{\eta}\}),\div(\{g_{\xi}\}))$ is well defined. The
application of the construction from
Section~\ref{subsection-quotientbiext} yields a biextension $P_E$ of
$(CH^p(X)',CH^q(X)')$ by $k^*$.

The following interpretation of the biextension $P_E$ from
Section~\ref{subsection-acycles} in terms of pairings between higher
Chow complexes was suggested to the author by S.\,Bloch.

We use notations and notions from Section~\ref{subsection-higherChow}.
As above, let $X$ be a smooth projective variety of dimension $d$ over $k$ and
let the integers $p,q\ge 0$ be such that $p+q=d+1$. There is a push-forward morphism of complexes
$\pi_*:Z^{d+1}(X,\bullet)\to
Z^{1}(\Spec(k),\bullet)$, where $\pi:X\to\Spec(k)$ is the structure map. Further, there is a morphism of complexes
$Z^{1}(\Spec(k),\bullet)\to k^*[-1]$ (which is actually a quasiisomorphism).
Taking the composition of these morphisms with the multiplication morphism
$$
m\in \Hom_{D^-(\Abc)}(Z^p(X,\bullet)\otimes_{\Z}^L Z^q(X,\bullet),Z^{d+1}(X,\bullet)),
$$
we get an element
$$
\phi\in\Hom_{D^-(\Abc)}(Z^p(X,\bullet)\otimes_{\Z}^L Z^q(X,\bullet),
k^*[-1]).
$$
Note that by Lemma \ref{lemma-moving},
in notations from Section \ref{subsection-complexes},
we have $H_0(Z^p(X,\bullet))'=CH^p(X)'$ and $H_0(Z^q(X,\bullet))'=CH^q(X)'$.
Hence the construction from Section \ref{subsection-complexes}
gives a biextension $P_{HC}$ of $(CH^p(X)',CH^q(X)')$ by $k^*$.

\begin{prop}\label{prop-Chowbiext}
The biextension $P_{HC}$ is canonically isomorphic to the
biextension $P_E$.
\end{prop}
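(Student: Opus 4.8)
The plan is to present both $P_E$ and $P_{HC}$ as quotient biextensions in the sense of Section~\ref{subsection-quotientbiext} and to compare the bilinear maps that define them. For $P_E$ this data is the one recalled in Section~\ref{subsection-acycles}: the bisubgroup $T\subset Z^p(X)'\times Z^q(X)'$ of pairs $(Z,W)$ with $|Z|\cap|W|=\emptyset$, the associated bisubgroup $S\subset T$, and the bilinear map $\psi_E\colon S\to k^*$ with $\psi_E(Z,W)=\prod_{\eta}f_{\eta}(\overline{\eta}\cap W)$ whenever $Z=\div(\{f_{\eta}\})$ for some $\{f_{\eta}\}\in G^{p-1}_{|W|}(X,1)$, and symmetrically in the other variable. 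The claim is that making the construction of Section~\ref{subsection-complexes} explicit produces, for $P_{HC}$, the very same $T$, $S$ and $\psi$; the canonical isomorphism $P_E\cong P_{HC}$ then follows from Remark~\ref{rmk-biextcoinc} (with Remark~\ref{rmk-isombiext} available to absorb sign discrepancies should any arise).

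First I would replace the derived pairing $\phi$ by an honest morphism of complexes, as the construction of Section~\ref{subsection-complexes} requires. The multiplication $m$ exists only in $D^{-}(\Abc)$ because it rests on the moving lemma (Lemma~\ref{lemma-moving}); however, the external product followed by the Alexander--Whitney map on $\Delta^{\bullet}$ and restriction to the diagonal is a genuine morphism of complexes once one works with cycles in general position, and by Lemma~\ref{lemma-moving} the corresponding subcomplex of the tensor product is quasiisomorphic to $Z^p(X,\bullet)\otimes^L_{\Z}Z^q(X,\bullet)$. Composing with $\pi_*$ and with the quasiisomorphism $Z^1(\Spec k,\bullet)\to k^*[-1]$ gives an honest representative of $\phi$, and since $P_{HC}=P_{\phi}$ depends only on the class of $\phi$ in the derived category we may use it. Note that for $p+q=d+1$ on a $d$-dimensional variety, ``general position'' of a codimension~$p$ and a codimension~$q$ cycle means exactly disjointness of supports, so the bisubgroup underlying the explicit construction coincides with Bloch's $T$; and, as already observed before the statement, the relevant annihilator subgroups are $H_0(Z^p(X,\bullet))'=CH^p(X)'$ and $H_0(Z^q(X,\bullet))'=CH^q(X)'$, which one sees from Lemma~\ref{lemma-moving} by computing the pairing $CH^p(X)\times CH^q(X,1)\to k^*$ on general-position representatives and comparing with the definition of $Z^p(X)'$.

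Next I would identify the bilinear map $\psi_{HC}$ that Section~\ref{subsection-complexes} attaches to this honest $\phi$. On boundaries in the first variable it reads $\psi_{HC}(d(a),W)=\phi(a\otimes W)$, where $d$ is the differential of the higher Chow complex, $a\in Z^p(X,1)$, and $W\in Z^q(X)$ is a cocycle in general position with respect to $a$; symmetrically, up to the sign prescribed by the construction, in the second variable. Given $(Z,W)\in S$ with $Z$ rationally trivial and $|Z|\cap|W|=\emptyset$, choose $\{f_{\eta}\}\in G^{p-1}_{|W|}(X,1)$ with $\div(\{f_{\eta}\})=Z$ (possible after moving within the $K_2$-equivalence class by Lemma~\ref{lemma-movingK1}) and take $a=\Gamma(\{f_{\eta}\})\in Z^p_{|W|}(X,1)$; then $d(a)=\div(\{f_{\eta}\})=Z$ and $a\otimes W$ is in general position, so
$$
\psi_{HC}(Z,W)=\phi(a\otimes W)=(\pi_*\circ m)(\Gamma(\{f_{\eta}\})\otimes W).
$$
By the explicit formula for the product of higher Chow cycles in general position (\cite{Blo86}; this is exactly the computation made in the proof of Lemma~\ref{lemma-Bloch}) the right-hand side equals $\prod_{\eta}f_{\eta}(\overline{\eta}\cap W)=\psi_E(Z,W)$. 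The same computation applies in the second variable, and the two halves of the definition of $\psi_{HC}$ agree on the overlap because $\phi$ is a morphism of complexes — concretely, this is the Weil reciprocity of Lemma~\ref{lemma-recipr}, exactly as for $\psi_E$. Hence $\psi_{HC}=\psi_E$ on the common bisubgroup $S$, which completes the argument.

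I expect the third paragraph to be the main obstacle: one has to match the product on the higher Chow complexes, which is defined only up to quasiisomorphism through the Alexander--Whitney map and the moving lemma, with the concrete norm expression $\prod_{\eta}f_{\eta}(\overline{\eta}\cap W)$ — choosing representatives that make all the relevant intersections proper on the nose, verifying that the boundary chain one is forced to pick may be taken to be $\Gamma(\{f_{\eta}\})$ without changing the value, and, the one genuinely delicate bookkeeping point, reconciling the asymmetry sign in the recipe of Section~\ref{subsection-complexes} with the signs carried by the product on higher Chow groups. A secondary, not purely formal, point is the identification $H_0(Z^{\bullet})'=CH^{\bullet}(X)'$, which ultimately rests on Bloch's Lemma~\ref{lemma-FirstBloch} through Lemma~\ref{lemma-Bloch}.
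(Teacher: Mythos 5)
Your overall strategy is the one the paper uses: identify both biextensions as quotient biextensions in the sense of Section~\ref{subsection-quotientbiext}, exhibit an honest cochain-level representative of $\phi$, compute it on elements of the form $\Gamma(\{f_{\eta}\})\otimes W$ and $Z\otimes\Gamma(\{g_{\xi}\})$ to recover $\psi_E$, and conclude by Remark~\ref{rmk-biextcoinc}. That is exactly the content of the paper's proof, and your computation in the third paragraph matches the paper's.

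The one place you and the paper diverge is in how you get from the derived-category pairing to an honest morphism of complexes, and here your justification is incomplete. You assert that the subcomplex of $Z^p(X,\bullet)\otimes Z^q(X,\bullet)$ consisting of general-position tensors is quasiisomorphic to the whole tensor product ``by Lemma~\ref{lemma-moving}.'' But Lemma~\ref{lemma-moving} moves a cycle in $Z^p(X,\bullet)$ into good position with respect to a \emph{fixed finite} collection $\Sc$; in the tensor product the position constraint on the first factor varies with the second, so this is not a direct consequence of the moving lemma and requires a separate argument (a moving lemma for the product, or a filtration/spectral-sequence argument). The paper sidesteps this entirely: it keeps the genuine chain map $ext:Z^p(X,\bullet)\otimes Z^q(X,\bullet)\to Z^{d+1}(X\times X,\bullet)$, observes that the quasiisomorphism to be inverted is the inclusion $Z^{d+1}_D(X\times X,\bullet)\hookrightarrow Z^{d+1}(X\times X,\bullet)$, and uses that the source is a complex of free $\Z$-modules while the subgroups $A_0\subset (Z^p\otimes Z^q)_0$ and $A_1\subset (Z^p\otimes Z^q)_1$ on which $ext$ already factors through $Z^{d+1}_D$ are \emph{direct summands}. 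This lets one choose a lift $ext'$ of $ext$ through $Z^{d+1}_D$ agreeing with $ext$ on $A_0,A_1$, which is all the quotient-biextension construction sees. So if you want to tighten your argument, replace the subcomplex-and-moving-lemma step by this lifting-compatible-with-direct-summands step; the rest of your proof then goes through verbatim and is the paper's proof. Your observation that the two halves of $\psi_{HC}$ agree via Lemma~\ref{lemma-recipr}, and the parenthetical that Remark~\ref{rmk-isombiext} can absorb any residual sign, are both sound and implicit in the paper.
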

\begin{proof}
Recall that the multiplication morphism $m$ is given by the composition
$$
Z^p(X,\bullet)\otimes Z^q(X,\bullet)\stackrel{ext}\to Z^{d+1}(X\times X,\bullet)
\hookleftarrow Z^{d+1}_{D}(X\times X,\bullet)\stackrel{D^*}\to Z^{d+1}(X,\bullet),
$$
where $D\subset X\times X$ is the diagonal.
Let $A_0\subset Z^p(X)\otimes Z^q(X)$ be the subgroup generated by elements $Z\otimes W$
such that $|Z|\cap |W|$, and let
$$
A_1\subset Z^p(X,1)\otimes Z^q(X,0)\oplus
Z^p(X,0)\otimes Z^q(X,1)=(Z^p(X,\bullet)\otimes Z^q(X,\bullet))_1
$$
be the subgroup generated by elements
$(\alpha\otimes W,Z\otimes\beta)$ such that $\alpha\in Z^p_{|W|}(X,1)$,
$\beta\in Z^q_{|Z|}(X,1)$; we have $ext(A_i)\subset Z^{d+1}_{D}(X\times X,i)$
for $i=0,1$.
Since all terms of the complex $Z^p(X,\bullet)\otimes Z^q(Z,\bullet)$
are free $\Z$-modules and
the groups $A_i$ are direct summands in the groups
$(Z^p(X,\bullet)\otimes Z^q(X,\bullet))_i$ for $i=0,1$, there exists a true morphism
of complexes $ext':Z^p(X,\bullet)\otimes
Z^q(X,\bullet)\to Z_D^{p+q}(X\times X,\bullet)$
such that $ext'$ is equivalent to $ext$ in the derived category $D^-(\Abc)$
and $ext'$ coincides with $ext$ on $A_i$, $i=0,1$.

On the other hand, for a cycle $Z\in Z^p(X)$ and a $K_1$-chain
$\{g_{\xi}\}\in G^{q-1}_{|Z|}(X,1)$, we have $\beta=\Gamma(\{g_{\xi}\})\in
Z^q_{|Z|}(X,1)$ and $(\pi_*\circ D^*\circ ext)(Z\otimes\beta))=
\prod_{\xi}g_{\xi}(Z\cap\overline{\xi})$ (see Section~\ref{subsection-K1}).

Therefore the needed statement follows from Remark~\ref{rmk-biextcoinc}
applied to the bisubgroup $T\subset Z^p(X)\times Z^q(X)$
together with the bilinear map $\psi$ from Section~\ref{subsection-acycles}
and the bisubgroup in $Z^p(X)\times Z^q(X)$ together the bilinear map
induced by the morphism of complexes $\phi=\pi_*\circ D^*\circ ext'$ as described in
Section~\ref{subsection-complexes}.
\end{proof}

\subsection{Intermediate Jacobians construction}\label{subsection-AJbiext}

The main result of this section was partially proved in \cite{MS95}
by using the functorial properties of higher Chow groups and the
regulator map to Deligne cohomology. We give a more elementary proof
that uses only the basic Hodge theory.

We use notions and notations from Sections \ref{subsection-Poincare}
and \ref{subsection-Abel-Jacobi}. Let $X$ be a complex smooth
projective variety of dimension $d$. Suppose that $p$ and $q$ are
natural numbers such that $p+q=d+1$. Multiplication by $(2\pi i)^p$
induces the isomorphism $J^{2p-1}(X)\to J(H^{2p-1}(X)(p))$. Since
$H^{2p-1}(X)(p)^{\vee}=H^{2q-1}(X)(q)$, by Section
\ref{subsection-Abel-Jacobi}, there is a Poincar\'e biextension
$P_{IJ}$ of $(J^{2p-1}(X),J^{2q-1}(X))$ by $\C^*$.

\begin{prop}\label{prop-AbelJacobi}
The pull-back $AJ^*(P_{IJ})$ is canonically isomorphic to the
restriction of the biextension $P_E$ constructed in section
\ref{subsection-acycles} to subgroups $CH^p(X)_{\hom}\subset
CH^p(X)'$.
\end{prop}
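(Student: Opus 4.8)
We plan to show that $AJ^{*}(P_{IJ})$ and the restriction of $P_{E}$ are, after unwinding the definitions, the \emph{same} quotient biextension in the sense of Section~\ref{subsection-quotientbiext}, built from the bisubgroup $T$ of pairs $(Z,W)$ of homologically trivial cycles (of codimensions $p$ and $q$) with $|Z|\cap|W|=\emptyset$. By the construction in Section~\ref{subsection-acycles}, the pull-back of $P_{E}$ along $T\to CH^{p}(X)_{\hom}\times CH^{q}(X)_{\hom}$ is the trivial biextension $\C^{*}\times T$. The first step is to trivialize $AJ^{*}(P_{IJ})$ over $T$ as well. For $(Z,W)\in T$ we attach the natural integral mixed Hodge structure $V(Z,W)$ carried by the relevant piece of $H^{2p-1}\!\left(X\backslash|W|,\,|Z|\backslash(|Z|\cap|W|);\Z(p)\right)$; combining the exact sequence of mixed Hodge structures that precedes Lemma~\ref{lemma-AJexplic} (applied to $Z$) with its analogue for the pair $(X,|W|)$ (applied to $W$), one checks that $\mathrm{gr}^{W}V(Z,W)$ is $\Z(0)\oplus H^{2p-1}(X)(p)\oplus\Z(1)$ in weights $0,-1,-2$, that $[V(Z,W)/W_{-2}V(Z,W)]=AJ(Z)$ and $[W_{-1}V(Z,W)]=AJ(W)$ under $H^{2q-1}(X)(q)=H^{2p-1}(X)(p)^{\vee}$, and that $(Z,W)\mapsto V(Z,W)$ is bilinear with respect to Baer sums. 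By Remark~\ref{rmk-Hain-canonical} this exhibits $V(Z,W)$ as a canonical point of the fibre of $AJ^{*}(P_{IJ})$ over $(Z,W)$, hence as a trivialization of $AJ^{*}(P_{IJ})$ over $T$ (equivalently, this trivialization can be written by an explicit period formula, cf. Remark~\ref{rmk-height}).

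Granting this, the quotient construction of Section~\ref{subsection-quotientbiext} (see Remark~\ref{rmk-quotient-biext}) recovers each of $P_{E}$ and $AJ^{*}(P_{IJ})$ over $CH^{p}(X)_{\hom}\times CH^{q}(X)_{\hom}$ from the trivial biextension over $T$ together with a bilinear map on the bisubgroup $S=T\cap(\{\text{rat.\ triv.}\}\times Z^{q}(X)_{\hom}\cup Z^{p}(X)_{\hom}\times\{\text{rat.\ triv.}\})$. For $P_{E}$ that bilinear map is $\psi(Z,W)=\prod_{\eta}f_{\eta}(\overline{\eta}\cap W)$, where $Z=\div(\{f_{\eta}\})$ with $\{f_{\eta}\}\in G^{p-1}_{|W|}(X,1)$; write $\psi_{AJ}$ for the one coming from $AJ^{*}(P_{IJ})$. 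It then suffices to prove $\psi_{AJ}=\psi$ on $S$, after which Remark~\ref{rmk-biextcoinc} (or Remark~\ref{rmk-isombiext}) supplies the asserted canonical isomorphism.

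To compute $\psi_{AJ}(Z,W)$ for $(Z,W)\in S$ with $Z=\div(\{f_{\eta}\})$ rationally trivial, note that $\psi_{AJ}(Z,W)\in\C^{*}$ measures, in the fibre of $P_{IJ}$ over $(0,AJ(W))$, the discrepancy between the trivialization $V(Z,W)$ and the canonical trivialization of $P_{IJ}$ along $\{0\}\times J^{2q-1}(X)$. Using Lemma~\ref{lemma-AJexplic} together with the explicit integral class $\alpha=\sum_{\eta}[\widetilde{\eta}]^{*}[\tfrac{dz}{z}]$ representing $AJ(Z)=0$ — obtained by applying the second proof of Lemma~\ref{lemma-AJrational} to each summand of $\{f_{\eta}\}$ (see Example~\ref{examp-P1}, Remark~\ref{rmk-trivial-AJ}, Remark~\ref{rmk-explicit-chain}) — and the compatibility of $AJ$ with correspondences (Lemma~\ref{lemma-correspAJ}), this discrepancy unwinds into a cup-product/period pairing of $\alpha$ against the logarithmic Abel--Jacobi datum of $W$. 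Since each $\overline{\eta}$ meets $|W|$ properly, that pairing localizes to a finite sum of contributions at the points $x\in\overline{\eta}\cap|W|$, and each contribution is a residue computation identifying the local period of $\frac{df_{\eta}}{f_{\eta}}$ with $\log\Nm_{k(x)/k}f_{\eta}(x)$ weighted by the intersection multiplicity $(\overline{\eta},W;x)$; exponentiating and multiplying over $\eta$ and $x$ gives exactly $\prod_{\eta}f_{\eta}(\overline{\eta}\cap W)$. The case in which $W$ is the rationally trivial cycle is handled by the same computation with $Z$ and $W$ interchanged, the extra sign being furnished by the commutativity constraint recorded in Section~\ref{subsection-Poincare}, while Lemma~\ref{lemma-recipr} guarantees consistency when both cycles are rationally trivial. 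Thus $\psi_{AJ}=\psi$ on $S$.

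The principal difficulty is the localization-and-residue computation of the previous paragraph: one must set up the cup product between the relative cohomology class carrying $V(Z,W)$ and the logarithmic Abel--Jacobi datum of $W$ with enough care that the resulting period integral genuinely concentrates on $\overline{\eta}\cap|W|$, and then match each localized integral with $\log\Nm_{k(x)/k}f_{\eta}(x)$ — this is the higher-dimensional analytic counterpart of the argument behind the Weil reciprocity law (Lemma~\ref{lemma-recipr}). A secondary technical point is checking that $V(Z,W)$ is functorial and bilinear enough in $(Z,W)$ to define an honest trivialization of $AJ^{*}(P_{IJ})$ over $T$, which reduces to a diagram chase with the exact sequences of mixed Hodge structures used above; carrying this out also yields the analytic proof of Lemma~\ref{lemma-FirstBloch} for complex varieties announced earlier.
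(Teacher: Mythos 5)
Your strategy is sound and does prove the statement, but it takes a route distinct from either of the paper's two proofs. The paper's Proof~1 is a short regulator argument that compares the higher Chow pairing with the Deligne cohomology pairing via the Kerr--Lewis--M\"uller-Stach map. The paper's Proof~2 lifts cycles to triples $\widetilde{Z}=(Z,\Gamma_Z,\eta_Z)$ carrying a bounding chain and a logarithmic form, maps such triples to $H^{2p-1}(X,\C)$ by $PD[\Gamma_Z]-[\eta_Z]$, pulls back the explicit $H_{\C}\times H^{\vee}_{\C}$ quotient description of $P_{IJ}$ from Section~\ref{subsection-Poincare}, and shows via Remark~\ref{rmk-isombiext} that the two resulting bilinear maps $\psi_1,\psi_2$ on $S$ differ by the explicit \emph{biextension-trivializing} map $\phi(\widetilde{Z},\widetilde{W})=\exp(2\pi i\int_{\Gamma_Z}\eta_W)$, which is bilinear on all of $T$. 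You instead work with the smaller bisubgroup of genuine cycle pairs with disjoint support and trivialize $AJ^*(P_{IJ})$ over it via the Hain mixed-Hodge-structure description of the fibres (Remark~\ref{rmk-Hain-canonical}), so that you can reduce directly to $\psi_{AJ}=\psi$ on $S$ without introducing the auxiliary chain/form data. What this buys you is a conceptually cleaner reduction (no quotienting by choices of $\Gamma_Z,\eta_Z$), and it makes Corollary~\ref{corol-canon-torsors} transparent; what it costs is that you must actually construct the ``biextension MHS'' $V(Z,W)$ inside $H^{2p-1}(X\backslash|W|,|Z|;\Z(p))$, check that its weight graded quotients and extension classes are $E_Z$ and $E_W^{\vee}$ as in Remark~\ref{rmk-explicPoinccycles}, and check bilinearity for Baer sums --- a genuine diagram chase with excision that you flag but do not carry out --- and you must also translate between the Hain fibre description and the Section~\ref{subsection-Poincare} quotient description in order to run the period computation. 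That final period/residue computation, which you rightly identify as the crux, is the same one the paper performs: it is Lemma~\ref{lemma-complan} combined with Lemma~\ref{lemma-correspAJ}, applied to $[\widetilde{Y}]^*[\tfrac{dz}{z}]$ for each component of the $K_1$-chain. So the computational heart coincides; the scaffolding around it is different. One small inaccuracy: the reference to Remark~\ref{rmk-height} for an ``explicit period formula'' for the trivialization is off --- that remark gives a trivialization of $\log|P|$ by $\R$, not of $P$ by $\C^*$ over $T$.
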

\begin{proof}[Proof 1]
First, let us give a short proof that uses functorial properties of
the regulator map from the higher Chow complex to the Deligne
complex.

Let $\Z_D(p)^{\bullet}={\rm cone}(\Z(p)\oplus
F^p\Omega_X^{\bullet}\to \Omega_X^{\bullet})[-1]$ be the Deligne
complex, where all sheaves are considered in the analytic topology.
There are a multiplication morphism
$$
R\Gamma_{an}(X(\Cb),\Z_D(p)^{\bullet})
\otimes^L_{\Z}R\Gamma_{an}(X(\Cb),\Z_D(q)^{\bullet})\to
R\Gamma_{an}(X(\Cb),\Z_D(p+q)^{\bullet})
$$
and a push-forward morphism
$$
R\Gamma_{an}(X(\Cb),\Z_D(d+1)^{\bullet})\to \Cb^*[-2d-1]
$$
in $D^b(\Abc)$. The application of the construction from Section
\ref{subsection-complexes} to the arising pairing of complexes
$$
R\Gamma_{an}(X(\C),\Z_D(p)^{\bullet}) \otimes^L_{\Z}
R\Gamma_{an}(X(\C),\Z_D(q)^{\bullet})\to\Cb^*[-2d-1]
$$
for $p+q=d+1$ yields a biextension of $(J^{2p-1}(X),J^{2q-1}(X))$ by $\Cb^*$,
since $J^{2p-1}(X)\subset H^{2p}_{an}(X(\Cb),\Z_D(p)^{\bullet})'$
and $J^{2q-1}(X)\subset H^{2q}_{an}(X(\Cb),\Z_D(q)^{\bullet})'$.

Furthermore, there is a canonical morphism
$$
\epsilon:R\Hom_{D^b(\H)}(\Z(0),RH^{\bullet}(X)(p))\to
R\Gamma_{an}(X(\Cb),\Z_D(p)^{\bullet}),
$$
where $RH^{\bullet}(X)$ is the ``derived'' Hodge structure of $X$.
The morphism $\epsilon$ induces isomorphisms on cohomology groups in degrees
less or equal to $2p$ (this
follows from the explicit formula for the complex $B(H)^{\bullet}$
given in Section~\ref{subsection-Poincare}). Moreover, the
morphism $\epsilon$ commutes with the multiplication and the push-forward.
Note that the multiplication morphism on the left hand side is defined as the composition
$$
R\Hom_{D^b(\H)}(\Z(0),RH^{\bullet}(X)(p))
\otimes^L_{\Z}R\Hom_{D^b(\H)}(\Z(0),RH^{\bullet}(X)(q))\to
$$
$$
R\Hom_{D^b(\H)}(\Z(0),RH^{\bullet}(X)\otimes RH^{\bullet}(X)(p+q))\to
R\Hom_{D^b(\H)}(\Z(0),RH^{\bullet}(X)(p+q))
$$
and the
push-forward is defined as the composition
$$
R\Hom_{D^b(\H)}(\Z(0),RH^{\bullet}(X)(d+1))\to R\Hom_{D^b(\H)}(\Z(0),H^{2d}(X)(d+1)[-2d])=
$$
$$
=R\Hom_{D^b(\H)}(\Z(0),\Z(1))[-2d]=\Cb^*[-2d-1].
$$
Therefore, the biextension of $(J^{2p-1}(X),J^{2q-1}(X))$ by $\Cb^*$ defined via the pairing
of complexes on the left hand side is canonically isomorphic to the previous one.

The multiplicativity of the spectral sequence
$$
\Ext^i_{\H}(\Z(0),H^j(X)(p))\Rightarrow
\Hom^{i+j}_{D^b(\H)}(\Z(0),RH^{\bullet}(X)(p))
$$
shows that the last biextension is canonically isomorphic to the Poicar\'e
biextension defined in Section~\ref{subsection-Poincare}.

Now consider the cohomological type complex
\mbox{$Z^p(X)^{\bullet}=Z^p(X,2p-\bullet)$} built out of the higher
Chow complex (see Section \ref{subsection-higherChow}). In
\cite{KLM} it is constructed an explicit morphism
$$
\rho:Z^p(X)^{\bullet}\to R\Gamma_{an}(X(\Cb),\Z_D(p)^{\bullet})
$$
of objects in the derived category $D^-(\Abc)$, which is actually the
regulator map. It follows from op.cit. that the regulator map $\rho$
commutes with the corresponding multiplication morphisms for higher
Chow complexes and Deligne complexes. By
Proposition~\ref{prop-Chowbiext}, this implies immediately the
needed result.
\end{proof}

\begin{quest}
Does there exist a morphism of DG-algebras $\rho:A_M^{\bullet}\to
A_D^{\bullet}$ such that a DG-algebra $A_M^{\bullet}$ is
quasiisomorphic to the DG-algebra $\bigoplus\limits_{p\ge
0}R\Gamma_{Zar}(X,\Z(p)^{\bullet})$, where $\Z(p)^{\bullet}$ is the
Suslin complex, and a DG-algebra $A_D^{\bullet}$ is quasiisomorphic
to the DG-algebra $\bigoplus\limits_{p\ge
0}R\Gamma_{an}(X,\Z_D(p)^{\bullet})$? The identification of higher
Chow groups with motivic cohomology $CH^p(X,n)=H^{2p-n}_M(X,\Z(p))$
shows that a positive answer to the question would be implied by the
existence of a DG-realization functor from the DG-category of
Voevodsky motives (see \cite{BV}) to the DG-category associated with
integral mixed Hodge structures. It was pointed out to the author by
V.\,Vologodsky that the existence of this DG-realization functor
follows immediately from op.cit. Does there exist an explicit
construction of the DG-algebras $A_M^{\bullet}$, $A_D^{\bullet}$,
and the morphism $\rho$?
\end{quest}

\begin{proof}[Proof 2]
Let $\widetilde{Z}^p(X)_{\hom}$ be the group that consists of all
triples $\widetilde{Z}=(Z,\Gamma_Z,\eta_Z)$, where $Z\in
Z^p(X)_{\hom}$, $\Gamma_Z$ is a differentiable singular chain on $X$
such that $\partial\Gamma_Z=Z$, and \mbox{$\eta_Z\in
F_{\log}^pA^{2p-1}_{X\backslash|Z|}$} is such that
$\partial_Z([(2\pi i)^p\eta_Z])=[Z]$. It follows from Section
\ref{subsection-Abel-Jacobi} that the natural map
$\widetilde{Z}^p(X)_{\hom}\to Z^p(X)_{\hom}$ is surjective. There is
a homomorphism $\widetilde{Z}^p(X)\to H^{2p-1}(X,\C)$ given by the
formula $\widetilde{Z} \mapsto PD[\Gamma_Z]-[\eta_Z]$; its
composition with the natural map $H^{2p-1}(X,\C)\to J^{2p-1}(X)$ is
equal to the Abel--Jacobi map $\widetilde{Z}^p(X)_{\hom}\to
J^{2p-1}(X)$ given by the formula $\widetilde{Z}\mapsto AJ(Z)$ (see
section \ref{subsection-Abel-Jacobi}).

Let $T$ be a bisubgroup in $\widetilde{Z}^p(X)_{\hom}\times
\widetilde{Z}^q(X)_{\hom}$ that consists of all pairs of triples
$(\widetilde{Z},\widetilde{W})=((Z,\Gamma_Z,\eta_Z),(W,\Gamma_W,\eta_W))$
such that $|Z|\cap |W|=\emptyset$, $\Gamma_Z$ does not meet $|W|$,
and $|Z|$ does not meet $\Gamma_W$. Note that for any pair
$(\widetilde{Z},\widetilde{W})\in T$, there are well defined classes
$PD[\Gamma_Z],[\eta_Z]\in H^{2p-1}(X\backslash|Z|,|W|;\C)$ and
$PD[\Gamma_W],[\eta_W]\in H^{2q-1}(X\backslash|W|,|Z|;\C)$. As
before, denote by
$$
(\cdot,\cdot):H^{*}(X\backslash|Z|,|W|;\C)\times
H^{2d-*}(X\backslash|W|,|X|;\C)\to \C
$$
the natural pairing.

Let $S\subset T$ be a bisubgroup that consists of all pairs of
triples $(\widetilde{Z},\widetilde{W})$ such that $Z$ or $W$ is
rationally trivial. Let $\psi_1:S\to \C^*$ be the pull-back of the
bilinear map constructed in Section \ref{subsection-acycles} via the
natural map $\widetilde{Z}^p(X)_{\hom}\times
\widetilde{Z}^q(X)_{\hom}\to Z^p(X)_{\hom}\times Z^q(X)_{\hom}$. Let
$\psi_2:S\to \C^*$ be the pull-back of the bilinear constructed in
Section \ref{subsection-Poincare} via the defined above map
$\widetilde{Z}^p(X)_{\hom}\times \widetilde{Z}^q(X)_{\hom}\to
H^{2p-1}(X,\C)\times H^{2q-1}(X,\C)$. The construction from Section
\ref{subsection-quotientbiext} applied to the maps $\psi_1$ and
$\psi_2$ gives the biextensions $P_E$ and $P_{IJ}$, respectively.
Define a bilinear map $\phi:T\to \C^*$ by the formula
$\phi(\widetilde{Z},\widetilde{W})=\exp(2\pi
i\int_{\Gamma_Z}\eta_W)$. By Remark \ref{rmk-isombiext}, it is
enough to show that $\psi_1=\phi|_S\cdot\psi_2$.

Consider a pair $(\widetilde{Z},\widetilde{W})\in S$. First, suppose
that $Z$ is rationally trivial. By linearity and Lemma
\ref{lemma-movingK1}, we may assume that $Z=\div(f)$, where $f\in
\C^*(Y)$ and $Y\subset X$ is an irreducible subvariety of
codimension $p-1$ such that $Y$ meets $|W|$ properly. Let
$\widetilde{Y}$ be the closure of the graph of the rational function
$f:Y\dasharrow\P^1$ and let $p:\widetilde{Y}\to X$ be the natural
map. Let $\Gamma$ be a smooth generic path on $\P^1$ such that
$\partial\Gamma=\{0\}-\{\infty\}$ and $\Gamma$ does not intersect
with the finite set $f(p^{-1}(W))$; there is a cohomological class
$PD[\Gamma]\in H^1(\P^1\backslash\{0,\infty\},f(p^{-1}(|W|));\Z)$.
We put $\alpha_Z=(2\pi i)^{-p}[\widetilde{Y}]^*(2\pi iPD[\gamma])\in
H^{2p-1}(X\backslash|Z|,|W|;\Z)$. By Lemma \ref{lemma-correspAJ}
with $X_1=\P^1$, $X_2=X$, $C=\widetilde{Y}$, $Z_1=\{0,\infty\}$, and
$W_2=|W|$, we get $PD[\Gamma_Z]-\alpha_Z\in H^{2p-1}(X,|W|;\Z)$ and
$\alpha_Z-[\eta_Z]\in F^pH^{2p-1}(X,\Cb)$ (note the element
$\alpha_Z-[\eta_Z]\in H^{2p-1}(X,|W|;\Cb)$ does not necessary belong
to the subgroup $F^pH^{2p-1}(X,|W|;\Cb)$). Since
$PD[\Gamma_Z]-[\eta_Z]=(PD[\Gamma_Z]-\alpha_Z)+(\alpha_Z-[\eta_Z])$,
it follows from the explicit construction given in Section
\ref{subsection-Poincare} that
$$
\psi_2(\widetilde{Z},\widetilde{W})=\exp(2\pi
i(PD[\Gamma_Z]-\alpha_Z, PD[\Gamma_W]-[\eta_W]))=
$$
$$
\mbox{$\exp(-2\pi i\int_{\Gamma_Z}\eta_W+2\pi
i(\alpha_Z,[\eta_W])).$}
$$
In addition, combining Lemma \ref{lemma-complan}$(i)$ and Lemma
\ref{lemma-correspAJ} with $X_1=X$, $X_2=\P^1$, $C=\widetilde{Y}$,
$Z_1=|W|$, and $W_2=\{0,\infty\}$, we see that $\exp(2\pi
i(\alpha_Z,[\eta_W]))=f(Y\cap
W)=\psi_1(\widetilde{Z},\widetilde{W})$.

Now suppose that $W$ is rationally trivial. As before, by linearity
and Lemma \ref{lemma-movingK1}, we may assume that $W=\div(g)$,
where $W\in \C^*(V)$ and $V\subset X$ is an irreducible subvariety
of codimension $p-1$ such that $|Z|$ meets $V$ properly. Let
$\widetilde{V}$ be the closure of the graph of the rational function
$g:V\dasharrow\P^1$ and let $p:\widetilde{V}\to X$ be the natural
map. Note that the differential form $\frac{dz}{z}$ defines a
cohomological class $[\frac{dz}{z}]\in
F^1H^1(\P^1\backslash\{0,\infty\},g(p^{-1}(|Z|));\Cb)$. We put
$\alpha_W=(2\pi i)^{-q}[\widetilde{V}]^*[\frac{dz}{z}]\in
F^qH^{2q-1}(X\backslash|W|,|Z|;\C)$. By Lemma \ref{lemma-correspAJ}
with $X_1=\P^1$, $X_2=X$, $C=\widetilde{V}$, $Z_1=\{0,\infty\}$, and
$W_2=|Z|$, we get $\alpha_W-[\eta_W]\in F^qH^{2q-1}(X,|Z|;\Z)$ and
$PD[\Gamma_W]-\alpha_W\in H^{2q-1}(X,\Z)$ (note that the element
$PD[\Gamma_W]-\alpha_W\in H^{2q-1}(X,|Z|;\Cb)$ does not necessary
belong to the subgroup $H^{2q-1}(X,|Z|;\Z)$). Since
$PD[\Gamma_W]-[\eta_W]=(PD[\Gamma_W]-\alpha_W)+(\alpha_W-[\eta_W])$,
it follows from the explicit construction given in Section
\ref{subsection-Poincare} that
$$
\psi_2(\widetilde{Z},\widetilde{W})= \exp(2\pi
i(PD[\Gamma_Z]-[\eta_Z],\alpha_W-[\eta_W]))=
$$
$$
\mbox{$=\exp(2\pi i(PD[\Gamma_Z],\alpha_W)-2\pi
i\int_{\Gamma_Z}\eta_W)$}.
$$
In addition, combining Lemma \ref{lemma-complan}$(ii)$ and Lemma
\ref{lemma-correspAJ} with $X_1=X$, $X_2=\P^1$, $C=\widetilde{Y}$,
$Z_1=|Z|$, and $W_2=\{0,\infty\}$, we see that $\exp(2\pi
i(PD[\Gamma_Z],\alpha_W))=g(Z\cap
Y)=\psi_1(\widetilde{Z},\widetilde{W})$. This concludes the proof.
\end{proof}

During the proof of Proposition~\ref{prop-AbelJacobi} we have used
the following simple fact.

\begin{lemma}\label{lemma-complan}
\hspace{0cm}
\begin{itemize}
\item[(i)]
Let $\eta$ be a meromorphic $1$-form on $\P^1$ with poles of order
at most one (i.e., $\eta$ is a differential of the third kind),
$\Gamma$ be a smooth generic path on $\P^1$ such that
$\partial\Gamma=\{0\}-\{\infty\}$ and $\Gamma$ does not contain any
pole of $\eta$; if ${\rm res}(2\pi i\eta)=\sum_i{n_i}\{z_i\}$ for
some integers $n_i$, then we have $\mbox{$\exp(2\pi
i\int_{\Gamma}\eta)=\prod_i z_i^{n_i}$}$.
\item[(ii)]
Let $z$ be a coordinate on $\P^1$, $\Gamma$ be a differentiable
singular $1$-chain on $\P^1$ that does not intersect with the set
$\{0,\infty\}$; if $\partial\Gamma=\sum_i n_i\{z_i\}$, then we have
$\mbox{$\exp(\int_{\Gamma}\frac{dz}{z})=\prod_i z_i^{n_i}$}$.
\end{itemize}
\end{lemma}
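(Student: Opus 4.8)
The plan is to establish part (ii) by an elementary argument with the logarithmic form, and then to reduce part (i) to it.

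For part (ii), note that since $\Gamma$ avoids $\{0,\infty\}$ it is a differentiable singular $1$-chain in $\C^{*}=\P^{1}\setminus\{0,\infty\}$. I would consider the assignment $E(c)=\exp\big(\int_{c}\tfrac{dz}{z}\big)\in\C^{*}$ for a $1$-chain $c$ in $\C^{*}$; it is a homomorphism by linearity of the integral. On a $1$-cycle $c$ the integral $\int_{c}\tfrac{dz}{z}$ equals $2\pi i$ times the winding number of $c$ around the origin, hence lies in $2\pi i\Z$, so that $E(c)=1$. Therefore $E(\Gamma)$ depends only on $\partial\Gamma$, and since $\C^{*}$ is path-connected this yields a homomorphism from the group of degree-zero $0$-chains in $\C^{*}$ to $\C^{*}$. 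Evaluating on a generator $\{b\}-\{a\}$ by integrating $\tfrac{dz}{z}$ along any path from $a$ to $b$ and using a continuous branch of $\log$ along that path gives the value $b/a$; by multiplicativity, $E(\Gamma)=\prod_{i}z_{i}^{n_{i}}$ whenever $\partial\Gamma=\sum_{i}n_{i}\{z_{i}\}$ (here $\sum_{i}n_{i}=0$ automatically, being the degree of a boundary).

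For part (i), the starting point is that $0$ and $\infty$ are endpoints of $\Gamma$, hence not poles of $\eta$; since $\eta$ has at most simple poles everywhere on $\P^{1}$, it has no polynomial part and its only poles are the finite points $z_{i}$, so by the residue theorem on $\P^{1}$ we get $\sum_{i}n_{i}=0$ and $\eta=\tfrac{1}{2\pi i}\sum_{i}\tfrac{n_{i}\,dz}{z-z_{i}}$. I would then introduce the rational function $\Phi=\prod_{i}(z-z_{i})^{n_{i}}$ on $\P^{1}$ (well defined because $n_{i}\in\Z$), whose divisor is $\sum_{i}n_{i}\{z_{i}\}$ and which satisfies $\Phi(\infty)=1$ and $\Phi(0)=\prod_{i}(-z_{i})^{n_{i}}=\prod_{i}z_{i}^{n_{i}}$ (using $\sum_{i}n_{i}=0$). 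Since $2\pi i\,\eta=\tfrac{d\Phi}{\Phi}=\Phi^{*}\big(\tfrac{dw}{w}\big)$ for the coordinate $w$ on the target $\P^{1}$, we have $2\pi i\int_{\Gamma}\eta=\int_{\Phi_{*}\Gamma}\tfrac{dw}{w}$, where $\Phi_{*}\Gamma$ is a $1$-chain avoiding $\{0,\infty\}$ (because $\Gamma$ avoids the zeros and poles of $\Phi$, which all lie among the $z_{i}$) with boundary $\{\Phi(0)\}-\{\Phi(\infty)\}=\{\prod_{i}z_{i}^{n_{i}}\}-\{1\}$. Applying part (ii) to $\Phi_{*}\Gamma$ then gives $\exp\big(2\pi i\int_{\Gamma}\eta\big)=\prod_{i}z_{i}^{n_{i}}$.

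I do not expect a serious obstacle. The two points that require care are the bookkeeping that forces $\sum_{i}n_{i}=0$ --- which is exactly what makes $\Phi$ a genuine rational function on $\P^{1}$ and what lets part (ii) apply --- and the observation that one cannot split $\int_{\Gamma}\eta$ into a sum of the integrals $\int_{\Gamma}\tfrac{dz}{z-z_{i}}$, since these diverge at the endpoint $\infty$; transporting the whole form $\eta$ along the single map $\Phi$ circumvents this.
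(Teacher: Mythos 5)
Your proof is correct and takes a genuinely different route from the paper's. The paper treats the two parts independently by direct analytic computation: for (i) it fixes a branch of $\log z$ on $\P^1\setminus\Gamma$, applies Stokes' theorem to $d((\log z)\eta)=0$ on the region $X_{\epsilon}$ obtained by removing a tubular $\epsilon$-neighbourhood of $\Gamma$ and $\epsilon$-disks about the $z_i$, and lets $\epsilon\to 0$ to read off $2\pi i\int_{\Gamma}\eta=\sum_i n_i\log z_i$ from the boundary terms; for (ii) it chooses an auxiliary path $\gamma$ from $\infty$ to $0$ disjoint from $\Gamma$ (after reducing to the case where $\Gamma$ has no loops around $0$ or $\infty$), fixes a branch of $\log z$ on $\P^1\setminus\gamma$, and computes $\int_{\Gamma}\tfrac{dz}{z}=\int_{\Gamma}d\log z=\sum_i n_i\log z_i$. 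You instead prove (ii) by a monodromy/period argument (the homomorphism $E(c)=\exp\int_c\tfrac{dz}{z}$ is trivial on cycles because the periods of $\tfrac{dz}{z}$ on $\C^*$ lie in $2\pi i\Z$, so $E$ factors through degree-zero $0$-chains and is pinned down by its value $b/a$ on $\{b\}-\{a\}$), which avoids any global choice of logarithm branch; and you reduce (i) to (ii) by observing $2\pi i\eta=d\log\Phi=\Phi^{*}(dw/w)$ for the rational function $\Phi=\prod_i(z-z_i)^{n_i}$ and pushing the chain forward along $\Phi$. Your reduction concentrates all the analytic work in the universal case (ii), replacing the Stokes/residue bookkeeping with the algebraic facts $\Phi(\infty)=1$ and $\Phi(0)=\prod_i z_i^{n_i}$, both consequences of $\sum_i n_i=0$ (which you correctly derive from the residue theorem, since $0,\infty$ lie on $\Gamma$ and hence cannot be poles of $\eta$). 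Your closing remark that one cannot split $\int_{\Gamma}\eta$ into $\sum_i\int_{\Gamma}\tfrac{n_i\,dz}{2\pi i(z-z_i)}$ --- each summand diverging at $\infty$ --- correctly pinpoints the pitfall that transporting along $\Phi$ circumvents; the paper's choice of $\log z$ (rather than $\log(z-z_i)$) as the multiplier is its own way around the same issue.
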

\begin{proof}
$(i)$ Let $\log z$ be a branch of logarithm on $\P^1\backslash
\Gamma$, $T_{\epsilon}$ be a tubular neighborhood of $\Gamma$ with
radius $\epsilon$, and let $B_{i,\epsilon}$ be disks around
$\{z_i\}$ with radius $\epsilon$; we put
$X_{\epsilon}=\P^1\backslash (\cup_iB_{i,\epsilon}\cup
T_{\epsilon})$. Then we have $0=\lim\limits_{\epsilon\to 0}
\int_{X_{\epsilon}} d((\log z) \eta)=\sum_i n_i\log z_i-2\pi
i\int_{\Gamma}\eta$; this concludes the proof.

$(ii)$ One may assume that $\Gamma$ does not contain any loop around
$\{0\}$ or $\{\infty\}$; hence there exists a smooth path $\gamma$
on $\P^1$ such that $\partial\gamma=\{0\}-\{\infty\}$ and $\gamma$
does not intersect $\Gamma$. Let $\log z$ be a branch of logarithm
on $\P^1\backslash\gamma$. Then we have
$\int_{\Gamma}\frac{dz}{z}=\int_{\Gamma}d\log z=\sum_i n_i\log z_i$;
this concludes the proof.
\end{proof}

\begin{rmk}\label{rmk-explicPoinccycles}
For a cycle $Z\in Z^p(X)_{\hom}$, consider an exact sequence of
integral mixed Hodge structures
$$
0\to H^{2p-1}(X)(p)\to H^{2p-1}(X\backslash |Z|)(p)\to
H_{2d-2p}(|Z|)\to H^{2p}(X)(p).
$$
Its restriction to $[Z]_{\Z}=\Z(0)\subset H_{2d-2p}(|Z|)$ defines a
short exact sequence
$$
0\to H^{2p-1}(X)(p)\to E_Z\to \Z(0)\to 0.
$$
Analogously, for a cycle $W\in Z^q(X)_{\hom}$, we get an exact
sequence
$$
0\to H^{2q-1}(X)(q)\to E_W\to \Z(0)\to 0
$$
and a dual exact sequence
$$
0\to \Z(1)\to E_W^{\vee}\to H^{2p-1}(X)(p)\to 0.
$$
By Remark~\ref{rmk-Hain-canonical}, we see that the fiber
$P_{IJ}|_{(AJ(Z),AJ(W))}$ is canonically bijective with the set of
isomorphism classes of all integral mixed Hodge structures $V$ whose
weight graded quotients are identified with $\Z(0)$,
$H^{2p-1}(X)(p)$, $\Z(1)$ and such that $[V/W_{-2}V]=[E_Z]\in
\Ext^1_{\H}(\Z(0),H^{2p-1}(X)(p))$, $[W_{-1}V]=[E_{W}^{\vee}]\in
\Ext^1_{\H}(H^{2p-1}(X)(p),\Z(1))$.
\end{rmk}

\begin{rmk}
Under the assumptions of Corollary \ref{corol-canon-torsors} suppose
that $|Z|\cap |W|=\emptyset$. Then there are two canonical
trivializations of the fiber of the biextension $\log|P_E|$ over
$(Z,W)$: the first one follows from the construction of $P_E$ and
the second one follows from Proposition \ref{prop-AbelJacobi} and
Remark \ref{rmk-height}. Let $h(Z,W)\in\R$ be the quotient of these
two trivializations. Consider closed forms $\eta^r_Z\in
F^p_{\log}A^{2p-1}_{X\backslash|Z|}$ and $\eta^r_W\in
F^q_{\log}A^{2q-1}_{X\backslash|W|}$ such that $\partial_Z ((2\pi
i)^p\eta^r_Z)=[Z]$, $\partial_W( (2\pi i)^q\eta^r_W)=[W]$,
$\eta^r_Z$ has real periods on $X\backslash|Z|$, and $\eta_W^r$ has
real periods on $X\backslash |W|$ (it is easy to see that such forms
always exist). In notations from the proof of
Proposition~\ref{prop-AbelJacobi} the isomorphism $P_E\to P_{IJ}$ is
given by the multiplication by $\exp(2\pi i\int_{\Gamma_Z}\eta_W)$.
On the other hand, the trivialization from Remark \ref{rmk-height}
is given by the section ${\rm Re}(2\pi
i(PD[\Gamma_Z]-[\eta^r_Z],PD[\Gamma_W]-[\eta_W]))= {\rm Re}(2\pi
i\int_{\Gamma_Z}(\eta^r_W-\eta_W))$. Therefore, $h(Z,W)={\rm
Re}(2\pi i\int_{\Gamma_Z}\eta^r_W)$. This number is called the {\it
archimedean height pairing} of $Z$ and $W$.
\end{rmk}

Finally, using Lemma \ref{lemma-complan} we give an explicit
analytic proof of Lemma \ref{lemma-Bloch} for complex varieties.

\begin{lemma}\label{lemma-BlochAnal}
Let $X$ be a complex smooth projective variety of dimension $d$,
$W\in Z^q(X)_{\hom}$ be a homologically trivial cycle, $\{Y_i\}$ be
a finite collection of irreducible subvarieties of codimension $d-q$
in $X$, and let $f_i\in \C(Y_i)^*$ be a collection of rational
functions such that $\sum_i\div(f_i)=0$ and for any $i$, the support
$|W|$ meets $Y_i$ properly and $|W|\cap |\div(f_i)|=\emptyset$. Then
we have $f(Y\cap W)=\prod_i f_i(Y_i\cap W)=1$; here for each $i$, we
put $f_i(Y_i\cap W)=\prod\limits_{x\in Y_i\cap
|W|}f_i^{(Y_i,W;x)}(x)$, where $(Y_i,W;x)$ is the intersection index
of $Y_i$ and $W$ at a point $x\in Y_i\cap |W|$.
\end{lemma}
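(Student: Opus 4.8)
The plan is to adapt the second proof of Proposition~\ref{prop-AbelJacobi}: I would realize $\prod_i f_i(Y_i\cap W)$ as the value of the natural cup-product pairing between a correspondence class built from the graphs of the $f_i$ and the logarithmic form attached to $W$, and then show this value lies in $\Z$ by a purity argument for the Hodge structure $H^{2p-1}(X)$, where $p=d-q+1$.

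First I would assemble the ingredients. Put $Z_i=\div(f_i)$, a codimension $p$ cycle supported in $Y_i$, so that $\sum_iZ_i=0$, and let $|Z|=\bigcup_i|Z_i|$, which is disjoint from $|W|$. Since $W$ is homologically trivial, choose a differentiable singular chain $\Gamma_W$ on $X$ with $\partial\Gamma_W=W$ and disjoint from $|Z|$, and a closed form $\eta_W\in F^q_{\log}A^{2q-1}_{X\backslash|W|}$ with $\partial_W([(2\pi i)^q\eta_W])=[W]$; as in the proof of Proposition~\ref{prop-AbelJacobi}, $PD[\Gamma_W]$ and $[\eta_W]$ then define classes in $H^{2q-1}(X\backslash|W|,|Z|;\C)$ with $PD[\Gamma_W]$ integral, $[\eta_W]\in F^q$, and $PD[\Gamma_W]-[\eta_W]$ extending to a class $\gamma_W\in H^{2q-1}(X;\C)$ which, having vanishing residue along $|W|$, comes from $H^{2q-1}(X,|Z|;\C)$. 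For each $i$ let $\widetilde Y_i\subset X\times\P^1$ be the closure of the graph of the rational map $f_i$, and set
$$
\alpha_i=(2\pi i)^{-p}\,[\widetilde Y_i]^*\bigl[\tfrac{dz}{z}\bigr]\in H^{2p-1}(X\backslash|Z_i|,|W|;\C),
$$
using Lemma~\ref{lemma-correspAJ}(i) with $X_1=\P^1$, $X_2=X$, $C=\widetilde Y_i$, $Z_1=\{0,\infty\}$, $W_2=|W|$. Since $[\tfrac{dz}{z}]$ lies in $F^1$ and in $H^1(\P^1\backslash\{0,\infty\};\Z(1))$ (Example~\ref{examp-P1}), the class $\alpha_i$ is integral and lies in $F^pH^{2p-1}(X\backslash|Z_i|,|W|;\C)$, and Lemma~\ref{lemma-correspAJ}(ii) gives $\partial_{Z_i}(\alpha_i)=[Z_i]$. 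Interchanging the two factors in Lemma~\ref{lemma-correspAJ} ($X_1=X$, $X_2=\P^1$, $C=\widetilde Y_i$, $Z_1=|W|$, $W_2=\{0,\infty\}$) and applying Lemma~\ref{lemma-complan}(ii), exactly as in the rationally trivial case of the second proof of Proposition~\ref{prop-AbelJacobi}, one obtains
$$
f_i(Y_i\cap W)=\exp\bigl(2\pi i\,(\alpha_i,[\eta_W])\bigr),
$$
where $(\cdot,\cdot)\colon H^{2p-1}(X\backslash|Z|,|W|;\C)\times H^{2q-1}(X\backslash|W|,|Z|;\C)\to H^{2d}(X;\C)=\C$ is the natural pairing. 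Taking the product over $i$ and writing $\alpha=\sum_i\alpha_i\in H^{2p-1}(X\backslash|Z|,|W|;\C)$, the assertion reduces to $(\alpha,[\eta_W])\in\Z$.

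The heart of the argument is the purity step. The class $\alpha$ is integral, of Hodge type $F^p$, and satisfies $\partial_Z(\alpha)=\sum_i[Z_i]=[\sum_iZ_i]=0$ — this is exactly where the hypothesis $\sum_i\div(f_i)=0$ enters. Since $|Z|\cap|W|=\emptyset$, the exact sequence
$$
0\to H^{2p-1}(X,|W|)\to H^{2p-1}(X\backslash|Z|,|W|)\stackrel{\partial_Z}\to H_{2d-2p}(|Z|)
$$
(the relative-to-$|W|$ version of the sequence in Section~\ref{subsection-Abel-Jacobi}) shows that $\alpha$ lifts uniquely to $\alpha'\in H^{2p-1}(X,|W|;\C)$, which is again integral and in $F^p$. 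Its image $\overline{\alpha'}\in H^{2p-1}(X;\C)$ is an integral class of Hodge type $F^p$ in a pure Hodge structure of odd weight $2p-1$, and $F^pH^{2p-1}(X;\C)\cap H^{2p-1}(X;\R)=0$, so $\overline{\alpha'}=0$. Now write $[\eta_W]=PD[\Gamma_W]-\gamma_W$. Then $(\alpha,PD[\Gamma_W])$ is a cup product of the integral classes $\alpha\in H^{2p-1}(X\backslash|Z|,|W|;\Z)$ and $PD[\Gamma_W]\in H^{2q-1}(X\backslash|W|,|Z|;\Z)$, hence lies in $H^{2d}(X;\Z)=\Z$; while, using that $\alpha$ comes from $\alpha'\in H^{2p-1}(X,|W|;\C)$ and $\gamma_W$ from $H^{2q-1}(X,|Z|;\C)$, the compatibility of cup product with the maps of the pairs gives $(\alpha,\gamma_W)=\overline{\alpha'}\cup\gamma_W=0$ in $H^{2d}(X;\C)$. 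Therefore $(\alpha,[\eta_W])=(\alpha,PD[\Gamma_W])\in\Z$ and $\prod_i f_i(Y_i\cap W)=1$.

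I expect the main obstacle to be the mixed-Hodge-theoretic bookkeeping underlying the formula $f_i(Y_i\cap W)=\exp(2\pi i\,(\alpha_i,[\eta_W]))$ and the cup-product identities above: one has to choose representatives with prescribed residues/boundary values, keep track of Tate twists, and keep straight which classes are relative to $|Z|$ and which to $|W|$. Conceptually, however, this is entirely parallel to the computations already carried out in the proof of Proposition~\ref{prop-AbelJacobi} together with Lemma~\ref{lemma-complan}; the one genuinely new input here is the reduction "$\sum_i\div(f_i)=0\Rightarrow\partial_Z(\alpha)=0\Rightarrow\overline{\alpha'}$ is integral of type $F^p$ in a pure odd-weight Hodge structure, hence zero", and the use of homological triviality of $W$ to guarantee that $\Gamma_W$ and $\eta_W$ exist in the first place.
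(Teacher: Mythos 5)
Your proof is correct, and it rests on the same key observation as the paper's: for the odd-weight pure Hodge structure $H^{2p-1}(X)$, an integral class lying in $F^p$ must vanish. You and the paper diverge, however, in how this observation is deployed and how the final integrality is extracted. The paper works entirely on the chain side: after writing $f(Y\cap W)=\exp(2\pi i\int_\Gamma\eta)$ with $\Gamma=\sum_i\Gamma_i$ and $\partial\Gamma=\sum_i\div(f_i)=0$, it uses the purity observation (in its Poincar\'e-dual form) to show $\Gamma$ is null-homologous on $X$, then invokes a Gysin-type argument to write $\Gamma$, modulo boundaries on $X\backslash|W|$, as an integral combination of small linking $(2q-1)$-spheres around the components of $|W|$, and finishes with the residue normalization $\int_{S_j}\eta=m_j\in\Z$ built into the choice of $\eta$. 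You never introduce these linking spheres or use the residue normalization directly; instead you bring in the extra chain $\Gamma_W$ with $\partial\Gamma_W=W$, split $[\eta_W]=PD[\Gamma_W]-\gamma_W$ with $\gamma_W$ coming from $H^{2q-1}(X;\C)$, and then conclude from two pieces: $(\alpha,PD[\Gamma_W])\in\Z$ because it is a cup product of integral classes, and $(\alpha,\gamma_W)=\overline{\alpha'}\cup\gamma_W=0$ by the purity step. Your route trades the Gysin step and the residue condition for the construction of $\Gamma_W$ and some cup-product bookkeeping; both are clean, and yours has the small advantage of staying closer to the formalism already set up in the second proof of Proposition~\ref{prop-AbelJacobi}. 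One tiny overstatement: the lift $\alpha'$ of $\alpha$ need not be unique (and uniqueness is not used), so that word should be dropped.
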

\begin{proof}
Since $W$ is homologically trivial, by Section
\ref{subsection-Abel-Jacobi}, there exists a closed differential
form $\eta\in F^q_{\log}A^{2q-1}_{X\backslash|W|}$ such that
$\partial_W([(2\pi i)^q\eta])=[W]$, i.e., for a small
$(2q-1)$-sphere $S_j$ around an irreducible component $W_j$ of $W$,
we have $\int_{S_j}\eta=m_j$, where $W=\sum m_jW_j$.

For each $i$, let $\widetilde{Y}_i$ be the closure of the graph of
the rational function $f_i:Y_i\dasharrow\P^1$ and let
$p_i:\widetilde{Y}_i\to X$ be the natural map. Let $\gamma$ be a
smooth generic path on $\P^1$ such that
$\partial\gamma=\{0\}-\{\infty\}$ and $\gamma$ does not intersect
with the finite set $\Sigma=\cup_i p_i(f_i^{-1}(|W|))$. There is a
cohomological class $PD[\gamma]\in
H^1(\P^1\backslash\{0,\infty\},\Sigma;\Z)$; we put $\alpha_i= (2\pi
i)^{-(d-q+1)}[\widetilde{Y}_i]^*(2\pi i PD[\gamma])\in
H^{2d-2q+1}(X\backslash|\div(f_i)|,|W|;\Z)$. Let $\Gamma_i$ be a
differentiable representative for $PD(\alpha_i)\in
H_{2q-1}(X\backslash|W|,|\div(f_i)|;\Z)$ (see Remark
\ref{rmk-explicit-chain}). In particular, $\Gamma_i$ is a
differentiable singular $(2q-1)$-chain on $X\backslash |W|$ with
boundary $\div(f_i)$.

By Lemma \ref{lemma-complan} and Lemma \ref{lemma-correspAJ} with
$X_1=\P^1$, $X_2=X$, $C=\widetilde{Y}_i$, $Z_1=\{0,\infty\}$, and
$W_2=|W|$, we get $f_i(Y_i\cap W)=\exp(2\pi
i(\alpha_i,\eta))=\exp(2\pi i\int_{\Gamma_i}\eta)$. Therefore,
$f(Y\cap W)=\exp(2\pi i\int_{\Gamma}\eta)$, where $\Gamma=\sum_i
\Gamma_i$. Since $\sum_i\div(f_i)=0$, the chain $\Gamma$ on
$X\backslash |W|$ has no boundary.

Note that the image of the class $\alpha_i$ under the natural
homomorphism
$$
H^{2d-2q+1}(X\backslash|\div(f_i)|,|W|;\C)\to
H^{2d-2q+1}(X\backslash|\div(f_i)|,\C)
$$
belongs to the subgroup
$F^{d-q+1}H^{2d-2q+1}(X\backslash|\div(f_i)|,\C)$, i.e, for any
closed form $\omega\in F^{q}A^{2q-1}_X$, we have
$\int_{\Gamma_i}\omega=0$. It follows that the chain $\Gamma$ is
homologous to zero on $X$. Therefore, $\Gamma$ is homologous on
$X\backslash |W|$ to an integral linear combination of small
$(2q-1)$-spheres around irreducible components of the subvariety
$|W|$; hence $\int_{\Gamma}\eta\in\Z$ and this concludes the proof.
\end{proof}

\subsection{$K$-cohomology construction}\label{subsection-Kcohombiext}

We use notions
and notations from Section~\ref{subsection-adelic}.
Let $X$ be a smooth projective variety of dimension $d$
over a field $k$, and let $p,q\ge 0$ be such that $p+q=d+1$.
There is a product morphism
$m:\K_p\otimes\K_q\to \K_{d+1}$ and a push-forward map
$\pi_*:H^d(X,\K_{d+1})\to k^*$, where $\pi:X\to\Spec(k)$ is the
structure map. By Example \ref{examp-sheavesbiext}, for any integer
$p\ge 0$, we get a biextension of $(H^p(X,\K_p)',H^{q}(X,\K_{q})')$
by $k^*$, where $H^p(X,\K_p)'\subset H^p(X,\K_p)$ is the annulator
of the group $H^{q-1}(X,\K_{q})$ with respect to the pairing
$H^p(X,\K_p)\times H^{q-1}(X,\K_q)\to k^*$ and the analogous is true
for the subgroup $H^q(X,\K_q)'\subset H^q(X,\K_q)$.

Consider a cycle $W\in Z^q(X)$ and a $K_1$-chain $\{f_{\eta}\}\in
G^{d-q}_{|W|}(X,1)$ such that $\div(\{f_{\eta}\})=0$. By
$\overline{W}$ and $\overline{\{f_{\eta}\}}$ denote the classes of
$W$ and $\{f_{\eta}\}$ in the corresponding $K$-cohomology groups.
It follows directly from Lemma \ref{lemma-adelicproduct} that
$\pi_*(m(\overline{\{f_{\eta}\}}\otimes \overline{W}))=
\prod_{\eta}f_{\eta}(\overline{\eta}\cap W)$. Hence by Lemma
\ref{lemma-Bloch}, the identification $CH^p(X)=H^p(X,\K_p)$ induces
the equality $CH^p(X)'= H^p(X,\K_p)'$; thus we get a biextension
$P_{KC}$ of $(CH^p(X)',CH^{d+1-1}(X)')$ by $k^*$.

\begin{prop}\label{prop-biext-Kcohom}
The biextension $P_{KC}$ is canonically isomorphic up to the sign
$(-1)^{pq}$ to the biextension $P_E$.
\end{prop}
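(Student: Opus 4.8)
The plan is to give two proofs, in parallel with the two proofs of Proposition~\ref{prop-AbelJacobi}: a short functorial one, relying on the comparison between higher Chow complexes and $K$-cohomology, and a computational one via the adelic resolution of Section~\ref{subsection-adelic}.

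\emph{Functorial approach.} Since the biextension attached in Section~\ref{subsection-complexes} to a pairing of complexes is functorial with respect to morphisms of pairings, it suffices to produce a morphism $\mathrm{reg}\colon Z^p(X,\bullet)\to R\Gamma(X,\K_p)$ in $D^-(\Abc)$ — the regulator from the higher Chow complex to $K$-cohomology, induced by the canonical map from motivic cohomology to $K$-cohomology. I would realize it concretely through the Gersten complex, passing to the adelic model of Section~\ref{subsection-adelic} to see the compatibility with multiplication (for which the Gersten complex is not suited). This morphism induces the identity on $H_0 = CH^p(X)$, intertwines the two push-forwards to $k^*$, and is compatible with the multiplication morphisms up to the sign $(-1)^{pq}$ in the bidegrees $(p,q)$ occurring here — precisely the sign of Lemma~\ref{lemma-adelicproduct}. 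By functoriality $\mathrm{reg}$ then identifies $P_{HC}$ with $P_{KC}$ up to the sign $(-1)^{pq}$, and Proposition~\ref{prop-Chowbiext} finishes the argument. The work in this proof is to construct the multiplicative comparison morphism and to pin down its sign.

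\emph{Explicit approach.} Here I would compute both sides through the quotient construction of Sections~\ref{subsection-quotientbiext}--\ref{subsection-complexes}, modelling $R\Gamma(X,\K_n)$ by the adelic complex $\A(X,\K_n)^{\bullet}$ with its explicit product $m$. For $P_E$ the relevant data is the bisubgroup $T\subset Z^p(X)'\times Z^q(X)'$ of pairs with $|Z|\cap|W|=\emptyset$, with the bilinear map $\psi$ of Section~\ref{subsection-acycles}. The first step is to convert this into data for $P_{KC}$: for each such $(Z,W)$ choose, by Lemma~\ref{lemma-patchingsyst} (conditions $(i)$ and $(ii)$), compatible patching systems for $|Z|$ and $|W|$ — so that by Remark~\ref{rmk-adelicproper} the supports meet all auxiliary subvarieties properly — and then good cocycles $[Z]\in\A(X,\K_p)^p$, $[W]\in\A(X,\K_q)^q$ by Lemma~\ref{lemma-adelicgoodcocycles}; these represent $\overline Z,\overline W$ in $K$-cohomology and depend on the choices only up to coboundaries, so they assemble into a morphism from the data of $P_E$ to the ambient cocycle data of $P_{KC}$ compatible with the biextension structures. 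By Remark~\ref{rmk-biextcoinc} it then remains to compare the two bilinear maps on $S$.

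So let $(Z,W)\in S$, say with $Z=\div(\{f_\eta\})$, $\{f_\eta\}\in G^{p-1}_{|W|}(X,1)$. By Lemma~\ref{lemma-adelicboundary} there is an adele $[\{f_\eta\}]\in\A(X,\K_p)^{p-1}$ with $d[\{f_\eta\}]=[Z]$, and by the formulas of Section~\ref{subsection-complexes} the trivialization of $P_{KC}$ over $(Z,W)$ is $\pi_*\,m([\{f_\eta\}]\otimes[W])$. I would evaluate this by the adelic product computation of Lemma~\ref{lemma-adelicproduct}, in the mild extension allowing the $K_1$-chain to have non-trivial divisor — the proof being unchanged since the product sits in top degree $d$ and is read off at codimension-$d$ points — getting $\pi_*\,m([\{f_\eta\}]\otimes[W])=(-1)^{pq}\prod_\eta f_\eta(\overline\eta\cap W)$, i.e. $(-1)^{pq}$ times the value of $\psi$. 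The case where $W$ is rationally trivial is symmetric, using the second formula of Lemma~\ref{lemma-adelicproduct} (which contributes $(-1)^{p(q-1)}$) together with the extra sign $(-1)^p$ built into the second summand of $S$ in Section~\ref{subsection-complexes}; these recombine to $(-1)^{pq}$. Hence $P_{KC}$ is the biextension obtained from the bilinear map $(-1)^{pq}\psi$, so $P_{KC}$ is canonically isomorphic to $P_E$ up to the sign $(-1)^{pq}$. I expect the main obstacle to be the bookkeeping in the first step of the explicit proof — arranging the patching systems and good cocycles so that $(Z,W)\mapsto([Z],[W])$ is genuinely compatible with the equivalence relations defining the two biextensions — together with tracking the signs coming from Lemma~\ref{lemma-adelicproduct} and from the construction of Section~\ref{subsection-complexes}.
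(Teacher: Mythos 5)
Your proposal takes essentially the same two-pronged approach as the paper's own proof: a short ``functorial'' argument via a regulator to $K$-cohomology followed by Proposition~\ref{prop-Chowbiext}, and an explicit computation through the adelic resolution. The explicit part matches the paper's second proof very closely: you build good cocycles from patching systems satisfying conditions $(i)$ and $(ii)$ of Lemma~\ref{lemma-patchingsyst}, apply Lemma~\ref{lemma-adelicproduct}, and track the signs correctly — the $(-1)^{(p'+1)q'}=(-1)^{pq}$ from the first formula (with $p'=p-1$, $q'=q$) and the $(-1)^{p}\cdot(-1)^{p(q-1)}=(-1)^{pq}$ recombination for the symmetric case, the extra $(-1)^p$ being exactly the sign built into $\psi$ in Section~\ref{subsection-complexes}. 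The paper's only organizational difference is that it defines its bisubgroup $T$ directly inside $\Ker(d^p)'\times\Ker(d^q)'$ in terms of good cocycles rather than pulling back from $Z^p(X)'\times Z^q(X)'$, and it invokes Lemma~\ref{lemma-adelicboundary} to produce the bounding adele $[\{f_\eta\}]$ rather than informally ``extending'' Lemma~\ref{lemma-adelicproduct}; you should be aware that the paper treats this step as a separate lemma, not as a corollary of Lemma~\ref{lemma-adelicproduct} whose proof is ``unchanged.''

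The functorial half of your proposal is where the real gap is. You assert the existence of a morphism $Z^p(X,\bullet)\to R\Gamma(X,\K_p)$ and say you would ``realize it concretely through the Gersten complex, passing to the adelic model,'' but the Gersten complex is not functorial or multiplicative, so by itself it cannot carry such a morphism compatibly with products. The paper's Proof~1 needs a genuine input here: it sheafifies the higher Chow complex, uses that its top cohomology sheaf $\Hc^p$ has a flabby resolution by the Milnor $K$-theory Gersten complex, maps Milnor $K$-groups to Quillen $K$-groups, and — crucially — invokes the theorems of Nesterenko--Suslin and Totaro to obtain both the quasiisomorphism $\Hc^p\cong\K_p^M$ in the relevant range and the compatibility of this identification with products. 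None of that machinery appears in your sketch, so as written the ``short'' proof has no mechanism for producing the regulator, nor for the multiplicative compatibility you need. You should either supply those inputs (as the paper does) or drop the functorial route and rely on the adelic computation alone, which is self-contained for infinite perfect $k$.
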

\begin{proof}[Proof 1]
First, we give a short proof that uses a regulator map from higher
Chow groups to $K$-cohomology. The author is very grateful to the referee
who suggested this proof.

Consider the cohomological type higher Chow complex
$$
Z^p(X)^{\bullet}=Z^p(X,2p-\bullet)
$$
and the complex of Zariski flabby sheaves
$\underline{Z}^p(X)^{\bullet}$ defined by the formula
$\underline{Z}^p(X)^{\bullet}(U)=Z^p(U)^{\bullet}$ for an open
subset $U\subset X$. Let $\Hc^i$ be the cohomology sheaves of the
complex $\underline{Z}^p(X)^{\bullet}$. It follows from \cite{Blo86}
combined with \cite{NS} and \cite{Tot} that $\Hc^i=0$ for $i>p$ and
that the sheaf $\Hc^p$ has the following flabby resolution
$\underline{Gers}^M(X,p)^{\bullet}$:
$\underline{Gers}^M(X,p)^{\bullet}(U)=Gers^M(U,p)^{\bullet}$ for an
open subset $U\subset X$, where
$$
Gers^M(U,p)^l=\sum_{\eta\in X^{(l)}} K_{p-l}^M(k(\eta))
$$
and $K^M$ denotes the Milnor $K$-groups. Combining the canonical
homomorphism from Milnor $K$-groups to Quillen $K$-groups of fields and
the fact that the Gersten complex is a resolution of the sheaf $\K_p$, we get a morphism
$$
R\Gamma(X,Z^p(X)^{\bullet})\to R\Gamma(X,\K_p[-p]).
$$
Moreover, it follows from \cite{NS} and \cite{Tot} that this morphism
commutes with the multiplication morphisms. By
Proposition~\ref{prop-Chowbiext}, we get the needed result.
\end{proof}

\begin{proof}[Proof 2]
Let us give an explicit proof in the case
when the ground field is infinite and perfect.
We use adelic complexes (see
Section~\ref{subsection-adelic} and \cite{Gor07}).
The multiplicative structure on the adelic complexes defines the
morphism of complexes
\mbox{$\phi:\A(X,p)^{\bullet}\otimes\A(X,q)^{\bullet}\to k^*[-d]$},
which agrees with the natural morphism
\mbox{$\Hom_{D^b(\Abc)}(R\Gamma(X,\K_p)\otimes^L_{\Z}R\Gamma(X,\K_q),k^*[-d])$}
that defines the biextension $P_{KC}$.

By $d$ denote the differential in the adelic complexes
$\A(X,n)^{\bullet}$. In notations from
Section~\ref{subsection-complexes}, let $T\subset
\Ker(d^p)'\times\Ker(d^{q})'$ be the bisubgroup that consists of all
pairs $(\sum_i[Z_i],\sum_j[W_j])$ with $[Z_i]\in \A(X,\K_p)^p$,
$[W_j]\in\A(X,q)^{q}$ such that for all $i$, $j$, we have:
\begin{itemize}
\item[(i)] the $K$-adeles $[Z_i]\in \A(X,\K_p)^p$ and
$[W_j]\in\A(X,q)^{q}$ are good cocycles with respect to some
patching systems $\{(Z_i)^{1,2}_r\}$ and $\{(W_j)^{1,2}_s\}$ for
cycles $Z_i\in Z^p(X)'$ and $W_j\in Z^q(X)'$ on $X$, respectively;
\item[(ii)] the patching systems
$\{(Z_i)^{1,2}_r\}$ and $\{(W_j)^{1,2}_s\}$ satisfy the condition
$(i)$ from Lemma~\ref{lemma-patchingsyst} and the patching system
$\{(W_j)^{1,2}_s\}$ satisfies the condition $(ii)$ from
Lemma~\ref{lemma-patchingsyst} with respect to the subvariety
$|Z_i|\subset X$;
\item[(iii)] the support $|Z_i|$ meets the support $|W_j|$
properly.
\end{itemize}

In particular, for all $j$, we have
$\codim_{Z}(Z\cap((W_j)_s^1\cap (W_j)_s^2))\ge s+1$ for all $s$,
$1\le s\le q-1$, where $Z=|\sum_i Z_i|\in Z^p(X)$.

Combining the classical moving lemma,
Lemma \ref{lemma-patchingsyst}, and Lemma \ref{lemma-adelicgoodcocycles}, we see that
the natural map $T\to H^p(X,\K_p)'\times H^{q}(X,\K_{q})'=
CH^p(X)'\times CH^{q}(X)'$ is surjective.

Suppose that $(\sum_i[Z_i],\sum_j[W_j])\in T$ and the class of the
cycle $Z=\sum_i Z_i$ in $H^p(X,\K_p)=CH^p(X)$ is trivial. By
Lemma~\ref{lemma-movingK1} and Corollary \ref{corol-movingK1codim},
there exists a \mbox{$K_1$-chain} $\{f_{\eta}\}\in G^{p-1}(X,1)$
such that $\div(\{f_{\eta}\})=Z$, the support
$Y=\Supp(\{f_{\eta}\})$ meets $W=\sum_j W_j\in Z^{q}(X)$ properly
and for all $j$, we have $\codim_{Y}(Y\cap((W_j)_s^1\cap
(W_j)_s^2)))\ge s+1$ for all $s$, $1\le s\le q-1$. It follows that
for all $j$, the patching system $\{(W_j)^{1,2}_s\}$ satisfies the
condition $(ii)$ from Lemma \ref{lemma-patchingsyst} with respect to
the subvariety $Y\subset X$. Combining Lemma
\ref{lemma-patchingsyst}, Lemma \ref{lemma-adelicboundary}, and
Lemma \ref{lemma-adelicproduct}, we see that there exists a
$K$-adele $[\{f_{\eta}\}]\in\A(X,\K_p)^{p-1}$ such that
$d^{p-1}[\{f_{\eta}\}]=\sum_i[Z_i]$ and
$\pi_*(m([\{f_{\eta}\}]\otimes\sum_j[W_j]))=
(-1)^{pq}\prod_{\eta}f_{\eta}(\overline{\eta}\cap W)$.

Suppose that $(\sum_i[Z_i],\sum_j[W_j])\in T$ and the class of the
cycle $W=\sum_j W_j$ in $H^{q}(X,\K_{q})=CH^{q}(X)$ is trivial. By
Lemma \ref{lemma-movingK1}, there exists a $K_1$-chain
$\{g_{\xi}\}\in G^{p-1}(X,1)$ such that $\div(\{g_{\xi}\})=Z$ and
the support $Y=\Supp(\{g_{\xi}\})$ meets $Z=\sum_i Z_i\in Z^{p}(X)$
properly. Combining Lemma \ref{lemma-patchingsyst}, Lemma
\ref{lemma-adelicboundary}, and Lemma \ref{lemma-adelicproduct}, we
see that there exists a $K$-adele
$[\{g_{\xi}\}]\in\A(X,\K_{q})^{q-1}$ such that
$d^{q-1}[\{g_{\xi}\}]=\sum_j[W_j]$ and $(-1)^p
\pi_*(m([Z]\otimes[\{g_{\xi}\}]))=
(-1)^{pq}\prod_{\xi}g_{\xi}(Z\cap\overline{\xi})$.

Therefore by Remark \ref{rmk-biextcoinc} applied to $T\subset \Ker(d^p)'\times
\Ker(d^q)'$ and $\psi$ induced by $\phi=\pi_*\circ m$, we get the needed result.

Therefore the needed statement follows from
Remark~\ref{rmk-biextcoinc} applied to the bisubgroup $T\subset
\Ker(d^p)'\times \Ker(d^q)'$ and the bigger bisubgroup in
$\Ker(d^p)'\times \Ker(d^q)'$ together with the bilinear map defined
by the morphism of complexes $\phi$ as shown in
Section~\ref{subsection-complexes}.
\end{proof}

\subsection{Determinant of cohomology construction}\label{subsection-determbiext}

In Section~\ref{subsection-determ} we defined
a ``determinant of cohomology'' distributive functor
$$
\langle\cdot,\cdot\rangle:V\M_X\times V\M_X\to V\M_k.
$$
Our goal is to descend this distributive functor to a biextension of Chow groups.
The strategy is as follows. First,
we define a filtration $C^pV\M_X$ on the category $V\M_X$, which is a kind
of a filtration by codimension of support. The successive quotients of
this filtration are isomorphic to certain Picard categories $\widetilde{CH}^p_X$
that are related to Chow groups. Then we show a homotopy invariance of
the categories $C^pV\M_X$ and construct a specialization map for them.
As usual, this allows to defined a contravariant structure on the categories $C^pV\M_X$
by using deformation to the normal cone. Next, the exterior product between
the categories $C^pV\M_X$ together with the pull-back along diagonal allows one
to define for a smooth variety $X$ a collection of distributive functors
$$
C^pV\M_X\times C^qV\M_X\to C^{p+q}V\M_X,
$$
compatible with the distributive functor
$$
V\M_X\times V\M_X\to V\M_X
$$
defined by the derived tensor product of coherent sheaves.
Since all constructions for the categories $C^pV\M_X$
are compatible for different $p$, we get the analogous constructions for the categories
$\widetilde{CH}^p_X$ (contravariancy and a distributive functor). Taking
the push-forward functor $C^{d+1}V\M_X\to V\M_X\to V\M_k$ for a smooth projective variety $X$,
we get a distributive functor
$$
\langle\cdot,\cdot\rangle_{pq}:\widetilde{CH}^p_X\times \widetilde{CH}^q_X\to V\M_k
$$
for $p+q=d+1$. Finally, applying Lemma~\ref{lemma-Deligne}, we get a biextension $P_{DC}$ of
$(CH^p(X)',CH^q(X)')$ by $k^*$. The fiber of this biextension at the classes of algebraic
cycles $Z=\sum_i m_iZ_i$ and $W=\sum_j n_jW_j$ is canonically isomorphic to the $k^*$-torsor
$$
\mbox{$\det(\sum_{i,j}m_in_jR\Gamma(X,\OO_{Z_i}\otimes^L_{\OO_X}\OO_{W_j}))\backslash\{0\}.$}
$$
Also, we prove that the biextension $P_{DC}$ is canonically isomorphic
to the biextension $P_E$ constructed in Section~\ref{subsection-acycles}.

Let us follow this plan.
For any variety $X$, and $p\ge 0$ denote by $\M^p_X$ the exact category
of sheaves on $X$ whose support codimension
is at least $p$. By definition, put $\M^{p}_X=\M_X$ for $p<0$.
For $p\ge q$, there are natural functors $V\M^p_X\to V\M_X$
and $V\M^p_X\to V\M^{q}_X$ .

\begin{defin}\label{defin-filtr}
For $p\ge 0$, let $C^pV\M_X$ be the following Picard category:
objects in $C^pV\M_X$ are objects in the category $V\M^p_X$
and morphisms are defined by the formula
$$
\Hom_{C^pV\M_X}(L,M)
=\mathrm{Im}(\Hom_{V\M_X^{p-1}}(L,M)\to
\Hom_{V\M_X^{p-2}}(L,M))
$$
(more precisely, we consider images of objects $L$ and $M$ with respect
to the corresponding functors from $V\M^p_X$);
a monoidal structure on $C^pV\M_X$ is naturally defined by monoidal structures
on the categories $V\M^{*}_X$.
\end{defin}

By definition, we have $C^0V\M_X=V\M_X$. Note that for $p> d+1$, $C^pV\M_X=0$  and
$C^{d+1}V\M_X$ consists of one object $0$ whose automorphisms group is equal to
$\mathrm{Im}(K_1(\M^d_X)\to K_1(\M^{d-1}_X))$ (the last group equals $H^d(X,\K_{d+1})$
provided that $X$ is smooth). For $p\ge q\ge 0$, there are natural functors
$C^pV\M_X\to C^qV\M_X$. It is clear that
$$
\pi_0(C^pV\M_X)=\mathrm{Im}(K_0(\M^p_X)\to K_0(\M_X^{p-1})),
$$
$$
\pi_1(C^pV\M_X)=\mathrm{Im}(K_1(\M^{p-1}_X)\to K_1(\M_X^{p-2})).
$$

The next definition is the same as the definition given in \cite[Section 2]{Fra}.

\begin{defin}
For $p\ge 1$, let $\widetilde{CH}^p_X$ be the following Picard category:
objects in $\widetilde{CH}^p_X$ are elements in the group $Z^p(X)$
and morphisms are defined by the formula
$$
\Hom_{\widetilde{CH}^p_X}(Z,W)
=\{f\in G^{p-1}(X,1)|\div(f)=W-Z\}/\Tame,
$$
where $\Tame$ denotes the $K_2$-equivalence on $K_1$-chains, i.e.,
the equivalence defined by the homomorphism
$$
\Tame:G^{p-2}(X,2)\to G^{p-1}(X,1);
$$
a monoidal structure on $\widetilde{CH}^p_X$
is defined by taking sums of algebraic cycles.
\end{defin}

It is clear that for $p\ge 0$, we have
$$
\pi_0(\widetilde{CH}^p_X)=CH^p(X),
$$
$$
\pi_1(\widetilde{CH}^p_X)=H^{p-1}(Gers(X,p)^{\bullet})=\Ker(\div)/\mathrm{Im}(\Tame).
$$

\begin{rmk}
In notations from Remark \ref{rmk-Picardbiext}, we have
$$
\widetilde{CH}^p_X=\Picard(\tau_{\ge(p-1)} Gers(X,p)^{\bullet}).
$$
\end{rmk}

\begin{lemma}\label{lemma-filtrcomp}
For any $p\ge 0$, there is a canonical equivalence of Picard categories
$$
C^pV\M_X/C^{p-1}V\M_X\to \widetilde{CH}^p_X.
$$
\end{lemma}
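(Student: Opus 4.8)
The plan is to construct a canonical symmetric monoidal functor $C^pV\M_X/C^{p-1}V\M_X\to\widetilde{CH}^p_X$ and then check it is an equivalence by verifying it is bijective on $\pi_0$ and $\pi_1$: a monoidal functor of Picard categories with this property is automatically an equivalence, since in a Picard category every Hom-set is either empty or a torsor under $\pi_1$, so bijectivity on $\pi_0$ and $\pi_1$ forces essential surjectivity and full faithfulness. For the functor I would use the universal property of virtual objects. By Lemma~\ref{lemma-univ} a symmetric monoidal functor $V\M^p_X\to\widetilde{CH}^p_X$ is the same as a determinant functor on the exact category $\M^p_X$ with values in $\widetilde{CH}^p_X$, and, using the presentation $\widetilde{CH}^p_X=\Picard(\tau_{\ge p-1}Gers(X,p)^{\bullet})$, such a determinant functor is prescribed by the codimension-$p$ cycle class $\F\mapsto\mathrm{cyc}_p(\F)=\sum_{\eta\in X^{(p)}}\mathrm{length}_{\OO_{X,\eta}}(\F_\eta)\,[\overline{\eta}]\in Z^p(X)$ on objects, with the isomorphisms attached to short exact sequences built from the $K_1$-chain term of the Gersten complex (the cycle class is additive on exact sequences, and the differential $\div$ furnishes the compatibility data). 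Then one checks that this functor is insensitive to the modification of Hom-sets in the definition of $C^{\bullet}V\M_X$, that its restriction to the filtration step being divided out is canonically trivialized, and hence, by Lemma~\ref{lemma-localiz}(iii), that it descends to the quotient $C^pV\M_X/C^{p-1}V\M_X$.

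To verify bijectivity on homotopy groups I would feed $K$-theoretic localization and dévissage into the six-term exact sequence of Lemma~\ref{lemma-localiz}(ii). By Example~\ref{examp-localiz}(ii) together with Quillen's dévissage theorem, for every $r$ the Serre quotient $\M^r_X/\M^{r+1}_X$ is equivalent to the direct sum over $\eta\in X^{(r)}$ of the categories of finite-length $\OO_{X,\eta}$-modules, so $K_n(\M^r_X/\M^{r+1}_X)=G^r(X,n)$, and --- this is Quillen's identification of the Gersten complex --- the boundary maps $G^r(X,1)\to K_0(\M^{r+1}_X)$ and $G^r(X,2)\to K_1(\M^{r+1}_X)$ recover, after passage to the relevant images, the differentials $\div$ and $\Tame$. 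Combining this with the already recorded formulas $\pi_0(C^pV\M_X)=\mathrm{Im}(K_0(\M^p_X)\to K_0(\M^{p-1}_X))$, $\pi_1(C^pV\M_X)=\mathrm{Im}(K_1(\M^{p-1}_X)\to K_1(\M^{p-2}_X))$ and their analogues one step along the filtration, a diagram chase identifies $\pi_0$ of the quotient with $Z^p(X)/\div(G^{p-1}(X,1))=CH^p(X)$ and $\pi_1$ with $\Ker(\div: G^{p-1}(X,1)\to Z^p(X))/\mathrm{Im}(\Tame)$, and one sees these are exactly the maps induced by the functor of the first paragraph. Since these are $\pi_0(\widetilde{CH}^p_X)$ and $\pi_1(\widetilde{CH}^p_X)$ by the computations recorded above, the functor is an equivalence; its canonicity is automatic because the construction made no choices.

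I expect the main obstacle to be the first paragraph, not the second. The homotopy-group computation is standard $K$-theory --- the localization theorem for abelian categories, dévissage, the Gersten resolution --- organised by a diagram chase. The delicate point is producing the comparison functor at the level of determinant functors: one must pin down the structure isomorphisms attached to short exact sequences of coherent sheaves as classes of $K_1$-chains modulo $\Tame$ (not merely on the cycle level), verify all of Deligne's compatibility axioms for a determinant functor, and check that the result is compatible with sums and genuinely factors through both the Hom-modification defining $C^{\bullet}V\M_X$ and the filtration quotient. In short, the work is to promote the familiar group-level identification of $CH^p(X)$ with the graded piece of the coniveau filtration on $K_0$ to a functorial, categorical statement about virtual coherent sheaves.
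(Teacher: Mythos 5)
Your overall strategy coincides with the paper's: build a functor to $\widetilde{CH}^p_X$ and then check bijectivity on $\pi_0$ and $\pi_1$ via Lemma~\ref{lemma-localiz}(ii) together with localization and d\'evissage. The homotopy-group computation in your second paragraph is fine and matches the paper. (Incidentally, the denominator in the lemma should be $C^{p+1}V\M_X$, not $C^{p-1}V\M_X$; the body of the paper's proof and Remark~\ref{rmk-inverseChow} confirm this.)

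The gap is in the construction of the functor, and it is more than the routine bookkeeping you flag at the end. Applying Lemma~\ref{lemma-univ} to the exact category $\M^p_X$ produces a symmetric monoidal functor $V\M^p_X\to\widetilde{CH}^p_X$; but $C^pV\M_X$ is not $V\M^p_X$, and the difference is exactly in the morphisms you need. By definition
$$
\Hom_{C^pV\M_X}(L,M)=\mathrm{Im}\bigl(\Hom_{V\M^{p-1}_X}(L,M)\to\Hom_{V\M^{p-2}_X}(L,M)\bigr),
$$
so $\pi_1(C^pV\M_X)=\mathrm{Im}(K_1(\M^{p-1}_X)\to K_1(\M^{p-2}_X))$, which is neither a quotient nor a subgroup of $\pi_1(V\M^p_X)=K_1(\M^p_X)$ in general. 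In particular, a determinant functor on $\M^p_X$ says nothing about morphisms in $C^pV\M_X$ that only exist after lifting to $V\M^{p-1}_X$ (these are precisely the ones that should hit the nontrivial $K_1$-chains in $\pi_1(\widetilde{CH}^p_X)$), so ``checking that the functor is insensitive to the modification of Hom-sets'' is not a verification, it is the missing construction. The paper resolves this by working directly on $C^pV\M_X$: $F^p$ on objects is the cycle class via $\widetilde{H}^p$, and on a morphism $f$ one picks a lift $\widetilde{f}\in\Hom_{V\M^{p-1}_X}(L,M)$, uses the canonical trivializations $H^{p-1}(L)\to 0$, $H^{p-1}(M)\to 0$ to turn $H^{p-1}(\widetilde{f})$ into an element of $G^{p-1}(X,1)$, and takes its class modulo $\Tame$; independence of the lift comes from the exact sequence $G^{p-2}(X,2)\to K_1(\M^{p-1}_X)\to K_1(\M^{p-2}_X)$. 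This device of computing the morphism component through the graded piece $H^{p-1}$ rather than through Deligne's universal property on $\M^p_X$ is the genuine content of the lemma, and your proposal does not supply it.
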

\begin{proof}
First, let us construct a functor $F^p:C^pV\M_X\to\widetilde{CH}^p_X$.
By Example~\ref{examp-localiz} (ii),
for any $p\ge 0$, there is an equivalence of Picard categories
$H^p:V\M^p_X/V\M^{p-1}_X\to \oplus_{\eta\in X^{(p)}}V\M_{k(\eta)}$.
In notation from Section~\ref{subsection-determ},
this defines a functor $\widetilde{H}^p:V\M^p_X\to \Picard(Z^p(X))$. We put
$F^p(L)=\widetilde{H}^p(L)\in Z^p(X)$
for any object $L$ in $C^pV\M_X$. Let $f:L\to M$ be a morphism
in $C^pV\M_X$ and let $\widetilde{f}:L\to M$ be a corresponding morphism in the category
$V\M^{p-1}_X$ (note that $\widetilde{f}$ is not uniquely defined).
There are canonical isomorphisms $H^{p-1}(L)\to 0$, $H^{p-1}(M)\to 0$, therefore
$H^{p-1}(\widetilde{f})$ defines a canonical element in the group
$\pi_1(\oplus_{\eta\in X^{(p-1)}}V\M_{k(\eta)})=G^{p-1}(X,1)$ that we denote again by
$H^{p-1}(\widetilde{f})$. It is easy to check that
$\div(H^{p-1}(\widetilde{f}))=F^p(M)-F^p(L)$. We put $F^p(f)=[H^{p-1}(\widetilde{f})]$,
where brackets denote the class of a $K_1$-chain modulo $K_2$-equivalence.
The exact sequence
$$
G^{p-2}(X,2)\to K_1(\M^{p-1}_X)\to K_1(\M^{p-2}_X)
$$
shows that $[F^{p-1}(\widetilde{f})]$ does not depend on the choice of
$\widetilde{f}$.

Next, the exact sequences
$$
K_0(\M^{p+1}_X)\to K_0(\M^p_X)\to Z^p(X),
$$
$$
K_1(\M^{p}_X)\to K_1(\M^{p-1}_X)\to G^{p-1}(X,1)
$$
show that the composition $C^{p+1}V\M_X\to C^pV\M_X\to \widetilde{CH}^p_X$
is canonically trivial. By Lemma~\ref{lemma-localiz}(iii), we get a well-defined
functor
$$
C^pV\M_X/C^{p-1}V\M_X\to \widetilde{CH}^p_X.
$$
Finally, combining Lemma~\ref{lemma-localiz}(ii) with the explicit description of $\pi_i$
for all involved categories, we see that the last functor is an isomorphism on the groups
$\pi_i$, $i=0,1$.
This gives the needed result.
\end{proof}

\begin{rmk}\label{rmk-inverseChow}
Let us construct explicitly an inverse functor $(F^p)^{-1}$
to the equivalence $F^p:C^pV\M_X/C^{p+1}V\M_X\to \widetilde{CH}^p_X$.
For each cycle $Z=\sum_i m_i Z_i$, we put
$(F^p)^{-1}(Z)=\sum_i m_i\gamma (\OO_{Z_i})$, where we choose
an order on the set of summands in the last expression.
For a codimension $p-1$ irreducible subvariety $Y\subset X$
and a rational function $f\in k(Y)^*$, let $\widetilde{Y}$ be the
closure of the graph of the rational function
$f:Y\dasharrow\P^1$ and let $\pi:\widetilde{Y}\to
Y\hookrightarrow X$ be the natural map. Denote by $D_0$ and
$D_{\infty}$ Cartier divisors of zeroes and poles of $f$ on
$\widetilde{Y}$, respectively. Denote by $Z_0$ and $Z_{\infty}$ the positive and the negative
part of the cycle $\div(f)$ on $X$, respectively. The exact sequences
$$
0\to \OO_{\widetilde{Y}}(-D_{\infty})\to\OO_{\widetilde{Y}}\to \OO_{D_{\infty}}\to 0,
$$
$$
0\to \OO_{\widetilde{Y}}(-D_{0})\to\OO_{\widetilde{Y}}\to \OO_{D_{0}}\to 0,
$$
and the isomorphism $f:\OO_{\widetilde{Y}}(-D_{\infty})\to
\OO_{\widetilde{Y}}(-D_{0})$ define an element in the set
$$
\Hom_{V\M^{p-1}_X}(\sum_i (-1)^i \gamma(R^i\pi_*\OO_{D_{\infty}},\sum_i (-1)^i
\gamma(R^i\pi_*\OO_{D_0})),
$$
that in turn defines an element
$$
(F^p)^{-1}(f_{\eta})\in\Hom_{V\M^{p-1}_X}((F^p)^{-1}(Z_{\infty}),(F^p)^{-1}(Z_0)+L)
$$
up to morphisms in the category $V\M^p_X$, where $L$ is a well-defined object in $V\M^{p+1}_X$.
For each morphism in the category $\widetilde{CH}^p_X$, we choose its representation
as the composition of morphisms defined by $f$ for some codimension $p-1$ irreducible subvarieties
$Y\subset X$. This allows to extend $(F^p)^{-1}$ to all morphisms in the category $\widetilde{CH}^p_X$.
\end{rmk}

Now let us describe some functorial properties of the categories $C^pV\M_X$.

\begin{defin}
For a variety $X$, let $\{\Pc^p_X\}$, $p\ge 0$,
be the collection of categories $\{V\M^p_X\}$ or $\{C^p\M_X\}$
(the same choice for all varieties), $G_X^{pq}:\Pc^p\to\Pc^q$ be the natural functors,
and let $S$ and $T$ be two varieties.
A {\it collection of compatible functors} from $\{\Pc^p_S\}$ to $\{\Pc^p_T\}$
is a pair $(\{F^p\},\{\Psi^{pq}\})$, where $F^p:\Pc^p_S\to \Pc^p_T$, $p\ge 0$, are
symmetric monoidal functors and $\Psi^{pq}:G_T^{pq}\circ F^p\to F^q\circ G_S^{pq}$
are isomorphisms in the category $\Funt(\Pc^p_S,\Pc^q_T)$
such that the following condition is satisfied: for all $p\ge q\ge r\ge 0$, we have
$G^{pq}_S(\Psi^{qr})\circ G^{qr}_T(\Psi^{pq})=\Psi^{pr}$.
\end{defin}

The proof of the next result is straightforward.

\begin{lemma}\label{lemma-filtr}
For varieties $S$ and $T$,
a collection of compatible functors from $\{V\M^p_S\}$ to $\{V\M^p_T\}$
defines in a canonical way a collection of compatible functors from
$\{C^p\M_S\}$ to $\{C^p\M_S\}$.
\end{lemma}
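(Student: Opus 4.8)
The plan is to build the induced functors $\overline F^p\colon C^pV\M_S\to C^pV\M_T$ and the comparison isomorphisms $\overline\Psi^{pq}$ out of the given data $(\{F^p\},\{\Psi^{pq}\})$, and then to observe that all the required compatibilities are inherited.

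First I would define $\overline F^p$. On objects there is nothing to do, since by Definition~\ref{defin-filtr} the objects of $C^pV\M_X$ are precisely the objects of $V\M^p_X$; I simply put $\overline F^p(L)=F^p(L)$. For a morphism $f\colon L\to M$ in $C^pV\M_S$ I would pick a representative $\widetilde f\in\Hom_{V\M^{p-1}_S}(L,M)$, apply $F^{p-1}$ to it, transport the outcome along the isomorphisms $\Psi^{p,p-1}$ to produce a morphism $G^{p,p-1}_T F^p(L)\to G^{p,p-1}_T F^p(M)$ in $V\M^{p-1}_T$, and finally pass to its image in $C^pV\M_T$ (that is, its image in $\Hom_{V\M^{p-2}_T}$).

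The step I expect to be the only non-formal one is checking that $\overline F^p(f)$ does not depend on the choice of $\widetilde f$. Two representatives differ by an automorphism $\gamma$ of $L$ in $V\M^{p-1}_S$, and saying that they define the same morphism of $C^pV\M_S$ means exactly that the class of $\gamma$ in $\pi_1(V\M^{p-1}_S)=K_1(\M^{p-1}_S)$ maps to zero in $K_1(\M^{p-2}_S)$. Applying $F^{p-1}$ and using that the isomorphism of functors $\Psi^{p-1,p-2}$ forces the square
$$
\begin{array}{ccc}
K_1(\M^{p-1}_S)&\longrightarrow&K_1(\M^{p-1}_T)\\
\downarrow&&\downarrow\\
K_1(\M^{p-2}_S)&\longrightarrow&K_1(\M^{p-2}_T)
\end{array}
$$
to commute (horizontal maps induced on $\pi_1$ by $F^{p-1}$, $F^{p-2}$, vertical maps by the natural functors), one gets that the class of $F^{p-1}(\gamma)$ in $K_1(\M^{p-1}_T)$ dies in $K_1(\M^{p-2}_T)$; and since transporting along the fixed isomorphisms $\Psi^{p,p-1}$ leaves $\pi_1$-classes unchanged, the two candidate values of $\overline F^p(f)$ coincide in $C^pV\M_T$. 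Functoriality of $\overline F^p$ is then immediate from that of $F^{p-1}$.

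Finally I would give $\overline F^p$ the symmetric monoidal structure coming from those of $F^p$ and $F^{p-1}$ — here one uses that the $\Psi^{pq}$ are monoidal natural transformations — and define $\overline\Psi^{pq}\colon G^{pq}_T\circ\overline F^p\to\overline F^q\circ G^{pq}_S$ on an object $L$ as the image in $C^qV\M_T$ of $\Psi^{pq}(L)$, viewed as a morphism $G^{pq}_T F^p(L)\to F^q G^{pq}_S(L)$ in $V\M^q_T$. Naturality of $\overline\Psi^{pq}$ on morphisms of $C^pV\M_S$ and the relation $G^{pq}_S(\overline\Psi^{qr})\circ G^{qr}_T(\overline\Psi^{pq})=\overline\Psi^{pr}$ then follow mechanically from the corresponding facts for the $\Psi^{pq}$ together with the given relation $G^{pq}_S(\Psi^{qr})\circ G^{qr}_T(\Psi^{pq})=\Psi^{pr}$. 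The verification is indeed straightforward throughout; the one place that calls for a moment's attention is the well-definedness on morphisms just described.
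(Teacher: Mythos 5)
The paper gives no proof of Lemma~\ref{lemma-filtr} beyond declaring it straightforward, and your write-up is a correct working-out of exactly the argument one expects: define $\overline F^p$ on objects via $F^p$, on morphisms by lifting to $V\M^{p-1}$, applying $F^{p-1}$, and conjugating by $\Psi^{p,p-1}$, with well-definedness reduced to the vanishing of the $\pi_1$-class of $F^{p-1}(\gamma)$ in $K_1(\M^{p-2}_T)$ using that the monoidal natural isomorphism $\Psi^{p-1,p-2}$ forces the square on $K_1$'s to commute. Your identification of the well-definedness check as the only non-formal step, and the way you discharge it, is exactly right; the monoidality of the $\overline F^p$ and the cocycle condition on the $\overline\Psi^{pq}$ then follow as you say.
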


\begin{examp}\label{examp-flatcontrav}
Let $f:S\to T$ be a flat morphism of schemes; then there is a collection
of compatible functors $f^*:V\M^p_T\to V\M^p_S$.
By Lemma~\ref{lemma-filtr}, this gives a collection of compatible functors
$f^*:C^p\M_T\to C^p\M_S$.
\end{examp}

Now let us prove homotopy invariance of the categories $C^pV\M_X$.
Note that there is no homotopy invariance for the categories $V\M^p_X$.
We use the following straightforward result on truncations of spectral sequences:

\begin{lemma}\label{lemma-spectr}
\hspace{0cm}
\begin{itemize}
\item[(i)]
Consider a spectral sequence $E^{ij}_r$, $r\ge 1$, a natural number $s$,
and an integer $l\in\Z$. Then there is a unique spectral sequence
$(t^l_s E)^{ij}_r$, $r\ge s$, such that $(t^l_s E)^{ij}_s=E^{ij}_s$ if $i\ge l$,
$(t^l_s E)^{ij}_s=0$ if $i<l$, and there is a morphism of spectral sequences
$(t^l_s E)^{ij}_r\to E^{ij}_r$, $r\ge s$.
\item[(ii)]
In the above notations, let $s_1,s_2$ be two natural numbers; then we have
$(t^{s_1}_l E)^{ij}_r=(t^{s_2}_l E)^{ij}_r$ for all $r\ge s=\mathrm{max}\{s_1,s_2\}$ and
$i\ge l+s-1$.
\end{itemize}
\end{lemma}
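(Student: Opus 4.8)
The plan is to construct $(t^l_s E)$ explicitly, page by page, and then read off both the uniqueness in (i) and the statement of (ii) from the construction. For (i): since the differential on the $s$-th page raises the first index by $s\ge 0$, the sub-object $\bigoplus_{i\ge l}E^{i\bullet}_s\subset E_s$ is a subcomplex of $(E_s,d_s)$; I would set $(t^l_sE)^{ij}_s=E^{ij}_s$ for $i\ge l$ and $0$ for $i<l$, with $d_s$ the restriction, and put $(t^l_sE)_{s+1}=H\big((t^l_sE)_s\big)$, equipped with the morphism to $H(E_s)=E_{s+1}$ induced by the inclusion of the subcomplex. The genuine issue — the step I expect to be the real content — is that the higher differentials $d_{s+1},d_{s+2},\dots$ of a spectral sequence are not determined by the lower pages, so they must be produced; I would pin them down by demanding that the morphism to $E$ be a morphism of spectral sequences. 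The key observation is that on page $r$ this morphism is an isomorphism in every column $i\ge l+r-1$ and is zero for $i<l$, while the target of any differential leaving a nonzero column of the truncated $r$-th page lands in a column of index $\ge l+r>l+r-1$, hence in the isomorphism range; therefore there is exactly one $d_r$ on $(t^l_sE)_r$ compatible with the morphism, obtained by transporting $d^E_r$. The identities $d_r^2=0$ and the fact that $d_r$ descends to the appropriate subquotients follow from the corresponding facts for $E$ by functoriality. Iterating yields the spectral sequence $(t^l_sE)$ together with its morphism to $E$.

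Uniqueness follows from the same ``target column lies in the isomorphism range'' principle: for any spectral sequence $F$ (pages $\ge s$) with the prescribed $s$-th page and a morphism $F\to E$ that is the identity on the columns $i\ge l$, the differential $d^F_s$ is forced to be the restriction of $d^E_s$, hence $F_{s+1}=H(F_s)$ and the morphism on page $s+1$ are forced, hence $d^F_{s+1}$ is forced, and so on by induction on the page; thus $(F,F\to E)$ is unique up to unique isomorphism, which is what (i) asserts.

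For (ii), taking the two truncations with the same column cut-off $l$ but at pages $s_1\le s_2=:s$, I would make the output of the construction explicit: in column $i$ on page $r$ one has $(t^l_{s'}E)^{ij}_r=Z^{ij}_r/\widetilde B^{ij}_r$, where $Z^{ij}_r\subseteq E^{ij}_{s'}$ is the usual subgroup of $r$-cycles and $\widetilde B^{ij}_r$ is the sum of exactly those $r$-boundary subgroups $\Imm(d_t)$, $s'\le t<r$, whose source column $i-t$ still satisfies $i-t\ge l$ — in other words, the truncation omits precisely the boundaries coming from the columns it has killed. Now for $i\ge l+s-1$ every index $t$ in the range $s_1\le t<s_2$ has source column $i-t\ge l$, so quotienting the $s_1$-cycles by those extra boundaries and intersecting with cycles is precisely the passage $E^{ij}_{s_1}\to E^{ij}_{s_2}$; after this identification the $s_1$- and $s_2$-truncations are given by the identical formula in columns $i\ge l+s-1$, compatibly with the differentials. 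The only obstacle here is the bookkeeping of the cycle–boundary filtrations; conceptually nothing is lost, because boundaries supported in columns that survive the truncation are killed exactly as in $E$, so the sole possible source of discrepancy is the low, killed columns, and those are pushed out of range once $i\ge l+s-1$.
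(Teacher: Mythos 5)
The paper states this lemma without proof (``We use the following straightforward result on truncations of spectral sequences''), so there is no argument in the paper to compare against; what can be assessed is whether your argument is sound, and it is. The key point, which you identify correctly, is that the differential $d_r$ raises the first index by $r\ge 1$, so the columns $i\ge l$ form a subcomplex on every page, the columns $i<l$ remain zero on every page, and the comparison morphism $(t^l_sE)_r\to E_r$ is an isomorphism in all columns $i\ge l+r-1$ (and surjective in columns $i\ge l$). Since a differential emanating from a column $i\ge l$ lands in column $i+r\ge l+r$, which lies in the isomorphism range, $d_r$ can be, and by the compatibility requirement must be, obtained by transport from $d_r^E$; this gives both existence and uniqueness in (i), exactly as you argue. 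Your explicit subquotient description $(t^l_{s'}E)^{ij}_r=Z^{ij}_r/\widetilde B^{ij}_r$, with $\widetilde B^{ij}_r$ the partial sum of boundaries $\mathrm{Im}(d_t)$ over $s'\le t<r$ with $i-t\ge l$, is correct (one checks by induction that the cycle subgroups of the truncation coincide with those of $E$ for $i\ge l$, and that the boundary subgroups stabilize exactly when the source column drops below $l$), and it immediately gives (ii): for $i\ge l+s-1$ with $s=\max(s_1,s_2)$, every $t$ in $[\,\min(s_1,s_2),\max(s_1,s_2))$ has $i-t\ge l$, so the two truncations kill the same boundaries. One small remark: the paper's statement of (ii) writes $t^{s_i}_l$ with the sub- and superscript swapped relative to (i); your reading, taking the same column cut-off $l$ and two starting pages $s_1,s_2$, is the intended one and matches the later application with $(l,s_1,s_2)=(p-1,1,2)$.
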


\begin{lemma}\label{lemma-hominv}
Consider a vector bundle $f:N\to S$; then the natural functors
$f^*:C^pV\M_S\to C^pV\M_N$
are equivalencies of Picard categories.
\end{lemma}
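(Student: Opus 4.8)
The plan is to reduce the statement to a known homotopy-invariance statement for $K$-theory of coherent sheaves (Quillen's $G$-theory), using the description of the homotopy groups $\pi_0$ and $\pi_1$ of the categories $C^pV\M_X$ in terms of the filtration by codimension of support on $K_0$ and $K_1$, together with the fact that a morphism of Picard categories which induces isomorphisms on $\pi_0$ and $\pi_1$ is an equivalence. So the first step is to note that it suffices to show that $f^*$ induces isomorphisms
$$
\pi_i(C^pV\M_S)\to \pi_i(C^pV\M_N),\quad i=0,1.
$$
By the explicit formulas recorded just before Definition~\ref{defin-filtr} (and in the discussion after it), we have $\pi_0(C^pV\M_X)=\mathrm{Im}(K_0(\M^p_X)\to K_0(\M^{p-1}_X))$ and $\pi_1(C^pV\M_X)=\mathrm{Im}(K_1(\M^{p-1}_X)\to K_1(\M^{p-2}_X))$, and $f^*$ is compatible with these descriptions by Example~\ref{examp-flatcontrav} (a vector bundle is flat), so one is reduced to a statement purely about $G$-theory with supports.

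Next I would bring in the coniveau (Gersten) spectral sequence for $G$-theory, whose $E_1$-terms are $\bigoplus_{\eta\in X^{(p)}}K_{*}(k(\eta))$ and which converges to $G_*(X)$; the filtration by codimension of support on $K_0(X)$ and $K_1(X)$ is exactly the filtration induced by this spectral sequence, and the groups $\mathrm{Im}(K_i(\M^p_X)\to K_i(\M^{p-1}_X))$ are expressed in terms of a two-step truncation of it — this is where Lemma~\ref{lemma-spectr} on truncations of spectral sequences enters, allowing one to isolate the relevant piece ($(t^{p-1}_1 E)$ or similar) in a way stable under $f^*$. For a vector bundle $f:N\to S$ of rank $r$, pullback on the $E_1$-page of the coniveau spectral sequence is the direct sum over $\eta\in S^{(p)}$ of the maps $K_*(k(\eta))\to \bigoplus_{\xi\mapsto\eta}K_*(k(\xi))$, and homotopy invariance of $G$-theory of fields together with the localization/purity for $\mathbb{A}^1$-bundles shows that $f^*$ is an isomorphism on all of $E_1$ in each fixed total degree $i=0,1$ after the appropriate truncation; hence it is an isomorphism on the abutment filtration pieces, i.e.\ on $\pi_i(C^pV\M_S)\to \pi_i(C^pV\M_N)$. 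This proves $f^*$ is an equivalence.

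The main obstacle I anticipate is the bookkeeping in the second step: matching the ad hoc groups $\mathrm{Im}(K_i(\M^p_X)\to K_i(\M^{p-1}_X))$ precisely with the truncated coniveau spectral sequence of Lemma~\ref{lemma-spectr}, and checking that $f^*$ respects the truncation and the induced filtrations, rather than just the full abutment. A cleaner alternative, which I would pursue in parallel, is to avoid spectral sequences altogether: one can factor $f$ (Zariski-locally, and then glue, or use a direct argument since $N\to S$ is a torsor under a vector bundle) and use that the categories $C^pV\M_X$ only depend, through $\pi_0$ and $\pi_1$, on the complex $Gers(X,p)^{\bullet}$ in degrees $p-1,p$ — via the identification $\widetilde{CH}^p_X=\Picard(\tau_{\ge(p-1)}Gers(X,p)^{\bullet})$ from the Remark after Lemma~\ref{lemma-filtrcomp} and the extension describing $C^pV\M_X$ by $C^{p-1}V\M_X$ — so that homotopy invariance of the Gersten complex of $K$-sheaves (which follows from Quillen's computation, since $f^*\K_n^S\to \K_n^N$ and the associated Gersten resolutions match up under the flat pullback $f^*$ of Section~\ref{subsection-adelic}) gives the result directly. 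Either way the input is Quillen's homotopy invariance; the only real work is transporting it through the definition of $C^pV\M_X$.
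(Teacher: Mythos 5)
Your first line of argument has a genuine gap at the point where you claim that $f^*$ is an isomorphism ``on all of $E_1$'' of the coniveau spectral sequence. This is false: a vector bundle $f:N\to S$ of positive rank has codimension-$p$ points of $N$ that do not lie over codimension-$p$ points of $S$, and even on the points $\xi\in N^{(p)}$ that are generic points of $f^{-1}(\overline{\eta})$ for $\eta\in S^{(p)}$, the residue field $k(\xi)$ is a purely transcendental extension of $k(\eta)$ and the map $K_n(k(\eta))\to K_n(k(\xi))$ is injective but certainly not surjective (e.g.\ already for $K_1$). So the $E_1$-page comparison map is not an isomorphism. Homotopy invariance of $G$-theory of fields does not give you what you want at that level; the isomorphism only appears after passing to $E_2$, i.e.\ on the cohomology of the Gersten complex.

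Your ``cleaner alternative'' is essentially the paper's actual proof, and it is the correct one. The paper follows exactly your first step — reduce to showing $f^*$ is an isomorphism on $\pi_0,\pi_1$, which amounts to showing that the maps $\mathrm{Im}(K_n(\M^p_S)\to K_n(\M^{p-1}_S))\to\mathrm{Im}(K_n(\M^p_N)\to K_n(\M^{p-1}_N))$ are isomorphisms — then uses Lemma~\ref{lemma-spectr} to express these image groups as the abutment of the truncated coniveau spectral sequence $(t^{p-1}_2 E)^{ij}_r$, $r\ge 2$. The crucial point is the identification $(t^{p-1}_2E)^{ij}_{2}(T)=H^i(Gers(T,-j)^{\bullet})$: it is only at $r=2$ (Gersten cohomology, i.e.\ $K$-cohomology) that $f^*$ is an isomorphism, by homotopy invariance of $K$-cohomology as in \cite[Lemma 81]{Gil}; since the truncated sequence starts at $r=2$ this suffices. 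If you commit to your second approach and make the homotopy invariance input explicitly at the $E_2$ rather than $E_1$ level, you recover the paper's proof; as written, the primary argument does not go through.
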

\begin{proof}
We follow the proof of \cite[Lemma 81]{Gil}. It is enough to show that $f^*$
induces isomorphisms on $\pi_0$ and $\pi_1$, i.e., it is enough to show that the natural
homomorphisms
$$
f^*:\mathrm{Im}(K_n(\M^p_S)\to K_n(\M^{p-1}_S))\to\mathrm{Im}(K_n(\M^p_N)\to K_n(\M^{p-1}_N))
$$
are isomorphisms for all $p\ge 0$, $n\ge 0$.

For any variety $T$,
consider Quillen spectral sequence $E^{ij}_r(T)$, $r\ge 1$, that converges to
$K_{-i-j}(\M_T)$. In notations from Lemma~\ref{lemma-spectr},
the filtration of abelian categories $\M^{p-1}_T\supset \M^p_T\supset\ldots$
defines the spectral sequence that is equal to the shift of the truncated Quillen
spectral sequence $(t^{p-1}_1E)^{ij}_{r}(T)$, $r\ge 1$ and converges to
$K_{-i-j}(\M^{p-1}_T)$. By Lemma~\ref{lemma-spectr},
$(t^{p-1}_1E)^{ij}_{r}(T)=(t^{p-1}_2E)^{ij}_{r}(T)$ for $i\ge p$.
Therefore, there is a spectral sequence $(t^{p-1}_2E)^{ij}_{r}(T)$, $r\ge 2$
that converges to $\mathrm{Im}(K_{-i-j}(\M^p_T)\to K_{-i-j}(\M^{p-1}_T))$.
Explicitly, we have $(t^{p-1}_2E)^{ij}_{2}(T)=H^i(Gers(T,-j)^{\bullet})$.

The morphism $f$ defines the morphism of spectral sequences
$f^*:(t^{p-1}_2E)^{ij}_{r}(S)\to (t^{p-1}_2E)^{ij}_{r}(N)$, $r\ge 2$.
Moreover, the homomorphism $f^*$ is an isomorphism for $r=2$ (see op.cit.), hence
$f^*$ is an isomorphism for all $r\ge 2$. This gives the needed result.
\end{proof}

\begin{corol}\label{corol-inversehomot}
Taking inverse functors to the equivalences $f^*:C^pV\M_V\to C^pV\M_N$,
we get a collection of compatible functors $(f^*)^{-1}$ from $C^pV\M_N$ to $C^pV\M_S$.
\end{corol}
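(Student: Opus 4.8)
The plan is to promote the pointwise quasi-inverses furnished by Lemma~\ref{lemma-hominv} to a collection of compatible functors by transporting, along these equivalences, the structure isomorphisms of the collection $\{f^*\}$ produced in Example~\ref{examp-flatcontrav}. First I would fix, for every $p\ge 0$, a quasi-inverse $(f^*)^{-1,p}\colon C^pV\M_N\to C^pV\M_S$ to the symmetric monoidal equivalence $f^{*,p}\colon C^pV\M_S\to C^pV\M_N$, together with monoidal natural isomorphisms $\varepsilon^p\colon(f^*)^{-1,p}\circ f^{*,p}\to\Id$ and $\eta^p\colon\Id\to f^{*,p}\circ(f^*)^{-1,p}$ satisfying the triangle identities. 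As usual, the quasi-inverse of a symmetric monoidal equivalence carries a canonical symmetric monoidal structure for which $\varepsilon^p$ and $\eta^p$ are monoidal, so each $(f^*)^{-1,p}$ is an object of $\Funt(C^pV\M_N,C^pV\M_S)$.

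Next I would recall that Example~\ref{examp-flatcontrav} supplies, for $p\ge q\ge 0$, monoidal isomorphisms $\Psi^{pq}\colon G^{pq}_N\circ f^{*,p}\to f^{*,q}\circ G^{pq}_S$ obeying the cocycle identity $G^{pq}_S(\Psi^{qr})\circ G^{qr}_N(\Psi^{pq})=\Psi^{pr}$. I would then define $\Phi^{pq}\colon G^{pq}_S\circ(f^*)^{-1,p}\to(f^*)^{-1,q}\circ G^{pq}_N$ to be the mate of $\Psi^{pq}$: the composite that first prepends the factor $(f^*)^{-1,q}\circ f^{*,q}$ by whiskering $(\varepsilon^q)^{-1}$, then replaces the inner factor $f^{*,q}\circ G^{pq}_S$ by $G^{pq}_N\circ f^{*,p}$ by whiskering $(\Psi^{pq})^{-1}$, and finally cancels the trailing factor $f^{*,p}\circ(f^*)^{-1,p}$ by whiskering $(\eta^p)^{-1}$. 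Since each of these three arrows is the whiskering of a monoidal natural isomorphism by the displayed functors, $\Phi^{pq}$ is an isomorphism in $\Funt(C^pV\M_N,C^qV\M_S)$; together with the previous paragraph this shows that $(\{(f^*)^{-1,p}\},\{\Phi^{pq}\})$ is a candidate collection of compatible functors from $\{C^pV\M_N\}$ to $\{C^pV\M_S\}$.

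It then remains to verify the cocycle identity $G^{pq}_N(\Phi^{qr})\circ G^{qr}_S(\Phi^{pq})=\Phi^{pr}$ for $p\ge q\ge r\ge 0$, and this is the only substantive point. I would prove it by a formal ``mate calculus'' string-diagram computation: unwinding and whiskering the three-step composites defining $\Phi^{qr}$, $\Phi^{pq}$ and $\Phi^{pr}$, the needed cancellations follow from the triangle identities for the adjoint equivalences $(f^{*,p},(f^*)^{-1,p})$ together with the cocycle identity for the $\Psi^{**}$. Conceptually this is just the standard fact that inverting componentwise a pointwise-equivalence pseudonatural transformation between (pseudo)functors on the poset $(\Z_{\ge 0},\ge)$ again yields such a transformation. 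I expect this diagram chase, though entirely routine, to be the main obstacle; everything else is bookkeeping about the symmetric monoidal structures.
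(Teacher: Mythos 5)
Your proof is correct. The paper offers no argument for this corollary at all — it is stated as an immediate consequence of Lemma~\ref{lemma-hominv} and the formal nature of the compatibility data — and your mate-calculus argument (promoting the pointwise quasi-inverses to a compatible collection by transporting the $\Psi^{pq}$ via whiskering with the unit and counit of the adjoint equivalences, then verifying the cocycle identity from the triangle identities plus the cocycle for $\Psi^{**}$) is precisely the routine verification the authors are suppressing.
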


Next we construct a specialization map for the categories $C^pV\M_X$.

\begin{lemma}\label{lemma-special}
Let $j:D\subset Y$ be a subscheme on a scheme given as a subscheme by the
equation $f=0$, where
$f$ is a regular function on $Y$, and let $U=Y\backslash D$. Then there exists a
collection of compatible functors $Sp^p:V\M^p_U\to V\M^p_D$ such that
the composition $V\M_Y\to V\M_U\stackrel{Sp^0}\longrightarrow V\M_D$
is canonically isomorphic to $j^*$ in the category of symmetric monoidal functors.
\end{lemma}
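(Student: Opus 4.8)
The plan is to build $Sp^p$ directly on coherent sheaves and then pass to virtual objects via the universal property of Lemma~\ref{lemma-univ}. The one geometric input is that $D$ is cut out by the single equation $f=0$: its normal bundle is trivial, and the associated \emph{self-intersection} vanishes, which is what makes the naive ``closure and intersect with $D$'' recipe independent of choices. For an object $\F$ of $\M^p_U$ I would choose a coherent extension of $\F$ to $Y$ (such an extension exists since $Y$ is Noetherian and $U\subset Y$ open) and then divide by the subsheaf of sections locally annihilated by a power of $f$; this produces an $f$-torsion free coherent sheaf $\overline\F$ on $Y$ with $\overline\F|_U\cong\F$. The key observation is that any such $\overline\F$ automatically has $\Supp(\overline\F)=\overline{\Supp(\F)}$: an irreducible component $V$ of $\Supp(\overline\F)$ meeting $U$ is the closure of $V\cap U\subseteq\Supp(\F)$, while a component $V\subseteq D$ with generic point $\eta$ would be the only point of $\Supp(\overline\F)$ in $\Spec\OO_{Y,\eta}$, so $\overline\F_\eta$ would have finite length and be killed by a power of $f$, contradicting $f$-torsion freeness. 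Hence $f$ is injective on $\overline\F$, the two term complex $[\overline\F\stackrel{f}{\to}\overline\F]$ (in degrees $-1,0$) is quasiisomorphic to $\overline\F/f\overline\F$, and $\Supp(\overline\F/f\overline\F)=\overline{\Supp(\F)}\cap D$ has codimension $\ge p$ in $D$. I set $Sp^p(\F)=\gamma(\overline\F/f\overline\F)\in V\M^p_D$.

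To upgrade this to a symmetric monoidal functor I would make it into a determinant functor on $(\M^p_U)_{iso}$ and apply Lemma~\ref{lemma-univ}. For independence of the extension, two $f$-torsion free coherent extensions lie between their intersection and their sum formed inside the $f$-torsion free quasicoherent sheaf $j_*\F$, so it suffices to treat an inclusion $\overline\F_1\subseteq\overline\F_2$ of such extensions; its cokernel $Q$ is supported on $\overline{\Supp(\F)}\cap D$, i.e. $Q\in\M^p_D$, and the degreewise exact sequence of complexes $0\to[\overline\F_1\stackrel{f}{\to}\overline\F_1]\to[\overline\F_2\stackrel{f}{\to}\overline\F_2]\to[Q\stackrel{f}{\to}Q]\to 0$ gives $\gamma(\overline\F_1/f\overline\F_1)+\gamma[Q\stackrel{f}{\to}Q]\cong\gamma(\overline\F_2/f\overline\F_2)$ in $V\M^p_D$; finally $\gamma[Q\stackrel{f}{\to}Q]\cong 0$ canonically, since $f$ acts by $0$ on the graded pieces of the $f$-adic filtration of $Q$, so that complex is (up to a filtration) $\mathrm{gr}\,Q\oplus\mathrm{gr}\,Q[1]$. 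An exact sequence $\Sigma\colon 0\to\F'\to\F\to\F''\to 0$ in $\M^p_U$ is handled by choosing an $f$-torsion free extension $\overline\F$, taking $\overline\F'=\overline\F\cap j_*\F'$ (automatically $f$-torsion free, restricting to $\F'$), and using the degreewise exact sequence of complexes attached to $0\to\overline\F'\to\overline\F\to\overline\F/\overline\F'\to 0$: after absorbing the $f$-power-torsion of $\overline\F/\overline\F'$ by the same vanishing, this yields the structural isomorphism $D(\Sigma)\colon Sp^p(\F')+Sp^p(\F'')\cong Sp^p(\F)$. Compatibility of $D$ with zero objects, associativity and commutativity is a routine verification, so Lemma~\ref{lemma-univ} produces $Sp^p\colon V\M^p_U\to V\M^p_D$.

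Since all the $Sp^p$ are given by the same recipe and evaluate to $\gamma(\overline\F/f\overline\F)$ for the same sheaf $\overline\F$, the natural functors $V\M^p_D\to V\M^q_D$ carry $Sp^p$ to $Sp^q$ through canonical isomorphisms $\Psi^{pq}$, and the transitivity $G^{pq}(\Psi^{qr})\circ G^{qr}(\Psi^{pq})=\Psi^{pr}$ holds trivially; thus $(\{Sp^p\},\{\Psi^{pq}\})$ is a collection of compatible functors. For the last assertion, let $\F$ be a coherent sheaf on $Y$; its maximal $f$-torsion free quotient $\F^{tf}$ is an $f$-torsion free extension of $\F|_U$, so $Sp^0(\F|_U)=\gamma(\F^{tf}/f\F^{tf})=\gamma[\F^{tf}\stackrel{f}{\to}\F^{tf}]$. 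The degreewise exact sequence of complexes attached to $0\to T\to\F\to\F^{tf}\to 0$, where $T$ is the $f$-power-torsion of $\F$, together with $\gamma[T\stackrel{f}{\to}T]\cong 0$ as above, gives a canonical isomorphism $Sp^0(\F|_U)\cong\gamma[\F\stackrel{f}{\to}\F]=j^*\gamma(\F)$, natural and monoidal in $\F$; this is the asserted isomorphism $V\M_Y\to V\M_U\xrightarrow{Sp^0}V\M_D\cong j^*$.

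The main obstacle is not any single step but the cumulative bookkeeping: every isomorphism above must be pinned down canonically, and the three coherence constraints of a determinant functor (zero objects, associativity, commutativity) verified for $D(\Sigma)$. A secondary point requiring care is that $Y$ is not assumed smooth, so one should systematically work with the perfect complex $[\OO_Y\stackrel{f}{\to}\OO_Y]$ --- which represents $\OO_D$ only up to the $f$-torsion of $\OO_Y$ --- and take this as the definition of $j^*$ on $V\M_Y$, rather than an $\OO_D\otimes^L_{\OO_Y}(-)$ of finite Tor-dimension.
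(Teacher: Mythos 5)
Your construction is correct and agrees with the paper at the level of the explicit formula: both produce the determinant functor $\F\mapsto\gamma_D(j^*\widetilde{\F})$ for a coherent extension $\widetilde{\F}$ of $\F$ to $Y$, with your $f$-torsion-free normalization simply killing the $\G_{-1}$-term appearing in the paper's description of $j^*$. The route, however, is genuinely different. The paper first constructs $j^*:V\M_Y\to V\M_D$ as a determinant functor, observes that $j^*\circ j_*\circ\gamma_D$ is canonically trivial (since $f$ annihilates any sheaf supported on $D$), upgrades this via Lemma~\ref{lemma-univ} to a trivialization $j^*\circ j_*\to 0$ in $\Funt(V\M_D,V\M_D)$, and then descends $j^*$ through the identification $V\M_U\simeq V\M_Y/V\M_D$ of Example~\ref{examp-localiz}(ii) by means of Lemma~\ref{lemma-localiz}(iii). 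This factorization makes the coherence of $Sp^0$ automatic and confines all choices of extension to an \emph{a posteriori} unwinding; for $p>0$ the paper repeats the argument for the pairs $(\overline{Z},Z_D)$ with $Z\subset U$ closed of codimension $\ge p$ and passes to the colimit. You instead build a determinant functor on $(\M^p_U)_{iso}$ directly, using $f$-torsion-freeness to force $\Supp(\overline{\F})=\overline{\Supp(\F)}$ (so the target category is correct), and using the canonical trivialization $\gamma[Q\xrightarrow{f}Q]\cong 0$ for $f$-power-torsion $Q$ in exactly the role the paper gives to the trivialization of $j^*j_*$. Your plan is sound, but it places the burden of genuine functoriality (the action on isomorphisms of $\F$, the three coherence constraints of a determinant functor, and the compatibility isomorphisms $\Psi^{pq}$) on a direct verification that tracks the choices of extension --- precisely the bookkeeping that the paper's quotient-Picard-category machinery from Section~\ref{subsection-determ} was set up to eliminate.
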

\begin{proof}
The composition $j^*\circ\gamma_Y:(\M_Y)_{iso}\to V\M_D$ is canonically isomorphic
to the functor $\F\mapsto \gamma_D(\G_0)-\gamma_D(\G_{-1})$, where $\G_i$
are cohomology sheaves of the complex $\F\stackrel{f}\longrightarrow\F$
placed in degrees $-1$ and $0$. It follows that the composition
$j^*\circ j_*\circ\gamma_D:(\M_D)_{iso}\to V\M_D$ is canonically
isomorphic to $0$ in the category $\mathrm{Det}(\M_D,V\M_D)$. By Lemma~\ref{lemma-univ},
this defines a canonical isomorphism $j^*\circ j_*\to 0$ in the category $\Funt(V\M_D,V\M_D)$.
Combining Lemma~\ref{lemma-localiz} (iii) and Example~\ref{examp-localiz} (ii),
we get the functor $\mathrm{Sp}:V\M_U\to V\M_D$, since there is a natural equivalence
of abelian categories $\M_Y/\M_D\to \M_U$.

More explicitly, the functor $\mathrm{sp}:V\M_U\to V\M_D$ corresponds via Lemma~\ref{lemma-univ} to the following determinant functor:
$\F\mapsto \gamma_D(j^*\widetilde{\F})$, where $\F$ is a coherent sheaf on $U$ and
$\widetilde{\F}$ is coherent sheaf on $Y$ that restricts to $\F$ (one can easily check
that two different collections of choices of $\widetilde{\F}$ for all $\F$ define canonically
isomorphic determinant functors).

Take a closed subscheme $Z\subset U$ such that each component of $Z$
has codimension at most $p$ in $U$. Consider the closed subscheme $Z_D=\overline{Z}\times _X D$ in
$D$, where $\overline{Z}$ is the closure of $Z$ in $X$.
We get a functor $Sp_Z:V\M_Z\to V\M_{Z_D}\to V\M^p_D$. Given a diagram
$Z\stackrel{i}\hookrightarrow Z'\subset U$,
we have a canonical isomorphism of symmetric monoidal functors
$Sp_{Z'}\circ i_*\to Sp_Z$. Taking the limits over closed
subschemes $Z\subset U$, we get the needed collection of compatible
functors $Sp^p:V\M^p_U\to V\M^p_D$, $p\ge 0$.
\end{proof}

\begin{corol}\label{corol-spec}
Combining Lemma~\ref{lemma-special} and Lemma~\ref{lemma-filtr},
we get a collection of compatible functors $Sp^p:C^pV\M_U\to C^pV\M_D$ in notations
from Lemma~\ref{lemma-special}.
\end{corol}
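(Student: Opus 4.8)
The plan is to feed the output of Lemma~\ref{lemma-special} directly into Lemma~\ref{lemma-filtr}, so there is essentially nothing new to prove; the only thing to be careful about is that what Lemma~\ref{lemma-special} produces really is a \emph{collection of compatible functors} in the precise sense of the definition preceding Lemma~\ref{lemma-filtr}, not just a family of unrelated functors. First I would recall that in the proof of Lemma~\ref{lemma-special} the functors $Sp^p:V\M^p_U\to V\M^p_D$ are obtained by taking limits over closed subschemes $Z\subset U$ of codimension at most $p$ of the functors $Sp_Z:V\M_Z\to V\M_{Z_D}\to V\M^p_D$, and that these are tautologically compatible with enlarging $Z$ (hence with lowering the codimension bound $p$) via the canonical isomorphisms $Sp_{Z'}\circ i_*\to Sp_Z$. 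This exhibits canonical isomorphisms $\Psi^{pq}:G^{pq}_D\circ Sp^p\to Sp^q\circ G^{pq}_U$ with respect to the forgetful functors $G^{pq}:V\M^p\to V\M^q$, and the cocycle identity $G^{pq}_U(\Psi^{qr})\circ G^{qr}_D(\Psi^{pq})=\Psi^{pr}$ holds because it holds before passing to the limit. Thus $(\{Sp^p\},\{\Psi^{pq}\})$ is a collection of compatible functors from $\{V\M^p_U\}$ to $\{V\M^p_D\}$.

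Then I would apply Lemma~\ref{lemma-filtr} with $S=U$ and $T=D$ to this collection: it produces, in a canonical way, a collection of compatible functors from $\{C^p\M_U\}$ to $\{C^p\M_D\}$, that is, symmetric monoidal functors $Sp^p:C^pV\M_U\to C^pV\M_D$ together with the required compatibility isomorphisms for the natural transition functors $C^pV\M\to C^qV\M$. The supplementary claim of Lemma~\ref{lemma-special}, that the composite $V\M_Y\to V\M_U\stackrel{Sp^0}{\longrightarrow}V\M_D$ is canonically isomorphic to $j^*$ as a symmetric monoidal functor, carries over verbatim in degree $p=0$ since $C^0V\M=V\M$ by definition, so no further argument is needed there. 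I do not expect any genuine obstacle: the single point that deserves a word is that the passage from a compatible family on the $V\M^{\bullet}$ to one on the $C^{\bullet}V\M$ is itself canonical and functorial in the input, but this is exactly the content of Lemma~\ref{lemma-filtr}, whose proof is declared straightforward and which I am entitled to invoke.
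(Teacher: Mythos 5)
Your proposal is correct and matches the paper's (implicit) reasoning exactly: Lemma~\ref{lemma-special} already asserts that the $Sp^p$ form a \emph{collection of compatible functors} from $\{V\M^p_U\}$ to $\{V\M^p_D\}$, and Lemma~\ref{lemma-filtr} applied with $S=U$, $T=D$ descends this to the categories $C^pV\M$. Your extra unpacking of why the limit construction yields the compatibility isomorphisms $\Psi^{pq}$ and why the cocycle identity holds is accurate but already contained in the statement of Lemma~\ref{lemma-special}, so no genuine gap or divergence from the paper.
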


Now we show contravariancy for the categories $C^pV\M_X$.
Let $i:S\subset T$ be a regular embedding of varieties;
then there is a symmetric monoidal functor
$$
i^*:V\M_T\cong V\M'_T\to V\M_S
$$
where $\M'_T$ is a subcategory in $\M_T$ of $\OO_S$-flat coherent sheaves.
The composition of $\gamma_T:(\M_T)_{iso}\to V\M_T$
with $i^*$ is canonically isomorphic to a functor that sends a coherent sheaf
$\F$ on $T$ to the object $\sum_i(-1)^i\gamma(\mathcal{T}or^{\OO_T}_i(\F,\OO_S))$.
By Lemma~\ref{lemma-univ}, the last condition defines the functor $i^*$
up to a unique isomorphism.


\begin{prop}\label{prop-restr}
For each $p\ge 0$, there exists a
collection of compatible functors
$i_p^*:C^pV\M_T\to C^pV\M_S$ such that $i^*_0=i^*$.
\end{prop}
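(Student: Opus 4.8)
The plan is to use deformation to the normal cone. First I would form the deformation space $M=\mathrm{Bl}_{S\times\{0\}}(T\times\Ab^1)$ with the strict transform of $T\times\{0\}$ deleted; it is flat over $\Ab^1$, its restriction over $\mathbb{G}_m=\Ab^1\setminus\{0\}$ is canonically $T\times\mathbb{G}_m$, and its fibre over $0$ is the normal bundle $N=N_ST$ with vector bundle projection $p:N\to S$. Writing $U=M\setminus N=T\times\mathbb{G}_m$, the subscheme $N\subset M$ is cut out by the pull-back of the coordinate function on $\Ab^1$, so the specialization functor of Corollary~\ref{corol-spec} applies. I would then define
$$
i_p^*:=(p^*)^{-1}\circ Sp^p\circ\mathrm{pr}_T^*:C^pV\M_T\longrightarrow C^pV\M_S,
$$
where $\mathrm{pr}_T^*:C^pV\M_T\to C^pV\M_U$ is the flat pull-back of Example~\ref{examp-flatcontrav}, $Sp^p:C^pV\M_U\to C^pV\M_N$ is the specialization functor, and $(p^*)^{-1}:C^pV\M_N\to C^pV\M_S$ is the inverse of the homotopy equivalence of Lemma~\ref{lemma-hominv} furnished by Corollary~\ref{corol-inversehomot}. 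Each of $\{\mathrm{pr}_T^*\}_p$, $\{Sp^p\}_p$, $\{(p^*)^{-1}\}_p$ is a collection of compatible functors, and a composite of such is again one (as in Lemma~\ref{lemma-filtr}); hence $\{i_p^*\}_{p\ge0}$ is a collection of compatible functors.

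The remaining point is to check $i_0^*=i^*$. Since $i_0^*:V\M_T\to V\M_S$ is symmetric monoidal, by the universal property of $V\M_T$ (Lemma~\ref{lemma-univ}) it suffices to construct a canonical isomorphism of determinant functors between $i_0^*\circ\gamma_T$ and the functor $\F\mapsto\sum_j(-1)^j\gamma_S(\mathcal{T}or_j^{\OO_T}(\F,\OO_S))$ that characterizes $i^*$. For this I would compute $Sp^0(\mathrm{pr}_T^*\F)$ through a resolution pulled back from $T$: choose a bounded resolution $E^\bullet\to\F$ by vector bundles on $T$ (available when $T$ is smooth, the case of interest; in general one resolves $\OO_S$ as an $\OO_T$-module, using that $i$ is a regular embedding). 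On $U$ the complex $\mathrm{pr}_T^*E^\bullet$ is still a resolution, so $\gamma_U(\mathrm{pr}_T^*\F)=\sum_k(-1)^k\gamma_U(\mathrm{pr}_T^*E^k)$ in $V\M_U$, and each $\mathrm{pr}_T^*E^k$ is the restriction to $U$ of the vector bundle $p_M^*E^k$ on $M$, where $p_M:M\to T$ is the natural morphism. By the explicit description of the specialization functor in the proof of Lemma~\ref{lemma-special} and additivity of $Sp^0$,
$$
Sp^0\bigl(\gamma_U(\mathrm{pr}_T^*\F)\bigr)=\sum_k(-1)^k\,j^*\gamma_M(p_M^*E^k),
$$
with $j:N\hookrightarrow M$ and $j^*$ underived since the $p_M^*E^k$ are vector bundles. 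As $p_M\circ j$ factors as $N\xrightarrow{p}S\xrightarrow{i}T$, one gets $j^*\gamma_M(p_M^*E^k)=\gamma_N(p^*i^*E^k)=p^*\gamma_S(i^*E^k)$, hence $Sp^0(\gamma_U(\mathrm{pr}_T^*\F))=p^*\bigl(\sum_k(-1)^k\gamma_S(i^*E^k)\bigr)$; since $i^*E^\bullet$ computes $Li^*\F$, its class in $V\M_S$ is canonically $\sum_j(-1)^j\gamma_S(\mathcal{T}or_j^{\OO_T}(\F,\OO_S))$. Applying $(p^*)^{-1}$ yields the required value of $i_0^*\circ\gamma_T$ on $\F$, and the functoriality in $\F$ and the compatibility with exact sequences that make this an isomorphism of determinant functors follow from additivity of all the functors involved and canonicity of the identifications used.

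I expect the main obstacle to be precisely this last identification. The morphism $p_M:M\to T$ is not flat, since it involves a blow-up, so the naive restriction of coherent sheaves does not compute $i_0^*$; one is forced to work with a resolution pulled back from $T$ that becomes exact only after restriction to the open locus $U$, and keeping careful track of which pull-backs are derived and which are not is the crux of the argument. A secondary technical point is the construction of the deformation space and the verification that its special fibre is the normal bundle with exactly the scheme structure required to invoke Corollaries~\ref{corol-spec} and~\ref{corol-inversehomot}, which is classical but must be done with care.
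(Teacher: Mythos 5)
Your construction is the paper's: define $i_p^*$ as $(p^*)^{-1}\circ Sp^p\circ\mathrm{pr}_T^*$ through the deformation to the normal cone, invoking Corollary~\ref{corol-spec} and Corollary~\ref{corol-inversehomot}. The only thing worth flagging is that the paper's printed proof \emph{stops} after exhibiting this composite and never actually checks $i_0^*=i^*$, whereas you supply that verification (choose a bounded vector bundle resolution $E^{\bullet}\to\F$ on $T$, extend its pullback over $U$ to $p_M^*E^{\bullet}$ on $M$, use the explicit formula for $Sp^0$ from Lemma~\ref{lemma-special} and the absence of torsion for a vector bundle along the Cartier divisor $N$, factor $p_M\circ j=i\circ p$, and conclude via the universal property of Lemma~\ref{lemma-univ} and the identification of the Euler characteristic of $i^*E^{\bullet}$ with $\sum_j(-1)^j\gamma_S(\mathcal{T}or_j^{\OO_T}(\F,\OO_S))$); this part is correct and is genuinely more than what the paper records, closing a step it leaves to the reader. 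The minor difference in the deformation space (you blow up $S\times\{0\}$ in $T\times\Ab^1$ and delete the strict transform of $T\times\{0\}$, taking $U=T\times\mathbb{G}_m$; the paper blows up $S\times\{\infty\}$ in $T\times\P^1$ and takes $U=T\times\Ab^1$) is cosmetic and if anything your version matches Fulton's normalization more cleanly and gives $N\subset M$ cut out by a genuine global function, as Lemma~\ref{lemma-special} literally requires.
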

\begin{proof}
We use deformation to the normal cone (see \cite{Ful}). Let $M$ be the blow-up
of $S\times\{\infty\}$ in $T\times\P^1$ and let $M^0=M\backslash\P(N)$, where
$N$ denotes the normal bundle to $S$ in $T$. Then we have
$M^0|_{\Ab^1}\cong T\times \Ab^1$ and $M^0|_{\{\infty\}}=N$,
where $\Ab^1=\P^1\backslash\{\infty\}$. Combining Corollary~\ref{corol-inversehomot} and
Corollary~\ref{corol-spec}, we get a compatible collection of functors $i^*_p$
as the composition
$$
C^pV\M_T\stackrel{pr_T^*}\longrightarrow
C^pV\M_{T\times\Ab^1}\stackrel{Sp^p}\longrightarrow
C^pV\M_{N}\stackrel{(pr_S^*)^{-1}}\longrightarrow C^pV\M_S,
$$
where $pr_T:T\times\Ab^1\to T$, $pr_S:N\to S$ are natural projections, and
$(pr_S^*)^{-1}$ is an inverse to the equivalence $pr_S^*:C^pV\M_S\to C^pV\M_N$.
\end{proof}

\begin{rmk}\label{rmk-properint}
Let $\M^p_{T,S}$ be a full subcategory in $\M^p_T$ whose objects are
coherent sheaves $\F$ from $\M^p_T$ whose support meets $S$ properly.
Then the composition of the natural functor $(\M^p_{T,S})_{iso}\to C^pV\M_T$ with
$i_p^*$ is canonically isomorphic to the functor that sends $\F$ in $\M^p_{T,S}$
to the object $\sum_i(-1)^i\gamma(\mathcal{T}or^{\OO_T}_i(\F,\OO_S))$.
\end{rmk}


\begin{rmk}\label{rmk-contrav}
Applying Proposition~\ref{prop-restr} to the embedding of the graph of
a morphism of varieties $f:X\to Y$ with smooth $Y$,
we get a collection of compatible functors
$f^*:C^pV\M_Y\to C^p\M_X$. Combining Lemma~\ref{lemma-filtrcomp}
and Lemma~\ref{lemma-localiz} (iii),
we get a collection of symmetric monoidal pull-back functors
$f^*:\widetilde{CH}^p_Y\to\widetilde{CH}^p_X$.
\end{rmk}

\begin{rmk}
The pull-back functors for the categories $\widetilde{CH}^p_X$ are also
constructed by J.\,Franke in \cite{Fra} by different methods: the categories
$\widetilde{CH}^p_X$ are interpreted as equivalent categories to categories
of torsors under certain sheaves. Also, a biextension of Chow groups
is constructed by the same procedure as below. It is expected that
the pull-back functor constructed in op.cit. is canonically isomorphic
to the pull-back functor constructed in Remark~\ref{rmk-contrav}.
By Proposition~\ref{prop-DCbiext} below,
this would imply that the biextension constructed in op.cit. is canonically isomorphic
to the biextension $P_E$.
\end{rmk}

\begin{rmk}
By Lemma~\ref{lemma-adelicquasiis}, for a smooth variety $X$ over an infinite perfect field $k$,
there is a canonical equivalence
$$
\Picard(\tau_{\ge(p-1)}\tau_{\le p}\A(X,\K_p)^{\bullet})\to \widetilde{CH}^p_X.
$$
Moreover, the Picard categories on the left hand side are canonically contravariant.
This gives another way to define a contravariant structure on the Chow categories
(note that the Godement resolution for a sheaf $\K_p$ is not enough to do this).
\end{rmk}

For three varieties $R,S,T$, a {\it collection of compatible distributive functors}
from $C^pV\M_R\times C^qV\M_S$ to $C^{p+q}V\M_T$, $p,q\ge 0$
is defined analogously to the collection of compatible symmetric tensor functors.

\begin{corol}\label{corol-product}
Let $X$ be a smooth variety.
There exists a collection of compatible distributive functors
$$
C^pV\M_X\times C^qV\M_X\to C^{p+q}V\M_X
$$
such that for $p=q=0$, the composition of the corresponding functor with
the functor $(\M_X)_{iso}\times(\M_X)_{iso}\to V\M_X\times V\M_X$ is canonically
isomorphic to the functor
$(\F,\G)\mapsto \sum_i(-1)^i\gamma(\mathcal{T}or^{\OO_X}_i(\F,\G))$.
\end{corol}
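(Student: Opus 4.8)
The plan is to obtain the required distributive functors as the composition of an exterior product with pull-back along the diagonal, as sketched before the statement: the exterior product is well defined on the filtration pieces for free (the external product of sheaves of support codimension $\ge p$ and $\ge q$ has support codimension $\ge p+q$), and the diagonal of a smooth $X$ is a regular embedding, so the pull-back is available through Proposition~\ref{prop-restr}.

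First I would handle the exterior product. For varieties $Y,Z$ and $p,q\ge 0$ the external tensor product
$$
(\F,\G)\longmapsto \F\boxtimes\G:=pr_Y^*\F\otimes_{\OO_{Y\times Z}}pr_Z^*\G
$$
is a biexact functor $\M^p_Y\times\M^q_Z\to\M^{p+q}_{Y\times Z}$: exactness in each variable is local, where it becomes the exactness of $M\mapsto M\otimes_k N$ over the field $k$, and $\Supp(\F\boxtimes\G)=\Supp(\F)\times\Supp(\G)$ has codimension at least $p+q$ in $Y\times Z$. By \cite{Del}, 4.11 this gives distributive functors $V\M^p_Y\times V\M^q_Z\to V\M^{p+q}_{Y\times Z}$, visibly compatible with lowering $p$ and $q$. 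An evident bifunctorial version of Lemma~\ref{lemma-filtr} then produces a collection of compatible distributive functors
$$
C^pV\M_Y\times C^qV\M_Z\to C^{p+q}V\M_{Y\times Z}.
$$
Indeed, on a morphism of $C^pV\M_Y$ represented in $V\M^{p-1}_Y$ and an object of $C^qV\M_Z$ one applies the distributive functor $V\M^{p-1}_Y\times V\M^q_Z\to V\M^{p-1+q}_{Y\times Z}$ and lands in $\Hom_{V\M^{p+q-1}_{Y\times Z}}$ between objects of $V\M^{p+q}_{Y\times Z}$, i.e.\ in a morphism of $C^{p+q}V\M_{Y\times Z}$; well-definedness modulo the two-step truncation and the distributivity constraints descend from the same statements one filtration level lower, exactly as in the proofs of Lemma~\ref{lemma-filtrcomp} and Lemma~\ref{lemma-filtr}.

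Next, since $X$ is smooth the diagonal $\delta\colon X\hookrightarrow X\times X$ is a regular embedding, so Proposition~\ref{prop-restr} (with $i=\delta$, $T=X\times X$, $S=X$) supplies a collection of compatible symmetric monoidal functors $\delta^*_r\colon C^rV\M_{X\times X}\to C^rV\M_X$, $r\ge 0$, with $\delta^*_0=\delta^*$. Taking $Y=Z=X$ I would then define the desired functors as the compositions
$$
C^pV\M_X\times C^qV\M_X\ \xrightarrow{\ \boxtimes\ }\ C^{p+q}V\M_{X\times X}\ \xrightarrow{\ \delta^*_{p+q}\ }\ C^{p+q}V\M_X .
$$
Being the composition of a distributive functor with a symmetric monoidal one, this is distributive, and compatibility for varying $p+q$ follows from compatibility of $\boxtimes$ and of the family $\{\delta^*_r\}$. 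For $p=q=0$ one has $C^0V\M_X=V\M_X$ and $\delta^*_0=\delta^*=i^*$, so the composition of the resulting functor with $(\M_X)_{iso}\times(\M_X)_{iso}\to V\M_X\times V\M_X$ sends $(\F,\G)$ to $i^*\gamma(\F\boxtimes\G)$; by the description of $i^*$ recorded before Proposition~\ref{prop-restr} this equals $\sum_i(-1)^i\gamma(\mathcal{T}or^{\OO_{X\times X}}_i(\F\boxtimes\G,\OO_X))$, and since $pr_Y\circ\delta=pr_Z\circ\delta=\mathrm{id}_X$ and $\F\boxtimes\G$ has no higher external $\mathcal{T}or$ over $k$ one has $(\F\boxtimes\G)\otimes^L_{\OO_{X\times X}}\OO_X\cong\F\otimes^L_{\OO_X}\G$, hence the composition is $(\F,\G)\mapsto\sum_i(-1)^i\gamma(\mathcal{T}or^{\OO_X}_i(\F,\G))$ as required.

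The only genuinely non-formal input here is Proposition~\ref{prop-restr}, which provides the pull-back $\delta^*_r$ on the filtration pieces via deformation to the normal cone; granting that, the single point that will need care is the passage of the exterior product from the categories $V\M^{\bullet}$ to the truncated categories $C^{\bullet}V\M$, i.e.\ verifying that the relevant $\Hom$-sets and the distributivity data remain well defined after truncation — but this is precisely the bookkeeping already carried out in the proofs of Lemma~\ref{lemma-filtr} and Lemma~\ref{lemma-filtrcomp}, so I do not expect a real obstacle.
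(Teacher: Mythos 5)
Your proposal follows exactly the paper's own route: form the exterior product to get compatible distributive functors $C^pV\M_X\times C^qV\M_X\to C^{p+q}V\M_{X\times X}$, then compose with pull-back along the diagonal from Proposition~\ref{prop-restr}. You merely fill in the details (biexactness and support-codimension additivity of $\boxtimes$, the bifunctorial descent to the truncated categories, and the $p=q=0$ verification via $L\delta^*(\F\boxtimes\G)\cong\F\otimes^L_{\OO_X}\G$) that the paper's two-sentence proof leaves implicit.
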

\begin{proof}
The exterior product of sheaves defines an collection of compatible distributive functors
$$
C^pV\M_X\times C^qV\M_X\to C^{p+q}\M_{X\times X}.
$$
Applying Proposition~\ref{prop-restr} to the diagonal embedding
$X\subset X\times X$, we get the needed statement.
\end{proof}

For algebraic cycles $Z=\sum_i m_i Z_i$, $W=\sum_j n_jW_j$, we put
$$
\langle Z,W\rangle=
\mbox{$\det(\sum_{i,j}m_in_jR\Gamma(X,\OO_{Z_i}\otimes^L_{\OO_X}\OO_{W_j}))\backslash\{0\}.$})
$$

\begin{prop}\label{prop-detcohombiext}
Let $X$ be a smooth projective variety of dimension $d$. Then for $p+q=d+1$,
there is a biextension $P_{DC}$ of $(CH^p(X)',CH^q(X)')$ such that the fiber
$P_{DC}|_{(Z,W)}$ is canonically isomorphic to the $k^*$-torsor $\langle Z,W\rangle$.
\end{prop}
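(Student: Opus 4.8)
The plan is to build the distributive functor $\langle\cdot,\cdot\rangle_{pq}\colon\widetilde{CH}^p_X\times\widetilde{CH}^q_X\to V\M_k$ announced before Definition~\ref{defin-filtr} and then to descend it to a biextension by means of Lemma~\ref{lemma-Deligne}.

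First I would take the collection of compatible distributive functors $C^rV\M_X\times C^sV\M_X\to C^{r+s}V\M_X$ of Corollary~\ref{corol-product} and, for $r=p$ and $s=q$ (so $r+s=d+1$), compose the corresponding functor with the push-forward functor $C^{d+1}V\M_X\to V\M_X\to V\M_k$, where $V\M_X\to V\M_k$ corresponds via Lemma~\ref{lemma-univ} to the determinant functor $\F\mapsto(\chi(X,\F),\det R\Gamma(X,\F)\backslash\{0\})$. This produces a distributive functor $C^pV\M_X\times C^qV\M_X\to V\M_k$. Since $C^rV\M_X=0$ for $r\ge d+2$, compatibility of the collection forces the restrictions of this functor to $C^{p+1}V\M_X\times C^qV\M_X$ and to $C^pV\M_X\times C^{q+1}V\M_X$ to be canonically isomorphic to $0$, with the two induced trivializations agreeing on $C^{p+1}V\M_X\times C^{q+1}V\M_X$; hence by Lemma~\ref{lemma-localiz}(iv) together with Lemma~\ref{lemma-filtrcomp} (recall $\widetilde{CH}^r_X\simeq C^rV\M_X/C^{r+1}V\M_X$) it descends to the required functor $\langle\cdot,\cdot\rangle_{pq}$. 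Evaluating it at the objects $(F^p)^{-1}(Z)=\sum_i m_i\gamma(\OO_{Z_i})$ and $(F^q)^{-1}(W)=\sum_j n_j\gamma(\OO_{W_j})$ of Remark~\ref{rmk-inverseChow} and using the $r=s=0$ compatibility of Corollary~\ref{corol-product} identifies $\langle\cdot,\cdot\rangle_{pq}(Z,W)$ with $\det R\Gamma(X,\sum_{i,j}m_in_j\OO_{Z_i}\otimes^L_{\OO_X}\OO_{W_j})\backslash\{0\}=\langle Z,W\rangle$.

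Next I would verify the two hypotheses of Lemma~\ref{lemma-Deligne} for the restriction of $\langle\cdot,\cdot\rangle_{pq}$ to the full Picard subcategories on objects whose cohomology classes lie in $CH^p(X)'$ and $CH^q(X)'$. The rank of $\langle Z,W\rangle$ equals $\sum_{i,j}m_in_j\chi(X,\OO_{Z_i}\otimes^L_{\OO_X}\OO_{W_j})$, and by Grothendieck--Riemann--Roch each summand is the degree of $\ch(\OO_{Z_i})\,\ch(\OO_{W_j})\,\mathrm{td}(X)$; since $\ch(\OO_{Z_i})\in\bigoplus_{a\ge p}CH^a(X)_{\Q}$ and $\ch(\OO_{W_j})\in\bigoplus_{b\ge q}CH^b(X)_{\Q}$, the product lies in $\bigoplus_{a+b\ge d+1}CH^{a+b}(X)_{\Q}=0$, so all ranks vanish and the functor lands in the full Picard subcategory of rank-zero objects of $V\M_k$, i.e.\ in $k^*$-torsors. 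For the remaining two vanishing conditions, recall that $\pi_1(\widetilde{CH}^q_X)=H^{q-1}(Gers(X,q)^{\bullet})$ is represented by $K_1$-chains $\{f_\eta\}\in G^{q-1}(X,1)$ with $\div(\{f_\eta\})=0$; writing the corresponding automorphism of $\widetilde{CH}^q_X$ via Remark~\ref{rmk-inverseChow} as a composition of morphisms attached to functions $f\in k(Y)^*$ on codimension $q-1$ subvarieties $Y$ chosen, by Lemma~\ref{lemma-movingK1}, to meet $|Z|$ properly, the isomorphism $f\colon\OO_{\widetilde{Y}}(-D_\infty)\to\OO_{\widetilde{Y}}(-D_0)$ becomes, after $\otimes^L_{\OO_X}\OO_Z$ and $R\Gamma(X,-)$, multiplication by $f(Y\cap Z)$---this is the generalization to proper intersections of Deligne's determinant-of-cohomology computation from \cite{Del} for divisors on curves. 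Thus the pairing $CH^p(X)\times H^{q-1}(Gers(X,q)^{\bullet})\to k^*$ induced by $\langle\cdot,\cdot\rangle_{pq}$ is, up to sign, $([Z],[\{f_\eta\}])\mapsto\prod_\eta f_\eta(\overline{\eta}\cap Z)$, which is trivial on $CH^p(X)'$ by the definition of $Z^p(X)'$ in Section~\ref{subsection-acycles} together with Lemma~\ref{lemma-movingK1}; the pairing $H^{p-1}(Gers(X,p)^{\bullet})\times CH^q(X)\to k^*$ is handled symmetrically and is trivial on $CH^q(X)'$. (Alternatively, both pairings can be matched with the $K$-cohomology pairing of Section~\ref{subsection-Kcohombiext} using multiplicativity of the filtration by codimension of support.)

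Applying Lemma~\ref{lemma-Deligne} then gives a biextension $P_{DC}$ of $(CH^p(X)',CH^q(X)')$ by $k^*$ with $\langle\cdot,\cdot\rangle_{pq}\cong G_{P_{DC}}$, and evaluating $G_{P_{DC}}$ at the objects $Z$ and $W$ (using the identification from the second paragraph) produces the canonical isomorphism $P_{DC}|_{(Z,W)}\cong\langle Z,W\rangle$. I expect the only genuinely non-formal point to be the identification of the $\pi_0$-versus-$\pi_1$ pairing induced by $\langle\cdot,\cdot\rangle_{pq}$ with the cycle-theoretic pairing of Section~\ref{subsection-acycles}: one must track the multiplicativity of the codimension-of-support filtration through the determinant-of-cohomology functor carefully enough to recover the classical norm formula. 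The descent of the distributive functor and the Riemann--Roch vanishing of ranks are routine.
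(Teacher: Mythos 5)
Your proof is correct and takes essentially the same route as the paper: descend the distributive functor of Corollary~\ref{corol-product} through the codimension-of-support filtration using Lemma~\ref{lemma-localiz}(iv) and Lemma~\ref{lemma-filtrcomp}, evaluate it via Remark~\ref{rmk-inverseChow}, and apply Lemma~\ref{lemma-Deligne}; the paper merely compresses the verification of the hypotheses of Lemma~\ref{lemma-Deligne}, which you spell out (your identification of the $\pi_0\times\pi_1$ pairing is exactly the content of Lemma~\ref{lemma-computation}, stated and proved in the paper just after this proposition). One small simplification you could have used: once the functor factors through $\widetilde{CH}^{d+1}_X = C^{d+1}V\M_X$, the rank vanishing is automatic because $\pi_0(\widetilde{CH}^{d+1}_X)=CH^{d+1}(X)=0$ on a $d$-dimensional variety, so every object maps to a rank-zero object of $V\M_k$ and the Grothendieck--Riemann--Roch computation, though correct, is not needed.
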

\begin{proof}
By Lemma~\ref{lemma-localiz} (iv), the collection of compatible distributive functors from
Corollary~\ref{corol-product} defines a collection of distributive functors
$$
\widetilde{CH}^p_X\times\widetilde{CH}^q_X\to
\widetilde{CH}^{p+q}_X.
$$
for all $p,q\ge 0$. If $p+q=d+1$, then we get
a distributive functor
$$
\langle\cdot,\cdot\rangle_{pq}:\widetilde{CH}^p_X\times\widetilde{CH}^q_X\to
\widetilde{CH}^{d+1}_X=C^{d+1}V\M_X\to V\M_X\to V\M_k.
$$
Moreover, by Remark~\ref{rmk-inverseChow},
we have $\langle Z,W\rangle_{pq}=(0,\langle Z,W\rangle)$.

Consider full Picard subcategories
$(\widetilde{CH}^p_X)'\subset \widetilde{CH}^p_X$ whose objects are elements $Z\in Z^p(X)'$
and the restriction of the distributive functor $\langle\cdot,\cdot\rangle$
to the product $(\widetilde{CH}^p_X)'\times (\widetilde{CH}^q_X)'$.
Applying Lemma~\ref{lemma-Deligne}, we get the next result.
\end{proof}

Finally, let us compare the biextension $P_{DC}$ with the biextension $P_E$.
With this aim we will need the following result.

\begin{lemma}\label{lemma-computation}
Let $X$ be a smooth projective variety of dimension $d$, $p+q=d+1$,
let $Y\subset X$ be a codimension $p-1$ irreducible subvariety, $f\in k(Y)^*$,
and let $W\subset X$ be a codimension $q$ irreducible subvariety.
Suppose that $Y$ meets $W$ properly and that $|\div(f)|\cap
W=\emptyset$. Then $f$ defines a isomorphism $0\to \div(f)$ in $\widetilde{CH}^p_X$,
hence a morphism $\sigma(f):k^*=\langle 0,W \rangle\to \langle \div(f),W\rangle=k^*$
such that $\sigma(f)=f(Y\cap W)$.
\end{lemma}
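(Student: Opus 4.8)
The plan is to unwind the definition of the isomorphism $\sigma(f)$ coming from the chain of functors defining $\langle\cdot,\cdot\rangle_{pq}$, and to compute explicitly the resulting element of $k^*=\pi_1(V\M_k)$ by identifying it with a determinant-of-cohomology torsor. Concretely, the morphism $f\colon 0\to \div(f)$ in $\widetilde{CH}^p_X$ is, via the inverse functor $(F^p)^{-1}$ from Remark~\ref{rmk-inverseChow}, represented by the isomorphism in $V\M^{p-1}_X$ built from the two exact sequences $0\to\OO_{\widetilde Y}(-D_\infty)\to\OO_{\widetilde Y}\to\OO_{D_\infty}\to 0$, $0\to\OO_{\widetilde Y}(-D_0)\to\OO_{\widetilde Y}\to\OO_{D_0}\to 0$, together with the isomorphism $f\colon\OO_{\widetilde Y}(-D_\infty)\xrightarrow{\sim}\OO_{\widetilde Y}(-D_0)$, pushed forward to $X$ along $\pi\colon\widetilde Y\to X$. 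First I would use the properness hypothesis $|\div(f)|\cap W=\emptyset$ and Remark~\ref{rmk-properint} to reduce the computation of $i^*$ along the diagonal (appearing in Corollary~\ref{corol-product}) to the naive derived restriction, so that $\langle\div(f),W\rangle_{pq}$ is literally computed by $\det R\Gamma(X,\OO_{\div(f)}\otimes^L_{\OO_X}\OO_W)\backslash\{0\}$ and the morphism $\sigma(f)$ is the automorphism of $k^*$ obtained by applying $\det R\Gamma(X,-\otimes^L\OO_W)$ to the above data restricted along $W$.

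Next I would localize the whole picture to $W$. Since $Y$ meets $W$ properly, $\widetilde Y$ meets $W$ properly as well, and the pullbacks of $D_0$, $D_\infty$ to $\widetilde Y\cap W$ are finite over $W\cap\div(f)$ — in fact, by hypothesis, over the empty set, so $\OO_{D_0}\otimes^L\OO_W$ and $\OO_{D_\infty}\otimes^L\OO_W$ are supported on the finite set $Y\cap W$. The key step is then the following local computation: for a point $x\in Y\cap W$ with intersection multiplicity $n=(Y,W;x)$, restricting the two exact sequences and the isomorphism $f$ to a neighbourhood of $x$ shows that the induced automorphism of the (trivial) torsor $\det R\Gamma$ is multiplication by $\Nm_{k(x)/k}\big(f^{(Y,W;x)}(x)\big)$; taking the product over all such $x$ gives exactly $f(Y\cap W)=\prod_{x\in Y\cap|W|}\Nm_{k(x)/k}(f^{(Y,W;x)}(x))$. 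The mechanism behind this is that applying $\det R\Gamma(X,-\otimes^L\OO_W)$ to an isomorphism $L(-D_\infty)\xrightarrow{f}L(-D_0)$ of line bundles on the curve $\widetilde Y$ (after base change) tensors the determinant torsor by the value of $f$ at the relevant divisor — this is precisely the normalization in Example~\ref{examp-virtualfield} and the definition of $\det$ for two-term complexes, combined with the fact that a nonzero rational function acts on the stalk of $\OO_{\widetilde Y}$ at a length-$n$ scheme by a scalar whose ``norm'' (determinant of multiplication) is $\Nm_{k(x)/k}(f(x))^n$.

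The main obstacle I anticipate is bookkeeping of the several compatibility isomorphisms: one must check that the canonical isomorphism $\langle 0,W\rangle\cong k^*$ used on the source side and the canonical identification $\langle\div(f),W\rangle\cong k^*$ on the target side are the ones making $\sigma(f)$ literally the automorphism computed above, with no stray sign or extra factor. Here I would invoke Remark~\ref{rmk-properint} to pin down the canonical trivializations (they come from the $\OO_{Z_i}\otimes^L\OO_W$ being finite-length sheaves supported away from the branch loci), and the distributivity/associativity constraints from Section~\ref{subsection-determ} to see that the computation is independent of the chosen ordering of summands in $(F^p)^{-1}(\div(f))$. A second, minor technical point is that $\widetilde Y$ and the $D_0,D_\infty$ are in general singular, so ``curve'' arguments must be phrased via $R\pi_*$; but since we only ever evaluate $\det R\Gamma$ of complexes supported on the finite set $Y\cap W$, one can work étale-locally at each point $x$ and the higher direct images do not interfere. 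Once the local statement is established, the global one follows by additivity of $\det$ over the exact sequences and multiplicativity over the points of $Y\cap W$, giving $\sigma(f)=f(Y\cap W)$ as claimed.
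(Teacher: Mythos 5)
Your proof takes essentially the same route as the paper's: unwind the morphism $f:0\to\div(f)$ via Remark~\ref{rmk-inverseChow}, use the hypothesis $|\div(f)|\cap W=\emptyset$ (together with the fact that $\pi:\widetilde Y\to Y$ is an isomorphism off $D_0\cup D_\infty$) to reduce $\sigma(f)$ to the automorphism of $\OO_Y\otimes^L_{\OO_X}\OO_W$ given by multiplication by $f$, and then apply $\det R\Gamma$. One small imprecision: you write that $\OO_{D_0}\otimes^L\OO_W$ and $\OO_{D_\infty}\otimes^L\OO_W$ are ``supported on $Y\cap W$,'' but by the disjointness hypothesis these are actually zero (their supports lie over $|\div(f)|$, which misses $W$ entirely) --- this is in fact what makes the two exact sequences trivialize and the argument go through, and your subsequent local computation uses the correct vanishing.
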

\begin{proof}
By Remark~\ref{rmk-inverseChow} and in its notations,
the action of $f$ in question is induced by the exact sequences
$$
0\to \OO_{\widetilde{Y}}(-D_{\infty})\to\OO_{\widetilde{Y}}\to \OO_{D_{\infty}}\to 0,
$$
$$
0\to \OO_{\widetilde{Y}}(-D_{0})\to\OO_{\widetilde{Y}}\to \OO_{D_{0}}\to 0,
$$
and the isomorphism $f:\OO_{\widetilde{Y}}(-D_{\infty})\to
\OO_{\widetilde{Y}}(-D_{0})$. Moreover, $\pi:\widetilde{Y}\to Y$ is an isomorphism outside of
$D_0\cup D_{\infty}$, so $\sigma(f)$ is induced by the automorphism of the complex
$\OO_Y\otimes_{\OO_X}^L\OO_W$ given by multiplication on $f$. Applying
the functor $\det R\Gamma(-)$, we get $f(Y\cap W)$.
\end{proof}

\begin{prop}\label{prop-DCbiext}
The biextension $P_{DC}$ is canonically isomorphic to the biextension $P_E$.
\end{prop}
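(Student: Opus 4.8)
The plan is to exhibit both $P_{DC}$ and $P_E$ as quotient biextensions, in the sense of Section~\ref{subsection-quotientbiext}, attached to one and the same bisubgroup
$$
T=\{(Z,W)\in Z^p(X)'\times Z^q(X)'\ :\ |Z|\cap|W|=\emptyset\},
$$
and then to check that they are induced by the same bilinear map on the associated bisubgroup $S\subset T$, so that Remark~\ref{rmk-biextcoinc} produces the desired canonical isomorphism. For $P_E$ this is its very definition in Section~\ref{subsection-acycles}: the classical moving lemma shows that $T$ surjects onto $CH^p(X)'\times CH^q(X)'$, and the bilinear map $\psi\colon S\to k^*$ is given by $\psi(Z,W)=\prod_\eta f_\eta(\overline\eta\cap W)$ when $Z$ is rationally trivial, $Z=\div(\{f_\eta\})$ with $\{f_\eta\}\in G^{p-1}_{|W|}(X,1)$, and symmetrically.

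For $P_{DC}$ I would argue as follows. By Proposition~\ref{prop-detcohombiext} and its proof, $P_{DC}$ is the biextension produced by Lemma~\ref{lemma-Deligne} from the distributive functor $\langle\cdot,\cdot\rangle_{pq}\colon(\widetilde{CH}^p_X)'\times(\widetilde{CH}^q_X)'\to V\M_k$, whose value on a pair of cycles $(Z,W)$ is $(0,\langle Z,W\rangle)$. When $|Z|\cap|W|=\emptyset$, every complex $\OO_{Z_i}\otimes^L_{\OO_X}\OO_{W_j}$ with $m_in_j\neq0$ vanishes, hence $\langle Z,W\rangle$ is canonically the trivial torsor $k^*$; these identifications give a trivialization of the functor over $T$ compatible with the distributivity constraints. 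Unwinding the equivalences of Lemma~\ref{lemma-localiz}(iv) and Example~\ref{examp-localiz}(i) used to prove Lemma~\ref{lemma-Deligne}, one sees that $P_{DC}$ is canonically isomorphic to the quotient biextension attached to $T$, this trivialization, and the bilinear map $\psi_{DC}\colon S\to k^*$ that sends $(Z,W)\in S$ with $Z$ rationally trivial to the scalar by which $\langle\cdot,W\rangle_{pq}$ acts along the morphism $0\to Z$ in $\widetilde{CH}^p_X$ defined by any $K_1$-chain $\{f_\eta\}\in G^{p-1}_{|W|}(X,1)$ with $\div(\{f_\eta\})=Z$ (such a chain exists by Lemma~\ref{lemma-movingK1}), and symmetrically when $W$ is rationally trivial.

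It then remains to identify $\psi_{DC}$ with $\psi$. Fix $(Z,W)\in S$ with $Z$ rationally trivial and a chain $\{f_\eta\}$ as above. The dimension count $p+q=d+1$ forces, via the definition of $G^{p-1}_{|W|}(X,1)$, that for each $\eta$ with $f_\eta\neq1$ the subvariety $\overline\eta$ meets $|W|$ properly and $|\div(f_\eta)|\cap|W|=\emptyset$. Writing the morphism $\{f_\eta\}\colon0\to Z$ as the monoidal sum of the one-point morphisms $(f_\eta)\colon0\to\div(f_\eta)$ and using that $\langle\cdot,W\rangle_{pq}$ is monoidal, Lemma~\ref{lemma-computation} applied to each $\eta$ gives $\psi_{DC}(Z,W)=\prod_\eta f_\eta(\overline\eta\cap W)=\psi(Z,W)$; the case of rationally trivial $W$ is identical after invoking the symmetry of $\langle\cdot,\cdot\rangle$ in its two arguments. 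Hence $\psi_{DC}=\psi$ on $S$, and Remark~\ref{rmk-biextcoinc} completes the proof.

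The step I expect to be the main obstacle is the second paragraph: carefully matching the biextension abstractly extracted from $\langle\cdot,\cdot\rangle_{pq}$ by Lemma~\ref{lemma-Deligne} with the explicit quotient construction of Section~\ref{subsection-quotientbiext}, i.e.\ verifying that the trivialization of $\langle\cdot,\cdot\rangle_{pq}$ over $T$ together with the action of the ``kernel'' morphisms $0\to Z$ really reproduces $P_{DC}$. This is formal but requires threading through the equivalences of Lemma~\ref{lemma-localiz}; the remaining points (the dimension count, the reduction to one-point $K_1$-chains, and the appeal to Lemma~\ref{lemma-computation}) are routine.
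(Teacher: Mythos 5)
Your proposal is correct and takes essentially the same approach as the paper: both fix the bisubgroup $T$ of pairs of disjointly supported cycles, use Lemma~\ref{lemma-movingK1} to represent a rational equivalence by a $K_1$-chain in $G^{p-1}_{|W|}(X,1)$, and then invoke Lemma~\ref{lemma-computation} to match the two bilinear maps, concluding by Remark~\ref{rmk-biextcoinc}. The paper's proof is extremely terse (it compresses your second and third paragraphs into a single sentence), so your writeup is a faithful expansion of the same argument rather than a different route.
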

\begin{proof}
Let $T\subset Z^p(X)'\times Z^q(X)'$
be the bisubgroup that consists of pair $(Z,W)$ such that
$|Z|\cap |W|=\emptyset$. Suppose that $(Z,W)$, $(Z',W)$ are in $T$ and that $Z$
is rationally equivalent to $Z'$. By Lemma~\ref{lemma-movingK1}, we may suppose
that there is a $K_1$-chain $\{f_{\eta}\}\in G^{p-1}_{|W|}(X,1)$ such that
$\div(\{f_{\eta}\})=Z'-Z$. By Lemma
\ref{lemma-computation}, we immediately get the needed result.
\end{proof}

Combining Proposition \ref{prop-DCbiext}, Proposition
\ref{prop-AbelJacobi}, and Remark \ref{rmk-explicPoinccycles}, we obtain
the following statement.

\begin{corol}\label{corol-canon-torsors}
Suppose that $X$ is a complex smooth projective variety of dimension
$d$, $Z\in Z^p(X)_{\hom}$, $W\in Z^q(X)_{\hom}$, $p+q=d+1$; then
there is a canonical isomorphism of the following $\C^*$-torsors:
\begin{itemize}
\item
$\det R\Gamma(X,\OO_Z\otimes^L_{\OO_X}\OO_W)\backslash\{0\}$;
\item
the set of isomorphism classes of all integral mixed Hodge
structures $V$ whose weight graded quotients are identified with
$\Z(0)$, $H^{2p-1}(X)(p)$, $\Z(1)$ and such that
$[V/W_{-2}V]=[E_Z]\in \Ext^1_{\H}(\Z(0),H^{2p-1}(X)(p))$,
$[W_{-1}V]=[E_{W}^{\vee}]\in \Ext^1_{\H}(H^{2p-1}(X)(p),\Z(1))$.
\end{itemize}
\end{corol}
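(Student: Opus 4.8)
The plan is simply to concatenate the three cited results, reading each of them off on the level of the fibre over the pair of cycle classes $(Z,W)$ (with $k=\C$). First I would invoke Proposition~\ref{prop-detcohombiext}: it produces the biextension $P_{DC}$ of $(CH^p(X)',CH^q(X)')$ by $k^*$ and identifies its fibre over $(Z,W)$ canonically with the $\C^*$-torsor $\langle Z,W\rangle=\det R\Gamma(X,\OO_Z\otimes^L_{\OO_X}\OO_W)\backslash\{0\}$. Then Proposition~\ref{prop-DCbiext} supplies a canonical isomorphism of biextensions $P_{DC}\cong P_E$, which on the fibre over $(Z,W)$ gives a canonical isomorphism of $\C^*$-torsors between $\det R\Gamma(X,\OO_Z\otimes^L_{\OO_X}\OO_W)\backslash\{0\}$ and $P_E|_{(Z,W)}$.

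Next, since $Z\in Z^p(X)_{\hom}$ and $W\in Z^q(X)_{\hom}$, their classes lie in $CH^p(X)_{\hom}\subset CH^p(X)'$ and $CH^q(X)_{\hom}\subset CH^q(X)'$ by Lemma~\ref{lemma-Bloch}, so it is legitimate to restrict the biextension $P_E$ to $CH^p(X)_{\hom}\times CH^q(X)_{\hom}$. Proposition~\ref{prop-AbelJacobi} then provides a canonical isomorphism between this restriction and $AJ^*(P_{IJ})$; on the fibre over $(Z,W)$ this reads $P_E|_{(Z,W)}\cong P_{IJ}|_{(AJ(Z),AJ(W))}$. Finally, Remark~\ref{rmk-explicPoinccycles} identifies $P_{IJ}|_{(AJ(Z),AJ(W))}$ canonically with the set of isomorphism classes of integral mixed Hodge structures $V$ whose weight graded quotients are $\Z(0)$, $H^{2p-1}(X)(p)$, $\Z(1)$ and with $[V/W_{-2}V]=[E_Z]\in\Ext^1_{\H}(\Z(0),H^{2p-1}(X)(p))$, $[W_{-1}V]=[E_W^{\vee}]\in\Ext^1_{\H}(H^{2p-1}(X)(p),\Z(1))$. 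Composing the chain
$$
\det R\Gamma(X,\OO_Z\otimes^L_{\OO_X}\OO_W)\backslash\{0\}\;\cong\;P_{DC}|_{(Z,W)}\;\cong\;P_E|_{(Z,W)}\;\cong\;P_{IJ}|_{(AJ(Z),AJ(W))}
$$
of canonical isomorphisms of $\C^*$-torsors with the last identification yields the assertion.

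The only point that requires any care — and it is bookkeeping rather than a genuine obstacle — is to check that each of the above isomorphisms of biextensions over the Chow groups does restrict to a $\C^*$-equivariant bijection on the single fibre over $(Z,W)$, and that the composite acquires no spurious sign. For this one uses precisely Propositions~\ref{prop-detcohombiext}, \ref{prop-DCbiext} and \ref{prop-AbelJacobi}, all of which are stated with honest canonical isomorphisms, rather than Proposition~\ref{prop-biext-Kcohom}, whose comparison carries the factor $(-1)^{pq}$. Once this compatibility is recorded, the corollary follows at once.
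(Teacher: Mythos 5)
Your proof is correct and is precisely the argument the paper gives: the text preceding the corollary reads "Combining Proposition~\ref{prop-DCbiext}, Proposition~\ref{prop-AbelJacobi}, and Remark~\ref{rmk-explicPoinccycles}, we obtain the following statement," which is exactly the chain of fibrewise identifications you spell out (with Proposition~\ref{prop-detcohombiext} supplying the description of the fibre of $P_{DC}$). Your added remark about avoiding the $(-1)^{pq}$-twisted comparison of Proposition~\ref{prop-biext-Kcohom} is a sensible bookkeeping observation but does not change the route.
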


\begin{quest}
Does there exist a direct proof of Corollary
\ref{corol-canon-torsors}?
\end{quest}

The present approach is a higher-dimensional generalization for the description
of the Poincar\'e biextension on a smooth projective curve $C$ in terms of
determinant of cohomology of invertible sheaves, suggested by
P.\,Deligne in \cite{Del}. Let us briefly recall this construction
for the biextension $\Pc$ of $(\Pic^0(C),\Pic^0(C))$ by $k^*$. For any degree zero
invertible sheaves $\Lc$ and $\M$ on $C$ there is an equality
$$
\Pc_{([\Lc],[\M])}=\langle\Lc-\OO_C,\M-\OO_C\rangle= \mbox{$\det
R\Gamma(C,\Lc\otimes_{\OO_C}\M-\Lc-\M+\OO_C))\backslash\{0\}$},
$$
where $[\cdot]$ denotes the isomorphism class of an invertible sheaf.
Since $\chi(C,\Lc\otimes_{\OO_C}\M)=\chi(C,\Lc)=\chi(C,\M)=1-g$,
this \mbox{$k^*$-torsor} is well defined on
$\Pic^0(C)\times\Pic^0(C)$. Consider a homomorphism
\mbox{$p:\Pic^0(C)\times\Div^0(C)\to \Pic^0(C)\times\Pic^0(C)$} given by the formula
\mbox{$([\Lc],E)\mapsto ([\Lc],[\OO_C(E)])$}. Then there is an isomorphism of
$k^*$-torsors $\varphi:p^*\Pc\cong \Pc'$, where
$\Pc'$ is a biextension of $(\Pic^0(C),\Div^0(C))$ by
$k^*$ given by the formula
$\Pc'_{([\Lc],E)}=(\bigotimes\limits_{x\in
C}\Lc|_x^{\otimes\ord_x(E)})\backslash\{0\}$. Thus
$\varphi$ induces a biextension structure on $p^*\Pc$;
it turns out that this structure descends to $\Pc$.
Moreover, in op.cit. it is proved that the biextension $\Pc$ is canonically isomorphic
to the Poincar\'e line bundle on $\Pic^0(X)\times\Pic^0(X)$ without the zero section.

\begin{claim}\label{claim-DeligneDC}
In the above notations, the biextensions $\Pc$ and $P_{DC}$ are canonically isomorphic.
\end{claim}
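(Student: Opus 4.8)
The plan is to identify both biextensions $\Pc$ and $P_{DC}$ over $\Pic^0(C)\times\Pic^0(C) = CH^1(C)_{\hom}\times CH^1(C)_{\hom}$ with the same biextension expressed via determinant of cohomology, and then invoke the explicit descriptions already available. First I would observe that on a curve $C$ every codimension-one cycle $Z\in Z^1(C)$ is a divisor, $Z = \sum_x n_x\{x\}$, and $\OO_Z = \bigoplus_x \OO_{\{x\}}^{\oplus n_x}$ (for effective part; the general case by taking virtual objects as in Remark~\ref{rmk-inverseChow}). Hence for a second divisor $W$ with $|Z|\cap|W|=\emptyset$ we have $\OO_Z\otimes^L_{\OO_C}\OO_W = 0$, so $\langle Z,W\rangle = k^*$ canonically, and by Proposition~\ref{prop-detcohombiext} the fiber $P_{DC}|_{(Z,W)}$ is this trivial $k^*$-torsor. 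On the other side, Deligne's biextension $\Pc$ has fiber $\Pc_{([\OO_C(Z)],[\OO_C(W)])} = \det R\Gamma(C,\OO_C(Z)\otimes\OO_C(W) - \OO_C(Z) - \OO_C(W) + \OO_C)\backslash\{0\}$; when $|Z|\cap|W|=\emptyset$ the standard exact sequences $0\to\OO_C\to\OO_C(Z)\to\OO_Z\to 0$ and their tensor products give a canonical identification of this determinant with $\langle\OO_Z,\OO_W\rangle = \det R\Gamma(C,\OO_Z\otimes^L_{\OO_C}\OO_W)\backslash\{0\} = k^*$ as well. So the two biextensions have canonically identified fibers over the bisubgroup $T\subset Z^1(C)_{\hom}\times Z^1(C)_{\hom}$ of pairs with disjoint supports.

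Next I would check that this fiberwise identification is compatible with the biextension structures, i.e.\ that it descends to an isomorphism of biextensions of $(\Pic^0(C),\Pic^0(C))$ by $k^*$. By Remark~\ref{rmk-biextcoinc} it suffices to compare the two trivializations of the respective biextensions over the bisubgroup $S = \Ker\times Z^1(C)'_{\hom}\cup Z^1(C)'_{\hom}\times\Ker$, where $\Ker$ is the group of principal divisors. For $P_{DC}$ this trivialization is computed by Lemma~\ref{lemma-computation}: for $Z = \div(f)$ with $f\in k(C)^* = k(Y)^*$ (here $Y = C$, $p-1 = 0$) rationally trivial, the induced morphism $\sigma(f)\colon k^*\to k^*$ is multiplication by $f(C\cap W) = \prod_x f(x)^{n_x}$ where $W = \sum_x n_x\{x\}$; this is exactly the bilinear map $\psi$ defining $P_E$ (Section~\ref{subsection-acycles}), and hence $P_{DC}\cong P_E$ by Proposition~\ref{prop-DCbiext}. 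For $\Pc$, Deligne's construction gives the biextension structure by descent through the map $p\colon\Pic^0(C)\times\Div^0(C)\to\Pic^0(C)\times\Pic^0(C)$ and the isomorphism $\varphi\colon p^*\Pc\cong\Pc'$ with $\Pc'_{([\Lc],E)} = \bigotimes_x\Lc|_x^{\otimes\ord_x(E)}\backslash\{0\}$; unwinding this, the induced trivialization over the analogous bisubgroup is: for $\Lc = \OO_C(\div(f))$ with a chosen rational trivialization, the comparison isomorphism is again multiplication by $\prod_x f(x)^{\ord_x(E)} = f(C\cap W)$. Thus both biextensions are induced, via the construction of Section~\ref{subsection-quotientbiext}, by the \emph{same} bilinear map $\psi$ on $S$, so Remark~\ref{rmk-biextcoinc} (or Remark~\ref{rmk-isombiext} with trivial correcting map $\phi$) yields the canonical isomorphism $\Pc\cong P_{DC}$.

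The main obstacle I anticipate is the bookkeeping in the second step: verifying that Deligne's descent isomorphism $\varphi$, which is defined in \cite{Del} via the Weil-reciprocity-type compatibilities for the torsors $\Pc'_{([\Lc],E)}$, really produces the bilinear map $(f,E)\mapsto\prod_x f(x)^{\ord_x(E)}$ in the sense of our Section~\ref{subsection-quotientbiext} construction, rather than up to a sign or a boundary term. Concretely one must trace through: (i) the canonical trivialization of $\Pc'_{([\OO_C(\div(f))],E)} = \bigotimes_x(\OO_C(\div(f)))|_x^{\otimes\ord_x(E)}$ coming from the rational section $f$ (which evaluates to $f(x)$ at each $x\notin|\div(f)|$), and (ii) the fact that $\varphi$ is compatible with the two partial group laws, so that this trivialization matches the one from the determinant-of-cohomology side computed via the exact sequences above. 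Both sides ultimately reduce to the classical Weil reciprocity on $C$ (Lemma~\ref{lemma-recipr} in the curve case), which guarantees the bilinear map is well defined on $S$; once the normalizations are pinned down, the identification is forced and the signs $(-1)^{pq}$ that appeared in Proposition~\ref{prop-biext-Kcohom} do not arise here since $p = q = 1$ and the relevant sign is $(-1)^{1\cdot 1}$ absorbed in the commutativity constraint of $V\M_k$ (Example~\ref{examp-virtualfield}), matching Deligne's own conventions.
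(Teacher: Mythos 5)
Your proposal is correct in substance, but it takes a different route from the paper. You factor the comparison through $P_E$: you (re)establish $\Pc\cong P_E$ by trivializing both over the bisubgroup $T$ of disjoint pairs and matching the induced bilinear maps on $S$, then cite Proposition~\ref{prop-DCbiext} for $P_E\cong P_{DC}$. The paper instead compares $\Pc$ and $P_{DC}$ directly on the nose: it pulls both back along $\pi:\Div^0(C)\times\Div^0(C)\to\Pic^0(C)^{\times 2}$ and exhibits a chain of canonical isomorphisms of $k^*$-torsors
$\pi^*\Pc\cong\bigl(\bigotimes_x\OO_C(D)|_x^{\otimes\ord_x(E)}\bigr)\backslash\{0\}\cong\langle\sum_x\ord_x(D)\OO_x,\sum_y\ord_y(E)\OO_y\rangle$
valid for \emph{all} pairs $(D,E)$, not only disjointly supported ones, checks that the chain respects the partial group laws, and then handles the descent to $\Pic^0$ with a single invocation of Lemma~\ref{lemma-computation}. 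The payoff of the paper's route is precisely to avoid what you flag as the main obstacle: it never has to unwind Deligne's descent data $\varphi$ and normalize the resulting bilinear map $(f,E)\mapsto\prod_x f(x)^{\ord_x(E)}$ against the $P_E$-convention, since the canonical trivialization of $\OO_C(D)|_x$ by the rational section does that bookkeeping automatically at the torsor level. Conversely, your route has the pedagogical advantage of making visible that all three biextensions $\Pc$, $P_E$, $P_{DC}$ are governed by the same Weil-reciprocity bilinear map; and your sign remark (the $(-1)^{pq}$ disappearing because $p=q=1$) is correct and worth keeping in mind, since $V\M_k$'s Koszul sign is built into Deligne's conventions. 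Both proofs ultimately rest on Lemma~\ref{lemma-computation} for the interaction of $\sigma(f)$ with the determinant torsor.
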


\begin{proof}
Consider a map $\pi:\Div^0(C)\times \Div^0(C)\to
\Pic^0(C)\times\Pic^0(C)$ given by the formula $(D,E)\mapsto ([\OO_C(D),\OO_C(E)])$.
There are canonical isomorphisms of $k^*$-torsors
$$
\mbox{$\pi^*\Pc\cong(\bigotimes\limits_{x\in
C}\OO_C(D)|_x^{\otimes\ord_x(E)})\backslash\{0\}\cong(\bigotimes\limits_{x\in
C}\OO_C(x)|_x^{\otimes(\ord_x(D)+\ord_x(E))})\backslash\{0\}\cong$}
$$
$$
\mbox{$\cong\langle\sum\limits_{x\in C}\ord_x(D)\OO_x,\sum\limits_{y\in
C}\ord_y(E)\OO_y\rangle.$}
$$
Besides, these isomorphisms commute with the biextension structure. It remains to note that
the rational equivalence on the first argument also commutes with the above isomorphisms
(this is a particular case of Lemma~\ref{lemma-computation}).
\end{proof}

\begin{rmk}\label{rmk-curvecase}
For a smooth projective curve, the biextension $P_{KC}$ from
Section~\ref{subsection-Kcohombiext} corresponds to a pairing
between complexes given by a certain product of Hilbert tame
symbols. Combining Proposition~\ref{prop-biext-Kcohom},
Proposition~\ref{prop-DCbiext}, and Claim~\ref{claim-DeligneDC},
we see that there is a connection between the tame symbol and the
Poincar\'e biextension given in terms of determinant of cohomology
of invertible sheaves. In \cite{Gor06} this connection is explained
explicitly in terms of the central extension of ideles on a smooth
projective curves constructed by Arbarello, de Concini, and Kac.
\end{rmk}

\subsection{Consequences on the Weil pairing}\label{subsection-weilpairing}

Let $X$ be a smooth projective variety of dimension $d$ over a field
$k$. As above, suppose that $p,q\ge 0$ are such that $p+q=d+1$.
Consider cycles $Z\in Z^p(X)'$ and $W\in Z^q(X)'$ (see
Section~\ref{subsection-acycles}) such that $lZ=\div(\{f_{\eta}\})$
and $lW=\div(\{g_{\xi}\})$ for an integer $l\in \Z$ and $K_1$-chains
$\{f_{\eta}\}\in G^{p-1}_{|W|}(X,1)$, $\{g_{\xi}\}\in
G^{q-1}_{|Z|}(X,1)$ (see Section \ref{subsection-K1}). By $[Z]$ and
$[W]$ denote the classes of the cycles $Z$ and $W$ in the groups
$CH^p(X)=H^p(X,\K_p)$ and $CH^q(X)=H^q(X,\K_q)$, respectively. As
explained in Example~\ref{examp-dg-Weilpairing}, the Massey triple
product in $K$-cohomology
$$
m_3([Z],l,[W])\in H^d(X,\K_{d+1})/([Z]\cdot H^{q-1}(X,\K_q)+
H^{p-1}(X,\K_p)\cdot[W])
$$
has a well defined push-forward $\overline{m}_3([Z],l,[W]))\in
\mu_l$ with respect to the map \mbox{$\pi_*:H^d(X,\K_{d+1})\to
k^*$}, where $\pi:X\to \Spec(k)$ is the structure morphism.

Combining Lemma~\ref{lemma-Weil-pair-commut},
Example~\ref{examp-dg-Weilpairing},
Proposition~\ref{prop-biext-Kcohom}, and
Proposition~\ref{prop-AbelJacobi}, we get the following statements.

\begin{corol}\label{corol-Weil-pairing}
\hspace{0cm}
\begin{itemize}
\item[(i)]
In the above notations, we have
$$
\overline{m}_3(\alpha,l,\beta)^{(-1)^{pq}}=\prod_{\eta}f_{\eta}(\overline{\eta}\cap
W)\cdot \prod_{\xi}g_{\xi}^{-1}(Z\cap \overline{\xi})=
\phi_l([Z],[W]),
$$
where $\phi_l$ is the Weil pairing associated with the biextension
$P_E$ from Section \ref{subsection-acycles}.
\item[(ii)]
If $k=\Cb$ and $Z\in Z^p(X)_{\hom}$, $W\in Z^q(X)_{\hom}$, then we
have
$$
\phi_l([Z],[W])=\phi^{an}_l(AJ(Z),AJ(W)),
$$
where $\phi^{an}_l$ is the Weil pairing between the
\mbox{$l$-torsion} subgroups in the dual complex tori $J^{2p-1}(X)$
and $J^{2q-1}(X)$.
\end{itemize}
\end{corol}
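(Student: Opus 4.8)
The plan is purely formal: the statement follows by combining the comparison isomorphisms between the different biextensions of Chow groups, the functoriality of the Weil pairing, and the Massey-product description of the Weil pairing from Example~\ref{examp-dg-Weilpairing}.

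For part~(i) I would establish the two equalities separately. For the outer equality, apply Example~\ref{examp-dg-Weilpairing} to the $K$-cohomology DG-algebra $\bigoplus_n R\Gamma(X,\K_n)$ (realized via the multiplicative adelic complexes of Section~\ref{subsection-adelic}, say) together with the push-forward $\pi_*\colon H^d(X,\K_{d+1})\to k^*$: this identifies the Weil pairing of $P_{KC}$ with the pushed-forward Massey triple product $\overline{m}_3([Z],l,[W])$. Since by Proposition~\ref{prop-biext-Kcohom} the biextension $P_{KC}$ is isomorphic to $P_E$ up to the sign $(-1)^{pq}$, the Weil pairing $\phi_l$ of $P_E$ equals $\overline{m}_3([Z],l,[W])^{(-1)^{pq}}$. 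For the inner equality, evaluate $\phi_l$ directly on $P_E$ via Lemma~\ref{lemma-Weil-pair-commut}: choose a lift $(Z,W)\in T$ of $([Z],[W])$ with $|Z|\cap|W|=\emptyset$; as $lZ$ and $lW$ are rationally trivial, the pairs $(lZ,W)$ and $(Z,lW)$ lie in the bisubgroup $S$, and Lemma~\ref{lemma-Weil-pair-commut} gives $\phi_l([Z],[W])=\psi(lZ,W)\cdot\psi(Z,lW)^{-1}$. By the definition of the bilinear map $\psi$ in Section~\ref{subsection-acycles} one has $\psi(lZ,W)=\prod_\eta f_\eta(\overline{\eta}\cap W)$ and $\psi(Z,lW)=\prod_\xi g_\xi(Z\cap\overline{\xi})$ — these being independent of the auxiliary choices exactly because $Z\in Z^p(X)'$ and $W\in Z^q(X)'$ — which is the middle expression.

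For part~(ii) the key input is Proposition~\ref{prop-AbelJacobi}, which presents the restriction of $P_E$ to $CH^p(X)_{\hom}\times CH^q(X)_{\hom}$ as the pull-back $AJ^*(P_{IJ})$ of the Poincar\'e biextension of the dual intermediate Jacobians. The Weil pairing of a biextension is built only out of the fibres over the relevant torsion points and the biextension isomorphisms between them, hence is functorial under pull-back along group homomorphisms; applied to $AJ\times AJ$ this yields $\phi_l([Z],[W])=\phi_l^{P_{IJ}}(AJ(Z),AJ(W))$. As the Weil pairing associated with the Poincar\'e biextension of a pair of dual complex tori is, by definition (compare Remark~\ref{rmk-Hain}), the analytic Weil pairing $\phi^{an}_l$ on their $l$-torsion subgroups, this gives the asserted formula. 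The only points requiring attention are bookkeeping: propagating the sign $(-1)^{pq}$ of Proposition~\ref{prop-biext-Kcohom} into the Weil pairing, and verifying that after multiplication by $l$ the values of $\psi$ on $S$ are indeed the norm products displayed in~(i); neither is a serious obstacle.
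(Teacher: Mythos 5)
Your proposal is correct and follows essentially the same approach as the paper, which simply cites Lemma~\ref{lemma-Weil-pair-commut}, Example~\ref{examp-dg-Weilpairing}, Proposition~\ref{prop-biext-Kcohom}, and Proposition~\ref{prop-AbelJacobi} and states that combining them gives the result. You have correctly filled in how these four ingredients fit together: the inner equality of~(i) comes from Lemma~\ref{lemma-Weil-pair-commut} applied to the explicit bilinear map $\psi$ defining $P_E$, the outer equality from Example~\ref{examp-dg-Weilpairing} (identifying the Weil pairing of $P_{KC}$ with the pushed-forward Massey product) propagated through the sign $(-1)^{pq}$ of Proposition~\ref{prop-biext-Kcohom}, and part~(ii) from functoriality of the Weil pairing under pull-back together with Proposition~\ref{prop-AbelJacobi}.
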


This is a generalization of the classical Weil's formula for
divisors on a curve. The first part of
Corollary~\ref{corol-Weil-pairing} was proved in \cite{Gor07}
without considering biextensions.


\begin{thebibliography}{99}

\bibitem{Bei} Beilinson, A. A.
\textit{Residues and adeles.} Funct. Anal. And Appl. \textbf{14} (1980), 34--35.

\bibitem{Bei86} Beilinson, A. \textit{Notes on absolute Hodge
cohomology.} Contemporary Mathemetics \textbf{55} (1986), 35--68.

\bibitem{Bei87} Beilinson, A. \textit{Height pairing between algebraic cycles.}
Contemporary Mathemetics \textbf{67} (1987), 1--24.

\bibitem{BV} Beilinson, A., Vologodsky, V.
\textit{A DG guide to Voevodsky's motives.} GAFA, Geom. func. anal., \textbf{17} (2007),
1709--1787.

\bibitem{Blo84} Bloch, S. \textit{Height pairing for algebraic cycles.}
Journal of Pure and Applied Algebra \textbf{34} (1984), 119--145.

\bibitem{Blo86} Bloch, S. \textit{Algebraic cycles and higher $K$-theory.}
Advances in Mathematics \textbf{61} (1986), 267--304.

\bibitem{Blo89} Bloch, S. \textit{Cycles and biextensions.}
Contemporary Mathematics {\bf 83} (1989), 19--30.

\bibitem{Blo} Bloch, S. http://www.math.uchicago.edu/$\sim$bloch/cubical\_chow.pdf.

\bibitem{Bre} Breen, L. \textit{Fonctions th\^eta et th\'eor\`eme du
cube.} Lecture Notes in Mathematics \textbf{980} (1983).


\bibitem{Del71} Deligne, P. \textit{Th\'eorie de Hodge: II.}
Publications Math\'ematiques de l'I.H.\'E.S. \textbf{40} (1971),
5--57.

\bibitem{Del} Deligne, P. \textit{Le d\'eterminant de la
cohomologie.} Contemporary Mathemetics \textbf{67} (1987), 93--177.


\bibitem{Fra} Franke, J. \textit{Chow categories.} Compositio Mathematica \textbf{76}:1,2
(1990), 101--162.

\bibitem{Ful} Fulton, W. \textit{Intersection Theory.}
A Series of Modern Surveys in Mathematics, Springer Verlag (1984).


\bibitem{Gil} Gillet, H. \textit{$K$-theory and intersection theory.} Handbook of $K$-theory,
\textbf{1} (2005), 236--293.

\bibitem{Gor06} Gorchinskiy, S. \textit{Poincar\'e
biextension and ideles on an algebraic curve.} Sbornik Mathematics
\textbf{197}:1 (2006), 25--38; arXiv: math.AG/0511626v3.

\bibitem{Gor07} Gorchinskiy, S. \textit{An adelic resolution for homology
sheaves.} Izvestiya: Mathematics \textbf{72}:6 (2008), 133--202;
arXiv:0705.2597.

\bibitem{Gri} Griffiths, P. \textit{Periods of integrals on
algebraic manifolds, I, II.} American Jouranal of Mathematics
\textbf{90} (1968), 568--626, 805--865.

\bibitem{Hai} Hain, R. \textit{Biextensions and heights
associated to curves of odd genus.} Duke Math. Journal \textbf{61}
(1990), 859--898.

\bibitem{Hub} Huber, A. \textit{On the Parshin--Beilinson adeles
for schemes.} Abh. Math. Sem. Univ. Hamburg \textbf{61}
(1991), 249--273.



\bibitem{KLM} Kerr M., Lewis J. D., M\"uller-Stach, S. \textit{The Abel--Jacobi map for higher Chow
groups.} Compositio Math. \textbf{142} (2006), 374--396.

\bibitem{Lev} Levine, M. \textit{Mixed motives.} Math. Surveys and Monographs
\textbf{57} (1998).

\bibitem{MS95} M\"uller-Stach, S. \textit{$\Cb^*$-extensions of tori,
higher Chow groups and applications to incidence equivalence
relations for algebraic cycles.} \mbox{$K$-theory} \textbf{9}
(1995), 395--406.

\bibitem{MS} M\"uller-Stach, S. \textit{Algebraic cycle complexes:
basic properties.} The Arithmetic and Geometry of Algebraic cycles
Banff 1998, Nato Sci. Series \textbf{548} (2000), 285-305.

\bibitem{Mum} Mumford, D. \textit{Biextensions of formal groups.}
Proceedings of the Bombay Colloquium on Algebraic Geometry (1968).

\bibitem{NS} Nesterenko Yu. P., Suslin A. A., \textit{Homology of the full linear group
over a local ring, and Milnor's $K$-theory.} Math. USSR-Izv,
\textbf{34} (1990), 121--145.

\bibitem{Par76} Parshin, A. N. \textit{On the arithmetic of
two-dimensional schemes I. Repartitions and residues}
Izv. Akad. Nauk SSSR \textbf{40}:4 (1976), 736--773.

\bibitem{Q} Quillen, D. \textit{Higher Algebraic $K$-theory.}
Lecture Notes in Mathematics \textbf{341} (1973), 85--147.

\bibitem{SGA} \textit{Groupes de Monodromie en G\'eom\'etrie Alg\'ebrique (SGA 7).}
Lecture Note in Mathematics \textbf{288} (1972).

\bibitem{Tot} Totaro B., \textit{Milnor $K$-theory is the simplest part of algebraic $K$-theory.}
\mbox{$K$-theory} \textbf{6} (1992), 177--189.

\end{thebibliography}
\end{document}